\newtheoremstyle{example}
{}                % Space above
{}                % Space below
{\sffamily}        % Theorem body font % (default is "\upshape")
{}                % Indent amount
{\bfseries}       % Theorem head font % (default is \mdseries)
{.}               % Punctuation after theorem head % default: no punctuation
{ }               % Space after theorem head
{}                % Theorem head spec
\newtheoremstyle{example}
{}                % Space above
{}                % Space below
{\sffamily}        % Theorem body font % (default is "\upshape")
{}                % Indent amount
{\bfseries}       % Theorem head font % (default is \mdseries)
{.}               % Punctuation after theorem head % default: no punctuation
{ }               % Space after theorem head
{}                % Theorem head spec
\newtheorem{theorem}{Theorem}
\newtheorem{lemma}[theorem]{Lemma}
\newtheorem{corollary}[theorem]{Corollary}
\theoremstyle{definition}
\newtheorem{definition}[theorem]{Definition}
\newtheorem{algorithm}[theorem]{Algorithm}
\newtheorem{remark}[theorem]{Remark}
\newtheorem{conjecture}[theorem]{Conjecture}
\numberwithin{theorem}{section}
\numberwithin{theorem}{section}
\DeclareMathOperator{\im}{im}
\DeclareMathOperator{\Tr}{Tr}
\DeclareMathOperator{\Gal}{Gal}
\DeclareMathOperator{\Aut}{Aut}
\DeclareMathOperator{\Nm}{Nm}
\DeclareMathOperator{\disc}{disc}
\DeclareMathOperator{\Res}{Res}
\DeclareMathOperator{\Part}{Part}
\DeclareMathOperator{\chara}{char}
\DeclareMathOperator{\colspan}{colspan}
\newcommand{\co}{\mathcal{O}}
\newcommand{\Z}{\mathbb{Z}}
\newcommand{\R}{\mathbb{R}}
\newcommand{\Q}{\mathbb{Q}}
\newcommand{\C}{\mathbb{C}}
\newcommand{\F}{\mathbb{F}}
\newcommand{\p}{\mathfrak{p}}
\newcommand{\m}{\widetilde{m}}
\newcommand{\Ext}[1]{\mathbf{Ext}_{#1}}
\newcommand{\Et}[2]{\textnormal{\'Et}_{#1}^{#2}}
\newcommand{\mff}{\mathfrak{f}}
\newcommand{\Split}{\mathrm{Split}}
\newcommand{\pred}{\mathrm{pred}}
\newcommand{\unpred}{\mathrm{unpred}}
\newcommand{\epi}{\mathrm{epi}}
\title{$S_n$-extensions with prescribed norms}
\author{Sebastian Monnet}
\begin{document}
\maketitle

\begin{abstract}
    Given a number field $k$, a finitely generated subgroup $\mathcal{A}\subseteq k^\times$, and an integer $n\geq 3$, we study the distribution of degree $n$ field extensions of $k$ with Galois closure group $S_n$, such that the elements of $\mathcal{A}$ are norms. For $n\leq 5$, and conjecturally for $n \geq 6$, we show that the density of such extensions is the product of so-called ``local masses'' at the places of $k$. When $n$ is an odd prime, we give formulae for these local masses, allowing us to express the aforementioned density as an explicit Euler product. For $n=4$, we determine almost all of these masses explicitly and give an efficient algorithm for computing the rest, again giving us the tools to evaluate the density as an Euler product.
\end{abstract}
\tableofcontents

\clearpage
\section{Introduction}
\label{sec-intro}

Fix a number field $k$ and let $\mathcal{A}\subseteq k^\times$ be a finitely generated subgroup. In \cite{newton-nf-prescribed}, Frei, Loughran, and Newton count abelian extensions $K/k$ such that $\mathcal{A}\subseteq N_{K/k}K^\times$. The natural follow-up is: what about non-abelian extensions? Counting non-abelian number fields in general is the subject of Malle's Conjecture (\cite[Conjecture~1.1]{malle-conj}), arguably the biggest open problem in arithmetic statistics, and we do not address it here. Instead, we tackle one of the best-understood classes of non-abelian number field, $S_n$-$n$-ics.

An \emph{$S_n$-$n$-ic extension of $k$} is a degree $n$ field extension $K/k$, whose normal closure $\widetilde{K}$ has $\Gal(\widetilde{K}/k) \cong S_n$. Given a real number $X$, write $N_{k,n}(X)$ for the number of $S_n$-$n$-ics $K/k$ with $\Nm(\disc(K/k)) \leq X$. The ``Malle--Bhargava heuristics'' give conjectural asymptotics for $N_{k,n}(X)$, which have been proved for $n \leq 5$ by Bhargava and collaborators (see \cite{bhargava-mass-formula} for the original heuristics and \cite{geom-of-nums} for the proof when $n\leq 5$). The cases $n\geq 6$ are wide open and very famous. 

Given a real number $X$, write $N_{k,n}(X;\mathcal{A})$ for the number of $S_n$-$n$-ic extensions $K/k$ such that $\Nm(\disc(K/k)) \leq X$ and $\mathcal{A}\subseteq N_{K/k}K^\times$. We are primarily interested in the leading coefficient,
$$
\lim_{X\to\infty} \frac{N_{k,n}(X;\mathcal{A})}{X},
$$
of the asymptotic expansion of $N_{k,n}(X;\mathcal{A})$, as well as the proportion 
$$
\lim_{X\to\infty} \frac{N_{k,n}(X;\mathcal{A})}{N_{k,n}(X)}
$$
of $S_n$-$n$-ics with $\mathcal{A}$ inside their norm groups. 

Since $N_{k,n}(X)$ is only known for $n \leq 5$, barring major breakthroughs in the field, we can only hope to solve our problem completely for $n=3,4,5$. It is common for papers to focus only on one of these three cases, but we have opted to address all three. In fact, contingent on the Malle--Bhargava heuristics, we extend our results to conjectures for $n\geq 6$.
\subsection{Qualitative results}

In Section~\ref{sec-general-n}, we explore our problem in as much generality as possible, namely for all $n\geq 3$. As already discussed, our results are conjectural for $n \geq 6$, since they assume the Malle--Bhargava heuristics. Specialising to the known cases, we isolate the following headline result: 
\begin{theorem}
    \label{thm-prop-between-0-and-1-for-n-leq-5}
    For $n\in \{3,4,5\}$, we have 
    $$
    0 < \lim_{X\to\infty} \frac{N_{k,n}(X;\mathcal{A})}{N_{k,n}(X)} \leq 1, 
    $$
    with equality if and only if $\mathcal{A}\subseteq k^{\times n}$. 
\end{theorem}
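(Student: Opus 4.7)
The plan is to invoke the Euler product factorisation established in Section~\ref{sec-general-n}, which for $n \leq 5$ writes
$$
\lim_{X\to \infty} \frac{N_{k,n}(X;\mathcal{A})}{N_{k,n}(X)} = \prod_v \rho_v(\mathcal{A}),
$$
where $\rho_v(\mathcal{A})\in[0,1]$ is the ratio of the local mass at $v$ restricted to étale $k_v$-algebras $L_v$ of degree $n$ satisfying $\mathcal{A}\subseteq N_{L_v/k_v}L_v^\times$ to the unrestricted local mass. The upper bound $\leq 1$ follows at once from $\rho_v(\mathcal{A})\leq 1$ at every $v$.

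For the lower bound, each individual factor $\rho_v(\mathcal{A})$ is strictly positive because the split étale algebra $k_v\times \cdots \times k_v$ has norm group all of $k_v^\times$ and therefore contributes positively to the restricted mass. The remaining task is convergence of the infinite product to a positive number; I would verify this by showing that at all but finitely many $v$ (namely finite primes $v\nmid n$ outside the support of $\mathcal{A}$), the factor $\rho_v(\mathcal{A})$ differs from $1$ by a quantity that is summable over $v$. The point is that at such $v$ the unramified degree $n$ étale algebras dominate the local mass and have norm groups containing the local unit group, hence $\mathcal{A}$; contributions from ramified algebras are small and can be controlled using the explicit local mass formulae given earlier in the paper.

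The direction $\mathcal{A}\subseteq k^{\times n}\Rightarrow $ ratio $=1$ of the equality characterisation is immediate and global: writing $a=b^n$ for each $a\in \mathcal{A}$ gives $a=N_{K/k}(b)$ for \emph{every} degree $n$ extension $K/k$, so the norm condition is automatic and every $S_n$-$n$-ic is counted by $N_{k,n}(X;\mathcal{A})$.

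For the converse, suppose $a\in \mathcal{A}\setminus k^{\times n}$. By the Grunwald--Wang theorem, which applies unconditionally in the range $n\in \{3,4,5\}$, there exists a place $v$ at which $a\notin (k_v^\times)^n$. Local class field theory then yields a cyclic degree $n$ extension $L_v/k_v$ with $a\notin N_{L_v/k_v}L_v^\times$; the corresponding étale algebra contributes positively to the unrestricted local mass at $v$ but not to the restricted one, forcing $\rho_v(\mathcal{A})<1$ and hence making the full Euler product strictly less than $1$. The principal obstacle in the whole programme is the quantitative control of $\rho_v(\mathcal{A})$ needed for positivity of the infinite product, which is exactly where the explicit computations of local masses for $n\in \{3,4,5\}$ carried out earlier in the paper are essential.
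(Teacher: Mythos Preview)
Your outline is correct and matches the paper's route (Lemma~\ref{lem-positive-prop} for positivity, Lemma~\ref{lem-unit-norm-of-all-tot-ram-implies-nth-power} for the local equality criterion, Theorem~\ref{thm-hasse-for-nth-powers} for Grunwald--Wang). Two imprecisions are worth flagging.

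First, your claim that local class field theory produces a \emph{cyclic degree $n$ field extension} $L_v/k_v$ with $a\notin N_{L_v/k_v}L_v^\times$ fails at a real place when $n=4$: there is no cyclic quartic extension of $\R$, yet Grunwald--Wang may hand you a real witness. The paper's Lemma~\ref{lem-unit-norm-of-all-tot-ram-implies-nth-power} sidesteps this uniformly by taking a cyclic extension of some degree $d\mid n$ that excludes $a$ from its norm group and then topping it up by an arbitrary degree $n/d$ extension to obtain a degree $n$ \'etale algebra; at a real place with $n=4$ this gives $\C\times\C$.

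Second, the positivity of the Euler product does not depend on the explicit mass computations for $n\in\{3,4,5\}$ carried out in Sections~\ref{sec-prime-degree-exts}--\ref{sec-S4-quartics}. The paper's Lemma~\ref{lem-positive-prop} establishes $1-\rho_v=O(q_v^{-2})$ for every $n\geq 3$ from the purely structural fact (Lemma~\ref{lem-overram-O-q^{-2}}) that any \'etale algebra with non-predictable splitting symbol has discriminant valuation at least $2$, while predictable algebras automatically have $\mathcal{A}$ in their norm group at almost all $v$ (Corollary~\ref{cor-Et-pred-subset-Et-A}). Your final sentence therefore overstates the dependency: the explicit formulae are needed to \emph{evaluate} the limit, not to prove it is positive.
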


\begin{corollary}
    \label{cor-infinitely-many-S-n-n-ics-with-norm-group}
    Let $n \in \{3,4,5\}$. For every finitely generated subgroup $\mathcal{A}\subseteq k^\times$, there are infinitely many $S_n$-$n$-ic extensions $K/k$ with $\mathcal{A}\subseteq N_{K/k}K^\times$. 
\end{corollary}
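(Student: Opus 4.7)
The plan is to deduce the corollary directly from Theorem~\ref{thm-prop-between-0-and-1-for-n-leq-5} together with the known asymptotics for $N_{k,n}(X)$. First, I would invoke the theorem to fix a constant $c = \lim_{X\to\infty} N_{k,n}(X;\mathcal{A})/N_{k,n}(X) > 0$; the strict positivity is the content of the theorem and holds regardless of $\mathcal{A}$, so there is no case analysis on $\mathcal{A}$ to perform.

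Next, I would recall that for $n\in\{3,4,5\}$ the Malle--Bhargava heuristics are proved (Davenport--Heilbronn for $n=3$, and Bhargava's results for $n=4,5$, as cited in the introduction via \cite{geom-of-nums}), giving an asymptotic of the form $N_{k,n}(X)\sim C_{k,n} X$ as $X\to\infty$ with $C_{k,n}>0$. In particular, $N_{k,n}(X)\to\infty$. Combining this with the first step, for all sufficiently large $X$ we have
$$
N_{k,n}(X;\mathcal{A}) \;\geq\; \tfrac{c}{2}\, N_{k,n}(X),
$$
so $N_{k,n}(X;\mathcal{A})\to\infty$ as $X\to\infty$.

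Finally, since for each $X$ the quantity $N_{k,n}(X;\mathcal{A})$ is a finite count of distinct field extensions of $k$ with absolute discriminant norm at most $X$ (finite by Hermite's theorem), the divergence $N_{k,n}(X;\mathcal{A})\to\infty$ forces the set of $S_n$-$n$-ic extensions $K/k$ with $\mathcal{A}\subseteq N_{K/k}K^\times$ to be infinite, which is the claim.

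There is no substantive obstacle here, since all the work has been done in Theorem~\ref{thm-prop-between-0-and-1-for-n-leq-5} and in the unconditional cases of Malle's conjecture. The only point worth a line of comment is that the corollary is genuinely cleaner than the theorem: it only needs the qualitative fact that the proportion is positive, not its exact value, and it does not need the upper bound of $1$ nor the characterisation of equality.
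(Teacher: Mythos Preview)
Your argument is correct and matches the paper's intended deduction: the paper states the corollary without proof, implicitly leaving it as an immediate consequence of Theorem~\ref{thm-prop-between-0-and-1-for-n-leq-5} together with $N_{k,n}(X)\to\infty$, and you have spelled out precisely that deduction.
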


For arbitrary $n\geq 3$, the more general (conjectural) analogue of Theorem~\ref{thm-prop-between-0-and-1-for-n-leq-5} is contained in Theorems~\ref{thm-prop-between-0-and-1} and \ref{thm-hasse-for-nth-powers}.

Let us unpack the content of Theorem~\ref{thm-prop-between-0-and-1-for-n-leq-5}. It tells us that, for $n=3,4,5$, any finitely generated subgroup $\mathcal{A}\subseteq k^\times$ is contained in the norm group of a positive proportion of $S_n$-$n$-ic extensions. Moreover, it tells us that $\mathcal{A}$ can only be contained in 100\% of such norm groups if it is contained in \emph{every} such norm group. 

\begin{remark}
    In the special case where $\mathcal{A}$ is generated by one element, Corollary~\ref{cor-infinitely-many-S-n-n-ics-with-norm-group} can be obtained by classical methods, using Hilbert's irreducibility theorem. See \cite[Example~1.13]{newton-nf-prescribed} for details. For general subgroups $\mathcal{A}$, ours is the first proof we are aware of. 
\end{remark}

In order to prove Theorem~\ref{thm-prop-between-0-and-1-for-n-leq-5}, we expand the limit $\lim_{X\to\infty}\frac{N_{k,n}(X;\mathcal{A})}{N_{k,n}(X)}$ as an Euler product over all primes of the number field $k$. The factors of this Euler product are expressed in terms of a notion called \emph{mass}, which we now define. 

Throughout this paper, we use the term \emph{$p$-adic field} to mean a finite-degree extension of $\Q_p$, for some rational prime $p$. We use the term \emph{local field} to refer to a field that is either $p$-adic or isomorphic to $\R$ or $\C$. For a $p$-adic field $F$, write $\F_F$ for its residue field and $q_F$ for the size $\# \F_F$. For a local field $F$, write $\Et{n/F}{}$ for the set of isomorphism classes of degree $n$ \'etale algebras over $F$. For a subset $S\subseteq \Et{n/F}{}$, the \emph{mass} $m(S)$ of $S$ is defined as follows:
\begin{itemize}
    \item If $F$ is $p$-adic, then
$$
m(S) = \frac{q_F - 1}{q_F}\cdot \sum_{L \in S} \frac{\Nm(\disc(L/F))^{-1}}{\#\Aut(L/F)}.
$$
\item If $F$ is $\R$ or $\C$, then we define
$$
m(S) = \sum_{L \in S} \frac{1}{\#\Aut(L/F)}.
$$
\end{itemize}
Given a subgroup $\mathcal{A}\subseteq F^\times$, write $\Et{n/F}{\mathcal{A}}$ for the set of \'etale algebras $L \in \Et{n/F}{}$ with $\mathcal{A}\subseteq N_{L/F}L^\times$. For each prime $\p$ of $k$, we may view $\mathcal{A}$ as a subgroup of $k_\p^\times$, which allows us to write $m_{\mathcal{A},\p}$ for the mass 
$$
m_{\mathcal{A},\p} = m\big(\Et{n/k_\p}{\mathcal{A}}\big).
$$
Write $\Pi_k$ for the set of primes of $k$, both finite and infinite. 
\begin{theorem}
    \label{thm-density-as-product-of-masses}
    For $n\in\{3,4,5\}$, we have 
    $$
    \lim_{X\to\infty} \frac{N_{k,n}(X;\mathcal{A})}{X} = \frac{1}{2}\cdot \operatornamewithlimits{Res}_{s = 1}\big(\zeta_k(s)\big)\cdot \prod_{\p \in \Pi_k} m_{\mathcal{A},\p},
    $$
    and the same result holds conjecturally for $n \geq 6$, subject to the Malle--Bhargava heuristics.  
\end{theorem}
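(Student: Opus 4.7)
My plan is to reduce the global norm condition to an infinite family of local conditions, apply the Malle--Bhargava heuristic with local conditions imposed at a finite set $S$ of places of $k$, and then pass to the limit as $S$ exhausts $\Pi_k$.

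The first step is to localise the norm condition. For $n\geq 3$, the Hasse norm principle holds for $S_n$-$n$-ic extensions $K/k$ (a fact presumably established or recalled earlier in the paper), so $\alpha \in N_{K/k}K^\times$ if and only if $\alpha \in N_{K_\p/k_\p}K_\p^\times$ for every $\p \in \Pi_k$, where $K_\p := K \otimes_k k_\p$. Applying this to each element of a finite generating set of $\mathcal{A}$, one concludes that $\mathcal{A} \subseteq N_{K/k}K^\times$ if and only if $K_\p \in \Et{n/k_\p}{\mathcal{A}}$ for every place $\p$.

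Next, for a finite set $S \subseteq \Pi_k$ containing the archimedean places and the finite places at which some generator of $\mathcal{A}$ has nonzero valuation, let $N_{k,n}^S(X;\mathcal{A})$ be the count of $S_n$-$n$-ic extensions $K/k$ with $\Nm(\disc(K/k)) \leq X$ satisfying $K_\p \in \Et{n/k_\p}{\mathcal{A}}$ for all $\p \in S$. For $n \in \{3,4,5\}$ Bhargava's theorem on the distribution of $S_n$-$n$-ics gives equidistribution with respect to any finite set of local conditions, yielding
$$
\lim_{X\to\infty} \frac{N_{k,n}^S(X;\mathcal{A})}{X} = \frac{1}{2}\operatornamewithlimits{Res}_{s=1}\zeta_k(s) \cdot \prod_{\p\in S} m_{\mathcal{A},\p} \cdot \prod_{\p \notin S} m_\p,
$$
where $m_\p := m(\Et{n/k_\p}{})$ denotes the full local mass; the corresponding statement for $n \geq 6$ is exactly the Malle--Bhargava heuristic.

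Finally, we let $S$ exhaust $\Pi_k$. For $\p \notin S$ every generator of $\mathcal{A}$ is a unit in $\co_{k_\p}$, hence automatically a norm from any unramified étale $k_\p$-algebra; thus $m_\p - m_{\mathcal{A},\p}$ is supported on the ramified locus and is $O(q_\p^{-2})$. This summability guarantees both that $\prod_\p m_{\mathcal{A},\p}$ converges and that the $S$-truncated Euler products converge to it. The chief obstacle is to show the overcount $N_{k,n}^S(X;\mathcal{A}) - N_{k,n}(X;\mathcal{A})$ is $o(X)$ uniformly in $S$, i.e.\ a tail bound of the form
$$
\#\bigl\{K : \Nm(\disc(K/k)) \leq X,\ K_\p \notin \Et{n/k_\p}{\mathcal{A}} \text{ for some } \p \notin S\bigr\} \ll \varepsilon(S)\cdot X
$$
with $\varepsilon(S) \to 0$ as $|S|\to\infty$. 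Such uniformity is the standard sieve input extracted from quantitative versions of Bhargava's theorem (and is built into the statement of the Malle--Bhargava heuristic), and is the key technical step that licenses the interchange of limits.
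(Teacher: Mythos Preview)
Your overall strategy is sound but differs from the paper's packaging. The paper does not truncate to a finite set $S$ and pass to the limit; instead it observes that the collection $\Sigma_\p = \Et{n/k_\p}{\mathcal{A}}$ is \emph{acceptable} (for all but finitely many $\p$, every $L \in \Et{n/k_\p}{}$ with $v_{k_\p}(d_{L/k_\p}) \leq 1$ lies in $\Sigma_\p$), and then invokes the Malle--Bhargava theorem/heuristic in the form already stated for acceptable infinite collections, which outputs the full Euler product directly. Your finite-$S$ argument with tail bounds is thus essentially re-deriving, by hand, the passage from finitely many to acceptably many local conditions that is already built into the cited theorem. Both routes work; the paper's is shorter because it delegates the analytic interchange-of-limits step to the reference.

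There is one genuine imprecision in your sketch. You justify $m_\p - m_{\mathcal{A},\p} = O(q_\p^{-2})$ by saying units are norms from any \emph{unramified} \'etale algebra, so the difference is ``supported on the ramified locus''. But the ramified locus includes algebras with discriminant valuation exactly $1$, whose total mass is of order $q_\p^{-1}$, not $q_\p^{-2}$. To obtain the $O(q_\p^{-2})$ bound (equivalently, to verify acceptability) you must also show that every $L$ with $v_{k_\p}(d_{L/k_\p}) = 1$ satisfies $\mathcal{A} \subseteq N_{L/k_\p}L^\times$. The paper's Lemma establishing acceptability does exactly this: for $n \geq 3$ and $\p$ coprime to $n$, such an $L$ cannot be a field (else $f(e-1) \leq 1$ forces $n \leq 2$), so it has an unramified field factor and hence $\co_{k_\p}^\times \subseteq N_{L/k_\p}L^\times$. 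Without this step neither your Euler-product convergence nor your tail estimate goes through.
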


\subsection{Computing the masses $m_{\mathcal{A}, \p}$}
Theorem~\ref{thm-density-as-product-of-masses} tells us that we can state the density $\lim_{X\to\infty}\frac{N_{k,n}(X;\mathcal{A})}{X}$ as an explicit Euler product if we can evaluate the masses $m_{\mathcal{A},\p}$ for all primes $\p$. As such, we dedicate much of this paper to a careful study of these masses. The masses for infinite primes are the easiest to state: 
\begin{theorem}
    \label{thm-mass-infinite-prime}
    Let $\p$ be an infinite prime of $k$ and let $f: k\to \C$ be the corresponding embedding. The following two statements are true:
    \begin{itemize} \item If $f$ is a real embedding, then 
    $$
    m\big(\Et{n/k_\p}{\mathcal{A}}\big) = \begin{cases}
        \sum_{s=0}^{\lfloor\frac{n}{2}\rfloor} \frac{1}{s!(n-2s)! 2^s} \quad\text{if $f(\alpha) > 0 $ for all $\alpha \in \mathcal{A}$},
        \\
        \sum_{s=0}^{\lceil \frac{n}{2}\rceil - 1} \frac{1}{s!(n-2s)! 2^s} \quad\text{otherwise}.
    \end{cases}
    $$
    \item If $f$ is a complex embedding, then 
    $$
    m\big(\Et{n/k_\p}{\mathcal{A}}\big) = \frac{1}{n!}. 
    $$
\end{itemize}
\end{theorem}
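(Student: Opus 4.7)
The plan is to handle the complex and real places separately, in each case using the classification of \'etale algebras over $\R$ and $\C$ together with a direct computation of the norm map.

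For a complex place, the only degree $n$ \'etale $\C$-algebra (up to isomorphism) is $\C^n$, with $\Aut(\C^n/\C) = S_n$ of order $n!$. The norm $\C^n \to \C$ is $(z_1,\ldots,z_n)\mapsto z_1 \cdots z_n$, which is surjective onto $\C^\times$, so the condition $\mathcal{A} \subseteq N_{L/\C}L^\times$ is vacuous and $m\big(\Et{n/\C}{\mathcal{A}}\big) = 1/n!$ as claimed.

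For a real place, every degree $n$ \'etale $\R$-algebra is isomorphic to $L_s := \R^{n-2s} \times \C^s$ for a unique $s$ with $0 \leq s \leq \lfloor n/2\rfloor$. The $\R$-automorphisms of $L_s$ permute the real factors and permute the complex factors, the latter with independent componentwise conjugations, so $\#\Aut(L_s/\R) = (n-2s)!\, s!\, 2^s$, matching the denominators in the claimed formula. The norm map
\[
N_{L_s/\R}\colon (x_1,\ldots,x_{n-2s}, z_1,\ldots,z_s) \longmapsto x_1 \cdots x_{n-2s} \cdot |z_1|^2 \cdots |z_s|^2
\]
has image $\R^\times$ whenever $n - 2s \geq 1$ (the sign of any $x_i$ may be chosen freely), and image $\R_{>0}$ when $n - 2s = 0$.

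It remains only to do bookkeeping. If $f(\alpha) > 0$ for every $\alpha \in \mathcal{A}$, then $\mathcal{A}$ lies in the norm group of every $L_s$, and summing $1/\#\Aut(L_s/\R)$ over $0\leq s \leq \lfloor n/2\rfloor$ yields the first formula. If some $f(\alpha)$ is negative, then $\mathcal{A}\not\subseteq N_{L_s/\R}L_s^\times$ precisely at the single index $s = n/2$, which exists only when $n$ is even; excluding this term translates to the upper bound $s \leq \lceil n/2\rceil - 1$, and this bound coincides with $\lfloor n/2\rfloor$ for odd $n$, consistent with the fact that for odd $n$ a real factor is always present. There is no genuine obstacle in the proof; the only subtlety is unifying the two parities of $n$ via the ceiling expression $\lceil n/2\rceil - 1$.
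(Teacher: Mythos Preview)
Your proof is correct and follows essentially the same approach as the paper: both classify the \'etale algebras over $\R$ and $\C$, compute their automorphism groups as $(n-2s)!\,s!\,2^s$ and $n!$ respectively, determine the norm images as $\R^\times$ or $\R_{>0}$ according to whether a real factor is present, and then sum. Your treatment of the parity bookkeeping via the $\lceil n/2\rceil-1$ bound is slightly more explicit than the paper's, but the argument is the same.
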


Recall that we are most interested in the cases $n=3,4,5$, since these are the values of $n$ for which the Malle--Bhargava heuristics have been proved. We address these cases and more, studying $m_{\mathcal{A},\p}$ when $n = 4$ and whenever $n$ is an odd prime.

For $p$-adic fields $F$, the definition of mass has a normalising factor $\frac{q_F - 1}{q_F}$. To avoid this cumbersome notation, we define the \emph{pre-mass} of a set $S \subseteq \Et{n/F}{}$ to be 
$$
\m(S) = \sum_{L\in S}\frac{\Nm(\disc(L/F))^{-1}}{\#\Aut(L/F)}.
$$
Since it is trivial to compute the mass from the pre-mass, all remaining results in the current subsection will be about pre-mass.

Suppose that $F$ is a $p$-adic field. Let $L/F$ be an \'etale algebra, and write $L = L_1\times \ldots \times L_r$, where each $L_i$ is a field extension of $F$ with ramification index $e_i$ and inertia degree $f_i$. The \emph{splitting symbol} $(L,F)$ of $L$ over $F$ is the symbol $(f_1^{e_1}\ldots f_r^{e_r})$, which is defined up to permutation of indices (see Definition~\ref{defi-splitting-symbol} and the subsequent discussion for a more rigorous definition). The \emph{degree} of the splitting symbol $(f_1^{e_1}\ldots f_r^{e_r})$ is defined to be the quantity $\sum_i e_if_i$. Given a degree $n$ splitting symbol $\sigma$, write $\Et{\sigma/F}{\mathcal{A}}$ for the set of $L \in \Et{n/F}{\mathcal{A}}$ with $(L,F) = \sigma$. When $\sigma$ is a splitting symbol of the form $(f^e)$, and for a finite group $G$, write $\Et{\sigma/F}{G/F, \mathcal{A}}$ for the set of field extensions $L \in \Et{\sigma/F}{\mathcal{A}}$ such that the Galois closure $\widetilde{L}$ of $L$ over $F$ has $\Gal(\widetilde{L}/F)\cong G$. 

Our next result, Theorem~\ref{thm-m-A-p-for-prime-n}, gives $\m\big(\Et{n/F}{\mathcal{A}}\big)$ for all $p$-adic fields $F$, whenever $n$ is a rational prime. Consequently, for such $n$, it allows us to write down all the masses $m_{\mathcal{A},\p}$. In particular, this covers the cases $n=3$ and $n=5$, two of the three we are especially interested in. For a $p$-adic field $F$, a finitely generated subgroup $\mathcal{A}\subseteq F^\times$, and a positive integer $n$, define 
$$
\overline{\mathcal{A}}^n = \mathcal{A}F^{\times n}/F^{\times n}. 
$$
For each nonnegative integer $t$, write
$$
\overline{\mathcal{A}}^n_t = \overline{\mathcal{A}}^n\cap \Big(U_F^{(t)}F^{\times n} / F^{\times n}\Big),
$$
where $U_F^{(t)}$ is the $t^\mathrm{th}$ term of the unit filtration, given by $U_F^{(t)} = 1 + \p_F^t$. 
\begin{definition}
    \label{defi-strat-gen-set}
    Let $F$ be a $p$-adic field, let $n$ be a positive integer, and let $\overline{\mathcal{A}}$ be a subgroup of $F^\times / F^{\times n}$. A \emph{stratified generating set} for $\overline{\mathcal{A}}$ is a collection $(A_i)_{i \geq 0}$, indexed by nonnegative integers, where each $A_i$ is a subset of $F^\times$, such that the following three conditions hold:
    \begin{enumerate}
        \item For each $i$ and each $\alpha \in A_i$, we have $v_F(\alpha) = i$.
        \item For $\alpha,\beta \in \bigcup_i A_i$, if $\alpha \neq \beta$ then $[\alpha] \neq [\beta]$ in $F^\times / F^{\times n}$. 
        \item The image of $\bigcup_i A_i$ under the natural map $F^\times \to F^\times / F^{\times n}$ is a minimal generating set for the group $\overline{\mathcal{A}}$. 
    \end{enumerate}
\end{definition}
\begin{theorem}
    \label{thm-m-A-p-for-prime-n}
    Let $p$ be a rational prime, let $F$ be a $p$-adic field with residue field of size $q$, and let $\ell$ be a rational prime. Let $\mathcal{A}\subseteq F^\times$ be a finitely generated subgroup. Let $(A_0, A_1)$ be a stratified generating set for $\overline{\mathcal{A}}^\ell$. The following four statements are true:
    \begin{enumerate}
    \item We have 
    $$
    \m\big(\Et{\ell/F}{\mathcal{A}}\big) = \m\big(\Et{(\ell)/F}{\mathcal{A}}\big) + \m\big(\Et{(1^\ell)/F}{\mathcal{A}}\big) + \big(1 - \ell^{-1}\big) + \sum_{d=1}^{\ell - 2}\frac{\Part(d,\ell - d)}{q^d},
    $$
    where $\Part(d,m)$ is the partition function defined in Section~\ref{sec-general-n}. 
    \item We have
    $$
    \m\big(\Et{(\ell)/F}{\mathcal{A}}\big) = \begin{cases}
        \frac{1}{\ell}\quad\text{if $A_1 = \varnothing$},
        \\
        0\quad\text{otherwise}.
    \end{cases}
    $$
    \item Suppose that $p \neq \ell$. Then we have  
    $$
    \m\big(\Et{(1^\ell)/F}{\mathcal{A}}\big) = \begin{cases}
        \frac{1}{q^{\ell - 1}} \quad \text{if $q\not \equiv 1\pmod{\ell}$ or $\mathcal{A}\subseteq F^{\times \ell}$},
        \\
        \frac{1}{\ell q^{\ell - 1}} \quad\text{if $q\equiv 1\pmod{\ell}$ and $A_0 = \varnothing$ and $\# A_1 = 1$},
        \\
        0 \quad\text{otherwise}. 
    \end{cases}
    $$
    \item Suppose that $p = \ell$. Then we have
    $$
    \m\big(\Et{(1^p)/F}{\mathcal{A}}\big) = \frac{1}{q^{p-1}} - \m\big(\Et{(1^p)/F}{C_p/F}\big) + \m\big(\Et{(1^p)/F}{C_p/F,\mathcal{A}}\big).
    $$ 
    The two missing ingredients, $\m\big(\Et{(1^p)/F}{C_p/F}\big)$ and $\m\big(\Et{(1^p)/F}{C_p/F,\mathcal{A}}\big)$, are as follows. We have 
    $$
    \m\big(\Et{(1^p)/F}{C_p/F}\big) = \frac{q-1}{p-1}\cdot q^{-2}\cdot \Big(\mathbbm{1}_{e_F \geq p-1}\cdot A\Big(\Big\lceil\frac{pe_F}{p-1}\Big\rceil\Big) + \mathbbm{1}_{(p-1)\nmid e_F} \cdot B\Big(\Big\lceil \frac{pe_F}{p-1}\Big\rceil\Big)\Big) + \mathbbm{1}_{\mu_p \subseteq F} \cdot q^{-(p-1)(e_F+1)} ,
    $$
    where $A$ and $B$ are the explicit functions defined in Appendix~\ref{appendix-helpers}, and 
    $$
    \m\big(\Et{(1^p)/F}{C_p/F,\mathcal{A}}\big) = \mathbbm{1}_{\mu_p\subseteq F} \cdot \mathbbm{1}_{\overline{\mathcal{A}}^p_{\frac{pe_F}{p-1}} = 0}\cdot \frac{q^{-(p-1)(e_F+1)}}{\#\overline{\mathcal{A}}^p} +  \frac{1}{(p-1)\#\overline{\mathcal{A}}^p}\cdot q^{-2}\cdot \sum_{\substack{1\leq c \leq \lceil \frac{pe_F}{p-1}\rceil \\ c\not\equiv 1\pmod{p}}} \frac{q\cdot \#\overline{\mathcal{A}}^p_c - \#\overline{\mathcal{A}}^p_{c-1}}{q^{(p-2)c + \lfloor\frac{c-2}{p}\rfloor}}.
    $$
\end{enumerate}
\end{theorem}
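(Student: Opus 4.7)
The proof treats the four parts in turn. For \textbf{Part (1)}, I would partition $\Et{\ell/F}{}$ by the presence of a trivial factor. Because $\ell$ is prime, the only algebras with no trivial factor are the fields in $\Et{(\ell)/F}{}$ and $\Et{(1^\ell)/F}{}$, matching the first two terms. The key structural observation is that any $L = F^a\times L'$ with $a\geq 1$ has $N_{L/F}L^\times = F^\times$ (the trivial factor alone surjects onto $F^\times$), so the condition $\mathcal{A}\subseteq N_{L/F}L^\times$ is automatic on such $L$. The remainder therefore contributes its unconditional pre-mass, which I would evaluate by a direct sum of $\Nm(\disc(L/F))^{-1}/\#\Aut(L/F)$ over all $L$ with a trivial factor, grouped by splitting symbol; the partition-counting yields $(1 - \ell^{-1}) + \sum_{d=1}^{\ell - 2}\Part(d,\ell-d)q^{-d}$.

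For \textbf{Parts (2) and (3)}, the main tool is the LCF identification: if $L/F$ has Galois closure $\widetilde L/F$ of group $G$ and $H = \Gal(\widetilde L/L)$, then the norm group $N_{L/F}L^\times$ corresponds via local reciprocity to $H\cdot [G,G] \subseteq G$, so $[F^\times : N_{L/F}L^\times] = [G : H\cdot [G,G]]$. Part (2) is then immediate: the unique unramified $L/F$ of degree $\ell$ has $\disc = 1$, $\Aut(L/F)\cong C_\ell$, and $N_{L/F}L^\times = \{x : \ell\mid v_F(x)\}$, contributing $1/\ell$ precisely when $A_1 = \varnothing$. For Part (3), when $q\not\equiv 1\pmod\ell$ the quotient $F^\times/F^{\times\ell}\cong \Z/\ell\Z$ yields a unique non-Galois totally ramified extension whose Galois closure has group $C_\ell\rtimes C_d$ with $d = \mathrm{ord}_\ell(q) > 1$ acting faithfully, so $H\cdot [G,G] = G$ forces $N_{L/F}L^\times = F^\times$; the pre-mass contribution is $q^{-(\ell-1)}$. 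When $q\equiv 1\pmod\ell$, Kummer theory gives $\ell$ totally ramified $C_\ell$-extensions, one per ramified line in $F^\times/F^{\times\ell}\cong \F_\ell^2$, each contributing $(\ell q^{\ell-1})^{-1}$; the norm condition becomes a vanishing condition for the tame Hilbert symbol, and casework on $(A_0,A_1)$ (using that for prime $\ell$ a stratified generating set can always be chosen with $A_i=\varnothing$ for $i\geq 2$ and $\#A_1\leq 1$) gives the three listed cases.

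For \textbf{Part (4)}, the same LCF argument shows every non-Galois totally ramified degree $p$ extension (whose Galois closure must be some $C_p\rtimes C_k$ with $k\mid p-1$, $k>1$) has $N_{L/F}L^\times = F^\times$, so its norm condition is automatic. Combined with the mass identity $\m(\Et{(1^p)/F}{}) = q^{-(p-1)}$ (proved independently, e.g.\ by summing over Eisenstein polynomials), inclusion--exclusion immediately yields the stated decomposition into a Galois term (with and without $\mathcal{A}$) and the constant $1/q^{p-1}$. For the two $C_p$-terms, I would parameterize $C_p$-extensions of $F$ via LCF as open index-$p$ subgroups $N\leq F^\times$ and convert the pre-mass sum into a conductor-stratified count of such $N$, using the conductor--discriminant formula to pass between $v_F(\disc)$ and the conductor $c(N)$. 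Counting index-$p$ subgroups of each conductor $c$ via the filtration $U_F^{(c)}F^{\times p}/F^{\times p}$ on $F^\times/F^{\times p}$, subject to the Hasse--Arf restriction $c\not\equiv 1\pmod p$, produces the helper functions $A$ and $B$; the $\mathbbm{1}_{\mu_p\subseteq F}$ term accounts for the unique maximal-conductor $C_p$-extension that exists only when $F\supseteq \mu_p$. For the $\mathcal{A}$-conditioned mass, the condition $\mathcal{A}\subseteq N$ is equivalent to $\overline{\mathcal{A}}^p\subseteq N/F^{\times p}$; stratifying both $N$ and $\overline{\mathcal{A}}^p$ by the same unit filtration leads to the alternating count $q\cdot\#\overline{\mathcal{A}}^p_c - \#\overline{\mathcal{A}}^p_{c-1}$ in the final sum.

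The main obstacle is Part (4), where the conductor-by-conductor count must simultaneously track (i) which index-$p$ subgroups have a given conductor, (ii) the discriminant shifts arising from the upper-to-lower numbering conversion (source of the exponent $\lfloor (c-2)/p\rfloor$), and (iii) which such subgroups actually contain $\overline{\mathcal{A}}^p$. The remaining parts reduce to relatively clean applications of LCF together with Kummer theory.
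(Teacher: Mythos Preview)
Your overall strategy matches the paper's closely, and Parts (2)--(4) are essentially correct and follow the same route (LCF, Kummer/tame classification, conductor-stratified counting of index-$p$ subgroups). However, your argument for Part~(1) contains a genuine gap.

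You claim that ``because $\ell$ is prime, the only algebras with no trivial factor are the fields in $\Et{(\ell)/F}{}$ and $\Et{(1^\ell)/F}{}$.'' This is false for $\ell \geq 5$: for instance, when $\ell = 5$ the algebra $E_2 \times E_3$, with $E_i/F$ a degree-$i$ field extension, has no factor isomorphic to $F$ yet is not a field. Your ``trivial factor forces full norm group'' argument therefore does not cover all the non-field algebras, and the decomposition you assert does not follow from the reason you give.

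The paper closes this gap differently. It observes (Lemma~\ref{lem-epi-symbols-for-ell}) that every degree-$\ell$ splitting symbol other than $(\ell)$ and $(1^\ell)$ is \emph{epimorphic}: writing $\sigma = (f_1^{e_1}\ldots f_r^{e_r})$, any common prime divisor of the $e_i$ (respectively the $f_i$) would divide $\sum_i e_i f_i = \ell$, forcing $\sigma = (1^\ell)$ (respectively $(\ell)$). Then Lemma~\ref{lem-norm-group-of-pred-symbol} shows that every epimorphic algebra has full norm group, by combining contributions from the components using coprimality of the $e_i$ and the $f_i$ separately. This is what makes the norm condition automatic on the complement and yields the partition sum $(1 - \ell^{-1}) + \sum_{d=1}^{\ell-2}\Part(d,\ell-d)q^{-d}$. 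Your numerical conclusion for Part~(1) is right, but the justification needs to be replaced by the epimorphic argument.
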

Since the sizes $\#\overline{\mathcal{A}}^p_c$ can all be different, the series for $\m\big(\Et{(1^p)/F}{C_p/F,\mathcal{A}}\big)$ does not have a closed form. In the special case where $\mathcal{A}$ is generated by one element, we can write down such a closed form, which is stated in Theorem~\ref{thm-closed-form-for-m-alpha-p-with-prime-n}. Along the way, we find the exact number of extensions $L \in \Et{(1^p)/F}{C_p/F}$ with each possible discriminant, which we state in Theorem~\ref{thm-num-Cp-exts-with-disc-val}.

We now turn our attention to the final case of particular interest: $n=4$. When $\p$ is a prime of $k$ not lying over $2$, the pre-masses $\m\big(\Et{4/k_\p}{\mathcal{A}}\big)$ take a relatively simple, explicit form, stated in Theorem~\ref{thm-n=4-for-odd-primes}. 

When $\p \mid 2$, the situation is more difficult; since $4$ is not prime, wildly ramified quartic extensions behave significantly worse than wildly ramified prime-degree extensions, so our methods from the prime case do not generalise. However, since there are only finitely many primes $\p\mid 2$, it suffices to have a practicable method for computing $\m\big(\Et{4/k_\p}{\mathcal{A}}\big)$ for each such $\p$. In principle, this can be done by brute-force, since there are finitely many isomorphism classes of quartic \'etale algebras over $k_\p$, and we can find all of them e.g. using the methods of \cite{compute-extensions}. Unfortunately, \cite[Theorem~2]{krasner-num-exts} tells us that $\#\Et{4/k_\p}{}$ is on the order of $2^{3[k_\p : \Q_2]}$, which becomes very large very quickly, so the brute-force approach is unfeasible for all but very low-degree extensions $k_\p/\Q_2$. One of our main results is a significant improvement over this brute-force approach, whose time complexity we will state shortly, in Theorem~\ref{thm-time-complexity-mass-all-quartics-with-norms}; before the statement, we introduce some notation.

Let $F$ be a $p$-adic field for some rational prime $p$. In most computer algebra packages, $p$-adic fields are defined up to a certain \emph{precision}, which refers to the length of $p$-adic series representations stored. We will be working with field extensions $E/F$ with $[E : F] \leq 4$, and we need enough precision to construct the quotient $E^\times / E^{\times 4}$ of units modulo fourth powers. It follows that we need precision on the order of $e_F$. Suppose that we are working with precision $m$. In any sensible implementation, the computation time for a field operation in $F$ will be polynomial in $[F : \Q_p]$ and $m$. Since for our applications we have $m = O(e_F)$, a field operation in $F$ can be performed with time complexity $O(t([F:\Q_p]))$, for some polynomial function $t$. Moreover, since we are bounding the degree by a constant, any field operation in any extension $E/F$ with $[E : F] \leq 4$ can also be performed with time complexity $O(t([F:\Q_p]))$. Given a function $f$ in unspecified variables, we write 
$$
O_F(f) = O(f\cdot t([F:\Q_p])).
$$
Thus, the notation $O_F$ should be thought of as Big O notation, where we are suppressing the polynomial $t([F:\Q_2])$ to account for field operations in $F$, and quadratic and quartic extensions thereof. 

Let $\mathcal{G}_4(\mathcal{A})\subseteq k_\p^\times$ be a subset whose image in $k_\p^\times / k_\p^{\times 4}$ generates the group $\overline{\mathcal{A}}^4$. By the discussion above, the brute-force computation of $\m(\Et{4/k_\p}{\mathcal{A}})$ has time complexity of at least $O_{k_\p}(\#\mathcal{G}_4(\mathcal{A})\cdot 2^{3[k_\p : \Q_2]})$. We present the following improvements:
\begin{theorem}
    \label{thm-time-complexity-mass-all-quartics-with-norms}
    Let $\p$ be a prime of $k$ lying over $2$, and choose a set $\mathcal{G}_4(\mathcal{A})$ as above. There are two algorithms for computing $\m(\Et{4/k_\p}{\mathcal{A}})$, whose time complexities respectively are as follows:
    \begin{enumerate}
        \item $$O_{k_\p}\big(e_{k_\p}\cdot \#\mathcal{G}_4(\mathcal{A})\cdot 2^{2[k_\p:\Q_2]}\cdot [k_\p:\Q_2]^3\big).$$
        \item $$O_{k_\p}\big(e_{k_\p}\cdot 2^{\#\mathcal{G}_4(\mathcal{A})}\cdot 2^{[k_\p:\Q_2]}\cdot [k_\p:\Q_2]^3\big).$$
    \end{enumerate}
\end{theorem}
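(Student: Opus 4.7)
The plan is to stratify $\Et{4/k_\p}{\mathcal{A}}$ by étale decomposition type and, in the quartic-field case, by Galois closure group $G\subseteq S_4$. Non-field étale types reduce via multiplicativity of mass to the $n\leq 3$ pre-masses computed in Theorem~\ref{thm-m-A-p-for-prime-n}. For $G\in\{A_4,S_4\}$ the point-stabilizer surjects onto the abelianization of $G$, so local class field theory forces $N_{L/k_\p}L^\times=k_\p^\times$ and the norm condition drops out; these fields are at most tamely ramified and few in number, handled by direct enumeration.

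Among the remaining closures $\{C_4,V_4,D_4\}$, the $D_4$ case enjoys a crucial simplification. The unique quadratic subfield $E\subseteq L$ satisfies (since $L/k_\p$ is not Galois) $N_{L/k_\p}L^\times=N_{E/k_\p}E^\times$, an index-$2$ subgroup of $k_\p^\times$ depending only on $E$. Hence the norm condition reduces to the $L$-independent statement $\mathcal{A}\subseteq N_{E/k_\p}E^\times$, and the $D_4$-contribution is evaluated by iterating over the $O(2^{[k_\p:\Q_2]})$ quadratic $E/k_\p$, testing the condition in $O(\#\mathcal{G}_4(\mathcal{A}))$, and multiplying by the total $D_4$-mass with subfield $E$ (a polynomial-time local computation via a Serre-type mass formula). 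The $C_4$ case is similar: $C_4$-extensions correspond to order-$4$ characters of $k_\p^\times$, of which there are $O(2^{[k_\p:\Q_2]})$, and the norm condition is checkable per character. The bottleneck is therefore the $V_4$ case.

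$V_4$-extensions correspond to unordered pairs of distinct nontrivial quadratic characters $\{\chi_1,\chi_2\}$ of $k_\p^\times$, with $N_{L/k_\p}L^\times=\ker(\chi_1)\cap\ker(\chi_2)$, so the norm condition factors as $\chi_1|_\mathcal{A}=\chi_2|_\mathcal{A}=1$. Algorithm~1 iterates over pairs, totalling $O(2^{2[k_\p:\Q_2]})$, and tests each condition in $O(\#\mathcal{G}_4(\mathcal{A}))$ via precomputed evaluations on a basis of the image of $\mathcal{A}$ in $k_\p^\times/k_\p^{\times 2}$, with discriminants and automorphism counts read off from the conductors of $\chi_1,\chi_2,\chi_1\chi_2$. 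Algorithm~2 replaces the pair-enumeration by a Fourier expansion: using
\begin{equation*}
\mathbbm{1}_{\chi|_\mathcal{A}=1}=\frac{1}{2^{\#\mathcal{G}_4(\mathcal{A})}}\sum_{T\subseteq\mathcal{G}_4(\mathcal{A})}\chi\!\left(\textstyle\prod_{\alpha\in T}\alpha\right)
\end{equation*}
and swapping the sums over characters and subsets, the $V_4$-mass becomes a sum indexed by $T\subseteq\mathcal{G}_4(\mathcal{A})$ of twisted single-character sums whose discriminant data couples $\chi_1$ and $\chi_2$ only through the conductor of $\chi_1\chi_2$. A careful convolution over the one-dimensional character variable collapses each of the $2^{\#\mathcal{G}_4(\mathcal{A})}$ pieces to cost $O_{k_\p}(e_{k_\p}\cdot 2^{[k_\p:\Q_2]}\cdot [k_\p:\Q_2]^3)$, yielding the claimed complexity.

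The main obstacle, and the source of the polynomial overhead $e_{k_\p}\cdot [k_\p:\Q_2]^3$, is the wild Kummer/CFT parametrization at $p=2$: constructing $k_\p^\times/k_\p^{\times n}$ for $n\in\{2,4\}$ and $E^\times/E^{\times 2}$, their character groups, and the conductor stratification required to evaluate discriminants all demand careful precision tracking and local-field linear algebra. With this infrastructure in place, the $D_4$ reduction via $N_{L/k_\p}L^\times=N_{E/k_\p}E^\times$, the $C_4$ enumeration by characters, and the $V_4$ Fourier convolution reduce to routine bookkeeping on the $\Gal(E/k_\p)$-action and the conductor-discriminant data.
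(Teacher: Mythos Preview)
Your identification of the bottleneck is wrong, and this error drives the rest of the argument off course.

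The number of order-$4$ characters of $k_\p^\times$ is $\asymp\#(k_\p^\times/k_\p^{\times 4})\asymp 4^{[k_\p:\Q_2]}=2^{2[k_\p:\Q_2]}$, not $O(2^{[k_\p:\Q_2]})$, so your dismissal of the $C_4$ case as ``similar'' to $D_4$ fails. Conversely, the $V_4$ case is the \emph{easiest} of the three, not the hardest: the condition $\chi_1|_\mathcal{A}=\chi_2|_\mathcal{A}=1$ says both characters factor through $k_\p^\times/\mathcal{A}k_\p^{\times 2}$, and the number of such pairs with prescribed conductors is given by a closed formula in the filtration sizes $\#\overline{\mathcal{A}}^2_c$ (Theorem~\ref{thm-1^4-2-adic}(3) in the paper). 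No enumeration or Fourier expansion is needed there at all.

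The actual bottleneck is $C_4$. The paper parametrizes totally ramified $C_4$-quartics by first fixing the quadratic subfield $E$ (there are $O(2^{[k_\p:\Q_2]})$ choices) and then writing $L=E(\sqrt{\omega t})$ for a fixed base element $\omega$ and varying $t\in k_\p^\times/k_\p^{\times 2}$. The norm condition $\mathcal{A}\subseteq N_{L/k_\p}L^\times$ translates (Lemma~\ref{lem-C4-bijection-with-norm-condition}) to membership of $t$ in
\[
N_E^\mathcal{A}=\bigcap_{\alpha\in\mathcal{G}_4(\mathcal{A})\cap N_\omega}\overline{N}_\alpha^2\ \setminus\ \bigcup_{\alpha\in\mathcal{G}_4(\mathcal{A})\setminus N_\omega}\overline{N}_\alpha^2,
\]
an intersection of index-$2$ subgroups \emph{minus} a union of others. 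Because of the set-difference this is not a coset, and counting its intersection with each level $U_{k_\p}^{(c)}k_\p^{\times 2}/k_\p^{\times 2}$ is precisely where the two complexities come from: brute-force over the $2^{2+[k_\p:\Q_2]}$ elements of $k_\p^\times/k_\p^{\times 2}$ gives Algorithm~1, while inclusion--exclusion over the $2^{\#\mathcal{G}_4(\mathcal{A})}$ subsets of $\mathcal{G}_4(\mathcal{A})\setminus N_\omega$ gives Algorithm~2 (Lemma~\ref{lem-time-complexity-for-size-of-NEAc}). The outer loop over $E$ supplies the remaining factor of $2^{[k_\p:\Q_2]}$.

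A smaller error: $A_4$- and $S_4$-quartics over a $2$-adic field are \emph{not} ``at most tamely ramified''; they can be wildly ramified, and there is no cheap direct enumeration. The paper handles their mass by citing the explicit formulas of \cite{monnet2024counting}.
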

\begin{remark}
    The first time complexity in Theorem~\ref{thm-time-complexity-mass-all-quartics-with-norms} is unconditionally a big improvement over the brute-force algorithm. The second time complexity is a significantly bigger improvement, conditional on the number of generators of $\overline{\mathcal{A}}^4$ being kept reasonably small. Thus, the algorithms are both useful, and the choice between them will depend on the application. 
\end{remark}
We now discuss the ingredients of the algorithms in Theorem~\ref{thm-time-complexity-mass-all-quartics-with-norms}. For $\p\mid 2$, Theorem~\ref{thm-n=4-for-odd-primes} gives an explicit formula for $\m\big(\Et{\sigma/k_\p}{\mathcal{A}}\big)$, whenever $\sigma \not\in \{(1^21^2), (2^2), (1^4)\}$. Thus, we need only address these three cases. For $\sigma = (1^21^2)$ and $\sigma = (2^2)$, we state the required pre-masses in Theorems~\ref{thm-1^21^2-2-adic} and \ref{thm-2^2-2-adic}, respectively. 

The last hurdle is to compute $\m\big(\Et{(1^4)/k_\p}{\mathcal{A}}\big)$ for primes $\p \mid 2$. This final case is significantly harder, because there is so much wild ramification at play. We now give a brief overview of the results, which are stated in full technicality in Theorem~\ref{thm-1^4-2-adic}. We partition by Galois closure group $G$, considering the sets $\Et{(1^4)/k_\p}{G/k_\p,\mathcal{A}}$ separately for each $G$. For $G \in \{A_4,S_4\}$, we can state the pre-mass explicitly. For other $G$, we will instead consider the quantities 
$$
\# \Et{(1^4)/k_\p, m}{G/k_\p,\mathcal{A}} = \#\Big\{L \in \Et{(1^4)/k_\p}{G/k_\p,\mathcal{A}} : v_{k_\p}(d_{L/k_\p}) = m\Big\}
$$
for each possible $m$, where $d_{L/k_\p}$ is the discriminant ideal of the extension $L/k_\p$. From these quantities, it is trivial to compute the pre-mass by summing over $m$. We give explicit expressions for $\# \Et{(1^4)/k_\p, m}{G/k_\p,\mathcal{A}}$ in the cases $G \in \{D_4, V_4\}$.

So far, every quantity we have stated is reasonably explicit, and we have not yet bifurcated into the two algorithms promised in Theorem~\ref{thm-time-complexity-mass-all-quartics-with-norms}. It is only the final piece, the pre-mass of wildly ramified $C_4$-extensions, that requires genuine algorithms to compute. Given a quadratic extension $E/k_\p$, write $\Et{2/E}{C_4/k_\p}$ for the set of quadratic field extensions $L/E$ such that $L/k_\p$ is a $C_4$-extension. We say that a quadratic extension $E/k_\p$ is \emph{$C_4$-extendable} if $\Et{2/E}{C_4/k_\p}\neq \varnothing$.  Given a $C_4$-extendable extension $E/k_\p$, the set $\Et{2/E}{C_4/k_\p}$ can be parametrised by the quotient group $k_\p^\times / k_\p^{\times 2}$. Under this parametrisation, the set $\Et{2/E}{C_4/k_\p,\mathcal{A}}$, which consists of the extensions $L \in \Et{2/E}{C_4/k_\p}$ with $\mathcal{A}$ in their norm group, corresponds to a certain subset $N_E^\mathcal{A}$ of $k_\p^\times / k_\p^{\times 2}$. In particular, we can easily express the quantities $\#\Et{(1^2)/E,m_2}{C_4/k_\p,\mathcal{A}}$ in terms of the sizes of the subsets 
$$
N_{E,c}^\mathcal{A} = N_E^\mathcal{A} \cap\big(U_{k_\p}^{(c)}k_\p^{\times 2}/k_\p^{\times 2}\big),
$$ 
for $-1 \leq c \leq 2e_{k_\p}$. When $[k_\p : \Q_2]$ is small, the set $k_\p^\times / k_\p^{\times 2}$ is small, so we can compute the sets $N_{E,c}^\mathcal{A}$ by brute-force, iterating over the $2^{2+[k_\p : \Q_2]}$ elements of $k_\p^\times / k_\p^{\times 2}$. This brute-force method yields the first time complexity in Theorem~\ref{thm-time-complexity-mass-all-quartics-with-norms}. On the other hand, if $[k_\p : \Q_2]$ is large, we can use a more sophisticated algorithm whose running time is dominated by $2^{\#\mathcal{G}_4(\mathcal{A})}$. This more sophisticated method gives the second time complexity in Theorem~\ref{thm-time-complexity-mass-all-quartics-with-norms}. Once the sizes $\# N_{E,c}^\mathcal{A}$ have been computed, we can quickly write down the pre-mass 
$$
\m\big(\big\{L \in \Et{(1^4)/k_\p}{C_4/k_\p,\mathcal{A}} : \text{$E$ embeds into $L$}\big\}\big).
$$
Iterating over the $2^{2+[k_\p:\Q_2]}$ quadratic extensions $E$ of $k_\p$, we can compute the pre-mass of $\Et{(1^4)/k_\p}{C_4/k_\p,\mathcal{A}}$, and hence of $\Et{4/k_\p}{\mathcal{A}}$, yielding Theorem~\ref{thm-time-complexity-mass-all-quartics-with-norms}.

\subsection{Acknowledgements}

I would like to thank my supervisor, Rachel Newton, for suggesting the initial research direction and for providing a great deal of support and encouragement throughout. I am also grateful to Brandon Alberts, Christopher Keyes, Ross Paterson, Ashvin Swaminathan, John Voight, and Jiuya Wang for helpful conversations. 

This work was supported by the Engineering and Physical Sciences Research Council [EP/S021590/1], via the EPSRC Centre for Doctoral Training in Geometry and Number Theory (The London School of Geometry and Number Theory), University College London. 

\subsection{Index of notation}

\begin{enumerate}
    \item We will always use the letters $K/k$ for extensions of number fields, and $L/F$ for \'etale algebras over $p$-adic fields.
    \item Let $\mathcal{E}$ be a number field or $p$-adic field, and let $\mathcal{M}/\mathcal{E}$ be a finite \'etale algebra. We write
    \begin{enumerate}
        \item $\mathcal{O}_\mathcal{E}$ for the ring of integers of $\mathcal{E}$. 
        \item $[\mathcal{M} : \mathcal{E}]$: The dimension of $\mathcal{M}$ as an $\mathcal{E}$-algebra. 
        \item $\disc(\mathcal{M}/\mathcal{E})$: The discriminant of the extension, which is an ideal of $\mathcal{O}_\mathcal{E}$. 
        \item $d_{\mathcal{M}/\mathcal{E}}$: Shorthand for $\disc(\mathcal{M}/\mathcal{E})$.
        \item $N_{\mathcal{M}/\mathcal{E}}$: The norm map $\mathcal{M}^\times \to \mathcal{E}^\times$. 
        \item $\widetilde{\mathcal{M}}$: The normal closure of $\mathcal{M}$ over $\mathcal{E}$, where the choice of base field $\mathcal{E}$ will be clear from context. Note that this is only defined when $\mathcal{M}$ is a field. 
        \item $\Aut(\mathcal{M}/\mathcal{E})$: The group of $\mathcal{E}$-algebra automorphisms of $\mathcal{M}$. 
        \item $\Gal(\mathcal{M}/\mathcal{E})$: The Galois group of the extension $\mathcal{M}/\mathcal{E}$, if it is a Galois extension of fields. 
    \end{enumerate}
    \item $\Nm$: This can represent two things:
    \begin{itemize}
        \item We write $\Nm(I)$ for the norm $\#(\co_{\mathcal{M}}/I)$ of an integral ideal $I$ of a $p$-adic field or number field $\mathcal{M}$.
        \item We write $\Nm(\mathcal{M})$ for the norm group $N_{\mathcal{M}/\mathcal{E}}\mathcal{M}^\times$ of an \'etale algebra $\mathcal{M}$ over a $p$-adic field or number field $\mathcal{E}$. When we use this shorthand, the base field will be clear from context, so we will omit it from the notation. 
    \end{itemize}
    \item $k$: A fixed number field. 
    \item $K_\p$: The completion $K\otimes_k k_\p$ of a finite-degree field extension $K/k$ over a prime $\p$ of $k$.
    \item $\mathcal{A}$: A subgroup of the unit group of a number field or local field. 
    \item $N_{k,n}(X)$: The number of $S_n$-$n$-ic extensions $K/k$ with $\Nm(\disc(K/k)) \leq X$. 
    \item $N_{k,n}(X;\mathcal{A})$: The number of $S_n$-$n$-ic extensions $K/k$ with $\Nm(\disc(K/k))\leq X$ and $\mathcal{A}\subseteq N_{K/k}K^\times$. 
    \item For a $p$-adic field $F$, we write:
    \begin{enumerate}
        \item $\p_F$ for the maximal ideal of $\co_F$.
        \item $\F_F$ for the residue field $\co_F/\p_F$ of $F$. 
        \item $q_F$ for the size $\# \F_F$ of the residue field. 
        \item $\pi_F$ for a uniformiser of $F$.
    \end{enumerate}
    \item $(L,F)$: The splitting symbol of an \'etale algebra $L$ over a $p$-adic field $F$, defined in Section~\ref{subsec-etale-algebras}.
    \item $\Et{n/F}{}$: For a local field $F$, this denotes the set of isomorphism classes of degree $n$ \'etale algebras over $F$. There is a variety of decorators that impose additional conditions on these \'etale algebras. The possible decorators are:
    \begin{enumerate}
    \item $\Et{n/F,m}{}$: The set of $L \in \Et{n/F}{}$ such that $v_F(d_{L/F}) = m$. 
    \item $\Et{n/F,\leq m}{}$: The set of $L \in \Et{n/F}{}$ such that $v_F(d_{L/F}) \leq m$. 
    \item $\Et{n/F}{\pred}$: The set of $L\in \Et{n/F}{}$ such that $(L,F) = (f_1^{e_1}\ldots f_r^{e_r})$ for mutually coprime $e_i$. 
    \item $\Et{n/F}{\unpred}$: The complement $\Et{n/F}{} \setminus \Et{n/F}{\pred}$.
    \item $\Et{n/F}{\epi}$: The set of $L\in \Et{n/F}{}$ such that $(L,F) = (f_1^{e_1}\ldots f_r^{e_r})$ for mutually coprime $e_i$ and mutually coprime $f_i$. 
    \item $\Et{n/F}{\mathcal{A}}$: The set of $L \in \Et{n/F}{}$ such that $\mathcal{A}\subseteq N_{L/F}L^\times$.
    \end{enumerate}
    If multiple decorators are present, then we take the intersection of the corresponding sets of \'etale algebras. 
    \item $\Et{\sigma/F}{}$: For a splitting symbol $\sigma$, this is the set of isomorphism classes of \'etale algebras $L/F$ with $(L,F) = \sigma$. This has all the same decorators as $\Et{n/F}{}$, as well as the following additional one:
    \begin{enumerate}
        \item $\Et{\sigma/E}{G/F}$: This is only defined if $\sigma$ is of the form $(f^e)$, so that the elements of $\Et{\sigma/E}{}$ are fields. In this case, $G$ is a finite group, $F$ is still a local field and $E/F$ is a finite field extension. Then $\Et{\sigma/E}{G/F}$ is the set of $L \in \Et{\sigma/E}{}$ such that $\Gal(\widetilde{L}/F) \cong G$, where $\widetilde{L}$ is the normal closure of $L$ over $F$. 
    \end{enumerate}
    \item $m(S)$: The mass of a subset $S$ of $\Et{n/F}{}$.
    \item $\widetilde{m}(S)$: The pre-mass of a subset $S$ of $\Et{n/F}{}$.
    \item $m_{\mathcal{A},\p}$: The mass $m\big(\Et{n/k_\p}{\mathcal{A}}\big)$, where $\p$ is a prime of $k$ and $\mathcal{A}$ is viewed as a subgroup of $k_\p$ via the natural inclusion $k \subseteq k_\p$. 
    \item $\Res_{s = 1}\big(\zeta_k(s)\big)$: The residue of the Dedekind zeta function of $k$ at $1$. 
    \item $\overline{\mathcal{A}}^n$: For a $p$-adic field $F$ and a subgroup $\mathcal{A}\subseteq F^\times$, this is the subgroup $\mathcal{A}F^{\times n} / F^{\times n}$ of $F^\times / F^{\times n}$. 
    \item $U_F^{(t)}$: For a $p$-adic field $F$ and a nonnegative integer $t$, this is $1 + \p_F^t$.
    \item $\mathcal{G}_4(\mathcal{A})$: A subset of $F^\times$ whose image in $F^\times / F^{\times 4}$ generates the group $\overline{\mathcal{A}}^4$. We tend to assume that a choice of $\mathcal{G}_4(\mathcal{A})$ is fixed implicitly. 
    \item $O_F$: Modified Big O notation that suppresses the time complexity of field operations in the $p$-adic field $F$. 
    \item $\overline{\mathcal{A}}_t^n$: For a $p$-adic field $F$, a subgroup $\mathcal{A}\subseteq F^\times$, and a nonnegative integer $t$, this is the intersection
    $$
    \overline{\mathcal{A}}^n \cap \Big(U_F^{(t)}F^{\times n} / F^{\times n}\Big)
    $$
    \item $\ell$: A rational prime. 
    \item $\Part(d,m)$: The partition function, as in Definition~\ref{defi-partition}.
    \item $\Split_n$: Set of degree $n$ splitting symbols.
    \item $\Split_n^{\pred}$: Set of predictable degree $n$ splitting symbols. 
    \item $\Split_n^{\epi}$: Set of epimorphic degree $n$ splitting symbols. 
    \item $\Ext{k,n}$: The set of isomorphism classes of $S_n$-$n$-ic extensions $K/k$. 
    \item $\Ext{k,n,\leq X}$: The set of $K \in \Ext{k,n, \leq X}$ with $\Nm(\disc(K/k)) \leq X$. 
    \item $N_{k,n}(X; \Sigma)$: The number of fields $K \in \Ext{k,n,\leq X}$ such that $K$ satisfies the family of local conditions $\Sigma$.
    \item $\sum \pi$: For a partition $\pi = (a_i)$, this is the sum $\sum_i a_i$. 
    \item For a splitting symbol $\sigma = (f_1^{e_1}\ldots f_r^{e_r})$, we write:
    \begin{enumerate}
        \item $\pi(\sigma)$ for the partition associated to $\sigma$.
        \item $d_\sigma$ for the integer $\sum_i f_i(e_i - 1)$. 
        \item $\#\Aut(\sigma)$ for the quantity 
        $$
        \Big(\prod_i f_i\Big) \cdot \# \{\sigma \in S_n : (e_{\sigma(i)}, f_{\sigma(i)}) = (e_i, f_i) \text{ for all $i$}\}.
        $$
    \end{enumerate}
    \item $k(\mu_m)$: The extension obtained from $k$ by adjoining all $m^\mathrm{th}$ roots of unity. 
    \item $\mu_m \subseteq F$: For a field $F$, this is shorthand for the statement that $F$ contains all $m^\mathrm{th}$ roots of unity.
    \item $\Pi_k$: The set of primes of $k$, both finite and infinite. 
    \item $\mff(L/F)$: The conductor of an extension $L/F$ of $p$-adic fields. 
    \item $\mff(\chi)$: The Artin conductor of a character $\chi$. 
    \item $G_i$: The $i^\mathrm{th}$ ramification subgroup of a Galois group $G$. 
    \item $G^\vee$: The set of characters $G\to \C^\times$, for a Galois group $G$.  
    \item $\operatorname{Epi}_{\F_p}(V, W)$: The set of $\F_p$-linear epimorphisms $V \to W$, where $V,W$ are $\F_p$-vector spaces. 
    \item $W_i$: Given an implicit choice of $p$-adic field $F$, this is the group $U_F^{(i)}F^{\times p} / U_F^{(i+1)}F^{\times p}$. 
    \item $A(t)$ and $B(t)$: Explicit functions defined in Appendix~\ref{appendix-helpers}. 
    \item $\sqcup$: Disjoint union.
    \item $c_\alpha$: Largest integer such that $\alpha \in U_F^{(c)}F^{\times p}$. If $\alpha \in F^{\times p}$, then $c_\alpha = \infty$.
    \item $N_\alpha$: For $\alpha \in F^\times \setminus F^{\times 2}$, this is the group $N_{F(\sqrt{\alpha})/F}F(\sqrt{\alpha})^\times$. 
    \item Given a $C_4$-extendable quadratic extension $E/F$, we have the following notation:
    \begin{enumerate}
    \item $N_\omega$: Given an element $\omega \in E$ such that $E(\sqrt{\omega})/F$ is a $C_4$-extension, this is the norm group $N_{E(\sqrt{\omega})/F}E(\sqrt{\omega})^\times$. 
    \item $N_\omega^\mathcal{A}$: The set 
    $$
    \bigcap_{\alpha \in \mathcal{G}_4(\mathcal{A}) \cap N_\omega} \overline{N}_\alpha^2 \setminus \bigcup_{\alpha \in \mathcal{G}_4(\mathcal{A}) \setminus N_\omega} \overline{N}_\alpha^2 \subseteq F^\times / F^{\times 2}.
    $$
    \item $N_E^\mathcal{A}$: The set $N_\omega^{\mathcal{A}}$ for a choice of element $\omega \in E$ such that the $C_4$-extension $E(\sqrt{\omega})/E$ has minimal discriminant.
    \item $N_{E,c}^\mathcal{A}$: The intersection 
    $
    N_E^\mathcal{A} \cap \big(U_F^{(c)}F^{\times 2}/F^{\times 2}\big).
    $
    \end{enumerate}
    \item $\mathcal{B}_0^E$: For a $p$-adic field $E$, this is a certain subset of $\co_E^\times$ that descends to an $\F_p$-basis for $\F_E$.
    \item $\operatorname{colspan}(M)$: The column span of a matrix $M$.
\end{enumerate}
\section{Results for general $n$}

The main goal of this section is to prove Theorem~\ref{thm-prop-between-0-and-1-for-n-leq-5}, along with its conjectural generalisations in Theorems~\ref{thm-prop-between-0-and-1} and \ref{thm-hasse-for-nth-powers}.

\label{sec-general-n}
\subsection{\'Etale algebras, splitting symbols, and local conditions}
\label{subsec-etale-algebras}
Let $F$ be a local field. Recall from Section~\ref{sec-intro} that $\Et{n/F}{}$ is the set of isomorphism classes of degree $n$ \'etale algebras over $F$, and that we are interested in the masses and pre-masses of subsets $S \subseteq \Et{n/F}{}$, denoted by $m(S)$ and $\m(S)$, respectively.

\begin{proof}[Proof of Theorem~\ref{thm-mass-infinite-prime}]
    Suppose that $f$ is a real embedding. Then 
    $$
    \Et{n/k_\p}{} = \{\R^r \times \C^s : r + 2s = n\text{ and $r,s \geq 0$}\}.  
    $$
    If $L = \R^r \times \C^s$, we have 
    $$
    N_{L/k_\p}L^\times = \begin{cases}
        \R^\times \quad\text{if $r > 0$},
        \\
        \R_{> 0}\quad\text{if $r = 0$},
    \end{cases}
    $$
    and 
    $$
    \# \Aut(L/k_\p) = 2^s \cdot r!\cdot s!.
    $$
    The result for real $f$ follows. If $f$ is a complex embedding, then the only element of $\Et{n/k_\p}{}$ is $\C^n$, which has $n!$ automorphisms and norm group $\C^\times$, so we are done.
\end{proof}
\begin{definition}
    \label{defi-splitting-symbol}
    Let $n$ be a positive integer. A \emph{degree $n$ splitting symbol} is a symbol $(f_1^{e_1}\ldots f_r^{e_r})$, where the $f_i$ and $e_i$ are positive integers. We identify splitting symbols that are permutations of each other, i.e. $(1^22^3) = (2^31^2)$. The superscripts are purely symbolic, and do not represent exponentiation, so for example $(1^2)$ and $(1^3)$ are different splitting symbols. Finally, we suppress exponents with value $1$, writing e.g. $(2)$ instead of $(2^1)$. Write $\Split_n$ for the set of degree $n$ splitting symbols. 
\end{definition}

Let $F$ be a local field and let $L \in \Et{n/F}{}$. Then we have 
$$
L = L_1 \times \ldots \times L_r,
$$
for field extensions $L_i/F$ with inertia degree $f_i$ and ramification index $e_i$. The \emph{splitting symbol $(L,F)$ of $L$ over $F$} is defined to be the splitting symbol
$$
(L,F) = (f_1^{e_1}\ldots f_r^{e_r}). 
$$
For a splitting symbol $\sigma \in \Split_n$, write 
$
\Et{\sigma/F}{}
$
for the set of $L \in \Et{n/F}{}$ with $(L,F) = \sigma$. 

\begin{definition}
    We introduce our key terminology for local conditions.
    \begin{enumerate}
    \item Let $k$ be a number field and let $\p \in \Pi_k$. A \emph{degree $n$ local condition at $\p$} is a set $\Sigma_\p \subseteq \Et{n/k_\p}{}$. 
    \item A \emph{degree $n$ collection of local conditions on $k$} is a collection $(\Sigma_\p)_\p$ of local conditions at each prime $\p\in \Pi_k$. 
    \item Let $\Sigma$ be a degree $n$ collection of local conditions on $k$. We call $\Sigma$ \emph{acceptable} if, for all but finitely many $\p$, the set $\Sigma_\p$ contains all \'etale algebras $L/k_\p$ with $v_{k_\p}(d_{L/k_\p}) \leq 1$. 
    \item Let $K/k$ be a finite field extension and let $\Sigma_\p$ be a local condition at the prime $\p$ of $k$. The extension $K/k$ \emph{satisfies} the condition $\Sigma_\p$ if the completion $K_\p := K\otimes_k k_\p$ is in $\Sigma_\p$. 
    \item The extension $K/k$ \emph{satisfies} a collection $\Sigma = (\Sigma_\p)_\p$ of local conditions if it satisfies $\Sigma_\p$ for every prime $\p \in \Pi_k$.
    \end{enumerate}
\end{definition}
Write $\Ext{k,n}$ for the set of isomorphism classes of degree $n$ extensions $K/k$ with Galois closure group $S_n$. For each real number $X$, define 
$$
\Ext{k, n,\leq X} = \{K \in \Ext{k, n} : \Nm(\disc(K/k)) \leq X\}.
$$
For a system $\Sigma$ of local conditions, define 
$$
N_{k,n}(X;\Sigma) = \#\{K \in \Ext{k, n,\leq X} : \text{$K$ satisfies $\Sigma$}\}.
$$

\begin{conjecture}[Malle--Bhargava heuristics]
\label{conj-N(X;Sigma)-asymptotics}
Let $\Sigma = (\Sigma_\p)_\p$ be an acceptable degree $n$ collection of local conditions on $k$. Then 
$$
\lim_{X\to\infty} \frac{N_{k,n}(X;\Sigma)}{X} = \frac{1}{2}\cdot \Res_{s=1}\big(\zeta_k(s)\big)\cdot \prod_{\p \in \Pi_k} m(\Sigma_\p).
$$
\end{conjecture}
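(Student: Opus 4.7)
Since this is stated as a conjecture, the plan is to reduce to known unconditional theorems when $n \leq 5$ and to motivate the formula heuristically for $n \geq 6$.

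For $n \in \{3,4,5\}$, the conjecture is a theorem of Bhargava and collaborators over $\Q$, extended to general $k$ in \cite{geom-of-nums}, so the strategy is to invoke these results directly. Each case rests on a parametrization of degree $n$ extensions of $k$ by orbits on a prehomogeneous vector space (binary cubic forms for $n=3$, pairs of ternary quadratic forms for $n=4$, quadruples of $5 \times 5$ alternating forms for $n=5$), from which one counts integral orbits of bounded discriminant by geometry of numbers. Local conditions are imposed by weighting the count against characteristic functions of the open compact subsets of the relevant $p$-adic models which cut out each $\Sigma_\p$. The acceptability hypothesis on $\Sigma$ guarantees that the infinite product converges and controls the tail contribution via a uniform sieve, relying on Serre's local mass formula to bound the sum over \'etale algebras of large discriminant at the cofinitely many primes where $\Sigma_\p$ contains all \'etale algebras with $v_{k_\p}(d_{L/k_\p}) \leq 1$.

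For $n \geq 6$ no such parametrization is known, so the plan is instead to justify the formula heuristically. The right-hand side is exactly what one obtains by modelling the local completions $K_\p$ as independent random elements of $\Et{n/k_\p}{}$, weighted by $\Nm(\disc(L/k_\p))^{-1}/\#\Aut(L/k_\p)$. Summing these local weights against the density of integral ideals of $\co_k$ of bounded norm yields the factor $\Res_{s=1}\zeta_k(s)$, and the normalising factor $(q_F-1)/q_F$ built into the finite-prime masses is exactly the local Euler factor $\zeta_{k_\p}(1)^{-1}$ needed to make the infinite product converge. The global constant $1/2$ comes from the parametrization itself, corresponding to the $[S_n:A_n]=2$ symmetry relating an $S_n$-$n$-ic to its Galois closure.

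The main obstacle for $n \geq 6$ is the absence of any prehomogeneous parametrization of degree $n$ number fields; even the weaker Malle conjecture, which only predicts the order of magnitude of $N_{k,n}(X)$, is wide open in this range. For this reason, Conjecture~\ref{conj-N(X;Sigma)-asymptotics} is taken as a working hypothesis throughout Section~\ref{sec-general-n} whenever $n \geq 6$.
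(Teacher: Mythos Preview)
The statement is a conjecture, and the paper does not attempt to prove it; it simply states the conjecture and then records (Theorem~\ref{thm-malle-bhargava-true-for-n-leq-5}) that it is a theorem for $n\in\{2,3,4,5\}$ by citing \cite{geom-of-nums}. Your proposal is consistent with this: you correctly reduce the $n\leq 5$ cases to the literature and treat $n\geq 6$ as genuinely open, offering only heuristic motivation. In that sense there is no gap, because there is nothing to prove.

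The difference is one of scope rather than method. The paper offers no heuristic derivation of the right-hand side at all; it takes the formula as a black box imported from \cite{bhargava-mass-formula} and \cite{geom-of-nums}. Your additional paragraphs sketching the prehomogeneous-space parametrizations and the probabilistic reading of the local masses are expository embellishments not present in the paper. They are reasonable context, though the specific claim that the factor $1/2$ arises from ``the $[S_n:A_n]=2$ symmetry relating an $S_n$-$n$-ic to its Galois closure'' is not quite the standard explanation (in Bhargava's setup it comes from the fact that each \'etale algebra is counted by two orbits, or equivalently from the automorphism of the prehomogeneous space), so you may want to soften or remove that sentence.
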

\begin{theorem}
    \label{thm-malle-bhargava-true-for-n-leq-5}
    Conjecture~\ref{conj-N(X;Sigma)-asymptotics} is true for $n=2,3,4,5$. 
\end{theorem}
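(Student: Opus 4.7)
The plan is to deduce the theorem from two ingredients already available in the literature: (i) the counting asymptotic for $S_n$-$n$-ics satisfying local conditions at only \emph{finitely} many primes, and (ii) a uniform tail estimate controlling the failure of such conditions at primes of large norm. I would not reprove these ingredients but rather cite and assemble them.

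First, I would invoke the existing finite-condition asymptotic. For any finite subset $T\subseteq\Pi_k$ and any choice of local conditions $(\Sigma_\p)_{\p\in T}$ (with no restriction imposed at primes outside $T$),
$$
\lim_{X\to\infty}\frac{N_{k,n}\bigl(X;(\Sigma_\p)_{\p\in T}\bigr)}{X} = \frac{1}{2}\cdot\Res_{s=1}\bigl(\zeta_k(s)\bigr)\cdot\prod_{\p\in T}m(\Sigma_\p)\cdot\prod_{\p\notin T}m\bigl(\Et{n/k_\p}{}\bigr).
$$
For $n=2$ this is standard class field theory plus a Dirichlet-series computation; for $n=3$ it is Davenport--Heilbronn in the form of Datskovsky--Wright; for $n=4,5$ it is Bhargava's theorem, extended to an arbitrary base number field, as packaged in \cite{geom-of-nums}.

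Next, I would reduce the general acceptable case to the finite case by a sieve. Fix an acceptable $\Sigma$ and let $T_0$ be the finite exceptional set from the definition of acceptable. For any finite $T\supseteq T_0$, let $\Sigma^T$ be the collection that agrees with $\Sigma$ on $T$ and is unrestricted outside $T$; then $N_{k,n}(X;\Sigma)\leq N_{k,n}(X;\Sigma^T)$, and the difference is bounded above by
$$
\sum_{\p\notin T}\#\bigl\{K\in\Ext{k,n,\leq X} : v_\p(d_{K/k})\geq 2\bigr\},
$$
because for every $\p\notin T_0$ the condition $\Sigma_\p$ is automatic whenever $v_\p(d_{K/k})\leq 1$. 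As $T$ exhausts $\Pi_k$, the main term from Step~1 converges $\p$-by-$\p$ to the Euler product on the right-hand side of Conjecture~\ref{conj-N(X;Sigma)-asymptotics}.

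The hard part, and the main obstacle, is the tail estimate
$$
\limsup_{X\to\infty}\;X^{-1}\sum_{\substack{\p\in\Pi_k \\ \Nm(\p)>M}}\#\bigl\{K\in\Ext{k,n,\leq X} : v_\p(d_{K/k})\geq 2\bigr\}\;\longrightarrow\;0\quad\text{as }M\to\infty.
$$
For $n=2,3$ this is elementary, since the local mass of \'etale algebras with $v_\p(d)\geq 2$ decays like $\Nm(\p)^{-2}$ and one can feed this into a crude Dirichlet-series bound. For $n=4,5$ the required uniformity in $\p$ is precisely the content of Bhargava's ``geometric sieve'' arguments, again contained in \cite{geom-of-nums}; without such a uniform bound the raw counting theorems for $S_4$- and $S_5$-extensions do not suffice to interchange the limits in $X$ and $T$. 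Once the tail estimate is in hand, this interchange of limits recovers the Euler product and completes the proof.
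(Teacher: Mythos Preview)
Your proposal is correct: the sieve argument you outline (finite-condition asymptotic plus a uniform tail bound letting you pass from finitely many local conditions to an acceptable system) is exactly the content of \cite[Theorem~2]{geom-of-nums}, and the paper's own proof is nothing more than a one-line citation of that result. You have simply unpacked the reference rather than citing it directly; both amount to the same thing.
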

\begin{proof}
    This is \cite[Theorem~2]{geom-of-nums}. 
\end{proof}

\subsection{Expressing our problem in terms of local conditions}

Let $F$ be a local field and let $\mathcal{A}\subseteq F^\times$ be a finitely generated subgroup. Write $\Et{n/F}{\mathcal{A}}$ for the set of $L \in \Et{n/F}{}$ with $\mathcal{A}\subseteq N_{L/F}L^\times$. Define $\Et{\sigma/F}{\mathcal{A}}$ analogously.

Recall that in our original setting, $k$ is a number field and $\mathcal{A}\subseteq k^\times$ is a finitely generated subgroup. For each prime $\p$ of $k$, we view $\mathcal{A}$ naturally as a subgroup of $k_\p^\times$, and therefore it makes sense to talk about the set $\Et{n/k_\p}{\mathcal{A}}$. In particular, we have a system of local conditions $\big(\Et{n/k_\p}{\mathcal{A}}\big)_\p$. 

The following theorem says that for an extension $K/k$, the elements of $\mathcal{A}$ are norms globally if and only if they are norms everywhere locally.

\begin{theorem}[Hasse Norm Principle holds for $S_n$-$n$-ics]
    \label{thm-hasse-norm-principle}
    Let $n$ be a positive integer, let $K/k$ be an $S_n$-$n$-ic extension of number fields, and let $\mathcal{A}\subseteq k^\times$ be a finitely generated subgroup. Then $\mathcal{A} \subseteq N_{K/k}K^\times$ if and only if $K/k$ satisfies the system $\big(\Et{n/k_\p}{\mathcal{A}}\big)_\p$ of local conditions. 
\end{theorem}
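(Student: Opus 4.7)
The forward implication is routine functoriality, and the reverse implication is the content of the Hasse norm principle for $S_n$-$n$-ic extensions. I would organise the argument in three steps.

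\textbf{Step 1 (trivial direction).} Suppose $\mathcal{A} \subseteq N_{K/k}K^\times$ and pick $\alpha \in \mathcal{A}$, so $\alpha = N_{K/k}(\beta)$ for some $\beta \in K^\times$. For each $\p \in \Pi_k$, base-changing the norm map along $k \hookrightarrow k_\p$ gives a commutative square, so $\alpha = N_{K_\p/k_\p}(\beta\otimes 1)$ in $k_\p^\times$, where $K_\p = K\otimes_k k_\p$. Hence $\mathcal{A}\subseteq N_{K_\p/k_\p}K_\p^\times$ and $K_\p \in \Et{n/k_\p}{\mathcal{A}}$ for every~$\p$.

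\textbf{Step 2 (reduction to a single element).} For the converse, suppose $K/k$ satisfies the system $(\Et{n/k_\p}{\mathcal{A}})_\p$. Since $N_{K/k}K^\times$ is a subgroup of $k^\times$ and $\mathcal{A}$ is a group, it is enough to show that each element $\alpha \in \mathcal{A}$ lies in $N_{K/k}K^\times$. Fix such an $\alpha$. The local hypothesis says $\alpha \in N_{K_\p/k_\p}K_\p^\times$ for every $\p$, and the problem thus reduces to the classical Hasse norm principle: if $\alpha \in k^\times$ is a norm from $K_\p$ at every place, must it be a norm from $K$?

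\textbf{Step 3 (Hasse norm principle for $S_n$-extensions).} The obstruction to the HNP for a single element is the Tate--Shafarevich group
\[
\Sha^1\big(k, R^1_{K/k}\mathbb{G}_m\big) = \ker\!\Big(H^1(k, R^1_{K/k}\mathbb{G}_m) \to \prod_\p H^1(k_\p, R^1_{K/k}\mathbb{G}_m)\Big),
\]
where $R^1_{K/k}\mathbb{G}_m$ is the norm-one torus. The finish is to invoke the theorem (originally due to Voskresenski\u{\i}, with a clean group-cohomological proof by Bartels) that this $\Sha$ group vanishes whenever $\Gal(\widetilde{K}/k) \cong S_n$ acts on the cosets of the point stabiliser $S_{n-1}$: the underlying computation shows $H^1(S_n, \widehat{T}) = H^1(S_{n-1}, \widehat{T}|_{S_{n-1}}) = 0$ via Shapiro's lemma, and a local--global comparison for decomposition subgroups then forces $\Sha^1 = 0$. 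Consequently $\alpha \in N_{K/k}K^\times$, and repeating for each generator of $\mathcal{A}$ gives $\mathcal{A}\subseteq N_{K/k}K^\times$.

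\textbf{Main obstacle.} The only non-formal input is the vanishing of $\Sha^1$ for $S_n$-$n$-ic extensions. Everything else is standard functoriality and the observation that a subgroup is contained in $N_{K/k}K^\times$ iff each generator is. Since we need the principle for arbitrary elements of $\mathcal{A}$ (not just in a fixed class modulo $k^{\times n}$), the honest cohomological statement $\Sha^1 = 0$ is essential, as opposed to a weaker density argument.
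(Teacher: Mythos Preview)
Your proposal is correct and matches the paper's approach: the paper simply cites Voskresenski\u{\i}'s result (specifically, the corollary to Theorem~4 in \cite{Voskresenski}) for the nontrivial direction, which is exactly the $\Sha^1$-vanishing you invoke in Step~3. Your Steps~1 and~2 are the standard unpacking that the paper leaves implicit in the phrase ``this is''.
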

\begin{proof}
    This is \cite[Corollary to Theorem~4]{Voskresenski}.
\end{proof}

\begin{lemma}
    \label{lem-disc-val-of-tame-extension}
    Let $E/F$ be a tamely ramified extension of $p$-adic fields, with ramification index $e$ and inertia degree $f$. Then 
    $$
    v_F(d_{E/F}) = f(e-1). 
    $$
\end{lemma}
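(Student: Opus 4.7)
The plan is to reduce the problem to the totally ramified case by interposing the maximal unramified subextension, and then compute the different directly using the tame Eisenstein structure.

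First I would factor $E/F$ as $E/E_0/F$, where $E_0$ is the maximal unramified subextension of $E/F$. Then $[E_0:F] = f$ and $E_0/F$ is unramified, while $[E:E_0] = e$ and $E/E_0$ is totally tamely ramified. The tower formula for discriminants gives
\[
d_{E/F} = N_{E_0/F}(d_{E/E_0}) \cdot d_{E_0/F}^{[E:E_0]}.
\]
Since $E_0/F$ is unramified, $d_{E_0/F} = \co_F$, so $v_F(d_{E_0/F}) = 0$ and the tower formula reduces the problem to computing $v_{E_0}(d_{E/E_0})$.

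Next I would handle the totally tamely ramified extension $E/E_0$. Because the ramification is tame, $E = E_0(\pi_E)$ for a uniformiser $\pi_E$ satisfying an Eisenstein polynomial $g(x) = x^e - u\pi_{E_0}$ with $u \in \co_{E_0}^\times$. The different $\mathfrak{d}_{E/E_0}$ is then generated by $g'(\pi_E) = e\pi_E^{e-1}$, and since $p \nmid e$ the factor $e$ is a unit, giving $\mathfrak{d}_{E/E_0} = \p_E^{e-1}$. Taking norms, $d_{E/E_0} = N_{E/E_0}(\p_E^{e-1}) = \p_{E_0}^{e-1}$, so $v_{E_0}(d_{E/E_0}) = e-1$.

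Finally, combining the two pieces: $v_F(N_{E_0/F}(d_{E/E_0})) = f \cdot v_{E_0}(d_{E/E_0}) = f(e-1)$, because $E_0/F$ is unramified of degree $f$. Substituting into the tower formula yields $v_F(d_{E/F}) = f(e-1)$, as required. There is no real obstacle here; the only step that deserves any care is verifying that tameness ensures $e \in \co_{E_0}^\times$ so that the differential computation gives exactly $\p_E^{e-1}$, but this is immediate from $p \nmid e$.
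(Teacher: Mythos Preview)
Your proof is correct and is essentially the argument the paper defers to Neukirch for: the paper cites the two standard facts that (i) the different of a tame extension has valuation $e-1$ and (ii) the discriminant is the norm of the different, which together immediately give $v_F(d_{E/F}) = f(e-1)$. You unpack exactly this via the maximal unramified subextension and the Eisenstein description of the totally ramified part; the content is the same.
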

\begin{proof}
    This follows easily from \cite[Page~199, Theorem~2.6]{neukirch2013algebraic} and \cite[Page~201, Theorem~2.9]{neukirch2013algebraic}. 
\end{proof}

\begin{lemma}
    \label{lem-norm-conds-acceptable}
    Let $k$ be a number field, let $n$ be an integer with $n \geq 3$, and let $\mathcal{A}\subseteq k^\times$ be a finitely generated subgroup. Then the system $\big(\Et{n/k_\p}{\mathcal{A}}\big)_\p$ of local conditions is acceptable.
\end{lemma}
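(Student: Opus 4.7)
The plan is to produce a finite ``bad'' set $S$ of primes of $k$ outside which the acceptability condition is automatic. I would fix a finite generating set $\alpha_1,\ldots,\alpha_s$ of $\mathcal{A}$, and let $S$ consist of those primes $\p$ at which some $\alpha_i$ fails to be a unit; since each $\alpha_i\in k^\times$ is supported at only finitely many primes, $S$ is finite. For $\p\notin S$ we have $\mathcal{A}\subseteq \co_{k_\p}^\times$, so the task reduces to showing $\co_{k_\p}^\times\subseteq N_{L/k_\p}L^\times$ whenever $L\in\Et{n/k_\p}{}$ satisfies $v_{k_\p}(d_{L/k_\p})\leq 1$.

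The second step is to classify such $L$. Write $L = L_1\times\cdots\times L_r$ with each $L_i/k_\p$ of ramification index $e_i$ and inertia degree $f_i$. Any wildly ramified component $L_i$ would contribute at least $e_i\geq p\geq 2$ to the discriminant valuation, so the hypothesis forces every $L_i$ to be tamely ramified. Applying Lemma~\ref{lem-disc-val-of-tame-extension} component-wise then yields $\sum_i f_i(e_i-1)\leq 1$, so either every $L_i$ is unramified or exactly one $L_i$ has $(f_i,e_i)=(1,2)$ and the rest are unramified. Because $n\geq 3$, in the latter case the unramified components still contribute degree at least $n-2\geq 1$, so in both cases $L$ has at least one unramified component $L_{i_0}$.

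The final step is a classical unit-norm surjectivity argument: for an unramified extension $L_{i_0}/k_\p$, the norm restricts to a surjection $\co_{L_{i_0}}^\times\twoheadrightarrow \co_{k_\p}^\times$, because the residue-field norm $\F_{L_{i_0}}^\times\to \F_{k_\p}^\times$ is surjective and each induced map $U_{L_{i_0}}^{(j)}/U_{L_{i_0}}^{(j+1)}\to U_{k_\p}^{(j)}/U_{k_\p}^{(j+1)}$ identifies with the trace $\F_{L_{i_0}}\to\F_{k_\p}$ (also surjective), so Hensel-style lifting handles the principal unit filtration. Combining with $N_{L/k_\p}L^\times\supseteq N_{L_{i_0}/k_\p}L_{i_0}^\times$ gives $\co_{k_\p}^\times\subseteq N_{L/k_\p}L^\times$, hence $\mathcal{A}\subseteq N_{L/k_\p}L^\times$, which is exactly the acceptability condition at $\p$.

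The main obstacle---if one can call it that---is the classification step, where one must verify that the wild case is \emph{genuinely} excluded by the bound $v_{k_\p}(d_{L/k_\p})\leq 1$; this rests on the elementary but crucial inequality $v_{k_\p}(d_{L_i/k_\p})\geq e_i$ for wildly ramified $L_i$, after which Lemma~\ref{lem-disc-val-of-tame-extension} takes over. Every other ingredient---finiteness of the support of finitely generated subgroups of $k^\times$, the tame discriminant formula, and unramified surjectivity of the norm on units---is entirely standard.
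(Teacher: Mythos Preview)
Your argument is correct and follows essentially the same route as the paper: exclude finitely many bad primes, then show that any degree $n$ \'etale algebra with discriminant valuation $\leq 1$ has an unramified factor (using $n\geq 3$), whence units---and hence $\mathcal{A}$---are norms. The only minor difference is that the paper additionally throws the primes dividing $n$ into the bad set so that wild ramification never arises, whereas you instead rule it out directly via the lower bound $v_{k_\p}(d_{L_i/k_\p})\geq f_ie_i\geq 2$ for wild components; both are fine.
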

\begin{proof}
    Let $\p$ be a finite prime such that $\gcd(\Nm(\p), n) = 1$ and $v_\p(\alpha)=0$ for all $\alpha\in \mathcal{A}$. Note that all but finitely many primes satisfy these conditions. Let $L \in \Et{n/k_\p}{}$ such that $v_{k_\p}(d_{L/k_\p}) \leq 1$. Writing $L = L_1\times\ldots \times L_r$ for field extensions $L_i/k_\p$, at most one of the extensions $L_i/k_\p$ is ramified, so either $L = L_1$ or $L_i/k_\p$ is unramified for some $i$. Suppose that $L = L_1$. Let $(L,k_\p) = (f^e)$. Since $\Nm(\p)$ is coprime to $n$, it follows that $L/k_\p$ is tamely ramified, so Lemma~\ref{lem-disc-val-of-tame-extension} tells us that $v_{k_\p}(d_{L/k_\p}) = f(e-1)$, and therefore $f = 1$ and $e \leq 2$, so $n \leq 2$, contradicting the assumption that $n \geq 3$. Therefore, $L_i/k_\p$ is unramified for some $i$, so $\co_{k_\p}^\times \subseteq N_{L/k_\p}L^\times$, and hence $L \in \Et{n/k_\p}{\mathcal{A}}$. The result follows.
\end{proof}

\begin{proof}
    [Proof of Theorem~\ref{thm-density-as-product-of-masses}]

    This is immediate from Theorem~\ref{thm-malle-bhargava-true-for-n-leq-5}, Theorem~\ref{thm-hasse-norm-principle}, and Lemma~\ref{lem-norm-conds-acceptable}.
\end{proof}

Call a splitting symbol $(f_1^{e_1}\ldots f_r^{e_r})$ \emph{predictable} if the integers $e_i$ are mutually coprime. Call a symbol $(f_1^{e_1}\ldots f_r^{e_r})$ \emph{epimorphic} if it is predictable and the integers $f_i$ are mutually coprime. For a $p$-adic field $F$, write $\Et{n/F}{\pred}$ and $\Et{n/F}{\epi}$ for the sets of $L \in \Et{n/F}{}$ such that $(L,F)$ is predictable and epimorphic, respectively. Write $\Split_n^{\pred}$ and $\Split_n^{\epi}$ for the sets of predictable and epimorphic degree $n$ splitting symbols, respectively.

\begin{lemma}
    \label{lem-norm-group-of-pred-symbol}
    Let $F$ be a $p$-adic field and let $\sigma$ be a predictable splitting symbol. Write $\sigma = (f_1^{e_1}\ldots f_r^{e_r})$ and let $L \in \Et{\sigma/F}{}$. Let $g = \gcd(f_1,\ldots, f_r)$. Then we have
    $$
    N_{L/F}L^\times = \{x \in F^\times : g\mid v_F(x)\}. 
    $$
    In particular, if $L \in \Et{n/F}{\epi}$, then 
    $$
    N_{L/F}L^\times = F^\times. 
    $$
\end{lemma}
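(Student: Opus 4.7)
The plan is to prove both inclusions separately, exploiting the product decomposition $N_{L/F}L^\times = \prod_{i=1}^r N_{L_i/F}L_i^\times$ coming from $L = L_1 \times \ldots \times L_r$ (the norm of a tuple being the componentwise product of norms).

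For the ``upper bound'' inclusion $N_{L/F}L^\times \subseteq \{x \in F^\times : g \mid v_F(x)\}$, I would invoke the standard identity $v_F(N_{L_i/F}(y)) = f_i \cdot v_{L_i}(y)$, proved by extending $v_F$ to the Galois closure of $L_i/F$ and summing over the $F$-embeddings of $L_i$. This shows that $v_F(N_{L_i/F}L_i^\times) = f_i\Z$, so summing over $i$ gives $v_F(N_{L/F}L^\times) = \sum_i f_i\Z = g\Z$.

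For the reverse inclusion, it suffices to produce (i) every element of $\co_F^\times$ as a norm and (ii) some element of valuation exactly $g$ as a norm. Part (ii) is a Bezout exercise: writing $g = \sum_i c_i f_i$ and picking $y_i \in L_i^\times$ with $v_{L_i}(y_i) = c_i$, the product $\prod_i N_{L_i/F}(y_i)$ has valuation $g$. The heart of the argument is part (i), which I would prove via the intermediate claim that $u^{e_i} \in N_{L_i/F}L_i^\times$ for every $u \in \co_F^\times$ and every $i$. Let $L_i^{\mathrm{nr}}$ denote the maximal unramified subextension of $L_i/F$, so $[L_i^{\mathrm{nr}}:F] = f_i$ and $L_i/L_i^{\mathrm{nr}}$ is totally ramified of degree $e_i$. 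Since the norm map of an unramified extension surjects onto units, I can choose $u_i \in \co_{L_i^{\mathrm{nr}}}^\times$ with $N_{L_i^{\mathrm{nr}}/F}(u_i) = u$. Viewing $u_i$ as an element of $L_i$ and using transitivity of the norm,
$$
N_{L_i/F}(u_i) = N_{L_i^{\mathrm{nr}}/F}\big(N_{L_i/L_i^{\mathrm{nr}}}(u_i)\big) = N_{L_i^{\mathrm{nr}}/F}(u_i^{e_i}) = u^{e_i},
$$
establishing the claim. By the predictability hypothesis $\gcd_i e_i = 1$, Bezout yields integers $a_i$ with $\sum_i a_i e_i = 1$, hence
$$
u = \prod_i (u^{e_i})^{a_i} \in \prod_i N_{L_i/F}L_i^\times = N_{L/F}L^\times.
$$

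The ``in particular'' statement drops out immediately: if $L$ is epimorphic then $g = \gcd_i f_i = 1$, so the divisibility condition is vacuous and $N_{L/F}L^\times = F^\times$. I expect the main obstacle to be part (i); the elementary trick of pushing $u$ up through $L_i^{\mathrm{nr}}$ to get $u^{e_i}$, then using Bezout on the ramification indices, neatly bypasses what would otherwise be a heavier appeal to local class field theory (translating the question into showing $\bigcap_i L_i^{\mathrm{ab}}/F$ is unramified).
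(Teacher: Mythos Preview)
Your proposal is correct and follows essentially the same approach as the paper: both decompose $L$ into its field factors, use the maximal unramified subextension tower to show $\co_F^{\times e_i} \subseteq N_{L_i/F}L_i^\times$, and then apply B\'ezout on the mutually coprime $e_i$ to get all units as norms, together with B\'ezout on the $f_i$ to produce an element of valuation $g$. The only cosmetic difference is that for the upper bound the paper invokes the containment $N_{L_i/F}L_i^\times \subseteq N_{L_i^{\mathrm{ur}}/F}L_i^{\mathrm{ur},\times} = \{x : f_i \mid v_F(x)\}$, whereas you use the equivalent valuation identity $v_F(N_{L_i/F}(y)) = f_i v_{L_i}(y)$ directly.
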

\begin{proof}
    Write $L = L_1 \times \ldots \times L_r$, where each $L_i/F$ is a field extension with ramification index $e_i$ and inertia degree $f_i$. For each $i$, let $L_i^{\mathrm{ur}}$ be the maximal unramified subextension of $L_i/F$. By considering the towers $L_i/L_i^{\mathrm{ur}}/F$, it is easy to see that we have 
    $$
    \co_F^{\times e_i}\subseteq N_{L_i/F}L_i^\times 
    $$
    for each $i$. Moreover, for each $i$, there is some $x_i \in N_{L_i/F}L_i^\times$ with $v_F(x_i) = f_i$. Since the $e_i$ are mutually coprime, there exist integers $r_i$ such that $\sum_i r_i e_i = 1$, so for each $\alpha \in \co_F^\times$, we have 
    $$
    \alpha = \prod_i (\alpha^{e_i})^{r_i} \in \prod_i N_{L_i/F}L_i^\times = N_{L/F}L^\times. 
    $$
    Therefore, $\co_F^\times \subseteq N_{L/F}L^\times$. On the other hand, there are integers $s_i$ such that $\sum_i s_if_i = g$. Setting
    $$
    x = \prod_i x_i^{s_i} \in N_{L/F}L^\times,
    $$
    we obtain $v_F(x) = g$, so 
    $$
    \{x \in F^\times : g\mid v_F(x)\} \subseteq N_{L/F}L^\times. 
    $$
    Conversely, we have
    $$
    N_{L_i/F}L_i^\times \subseteq N_{L_i^{\mathrm{ur}}/F} L_i^{\mathrm{ur}, \times} = \{x \in F^\times : f_i\mid v_F(x)\} \subseteq \{x \in F^\times : g\mid v_F(x)\}
    $$
    for each $i$, so we are done. 
\end{proof}

\begin{corollary}
    \label{cor-Et-pred-subset-Et-A}
    Let $F$ be a $p$-adic field and let $\mathcal{A} = \langle \alpha_1,\ldots, \alpha_d\rangle$ be a finitely generated subgroup of $F^\times$. Let $n$ be a positive integer and suppose that $n \mid v_F(\alpha_i)$ for each $i$. For every finite prime $\p$ of $k$, we have 
    $$
    \Et{n/k_\p}{\pred}\subseteq \Et{n/k_\p}{\mathcal{A}}.
    $$
\end{corollary}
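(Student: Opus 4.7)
The plan is to deduce this directly from Lemma~\ref{lem-norm-group-of-pred-symbol}, which already gives an explicit description of the norm group of any \'etale algebra with predictable splitting symbol. The only arithmetic input needed is that the gcd $g = \gcd(f_1,\ldots,f_r)$ appearing in that lemma divides $n$.

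First, let $L \in \Et{n/F}{\pred}$ with $(L,F) = (f_1^{e_1}\ldots f_r^{e_r})$, and set $g = \gcd(f_1,\ldots,f_r)$. Since $g$ divides each $f_i$, it divides the sum $\sum_i e_if_i = n$. By Lemma~\ref{lem-norm-group-of-pred-symbol} we have
$$
N_{L/F}L^\times = \{x \in F^\times : g\mid v_F(x)\}.
$$
Now, by hypothesis $n \mid v_F(\alpha_i)$ for each generator $\alpha_i$ of $\mathcal{A}$, and since $g \mid n$, this gives $g\mid v_F(\alpha_i)$. Hence each $\alpha_i \in N_{L/F}L^\times$, and because the norm group is a subgroup of $F^\times$, we conclude $\mathcal{A}\subseteq N_{L/F}L^\times$, i.e., $L \in \Et{n/F}{\mathcal{A}}$. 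Applying this with $F = k_\p$ for each finite prime $\p$ yields the claim.

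There is essentially no obstacle here; the content was already packaged in Lemma~\ref{lem-norm-group-of-pred-symbol}, and the only new observation is the divisibility $g\mid n$, which is immediate from the identity $\sum_i e_if_i = n$.
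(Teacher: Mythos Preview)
Your proof is correct and follows exactly the paper's approach: the paper simply says ``This is immediate from Lemma~\ref{lem-norm-group-of-pred-symbol}'', and you have spelled out the one-line argument, including the easy observation $g\mid n$ from $\sum_i e_if_i = n$. Nothing more is needed.
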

\begin{proof}
    This is immediate from Lemma~\ref{lem-norm-group-of-pred-symbol}. 
\end{proof}
\begin{lemma}
    \label{lem-overram-O-q^{-2}}
    Let $n$ be an integer with $n \geq 3$, let $p$ be a rational prime with $p > n$, let $F$ be a $p$-adic field, and let $L \in \Et{n/F}{}\setminus \Et{n/F}{\pred}$. Then $v_F(d_{L/F}) \geq 2$. 
\end{lemma}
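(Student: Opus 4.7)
The plan is to use tameness to compute the discriminant valuation explicitly via Lemma~\ref{lem-disc-val-of-tame-extension}, and then do a short case analysis on the structure of the splitting symbol.

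First I would observe that since $p > n$, every ramification index $e_i$ in the splitting symbol $(L,F) = (f_1^{e_1}\ldots f_r^{e_r})$ satisfies $e_i \leq n < p$, so each field factor $L_i/F$ is tamely ramified. Writing $L = L_1\times\ldots\times L_r$, the discriminant of a product étale algebra is the product of the component discriminants, so by Lemma~\ref{lem-disc-val-of-tame-extension} we obtain the clean formula
$$
v_F(d_{L/F}) = \sum_{i=1}^r f_i(e_i - 1).
$$

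Next I would use the hypothesis $L \notin \Et{n/F}{\pred}$, which by definition means the integers $e_1,\ldots,e_r$ are not mutually coprime, i.e.\ $\gcd(e_1,\ldots,e_r) \geq 2$; in particular every $e_i \geq 2$. I would then split into two cases. If $r \geq 2$, then each summand $f_i(e_i-1) \geq 1$, so $v_F(d_{L/F}) \geq r \geq 2$ immediately. If instead $r = 1$, then $L$ is a field with $(L,F) = (f^e)$ and $ef = n \geq 3$, and non-predictability forces $e \geq 2$. Here $v_F(d_{L/F}) = f(e-1)$: if $f \geq 2$ we get $f(e-1) \geq 2$ directly, and if $f = 1$ then $e = n \geq 3$, giving $v_F(d_{L/F}) = e - 1 \geq 2$. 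In every case the bound holds.

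There is no real obstacle here — the only subtle point is interpreting ``mutually coprime'' consistently with its usage in the proof of Lemma~\ref{lem-norm-group-of-pred-symbol} (where it is used to produce integers $r_i$ with $\sum r_i e_i = 1$), which forces the convention that the single-element tuple $(e_1)$ is mutually coprime iff $e_1 = 1$. Once that is pinned down, the $r=1$ sub-case with $f=1$ (requiring $n\geq 3$) is the only spot where the hypothesis $n\geq 3$ is actually used, and the rest is arithmetic.
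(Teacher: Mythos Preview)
Your proof is correct and follows essentially the same approach as the paper: use $p>n$ to get tameness, apply Lemma~\ref{lem-disc-val-of-tame-extension} to obtain $v_F(d_{L/F})=\sum_i f_i(e_i-1)$, deduce $e_i\geq 2$ for all $i$ from non-predictability, and finish with a short case analysis. The only cosmetic difference is the case split---the paper branches on whether some $e_i\geq 3$ versus all $e_i=2$, while you branch on $r\geq 2$ versus $r=1$---but both are immediate.
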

\begin{proof}
    Let $(L,F) = (f_1^{e_1}\ldots f_r^{e_r})$. Since $p > n$, we have $p\nmid e_i$ for each $i$, so by Lemma~\ref{lem-disc-val-of-tame-extension} we have 
    $$
    v_F(d_{L/F}) = \sum_i f_i(e_i-1). 
    $$
    Since $(L,F)$ is not predictable, the $e_i$ are not mutually coprime, so there is some $d \geq 2$ with $d \mid e_i$ for all $i$, and therefore $e_i \geq 2$ for each $i$. If any of the integers $e_i$ is at least $3$, then we are done. On the other hand, if $e_i = 2$ for every $i$, then
    $$
    v_F(d_{L/F}) = \sum_i f_i = \frac{n}{2} > 1,
    $$
    so we are done. 
\end{proof}
\begin{definition}[Partitions]
    \label{defi-partition}
Let $d$ be a nonnegative integer and let $m$ be a positive integer. The symmetric group $S_m$ acts by permutation of coordinates on the set $\Z_{\geq 0}^m$ of nonnegative integers. We define a \emph{partition of $d$ into $m$ parts} to be an equivalence class 
$$
[(a_i)] \in \Z_{\geq 0}^m / S_m,
$$
such that $\sum_i a_i = d$. This is often referred to as a partition into ``at most'' $m$ parts, but for our purposes it is more convenient to allow parts to be zero. Write $\Part(d, m)$ for the number of partitions of $d$ into $m$ parts. Given a partition 
$$
\pi = (a_1,\ldots, a_m)
$$
of $d$ into $m$ parts, write $\sum \pi$ for the sum $\sum_i a_i$. 
\end{definition}
Let $\sigma = (f_1^{e_1}\ldots f_r^{e_r})$ be a degree $n$ splitting symbol with 
$$
\sum_i f_i(e_i - 1) = d. 
$$
Then we obtain a partition 
$$
\pi(\sigma) := (e_1 - 1, \ldots, e_1 - 1, e_2 - 1,\ldots, e_2 - 1, \ldots, e_r - 1, \ldots, e_r - 1),
$$
of $d$ into $n - d$ parts, where each term $e_i - 1$ appears $f_i$ times. For a splitting symbol $\sigma = (f_1^{e_1}\ldots f_r^{e_r})$, define 
$$
d_\sigma := \sum_i f_i(e_i - 1)
$$
and 
$$
\#\Aut(\sigma) = \Big(\prod_i f_i \Big)\cdot \#\{\sigma \in S_n : (e_{\sigma(i)},f_{\sigma(i)}) = (e_i, f_i)\text{ for all $i$}\}.
$$
Note that $d_\sigma$ can be read off from $\pi(\sigma)$. Recall that, given a $p$-adic field $F$, we write $q$ as shorthand for the size $q_F$ of the residue field of $F$. 

\begin{lemma}
    \label{lem-num-exts-with-ramification-partition}
    Let $F$ be a $p$-adic field for some rational prime $p$. Let $n$ and $d$ be integers with $0 \leq d \leq n - 1$, and let $\pi_0$ be a partition of $d$ into $n-d$ parts. We have 
    $$
    \m\big(\{L \in \Et{n/F}{} : \pi(L,F) = \pi_0\}\big) = \frac{1}{q^d}. 
    $$
\end{lemma}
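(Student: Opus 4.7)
The plan is to package all the masses in a multivariate generating function, factor it as an Euler product over field extensions, and extract the desired coefficient. For each integer $a\geq 0$, introduce a formal variable $T_a$, and set
$$
Z(T_0, T_1, \ldots) := \sum_{N\geq 1}\sum_{L \in \Et{N/F}{}} \frac{q^{-v_F(\disc(L/F))}}{\#\Aut(L/F)} \prod_{a\geq 0} T_a^{\mu_a(L)},
$$
where $\mu_a(L)$ denotes the multiplicity of $a$ in $\pi(L,F)$. The lemma is equivalent to the assertion that the coefficient of $\prod_a T_a^{\mu_a}$, where $\mu_a$ is the multiplicity of $a$ in $\pi_0$, equals $q^{-d}$.

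First I would exploit that every $L$ decomposes uniquely as $L\cong \prod_j M_j^{k_j}$ with the $M_j$ pairwise non-isomorphic field extensions of $F$, using the identities $v_F(\disc(L/F)) = \sum_j k_j v_F(\disc(M_j/F))$ and $\#\Aut(L/F) = \prod_j (\#\Aut(M_j/F))^{k_j}\cdot k_j!$, to factor
$$
Z = \prod_{M/F \textnormal{ field}} \exp\!\bigg( \frac{q^{-v_F(\disc(M/F))}}{\#\Aut(M/F)}\cdot T_{e(M/F)-1}^{f(M/F)} \bigg).
$$

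The key local input, which I expect to be the main obstacle, is the identity
$$
\sum_{\substack{M/F \textnormal{ field}\\ e(M/F)=e,\, f(M/F)=f}} \frac{q^{-v_F(\disc(M/F))}}{\#\Aut(M/F)} = \frac{q^{-f(e-1)}}{f}. \qquad (\ast)
$$
Every such $M$ contains the unique unramified degree $f$ extension $F_f/F$, with $M/F_f$ totally ramified of degree $e$. Serre's mass formula applied over $F_f$ (whose residue field has size $q^f$) gives $\sum_{M'/F_f \textnormal{ t.r. deg } e} (q^f)^{-v_{F_f}(\disc(M'/F_f))}/\#\Aut(M'/F_f) = q^{-f(e-1)}$. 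To convert this into a sum over $F$-isomorphism classes, I would track the $\Gal(F_f/F)\cong C_f$-action on $F_f$-embeddings into a fixed $M/F$: each such $M/F$ arises from exactly $f/s(M)$ distinct $M'/F_f$, where $s(M) = |\im(\Aut(M/F)\to \Gal(F_f/F))|$, and $\#\Aut(M/F_f) = \#\Aut(M/F)/s(M)$ because $F_f$ is the unique maximal unramified subextension. Using $v_F(\disc(M/F)) = f\cdot v_{F_f}(\disc(M/F_f))$ to match the discriminant factors, the factors of $s(M)$ cancel and the $F_f$-sum becomes $f$ times the left-hand side of $(\ast)$.

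With $(\ast)$ in hand, taking $\log$ of the Euler product and regrouping by $(e,f)$ with $a = e-1$ gives
$$
\log Z = \sum_{a\geq 0}\sum_{f\geq 1} \frac{T_a^f\, q^{-fa}}{f} = -\sum_{a\geq 0}\log\!\big(1 - T_a q^{-a}\big),
$$
hence $Z = \prod_{a\geq 0} \sum_{\mu_a\geq 0} T_a^{\mu_a} q^{-a\mu_a}$. The coefficient of $\prod_a T_a^{\mu_a}$ is $\prod_a q^{-a\mu_a} = q^{-\sum_a a\mu_a} = q^{-d}$, completing the proof. The hard part is $(\ast)$, which rests on Serre's mass formula and the orbit analysis; once it is established, the rest is generating-function bookkeeping.
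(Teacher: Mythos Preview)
Your proof is correct. The paper's own proof is simply a citation to \cite[Proposition~2.2]{bhargava-mass-formula}, and what you have written is essentially Bhargava's argument: the exponential/Euler-product formula reduces everything to the field identity $(\ast)$, which in turn follows from Serre's mass formula over the unramified extension $F_f$ together with the $\Gal(F_f/F)$-orbit count relating $F_f$-isomorphism classes to $F$-isomorphism classes.
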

\begin{proof}
    In the case $F = \Q_p$, this is \cite[Proposition~2.2]{bhargava-mass-formula}. The appendix of \cite{bhargava-mass-formula} explains how to extend the proof to arbitrary $F$.
\end{proof}
\begin{corollary}
    \label{cor-mass-of-etale-algebras-with-given-symbol-disc}
    Let $F$ be a $p$-adic field. Let $n$ and $d$ be integers with $0 \leq d \leq n - 1$. Then 
    $$
    \m\big(\{L \in \Et{n/F}{} : d_{(L,F)} = d\}\big) = \frac{\Part(d,n-d)}{q^d}.
    $$
\end{corollary}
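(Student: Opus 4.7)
The plan is to partition the set $\{L \in \Et{n/F}{} : d_{(L,F)} = d\}$ according to the refined invariant $\pi(L,F)$ and then apply Lemma~\ref{lem-num-exts-with-ramification-partition} directly to each piece.

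First, I would verify the compatibility between $d_\sigma$ and $\pi(\sigma)$. For any degree $n$ splitting symbol $\sigma = (f_1^{e_1}\ldots f_r^{e_r})$, the construction of $\pi(\sigma)$ places $f_i$ copies of $e_i - 1$ into the partition, so $\sum \pi(\sigma) = \sum_i f_i(e_i - 1) = d_\sigma$ and the number of parts is $\sum_i f_i$. Since $n = \sum_i f_i e_i = \sum_i f_i(e_i - 1) + \sum_i f_i$, the number of parts equals $n - d_\sigma$. Consequently, $d_{(L,F)} = d$ holds if and only if $\pi(L,F)$ is a partition of $d$ into exactly $n-d$ parts.

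The second step is the decomposition
$$
\{L \in \Et{n/F}{} : d_{(L,F)} = d\} \;=\; \bigsqcup_{\pi_0} \{L \in \Et{n/F}{} : \pi(L,F) = \pi_0\},
$$
where $\pi_0$ ranges over partitions of $d$ into $n-d$ parts. By Definition~\ref{defi-partition}, there are exactly $\Part(d, n-d)$ such partitions. Lemma~\ref{lem-num-exts-with-ramification-partition} assigns pre-mass $1/q^d$ to each fibre, and summing over the $\Part(d, n-d)$ partitions immediately yields the claimed value $\Part(d, n-d)/q^d$.

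There is no real obstacle here: the entire content of the corollary is the bookkeeping identity relating $d_\sigma$, $\sum \pi(\sigma)$, and the number of parts of $\pi(\sigma)$, after which the result is a one-line consequence of the previous lemma. The only thing worth flagging carefully is that the number-of-parts count is exactly $n-d$, since this is what matches the indexing used in $\Part(d, n-d)$.
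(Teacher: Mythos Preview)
Your proposal is correct and matches the paper's approach: the paper simply says ``This follows easily from Lemma~\ref{lem-num-exts-with-ramification-partition},'' and you have spelled out the bookkeeping identity $\sum_i f_i = n - d_\sigma$ and the partition-by-$\pi(L,F)$ decomposition that make that invocation work.
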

\begin{proof}
    This follows easily from Lemma~\ref{lem-num-exts-with-ramification-partition}.
\end{proof}

We also record the following lemma, which will be useful later:
\begin{lemma}
    \label{lem-mass-of-all-algebras-with-symbol}
    Let $n$ be a positive integer, let $F$ be a $p$-adic field, and let $\sigma \in \Split_n$. We have 
    $$
    \m\big(\Et{\sigma/F}{}\big) = \frac{1}{q^{d_\sigma}} \cdot \frac{1}{\#\Aut(\sigma)}.
    $$
\end{lemma}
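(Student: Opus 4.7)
The plan is a two-stage reduction: first, reduce from an arbitrary splitting symbol $\sigma$ to the case $\sigma = (f^e)$ of a single field factor via a multinomial identity; second, reduce the case $(f^e)$ to the totally ramified case $(1^e)$ by Galois descent to the unramified extension, where Lemma~\ref{lem-num-exts-with-ramification-partition} applies directly.

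For the first stage, I will write $L \in \Et{\sigma/F}{}$ as $L = \prod_M M^{a_M}$, with $M$ ranging over isomorphism classes of field extensions of $F$ and $a_M$ the multiplicity. Since $v_F(d_{L/F}) = \sum_M a_M v_F(d_{M/F})$ and $\#\Aut(L/F) = \prod_M \#\Aut(M/F)^{a_M}\cdot a_M!$, the pre-mass contribution of $L$ factorises as $\prod_M x_M^{a_M}/a_M!$, where $x_M := q^{-v_F(d_{M/F})}/\#\Aut(M/F)$. Grouping the factors of $\sigma$ by their distinct $(e'_j, f'_j)$-types with multiplicities $m_j$, the multinomial identity $\sum_{\sum a_M = m}\prod_M x_M^{a_M}/a_M! = (\sum_M x_M)^m/m!$ gives
$$
\m\big(\Et{\sigma/F}{}\big) = \prod_j \frac{\mu(f'_j, e'_j)^{m_j}}{m_j!}, \qquad \mu(f,e) := \m\big(\Et{(f^e)/F}{}\big).
$$

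For the second stage, let $K/F$ be the unique degree-$f$ unramified extension. Each field $L$ with $(L, F) = (f^e)$ contains a unique unramified subextension isomorphic to $K$, realising $L$ as an element of $\Et{(1^e)/K}{}$. The group $\Gal(K/F) \cong C_f$ acts on $\Et{(1^e)/K}{}$ by twisting the $K$-structure, and the orbit space is $\Et{(f^e)/F}{}$. The stabiliser of $L$ equals the image of the restriction $\Aut(L/F) \to \Gal(K/F)$, whose kernel is $\Aut(L/K)$; hence each orbit has size $f \cdot \#\Aut(L/K)/\#\Aut(L/F)$. Using $v_F(d_{L/F}) = f v_K(d_{L/K})$ and $q_K = q^f$, an orbit-stabiliser computation yields $\m(\Et{(1^e)/K}{}) = f \mu(f, e)$. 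Since the partition $(e-1)$ over $K$ is realised by the sole splitting symbol $(1^e)$, Lemma~\ref{lem-num-exts-with-ramification-partition} gives $\m(\Et{(1^e)/K}{}) = q_K^{-(e-1)} = q^{-f(e-1)}$, so $\mu(f,e) = 1/(f q^{f(e-1)})$.

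Substituting into the first-stage formula and using $d_\sigma = \sum_j m_j f'_j(e'_j - 1)$ and $\#\Aut(\sigma) = \prod_j (f'_j)^{m_j} m_j!$ delivers the desired identity. The main obstacle will be verifying the Galois descent: specifically, that the weight $q_K^{-v_K(d_{L/K})}/\#\Aut(L/K)$ is $\Gal(K/F)$-invariant. This comes down to checking $\#\Aut(L^\tau/K) = \#\Aut(L/K)$ (the underlying field is unchanged, so the abstract automorphism groups agree), and $v_K(d_{L^\tau/K}) = v_K(d_{L/K})$ (since $d_{L^\tau/K} = \tau(d_{L/K})$ and $v_K$ is $\Gal(K/F)$-stable).
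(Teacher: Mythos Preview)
Your proof is correct, and it differs from the paper's in a meaningful way: the paper does not argue at all but simply cites \cite[Proposition~2.1]{bhargava-mass-formula} (and its appendix for general $F$), treating Lemma~\ref{lem-mass-of-all-algebras-with-symbol} as a black-box import. You instead give a self-contained derivation from Lemma~\ref{lem-num-exts-with-ramification-partition} (itself Bhargava's Proposition~2.2). Your two-stage reduction --- multinomial decomposition over the distinct $(e,f)$-types, then orbit--stabiliser descent through the degree-$f$ unramified subfield --- is exactly how one would reprove Bhargava's Proposition~2.1 from Serre's mass formula, so in substance you have recreated the cited argument rather than found an independent route. The payoff of your approach is that the paper becomes internally closed once Lemma~\ref{lem-num-exts-with-ramification-partition} is granted; the cost is a page of bookkeeping the paper elected to outsource. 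Your verification of the two potential obstacles (weight-invariance under the $\Gal(K/F)$-twist, and that the partition $(e-1)$ of $e-1$ into one part is realised only by $(1^e)$) is clean and complete.
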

\begin{proof}
    As with Lemma~\ref{lem-num-exts-with-ramification-partition}, the case $F = \Q_p$ is \cite[Proposition~2.1]{bhargava-mass-formula}. For our statement, the necessary modifications to the proof are explained in the appendix of \cite{bhargava-mass-formula}. 
\end{proof}
    
\begin{lemma}
    \label{lem-positive-prop}
    For $n \geq 3$, we have  
    $$
    \prod_{\p \in \Pi_k} \frac{\m\big(\Et{n/k_\p}{\mathcal{A}}\big)}{\m\big(\Et{n/k_\p}{}\big)} > 0.
    $$
\end{lemma}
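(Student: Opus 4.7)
The plan is to deduce positivity of the infinite product from two independent facts: first, that every individual factor $\m(\Et{n/k_\p}{\mathcal{A}})/\m(\Et{n/k_\p}{})$ is a finite strictly positive real number, and second, that for all but finitely many $\p$ the factor differs from $1$ by a quantity that is summable in $\p$. Standard convergence criteria for infinite products then give the result.

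For the positivity of each factor, the denominator is clearly positive. For the numerator at a finite prime $\p$, the totally split algebra $k_\p^n$ has norm group $k_\p^\times \supseteq \mathcal{A}$ and hence lies in $\Et{n/k_\p}{\mathcal{A}}$, contributing a positive summand to $\m(\Et{n/k_\p}{\mathcal{A}})$. At an infinite prime, Theorem~\ref{thm-mass-infinite-prime} expresses $m(\Et{n/k_\p}{\mathcal{A}})$ as a sum of strictly positive terms (in both real cases the $s=0$ contribution $1/n!$ is included, and the complex case gives $1/n!$ directly), so the factor is positive whenever $n\geq 3$.

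For the convergence step, fix a finite generating set $\alpha_1,\ldots,\alpha_r$ of $\mathcal{A}$. For all but finitely many primes $\p$ we have simultaneously $v_\p(\alpha_i)=0$ for all $i$ and residue characteristic of $k_\p$ exceeding $n$. At such a prime, Corollary~\ref{cor-Et-pred-subset-Et-A} gives $\Et{n/k_\p}{\pred}\subseteq \Et{n/k_\p}{\mathcal{A}}$, and therefore
\[
0 \leq \m\bigl(\Et{n/k_\p}{}\bigr) - \m\bigl(\Et{n/k_\p}{\mathcal{A}}\bigr) \leq \m\bigl(\Et{n/k_\p}{\unpred}\bigr).
\]
Since the residue characteristic of $k_\p$ exceeds $n$, every $L \in \Et{n/k_\p}{\unpred}$ is tame, so Lemma~\ref{lem-disc-val-of-tame-extension} gives $v_{k_\p}(d_{L/k_\p}) = d_{(L,k_\p)}$, and Lemma~\ref{lem-overram-O-q^{-2}} forces $d_{(L,k_\p)} \geq 2$. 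Corollary~\ref{cor-mass-of-etale-algebras-with-given-symbol-disc} then bounds the right-hand side by $\sum_{d=2}^{n-1}\Part(d,n-d)\,q_{k_\p}^{-d} = O_n(q_{k_\p}^{-2})$, with the implicit constant depending only on $n$. Since $\m(\Et{n/k_\p}{}) \geq 1$ (the $d=0$ partition alone contributes $1$), the ratio is at least $1 - O_n(q_{k_\p}^{-2})$.

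The series $\sum_\p q_{k_\p}^{-2}$ converges (it is dominated by $\zeta_k(2)$), so the infinite tail $\prod_{\p\text{ good}}\bigl(1 - O_n(q_{k_\p}^{-2})\bigr)$ converges to a strictly positive real number; multiplication by the finitely many excluded factors, each positive by the first step, preserves positivity and gives the claim. The only step requiring genuine care is the extraction of a uniform $O_n(q^{-2})$ tail bound for the unpredictable contribution; once Lemma~\ref{lem-overram-O-q^{-2}} and Corollary~\ref{cor-mass-of-etale-algebras-with-given-symbol-disc} are combined, the remainder is a routine convergence argument.
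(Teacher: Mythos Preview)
Your proof is correct and follows essentially the same approach as the paper: both restrict to primes with residue characteristic exceeding $n$ and $v_\p(\alpha_i)=0$, invoke Corollary~\ref{cor-Et-pred-subset-Et-A}, Lemma~\ref{lem-overram-O-q^{-2}}, and Corollary~\ref{cor-mass-of-etale-algebras-with-given-symbol-disc} to bound the deficit by $O_n(q^{-2})$, and then conclude by standard convergence of the product. You are slightly more explicit than the paper in justifying the positivity of each individual factor (particularly at infinite primes via Theorem~\ref{thm-mass-infinite-prime}), but the substance is the same.
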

\begin{proof}
    Since the factor 
    $$
    \frac{\m\big(\Et{n/k_\p}{\mathcal{A}}\big)}{\m\big(\Et{n/k_\p}{}\big)}
    $$
    is strictly positive for each $\p$, without loss of generality we may assume that $\p$ is a finite prime with $\chara(\co_k/\p) > n$ and $v_\p(\alpha_i) = 0$ for all $i$. Then all degree $n$ \'etale algebras $L/k_\p$ are tamely ramified, so Lemma~\ref{lem-disc-val-of-tame-extension} tells us that 
    $$
    v_{k_\p}(d_{L/k_\p}) = d_{(L,k_\p)},
    $$
    for each $L \in \Et{n/k_\p}{}$. For $0 \leq d \leq n-1$, let 
    $
    a_d = \Part(d, n - d). 
    $
    Corollary~\ref{cor-Et-pred-subset-Et-A}, Lemma~\ref{lem-overram-O-q^{-2}}, and Corollary~\ref{cor-mass-of-etale-algebras-with-given-symbol-disc} tell us that 
    $$
    \m\big(\Et{n/k_\p}{\mathcal{A}}\big) \geq \m\big(\Et{n/k_\p}{\pred}\big) \geq a_0 + a_1 q^{-1} 
    $$
    and 
    $$
    \m\big(\Et{n/k_\p}{}\big) = a_0 + a_1q^{-1} + \ldots + a_{n-1}q^{-(n-1)}. 
    $$
    It is easy to write down a positive real number $a$, independent of $\p$, such that 
    $$
    1 - aq^{-2} \leq \frac{\m\big(\Et{n/k_\p}{\mathcal{A}}\big)}{\m\big(\Et{n/k_\p}{}\big)} \leq 1,
    $$
    and the result follows. 
\end{proof}
\begin{lemma}
    \label{lem-unit-norm-of-all-tot-ram-implies-nth-power}
    Let $F$ be a local field and let $n$ be any positive integer. Then 
    $$
    \bigcap_{L \in \Et{n/F}{}} N_{L/F}L^\times = F^{\times n}. 
    $$
\end{lemma}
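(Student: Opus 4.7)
The inclusion $F^{\times n} \subseteq \bigcap_L N_{L/F}L^\times$ is immediate: for any $L \in \Et{n/F}{}$ and any $y \in F^\times$, embedding $y$ diagonally via $F \hookrightarrow L$ yields $N_{L/F}(y) = y^n$, since the norm from an $F$-algebra of dimension $n$ acts as the $n$th-power map on scalars.

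For the reverse inclusion, the plan is to show that every $x \in F^\times \setminus F^{\times n}$ fails to be a norm from some specific $L \in \Et{n/F}{}$. First I would dispatch the archimedean cases: if $F = \C$, or if $F = \R$ and $n$ is odd, then $F^{\times n} = F^\times$ and there is nothing to prove; if $F = \R$ and $n$ is even, the algebra $L = \C^{n/2}$ has norm group $\R_{>0} = \R^{\times n}$, which does not contain $x$.

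Now suppose $F$ is $p$-adic. The quotient $G := F^\times / F^{\times n}$ is a finite abelian group whose exponent divides $n$. By the structure theorem, for every non-identity $\overline{x} \in G$ there is a surjective homomorphism $\phi : G \twoheadrightarrow H$ onto a cyclic group $H$ whose order $d$ divides $n$ (since $d$ divides the exponent of $G$), with $\phi(\overline{x}) \neq 0$. Composing with $F^\times \twoheadrightarrow G$ produces an open subgroup $N \subseteq F^\times$ of index $d$ containing $F^{\times n}$ but not $x$, and local class field theory yields a cyclic extension $M/F$ of degree $d$ with $N = N_{M/F}M^\times$. Setting $L := M^{n/d}$, an étale algebra of degree $n$, and using the identity $N_{L/F}L^\times = \prod_i N_{L_i/F}L_i^\times$ for a product étale algebra $L = L_1 \times \cdots \times L_r$, together with the idempotence of subgroup multiplication, I get $N_{L/F}L^\times = N_{M/F}M^\times = N$; since $x \notin N$, this is the desired algebra. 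The only substantive input is local class field theory used to realize $N$ as a norm group of an abelian extension; everything else is elementary group theory and bookkeeping, so I do not expect any serious obstacle beyond this standard appeal.
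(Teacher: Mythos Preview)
Your proof is correct and follows essentially the same strategy as the paper's: reduce to the $p$-adic case, use the structure of $F^\times/F^{\times n}$ together with local class field theory to find, for each $x\notin F^{\times n}$, a cyclic extension of degree $d\mid n$ whose norm group omits $x$, and then pad that extension up to degree $n$. The only cosmetic difference is in the padding step: the paper takes an arbitrary degree-$n/d$ field extension on top of the cyclic one, whereas you take the $(n/d)$-fold product $M^{n/d}$; both yield an element of $\Et{n/F}{}$ with the same norm group.
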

\begin{proof}
    If $F$ is $\R$ or $\C$, then the result is easy to see. Therefore, we will assume that $F$ is $p$-adic. By the structure theorem for finitely generated abelian groups, we may write 
    $$
    F^\times / F^{\times n} = \bigoplus_{i=1}^r (\Z/d_i\Z)e_i, 
    $$
    for basis elements $e_i \in F^\times / F^{\times n}$ and integers $d_i\geq 2$ with $d_i \mid n$. For each $i$, let 
    $$
    A_i = \bigoplus_{j\neq i} (\Z/d_j\Z)e_j,
    $$
    and let $E_i/F$ be the degree $d_i$ abelian field extension with 
    $$
    \big(N_{E_i/F}E_i^\times\big) / F^{\times n} = A_i. 
    $$
    For each $i$, let $L_i/E_i$ be any field extension of degree $\frac{n}{d_i}$, so that 
    $$
    \big(N_{L_i/F}L_i^\times\big) / F^{\times n} \subseteq A_i.
    $$
    The result follows since $\bigcap_i A_i = 0$. 
\end{proof}

\begin{theorem}
    \label{thm-prop-between-0-and-1}
    Let $n \geq 3$, let $k$ be a number field, and let $\mathcal{A}\subseteq k^\times$ be a finitely generated subgroup. Assuming Conjecture~\ref{conj-N(X;Sigma)-asymptotics} is true for $n$ (as is the case for $n\leq 5$), we have
    $$
    0 < \lim_{X\to\infty}\frac{N_{k,n}(X;\mathcal{A})}{N_{k,n}(X)} \leq 1,
    $$
    with equality if and only if 
    $$
    \mathcal{A} \subseteq \ker\Big(
        k^\times \to \prod_{\p\in \Pi_k} k_\p^\times / k_\p^{\times n} 
    \Big).
    $$
\end{theorem}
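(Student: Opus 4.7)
The plan is to reduce the theorem to a comparison of Euler products, using tools already assembled in the excerpt. First, I would apply Conjecture~\ref{conj-N(X;Sigma)-asymptotics} in two instances: once with the trivial family $\Sigma^{\triv}_\p = \Et{n/k_\p}{}$, yielding an Euler product for $\lim_{X\to\infty} N_{k,n}(X)/X$, and once with $\Sigma^{\mathcal{A}}_\p = \Et{n/k_\p}{\mathcal{A}}$. The latter family is acceptable by Lemma~\ref{lem-norm-conds-acceptable}, and by the Hasse norm principle (Theorem~\ref{thm-hasse-norm-principle}) its count coincides with $N_{k,n}(X;\mathcal{A})$. Dividing the two asymptotics, the residue $\operatornamewithlimits{Res}_{s=1}\zeta_k(s)$ and the normalising factors $(q_\p-1)/q_\p$ cancel at each finite prime, producing
\[
\lim_{X\to\infty} \frac{N_{k,n}(X;\mathcal{A})}{N_{k,n}(X)} = \prod_{\p \in \Pi_k} \frac{\m\bigl(\Et{n/k_\p}{\mathcal{A}}\bigr)}{\m\bigl(\Et{n/k_\p}{}\bigr)},
\]
with the understanding that the pre-mass $\m$ is replaced by the mass $m$ at infinite places.

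The upper bound then follows immediately from the inclusion $\Et{n/k_\p}{\mathcal{A}}\subseteq \Et{n/k_\p}{}$, which forces every factor of the product to lie in $(0,1]$. For positivity, the contribution from the finite primes is handled by Lemma~\ref{lem-positive-prop}, while at each of the finitely many infinite primes Theorem~\ref{thm-mass-infinite-prime} gives a strictly positive factor, so the infinite product is strictly positive.

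For the equality case, I would argue as follows. Since every factor lies in $(0,1]$, the infinite product equals $1$ if and only if \emph{every} factor equals $1$. A given factor at $\p$ equals $1$ precisely when $\Et{n/k_\p}{\mathcal{A}} = \Et{n/k_\p}{}$, i.e.\ when every \'etale algebra $L/k_\p$ has $\mathcal{A}$ inside its norm group, which in turn is equivalent to
\[
\mathcal{A} \subseteq \bigcap_{L \in \Et{n/k_\p}{}} N_{L/k_\p}L^\times.
\]
By Lemma~\ref{lem-unit-norm-of-all-tot-ram-implies-nth-power}, this intersection equals $k_\p^{\times n}$, so the equality case amounts to the condition $\mathcal{A}\subseteq k_\p^{\times n}$ for every $\p \in \Pi_k$, which is exactly the statement that $\mathcal{A}$ lies in the kernel of the diagonal map $k^\times \to \prod_\p k_\p^\times/k_\p^{\times n}$.

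No step above presents a serious obstacle, since the technical heavy lifting --- the Hasse norm principle, acceptability, convergence of the Euler product, and the computation of $\bigcap_L N_{L/F}L^\times$ --- has already been carried out. If anything, the most delicate point is the bookkeeping required to pass from the product expression to the pointwise ``equals $1$ at every $\p$'' reformulation, but this is a standard consequence of each factor lying in $(0,1]$.
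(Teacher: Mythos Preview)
Your proposal is correct and follows essentially the same approach as the paper: the paper's proof also derives the Euler product from Conjecture~\ref{conj-N(X;Sigma)-asymptotics}, invokes Lemma~\ref{lem-positive-prop} for strict positivity, and uses Lemma~\ref{lem-unit-norm-of-all-tot-ram-implies-nth-power} to characterise the equality case. The only difference is presentational---the paper compresses the Hasse norm and acceptability steps into an implicit appeal to Theorem~\ref{thm-density-as-product-of-masses}, whereas you spell them out.
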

\begin{proof}
    Since we are assuming that Conjecture~\ref{conj-N(X;Sigma)-asymptotics} holds for $n$, Lemma~\ref{lem-positive-prop} tells us that 
    $$
    0 < \lim_{X\to\infty}\frac{N_{k,n}(X;\mathcal{A})}{N_{k,n}(X)} \leq 1,
    $$
    with equality if and only if $\Et{n/k_\p}{\mathcal{A}} = \Et{n/k_\p}{}$ for all $\p \in \Pi_k$. The result then follows from Lemma~\ref{lem-unit-norm-of-all-tot-ram-implies-nth-power}. 
\end{proof}
\begin{definition}
    Given a number field $k$, we say that a positive integer $n$ is \emph{power-pathological in $k$} if the following three statements are true:
    \begin{enumerate}
        \item $n = 2^rn'$ for an odd integer $n'$ and an integer $r \geq 3$.
        \item The extension $k(\mu_{2^r})/k$ is not cyclic.
        \item All primes $\p$ of $k$ lying over $2$ decompose in $k(\mu_{2^r})/k$. 
    \end{enumerate}
\end{definition}
\begin{theorem}
    [Hasse principle for $n^\mathrm{th}$ powers]
    \label{thm-hasse-for-nth-powers}
    Let $\Pi_k$ be the set of primes of $k$. We have 
    $$
    \# \ker\Big(
        k^\times / k^{\times n} \to \prod_{\p \in \Pi_k} k_\p^\times / k_\p^{\times n} 
    \Big) = \begin{cases}
        1 \quad\text{if $n$ is not power-pathological in $k$},
        \\
        2\quad\text{if $n$ is power-pathological in $k$}. 
    \end{cases}
    $$
\end{theorem}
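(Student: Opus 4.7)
The plan is to identify this statement as the classical Grunwald--Wang theorem and to follow its standard proof (cf.\ Artin--Tate, Chapter~X, or Neukirch--Schmidt--Wingberg, Chapter~IX). First, I would reduce to the case where $n$ is a prime power. Writing $n = \prod_\ell \ell^{v_\ell(n)}$ and using coprimality, there is a canonical isomorphism $k^\times/k^{\times n} \cong \bigoplus_\ell k^\times / k^{\times \ell^{v_\ell(n)}}$ that is compatible with the corresponding decompositions of each $k_\p^\times/k_\p^{\times n}$. Hence the global kernel decomposes as a direct sum of the kernels at each prime-power exponent, and it suffices to treat $n = \ell^s$.

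For $n = \ell^s$ with $\ell$ odd, or with $\ell = 2$ and $s \leq 2$, the cyclotomic extension $K = k(\mu_{\ell^s})$ is cyclic over $k$, and the plan is to show the kernel is trivial. If $\alpha \in k^\times$ becomes an $\ell^s$-th power in every $k_\p^\times$, then $\alpha$ becomes an $\ell^s$-th power in every completion of $K$, so by Kummer theory over $K$ combined with Chebotarev (the Kummer extension $K(\alpha^{1/\ell^s})/K$ is abelian and locally trivial everywhere, hence globally trivial), we obtain $\alpha \in K^{\times\ell^s}$. A Hilbert~90 style descent, using cyclicity of $\Gal(K/k)$, then gives $\alpha \in k^{\times\ell^s}$. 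This handles every case except $\ell = 2$, $s \geq 3$ with $K/k$ non-cyclic.

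The remaining case is the main obstacle and the genuine content of Grunwald--Wang. On the one hand, in the non-cyclic case one exhibits an explicit class in $k^\times/k^{\times 2^s}$, classically built from $\zeta_{2^s} + \zeta_{2^s}^{-1}$, which is non-trivial precisely when $K/k$ is non-cyclic; triviality of this class at non-dyadic finite and at archimedean places is automatic, while at a dyadic place $\p$ a direct Hensel-type computation shows local triviality iff $\p$ splits in $K/k$. This gives the lower bound of $2$ on the kernel whenever all three pathology conditions hold. On the other hand, the upper bound of $2$ follows from the Poitou--Tate nine-term exact sequence applied to $\mu_{2^s}$: the Shafarevich kernel is Pontryagin-dual to the cokernel of a local-to-global map, and explicit inspection shows this cokernel has order at most $2$ and is trivial as soon as any one of the three power-pathological conditions fails. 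Combining the two directions yields the dichotomy as stated.
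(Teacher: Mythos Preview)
Your identification of the statement with the Grunwald--Wang theorem is exactly right, and the paper's own proof is a single sentence: it cites the result as the special case $T = \Pi_k$ of Theorem~9.1.11 in Neukirch--Schmidt--Wingberg. So you and the paper agree entirely on the content; the only difference is that you outline how Grunwald--Wang is proved, whereas the paper is content to quote it as a black box.

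Your sketch follows the classical route and is broadly sound. Two small remarks. First, what you call a ``Hilbert~90 style descent'' is more precisely the vanishing of $H^1\big(\Gal(K/k),\mu_{\ell^s}\big)$ for $K = k(\mu_{\ell^s})$ whenever this Galois group is cyclic; this is a short cohomological computation (and its failure in the non-cyclic case is exactly where the special case enters), not literally Hilbert~90. Second, for the upper bound in the special case the standard treatments in Artin--Tate and Neukirch--Schmidt--Wingberg argue directly with this $H^1$ together with the local constraints, rather than invoking the full Poitou--Tate nine-term sequence; your global-duality approach is also legitimate, but would need some unpacking to make precise. Since the paper only cites the result, your outline already exceeds what is needed here.
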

\begin{proof}
    This is the special case $T = \Pi_k$ of \cite[Theorem~9.1.11]{neukirch-cohomology-of-nfs}. 
\end{proof}
\begin{corollary}
    If $n$ is not power-pathological in $k$, then 
    $$
    \ker\Big(
        k^\times \to \prod_{\p \in \Pi_k} k_\p^\times / k_\p^{\times n} 
    \Big) = k^{\times n}. 
    $$
    If $n$ is power-pathological in $k$, then 
    $$
    \ker\Big(
        k^\times \to \prod_{\p \in \Pi_k} k_\p^\times / k_\p^{\times n} 
    \Big) = k^{\times n} \cup u k^{\times n},
    $$
    for some $u \in k^{\times (n/2)} \setminus k^{\times n}$.
\end{corollary}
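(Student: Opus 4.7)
The first assertion is immediate from Theorem~\ref{thm-hasse-for-nth-powers}: in the non-power-pathological case, the kernel
$$
K := \ker\Big(k^\times/k^{\times n} \to \prod_{\p\in\Pi_k} k_\p^\times/k_\p^{\times n}\Big)
$$
is trivial, so the preimage of $K$ under the surjection $k^\times \twoheadrightarrow k^\times/k^{\times n}$ is exactly $k^{\times n}$.

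For the power-pathological case, Theorem~\ref{thm-hasse-for-nth-powers} gives $\# K = 2$, so the preimage in $k^\times$ has the form $k^{\times n}\cup u k^{\times n}$ for any lift $u\in k^\times$ of the non-trivial class of $K$; the only task is to choose $u$ in $k^{\times(n/2)}$. First, I would extract an algebraic constraint: since $K$ has order $2$, the class of $u^2$ is trivial in $k^\times/k^{\times n}$, so $u^2 = v^n$ for some $v \in k^\times$, giving $u = \epsilon v^{n/2}$ for some $\epsilon\in\{\pm 1\}$. In the ``$+$'' case we are done immediately, so it suffices to rule out ``$-$''. Note that we cannot patch this within the coset: replacing $u$ by $uw^n$ leads to $uw^n = -(vw^2)^{n/2}$, and for this to be an $(n/2)$-th power we would need some $\zeta \in k^\times$ with $\zeta^{n/2} = -1$. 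Any such $\zeta$ must have order of the form $2^r b$ with $b\mid n'$, so $\zeta^b \in k$ is a primitive $2^r$-th root of unity, forcing $\mu_{2^r}\subseteq k$ and contradicting the power-pathological assumption that $\Gal(k(\mu_{2^r})/k)$ is non-cyclic (in particular non-trivial).

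If $k$ has a real embedding $f: k \to \R$, the ``$-$'' case is excluded elementarily: at the corresponding real completion, local $n$-th-powerness of $u$ forces $f(u) > 0$, while $f(v)^{n/2} \geq 0$ since $n/2 = 2^{r-1}n'$ is even (as $r \geq 3$), so $\epsilon = +1$. For totally complex $k$ — which is possible, for example $k = \Q(\sqrt{-3})$ with $n = 8$ — the global sign is not determined by local data, and this is the main obstacle. My plan here is to invoke the explicit construction underlying the proof of \cite[Theorem~9.1.11]{neukirch-cohomology-of-nfs}, which realises the generator of $K$ via a cyclotomic norm from $k(\mu_{2^r})$ and is manifestly of the form $\alpha^{n/2}$. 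A purely abstract diagram chase down the chain $K \to K_{n/2} \to K_{n/4}\to \cdots$ (where $K_m$ denotes the analogous kernel for exponent $m$) does not suffice: it only yields that the generator is a $(4n')$-th power, which is strictly weaker than being an $(n/2)$-th power when $r \geq 4$.
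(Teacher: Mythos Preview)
The paper provides no proof for this Corollary; it is stated immediately after Theorem~\ref{thm-hasse-for-nth-powers} without argument, and the intended justification is presumably that the full statement of \cite[Theorem~9.1.11]{neukirch-cohomology-of-nfs} --- not merely the cardinality recorded in Theorem~\ref{thm-hasse-for-nth-powers} --- exhibits an explicit generator of the kernel in the special case, from which membership in $k^{\times(n/2)}$ can be read off. Your proposed final step (appealing to that explicit construction) is therefore exactly what the paper is silently doing.

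Your write-up goes well beyond the paper in one respect: the reduction to the sign $\epsilon\in\{\pm 1\}$, the observation that $-1\notin k^{\times(n/2)}$ under the power-pathological hypothesis, the real-place argument, and the remark that the naive descent $K_n\to K_{n/2}\to\cdots$ settles only the case $r=3$ (since $\Gal(k(\mu_4)/k)$ has order at most $2$ and is therefore cyclic, so $4n'$ is never power-pathological) --- all of this is correct and clarifies precisely why the bare size statement of Theorem~\ref{thm-hasse-for-nth-powers} is insufficient. The only ``gap'' is that in the totally complex case you state a plan rather than carrying it out; but since the paper's own argument is nothing more than an implicit citation of the same source, your proposal is at least as complete as what the paper offers.
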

\begin{proof}
    [Proof of Theorem~\ref{thm-prop-between-0-and-1-for-n-leq-5}]

    This is immediate from Theorems~\ref{thm-malle-bhargava-true-for-n-leq-5}, \ref{thm-prop-between-0-and-1}, and \ref{thm-hasse-for-nth-powers}.
\end{proof}
\begin{corollary}
    If $n$ is not power-pathological in $k$ and Conjecture~\ref{conj-N(X;Sigma)-asymptotics} is true for $n$, then $\mathcal{A}$ is in the norm group of $100\%$ of $S_n$-$n$-ics if and only if $\mathcal{A}$ is in the norm group of all $S_n$-$n$-ics. 
\end{corollary}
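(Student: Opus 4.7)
The plan is to derive this corollary directly from Theorem~\ref{thm-prop-between-0-and-1} together with the preceding corollary to Theorem~\ref{thm-hasse-for-nth-powers}; essentially all of the work has been done in those two results, and what remains is just a chase through them. The direction $(\text{all}) \Rightarrow (100\%)$ is trivial, so the content lies entirely in proving $(100\%) \Rightarrow (\text{all})$.

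For that direction, I would start from the assumption that $\mathcal{A}$ lies in the norm group of $100\%$ of $S_n$-$n$-ics. Since we are assuming Conjecture~\ref{conj-N(X;Sigma)-asymptotics} for this $n$, Theorem~\ref{thm-prop-between-0-and-1} applies and tells us that
$$
\mathcal{A}\subseteq \ker\Big(k^\times \to \prod_{\p\in\Pi_k} k_\p^\times/k_\p^{\times n}\Big).
$$
Because $n$ is not power-pathological in $k$, the corollary to Theorem~\ref{thm-hasse-for-nth-powers} identifies this kernel with $k^{\times n}$, so every $\alpha \in \mathcal{A}$ can be written as $\alpha = \beta^n$ for some $\beta \in k^\times$.

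To finish, I would observe that for any $S_n$-$n$-ic extension $K/k$, viewing $\beta$ as a constant in $K$ gives $N_{K/k}(\beta) = \beta^{[K:k]} = \beta^n = \alpha$, so $\alpha \in N_{K/k}K^\times$. Applying this to every $\alpha \in \mathcal{A}$ and every $S_n$-$n$-ic $K/k$ yields $\mathcal{A}\subseteq N_{K/k}K^\times$ for \emph{all} such $K$, as desired. There is no serious obstacle: the corollary is a cosmetic repackaging of the two preceding results, and the whole argument is a few lines.
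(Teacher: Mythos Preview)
Your proposal is correct and follows exactly the approach implicit in the paper: the corollary is stated without proof precisely because it is an immediate consequence of Theorem~\ref{thm-prop-between-0-and-1} combined with the corollary to Theorem~\ref{thm-hasse-for-nth-powers}, and your argument spells out that combination accurately.
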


\section{Prime degree extensions}
\label{sec-prime-degree-exts}

The main purpose of this section is to prove Theorem~\ref{thm-m-A-p-for-prime-n}. We start by fixing some notation: Let $p$ and $\ell$ be rational primes; let $F$ be a $p$-adic field with residue field of size $q$; let $\pi_F$ be a uniformiser of $F$; let $\mathcal{A}\subseteq F^\times$ be a finitely generated subgroup. 
\begin{lemma}
    \label{lem-epi-symbols-for-ell}
    We have  
    $$
    \Split_\ell = \{(1^\ell), (\ell)\} \sqcup \Split_\ell^{\epi}. 
    $$
\end{lemma}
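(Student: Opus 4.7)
The plan is a direct case analysis on $r$, the number of factors in the splitting symbol $\sigma = (f_1^{e_1}\ldots f_r^{e_r})$, exploiting the constraint $\sum_i e_i f_i = \ell$ together with the primality of $\ell$. The single nontrivial input is that ``mutually coprime'' in the paper's terminology means $\gcd(e_1, \ldots, e_r) = 1$ (as used, for instance, in the proof of Lemma~\ref{lem-norm-group-of-pred-symbol} via B\'ezout), and not pairwise coprime.

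If $r = 1$, the equation $e_1 f_1 = \ell$ and the primality of $\ell$ force $(e_1, f_1) \in \{(1, \ell), (\ell, 1)\}$, producing exactly the two exceptional symbols $(\ell)$ and $(1^\ell)$.

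For $r \geq 2$, the goal is to show $\sigma \in \Split_\ell^{\epi}$. The key observation is that any common divisor $d$ of all the $e_i$ divides each $e_i f_i$, hence divides $\sum_i e_i f_i = \ell$; so $d \in \{1, \ell\}$, and $d = \ell$ is ruled out because it would force $\sum_i e_i f_i \geq r\ell \geq 2\ell$. The same argument applied to the $f_i$ finishes the epimorphic claim. Disjointness of the two pieces is immediate once one notes that $(\ell)$ and $(1^\ell)$ each have $r = 1$ but fail one of the gcd conditions (the single $f_1 = \ell$ in the first case, the single $e_1 = \ell$ in the second), so neither lies in $\Split_\ell^{\epi}$.

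I do not anticipate any genuine obstacle; the whole lemma is really a bookkeeping statement separating the two degenerate $r = 1$ symbols from the epimorphic ones, and the verification reduces to a short divisibility argument using the primality of $\ell$.
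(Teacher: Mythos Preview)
Your proof is correct and follows essentially the same approach as the paper's own proof: both argue that for $\sigma \neq (1^\ell),(\ell)$, any common divisor of the $e_i$ (respectively the $f_i$) must divide $\sum_i e_if_i = \ell$ and hence equal $1$. Your write-up simply makes the case split on $r$ and the disjointness check more explicit than the paper's one-line version.
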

\begin{proof}
    Let $\sigma \in \Split_\ell \setminus \{(1^\ell), (\ell)\}$. Writing $\sigma = (f_1^{e_1}\ldots f_r^{e_r})$, we have
    $
    \ell = \sum_i f_ie_i,
    $
    so the $e_i$ (respectively the $f_i$) are mutually coprime, and therefore $\sigma \in \Split_\ell^\epi$. 
\end{proof}

\begin{corollary}
    \label{cor-m-et-ell-F-A}
    We have 
    $$
    \m\big(\Et{\ell/F}{\mathcal{A}}\big) = \m\big(\Et{(1^\ell)/F}{\mathcal{A}}\big) + \m\big(\Et{(\ell)/F}{\mathcal{A}}\big) + \m\big(\Et{\ell/F}{\epi}\big).  
    $$
\end{corollary}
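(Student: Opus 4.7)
The plan is to write $\Et{\ell/F}{\mathcal{A}}$ as a disjoint union of three pieces indexed by splitting symbol type and then take pre-masses. By Lemma~\ref{lem-epi-symbols-for-ell}, every degree $\ell$ splitting symbol is either $(1^\ell)$, $(\ell)$, or epimorphic, and these three possibilities are mutually exclusive. Intersecting the resulting partition of $\Et{\ell/F}{}$ with $\Et{\ell/F}{\mathcal{A}}$ yields a disjoint decomposition
$$
\Et{\ell/F}{\mathcal{A}} = \Et{(1^\ell)/F}{\mathcal{A}} \sqcup \Et{(\ell)/F}{\mathcal{A}} \sqcup \Et{\ell/F}{\epi,\mathcal{A}},
$$
where $\Et{\ell/F}{\epi,\mathcal{A}} := \Et{\ell/F}{\epi}\cap \Et{\ell/F}{\mathcal{A}}$.

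The key observation is that the third term simplifies: for any $L \in \Et{\ell/F}{\epi}$, Lemma~\ref{lem-norm-group-of-pred-symbol} gives $N_{L/F}L^\times = F^\times$, so $\mathcal{A}\subseteq N_{L/F}L^\times$ is automatic. Hence $\Et{\ell/F}{\epi,\mathcal{A}} = \Et{\ell/F}{\epi}$, and additivity of pre-mass over disjoint unions delivers the claimed identity. There is no real obstacle here; the statement is essentially a bookkeeping consequence of Lemmas~\ref{lem-epi-symbols-for-ell} and \ref{lem-norm-group-of-pred-symbol}, with the only substantive input being that epimorphic splitting symbols force surjective norm maps.
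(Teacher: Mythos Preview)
Your proof is correct and follows essentially the same approach as the paper, which simply cites Lemmas~\ref{lem-norm-group-of-pred-symbol} and \ref{lem-epi-symbols-for-ell}. You have merely made explicit the disjoint decomposition and the observation that epimorphic algebras have surjective norm, which is exactly what the paper intends by ``follows easily''.
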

\begin{proof}
    This follows easily from Lemmas~\ref{lem-norm-group-of-pred-symbol} and \ref{lem-epi-symbols-for-ell}.
\end{proof}

\begin{lemma}
    \label{lem-mass-Et-ell-epi}
    We have 
    $$
    \m\big(\Et{\ell/F}{\epi}\big) = \Big(1 - \frac{1}{\ell}\Big) + \sum_{d=1}^{\ell - 2} \frac{\Part(d,\ell - d)}{q^d}.
    $$
\end{lemma}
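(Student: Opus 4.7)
The plan is to use the disjoint union $\Split_\ell = \{(1^\ell), (\ell)\} \sqcup \Split_\ell^{\epi}$ from Lemma~\ref{lem-epi-symbols-for-ell} to write
$$
\m\big(\Et{\ell/F}{\epi}\big) = \m\big(\Et{\ell/F}{}\big) - \m\big(\Et{(\ell)/F}{}\big) - \m\big(\Et{(1^\ell)/F}{}\big),
$$
and then evaluate each of the three pre-masses on the right-hand side using tools already proved in the paper.

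For the total pre-mass $\m\big(\Et{\ell/F}{}\big)$, I would sum Corollary~\ref{cor-mass-of-etale-algebras-with-given-symbol-disc} over $d \in \{0, 1, \ldots, \ell-1\}$ (since $d_\sigma \leq n-1$ for any $\sigma \in \Split_n$), obtaining
$$
\m\big(\Et{\ell/F}{}\big) = \sum_{d=0}^{\ell-1} \frac{\Part(d, \ell - d)}{q^d}.
$$
For the two distinguished splitting symbols, the relevant invariants are easy to read off: for $\sigma = (\ell)$ we have $d_\sigma = \ell(1-1) = 0$ and $\#\Aut(\sigma) = \ell$, while for $\sigma = (1^\ell)$ we have $d_\sigma = \ell - 1$ and $\#\Aut(\sigma) = 1$. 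Applying Lemma~\ref{lem-mass-of-all-algebras-with-symbol} then yields $\m\big(\Et{(\ell)/F}{}\big) = 1/\ell$ and $\m\big(\Et{(1^\ell)/F}{}\big) = 1/q^{\ell-1}$.

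Finally, I would observe that the two extreme terms of the sum for $\m\big(\Et{\ell/F}{}\big)$ are trivial to evaluate: $\Part(0,\ell) = 1$ (the only partition of $0$ into $\ell$ parts is the all-zero one), contributing $1$ to the sum, and $\Part(\ell - 1, 1) = 1$ (the only partition of $\ell - 1$ into a single part is $(\ell-1)$), contributing $1/q^{\ell-1}$. Pulling these two terms out of the sum and subtracting $1/\ell$ and $1/q^{\ell-1}$ gives precisely the claimed formula. There is no serious obstacle here; the argument is essentially the bookkeeping that one gets by applying Corollary~\ref{cor-m-et-ell-F-A} in the trivial case $\mathcal{A} = \{1\}$, combined with the observation that $\Et{(1^\ell)/F}{\{1\}} = \Et{(1^\ell)/F}{}$ and $\Et{(\ell)/F}{\{1\}} = \Et{(\ell)/F}{}$.
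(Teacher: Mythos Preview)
Your proof is correct and takes essentially the same approach as the paper: both use the decomposition of $\Split_\ell$ from Lemma~\ref{lem-epi-symbols-for-ell} together with Corollary~\ref{cor-mass-of-etale-algebras-with-given-symbol-disc}, and both need the values $\m(\Et{(\ell)/F}{}) = 1/\ell$ and $\m(\Et{(1^\ell)/F}{}) = 1/q^{\ell-1}$. The only cosmetic difference is that you organize the argument as ``total pre-mass minus two corrections'', while the paper stratifies by $d_\sigma$ first and handles the boundary values $d=0$ and $d=\ell-1$ separately; your use of Lemma~\ref{lem-mass-of-all-algebras-with-symbol} to compute the two excluded pre-masses is if anything more explicit than what the paper writes.
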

\begin{proof}
    Since $(1^\ell)$ is the only degree $\ell$ splitting symbol $\sigma$ with $d_\sigma = \ell - 1$, we have  
    $$
    \m\big(\Et{\ell/F}{\epi}\big) = \sum_{d = 0}^{\ell - 2} \m\big(
    \{L \in \Et{\ell / F}{\epi} : d_{(L,F)} = d\}    
    \big).
    $$
    Since $d_{(\ell)} = 0$, it follows from Lemma~\ref{lem-epi-symbols-for-ell} that 
    $$
    \{L \in \Et{\ell / F}{\epi} : d_{(L,F)} = d\} = \{L \in \Et{\ell / F}{} : d_{(L,F)} = d\}
    $$
    whenever $1 \leq d \leq \ell - 2$. The result follows from Corollary~\ref{cor-mass-of-etale-algebras-with-given-symbol-disc}. 
\end{proof}

Recall from Section~\ref{sec-intro} that for each positive integer $n$, we write $\overline{\mathcal{A}}^n$ for the group $\mathcal{A}F^{\times n}/F^{\times n}$. Recall the notion of a stratified generating set from Definition~\ref{defi-strat-gen-set}. 
\begin{lemma}
    \label{lem-mass-of-l-ic-unram}
    Let $(A_0, A_1)$ be a stratified generating set for $\overline{\mathcal{A}}^\ell$. Then we have 
    $$
    \m\big(\Et{(\ell)/F}{\mathcal{A}}\big) = \begin{cases}
        \frac{1}{\ell}\quad\text{if $A_1 = \varnothing$},
        \\
        0 \quad\text{otherwise}. 
    \end{cases}
    $$
\end{lemma}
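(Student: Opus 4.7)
The plan is to identify the set $\Et{(\ell)/F}{\mathcal{A}}$ explicitly and evaluate its pre-mass directly. The first step is to observe that there is, up to isomorphism, a unique unramified field extension $L/F$ of degree $\ell$; it is cyclic Galois, so $\#\Aut(L/F) = \ell$, and its discriminant is trivial, so $\m(\{L\}) = 1/\ell$. Since $(\ell)$ is the splitting symbol of exactly this one extension, we have $\Et{(\ell)/F}{\mathcal{A}} \in \{\varnothing, \{L\}\}$, and the whole problem reduces to deciding when $\mathcal{A} \subseteq N_{L/F}L^\times$.

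Next, I would compute the norm group by applying Lemma~\ref{lem-norm-group-of-pred-symbol} to the predictable splitting symbol $(\ell)$, whose single inertia degree is $\ell$. This immediately yields
$$
N_{L/F}L^\times = \{x \in F^\times : \ell \mid v_F(x)\}.
$$
Since $F^{\times \ell}$ is clearly contained in this group, the condition $\mathcal{A} \subseteq N_{L/F}L^\times$ is equivalent to $\overline{\mathcal{A}}^\ell$ lying inside the image of $N_{L/F}L^\times$ in $F^\times/F^{\times \ell}$; because this image is a subgroup, that in turn is equivalent to every element of $A_0 \cup A_1$ lying in $N_{L/F}L^\times$.

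Finally, I would check the valuation criterion stratum by stratum. Each $\alpha \in A_0$ has $v_F(\alpha) = 0$, which is divisible by $\ell$, so $A_0$ never obstructs the containment. Each $\alpha \in A_1$ has $v_F(\alpha) = 1$, which is not divisible by $\ell$ (since $\ell \geq 2$), so a single element of $A_1$ already forces $\mathcal{A} \not\subseteq N_{L/F}L^\times$. Combining the two cases gives exactly the dichotomy stated in the lemma. There is no real obstacle here; the result is essentially a formal consequence of Lemma~\ref{lem-norm-group-of-pred-symbol} combined with the valuation stratification built into the definition of a stratified generating set.
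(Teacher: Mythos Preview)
Your proof is correct and takes essentially the same approach as the paper: both identify the unique unramified degree $\ell$ extension, compute its norm group as $\{x \in F^\times : \ell \mid v_F(x)\} = \co_F^\times F^{\times \ell}$, and read off the dichotomy from the valuation condition on $A_1$. The only cosmetic difference is that you derive the norm group from Lemma~\ref{lem-norm-group-of-pred-symbol}, whereas the paper simply quotes it as the well-known class field theory fact.
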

\begin{proof}
    This follows from the fact that the unramified degree $\ell$ extension of $F$ has norm group $\co_F^\times F^{\times \ell}$. 
\end{proof}
Because of Corollary~\ref{cor-m-et-ell-F-A} and Lemmas~\ref{lem-mass-Et-ell-epi} and \ref{lem-mass-of-l-ic-unram}, it remains only to find the pre-mass $\m\big(\Et{(1^\ell)/F}{\mathcal{A}}\big)$ of totally ramified degree $\ell$ extensions with $\mathcal{A}$ in their norm group. We address the tamely ramified case $p \neq \ell$ and the wildly ramified case $p = \ell$ separately. 

\subsection{Finding $\m\big(\Et{(1^\ell)/F}{\mathcal{A}}\big)$ when $p \neq \ell$}

For each positive integer $m$, fix a primitive $m^\mathrm{th}$ root of unity $\zeta_m$ in the algebraic closure of $F$.

\begin{lemma}
    \label{lem-tot-tame-ram-classification}
    Let $e$ be a positive integer coprime to $q$ and define $g = \gcd(e,q-1)$. For $0 \leq j \leq g-1$, define 
    $$
    L_j = F\Big(\sqrt[e]{\zeta_{q-1}^j\pi_F}\Big). 
    $$ 
    The extensions $L_j/F$ are all nonisomorphic, and 
    $$
    \Et{(1^e)/F}{} = \big\{L_j : j = 0,1,\ldots, g-1\big\}.
    $$
\end{lemma}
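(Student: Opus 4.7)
The plan is to exhibit every element of $\Et{(1^e)/F}{}$ as $F(\sqrt[e]{u\pi_F})$ for some unit $u$, and then to classify these extensions via the quotient $\co_F^\times/\co_F^{\times e}$, which turns out to be cyclic of order $g$ and generated by the Teichm\"uller lift of a generator of $\F_F^\times$.

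First, given a totally ramified degree $e$ extension $L/F$ with uniformiser $\varpi_L$, total ramification gives $\varpi_L^e = u'\pi_F$ for some $u' \in \co_L^\times$, and the triviality of the residue extension lets us write $u' = u(1+\epsilon)$ with $u \in \co_F^\times$ and $\epsilon \in \p_L$. The tameness hypothesis $\gcd(e,q)=1$ makes the $e$-th power map a bijection on the pro-$p$ group $U_L^{(1)}$, so $(1+\epsilon)^{-1}$ has an $e$-th root $w \in U_L^{(1)}$, and replacing $\varpi_L$ by $\varpi_L w$ yields a uniformiser whose $e$-th power equals $u\pi_F \in F^\times$. Hence $L = F(\sqrt[e]{u\pi_F})$.

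Next, I would determine when $F(\sqrt[e]{u_1\pi_F}) \cong F(\sqrt[e]{u_2\pi_F})$. Writing $L_i = F(\alpha_i)$ with $\alpha_i^e = u_i\pi_F$ and using the fact that $\alpha_1$ must be a uniformiser of $L_2$ when they are isomorphic, we get $\alpha_1 = \alpha_2 v$ for some $v \in \co_{L_2}^\times$, so $u_1/u_2 = v^e$. Then, using the Teichm\"uller decomposition $\co_{L_2}^\times \cong \mu_{q-1} \times U_{L_2}^{(1)}$ together with the unique $e$-divisibility of $U_{L_2}^{(1)}$, one verifies $\co_{L_2}^{\times e} \cap \co_F^\times = \mu_{q-1}^e \cdot U_F^{(1)} = \co_F^{\times e}$. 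Conversely, if $u_1/u_2 \in \co_F^{\times e}$, rescaling $\alpha_1$ by an $e$-th root of $u_1/u_2$ in $F$ yields an explicit $F$-isomorphism $L_1 \cong L_2$.

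Finally, the same Teichm\"uller decomposition combined with $U_F^{(1)} = (U_F^{(1)})^e$ shows that $\co_F^\times/\co_F^{\times e} \cong \mu_{q-1}/\mu_{q-1}^e$ is cyclic of order $\gcd(e,q-1) = g$, generated by the image of $\zeta_{q-1}$. Hence the $g$ cosets $\zeta_{q-1}^j \co_F^{\times e}$ for $0 \leq j \leq g-1$ give exactly $g$ pairwise non-isomorphic extensions $L_j$, which exhaust $\Et{(1^e)/F}{}$. The only step requiring real thought is the Hensel-style adjustment of the uniformiser in the first step; the remainder is routine unit-group bookkeeping.
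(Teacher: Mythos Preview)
Your argument is correct. The paper does not give a proof at all: it simply cites \cite[Theorem~7.2]{compute-extensions} and notes that the sign change from $X^e+\zeta_{q-1}^j\pi_F$ to $X^e-\zeta_{q-1}^j\pi_F$ requires only trivial modifications. By contrast, you supply a self-contained argument via the Teichm\"uller splitting $\co_F^\times \cong \mu_{q-1}\times U_F^{(1)}$ and the bijectivity of the $e$-th power map on the pro-$p$ group $U_F^{(1)}$. This is the standard route and is exactly what lies behind the cited theorem; the gain is that a reader need not chase the reference. One cosmetic point: in Step~2 you write ``$\alpha_1$ must be a uniformiser of $L_2$'' and ``$\alpha_1=\alpha_2 v$'', when you really mean the image $\phi(\alpha_1)$ under the isomorphism $\phi\colon L_1\to L_2$; the computation is unaffected, but making this explicit would avoid the apparent identification of elements in different fields.
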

\begin{proof}
    This is essentially \cite[Theorem~7.2]{compute-extensions}, where we replace the polynomial $X^e + \zeta_{q-1}^j\pi_F$ with $X^e - \zeta_{q-1}^j\pi_F$. The modifications to the proof are trivial. 
\end{proof}
\begin{lemma}
    \label{lem-norm-groups-of-tot-tame-ram-l-exts}
    Suppose that $p\neq \ell$. The following two statements are true:
    \begin{enumerate}
        \item If $\ell \mid q-1$, then $\Et{(1^\ell)/F}{} = \{L_0,\ldots, L_{\ell -1}\}$, where $L_j = F\Big(\sqrt[\ell]{\zeta_{q-1}^j\pi_F}\Big)$. Moreover, for each $j$ we have $\Aut(L_j/F) \cong C_\ell$ and
        $$
        N_{L_j/F}L_j^\times = \big\{u^\ell ((-1)^{\ell + 1}\zeta_{q-1}^j\pi_F)^m : u \in \co_F^\times, m \in \Z\big\}.
        $$
        \item If $\ell \nmid q-1$, then $\Et{(1^\ell)/F}{} = \{L\}$, where $L = F(\sqrt[\ell]{\pi_F})$. Moreover, we have $\Aut(L/F) = 1$ and $N_{L/F}L^\times = F^\times$.
    \end{enumerate}
\end{lemma}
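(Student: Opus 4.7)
The plan is to deduce both parts of the lemma from Lemma~\ref{lem-tot-tame-ram-classification} applied with $e = \ell$, then compute the automorphism groups and norm groups case by case. In case~(1), $\gcd(\ell, q-1) = \ell$, so that lemma produces exactly the $\ell$ extensions $L_j = F(\sqrt[\ell]{\zeta_{q-1}^j \pi_F})$; in case~(2), $\gcd(\ell, q-1) = 1$, giving the unique extension $L = F(\sqrt[\ell]{\pi_F})$. A key observation used throughout is that $\varpi_j := \sqrt[\ell]{\zeta_{q-1}^j\pi_F}$ is a uniformiser of $L_j$ with minimal polynomial $X^\ell - \zeta_{q-1}^j\pi_F$ over $F$, so $N_{L_j/F}(\varpi_j) = (-1)^{\ell+1}\zeta_{q-1}^j\pi_F$.

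For the automorphism groups, I would first note that, since $p\neq \ell$, the pro-$p$ group $U_F^{(1)}$ has no $\ell$-torsion, so $\mu_\ell \subseteq F$ if and only if $\ell \mid q-1$. In case~(1), $\mu_\ell\subseteq F$, so each $L_j/F$ is a Kummer $\ell$-extension and therefore cyclic of order $\ell$. In case~(2), $\zeta_\ell \notin F$, so the only root of $X^\ell - \pi_F$ lying in $L$ is $\sqrt[\ell]{\pi_F}$ itself, forcing $\Aut(L/F) = 1$.

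For the norm groups, the common input is that $u^\ell = N_{L/F}(u)$ for all $u \in \co_F^\times$, so $\co_F^{\times\ell}$ is always contained in the norm group. In case~(2), the bijectivity of $x\mapsto x^\ell$ on both $\mu_{q-1}$ (since $\gcd(\ell,q-1)=1$) and $U_F^{(1)}$ (since $p\neq \ell$) yields $\co_F^{\times\ell} = \co_F^\times$; combined with the fact that $(-1)^{\ell+1}\pi_F$ is a norm, this gives $N_{L/F}L^\times = F^\times$. In case~(1), I would combine $\co_F^{\times\ell}\subseteq N_{L_j/F}L_j^\times$ with the computation of $N_{L_j/F}(\varpi_j)$ to exhibit the claimed subgroup inside the norm group, and then invoke local class field theory (which provides index $\ell$) together with a short index calculation in $F^\times$ to upgrade this containment to equality.

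I expect the main technical point to be this final index calculation in case~(1): one must verify that $\co_F^{\times\ell}\cdot\langle(-1)^{\ell+1}\zeta_{q-1}^j\pi_F\rangle$ has index exactly $\ell$ in $F^\times$. Since $p\neq\ell$ gives $[\co_F^\times : \co_F^{\times\ell}] = \gcd(\ell,q-1) = \ell$ and $(-1)^{\ell+1}\zeta_{q-1}^j\pi_F$ has valuation one, this reduces to a routine count inside the decomposition $F^\times \cong \co_F^\times \times \pi_F^{\Z}$, after which equality with the norm group follows by matching indices. Everything else in the proof amounts to Hensel's lemma and a direct computation of the norm of a uniformiser.
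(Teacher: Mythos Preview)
Your proposal is essentially correct and, in case~(1), follows the paper's proof almost verbatim: classify via Lemma~\ref{lem-tot-tame-ram-classification}, observe the extension is Kummer hence cyclic, exhibit $\co_F^{\times\ell}$ and $(-1)^{\ell+1}\zeta_{q-1}^j\pi_F$ as norms, then match the index $[\co_F^\times:\co_F^{\times\ell}]=\ell$ against the class-field-theoretic index.

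There is one small slip to patch in case~(2). You write ``$\zeta_\ell\notin F$, so the only root of $X^\ell-\pi_F$ lying in $L$ is $\sqrt[\ell]{\pi_F}$ itself.'' What you actually need is $\zeta_\ell\notin L$, not merely $\zeta_\ell\notin F$. The paper obtains this by citing \cite[Page~140, Proposition~5.7]{neukirch2013algebraic}; you can equally well argue that if $\zeta_\ell\in L$ then, since $[L:F]=\ell$ is prime and $\zeta_\ell\notin F$, we would have $L=F(\zeta_\ell)$, which is unramified over $F$ (as $p\neq\ell$), contradicting $(L,F)=(1^\ell)$.

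Your treatment of the norm group in case~(2) is genuinely different from the paper's and, in fact, more elementary. The paper argues that since $L/F$ is non-Galois of prime degree, its maximal abelian subextension is $F$, whence class field theory forces $N_{L/F}L^\times=F^\times$. You instead observe directly that $\co_F^{\times\ell}=\co_F^\times$ (because the $\ell$-th power map is bijective on both $\mu_{q-1}$ and the pro-$p$ group $U_F^{(1)}$), so $\co_F^\times$ together with the uniformiser $N_{L/F}(\sqrt[\ell]{\pi_F})=(-1)^{\ell+1}\pi_F$ already generates all of $F^\times$. This avoids invoking class field theory for this half of the lemma; the paper's route is slightly shorter to state but leans on more machinery.
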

\begin{proof}
    In both cases, the classification of extensions comes from Lemma~\ref{lem-tot-tame-ram-classification}. To prove the statements about norms and automorphisms, we consider the two cases separately. 
    \begin{enumerate}
        \item Suppose that $\ell \mid q-1$, so that $\zeta_\ell\in F^\times$. Since the elements $\zeta_\ell^j\pi_F$ are all uniformisers of $F$, and $\pi_F$ is an arbitrary uniformiser, we only need to prove the result for $L_0 = F(\sqrt[\ell]{\pi_F})$. The element $\sqrt[\ell]{\pi_F}$ has minimal polynomial $X^\ell - \pi_F$ over $F$, which splits in $L_0$, so $L_0/F$ is Galois, hence cyclic. By class field theory, we have
        $$
        [F^\times : N_{L_0/F}L_0^\times] = \ell.
        $$
        Moreover, it is clear that 
        $$
        \big\{u^\ell ((-1)^{\ell + 1}\pi_F)^m : u \in \co_F^\times, m \in \Z\big\} \subseteq N_{L_0/F}L_0^\times. 
        $$
        By \cite[Part II, Proposition~3.7]{neukirch-bonn}, we have 
        $$
        [\co_F^\times : \co_F^{\times \ell}] = \# \mu_\ell(F) = \ell.
        $$
        It follows that  
        $$
        \big[F^\times : \big\{u^\ell ((-1)^{\ell + 1}\pi_F)^m : u \in \co_F^\times, m \in \Z\big\}\big] = \ell,
        $$
        and the result follows. 
    \item Suppose that $\ell \nmid q-1$. Since $\ell \neq p$ and $\ell \nmid q-1$, the result \cite[Page~140, Proposition~5.7]{neukirch2013algebraic} tells us that $\zeta_\ell \not \in L$. It follows that the polynomial $X^\ell - \pi_F$ has only one root in $L$, and therefore $L/F$ is not Galois, hence it has only one automorphism. The maximal abelian subextension of $L/F$ is $F$, so it has trivial norm group. 
    \end{enumerate}
\end{proof}
\begin{lemma}
    \label{lem-size-of-Et-1^ell-A-for-l-divides-(q-1)}
    Suppose that $p \neq \ell$ and $\ell \mid q-1$. Let $(A_0, A_1)$ be a stratified generating set for $\overline{\mathcal{A}}^\ell$. 
    Then 
    $$
    \# \Et{(1^\ell)/F}{\mathcal{A}} = \begin{cases}
        \ell \quad\text{if $\mathcal{A}\subseteq F^{\times \ell}$} ,
        \\
        1 \quad\text{if $A_0 = \varnothing$ and $\# A_1 = 1$},
        \\
        0 \quad\text{otherwise}. 
    \end{cases}
    $$
\end{lemma}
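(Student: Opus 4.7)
The plan is to leverage the explicit classification in Lemma~\ref{lem-norm-groups-of-tot-tame-ram-l-exts}(1). Writing $\overline{N}_j := N_{L_j/F}L_j^\times / F^{\times \ell}$, the condition $\mathcal{A}\subseteq N_{L_j/F}L_j^\times$ is equivalent to $\overline{\mathcal{A}}^\ell \subseteq \overline{N}_j$ inside $F^\times/F^{\times \ell}$, and by Lemma~\ref{lem-norm-groups-of-tot-tame-ram-l-exts}, $\overline{N}_j$ is the cyclic subgroup of order $\ell$ generated by $[(-1)^{\ell+1}\zeta_{q-1}^j\pi_F]$.

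The next step is to describe the ambient group. Since $p\neq \ell$ and $\ell\mid q-1$ forces $\mu_\ell\subseteq F^\times$, a short computation (using $\co_F^\times \cong \mu_{q-1}(F)\times U_F^{(1)}$ together with $U_F^{(1)}\subseteq \co_F^{\times\ell}$) gives $|F^\times/F^{\times \ell}| = \ell^2$, so $F^\times/F^{\times \ell}$ is a two-dimensional $\F_\ell$-vector space with exactly $\ell + 1$ one-dimensional subspaces. One of these is the image $\overline{\co_F^\times}$ of $\co_F^\times$, and the remaining $\ell$ must be the $\overline{N}_j$: each has a valuation-$1$ generator (so differs from $\overline{\co_F^\times}$), and the $\overline{N}_j$ are pairwise distinct because the classes $[(-1)^{\ell+1}\zeta_{q-1}^j]$ realize the $\ell$ distinct cosets of the cyclic group $\co_F^\times/\co_F^{\times\ell}$.

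By Definition~\ref{defi-strat-gen-set}, $\#A_0 + \#A_1 = \dim_{\F_\ell}\overline{\mathcal{A}}^\ell\leq 2$, so the remaining argument is a case analysis on this dimension. If $\overline{\mathcal{A}}^\ell = 0$ (equivalently $\mathcal{A}\subseteq F^{\times\ell}$, i.e.\ $A_0 = A_1 = \varnothing$), the containment is vacuous and all $\ell$ extensions qualify. If $\dim\overline{\mathcal{A}}^\ell = 2$, then $\overline{\mathcal{A}}^\ell$ fills the whole two-dimensional space and lies inside no proper subspace, giving $0$. In the one-dimensional case: if $\#A_0 = 1$ and $A_1 = \varnothing$, then $\overline{\mathcal{A}}^\ell$ sits inside $\overline{\co_F^\times}$, which is not one of the $\overline{N}_j$, so again $0$; if $A_0 = \varnothing$ and $\#A_1 = 1$, writing the unique element of $A_1$ as $u\pi_F$ with $u\in\co_F^\times$, the coset $[u]\in\co_F^\times/\co_F^{\times\ell}$ matches $[(-1)^{\ell+1}\zeta_{q-1}^j]$ for exactly one $j$, and for that $j$ one has $\overline{\mathcal{A}}^\ell = \overline{N}_j$, giving a unique surviving extension.

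The main technical check is that $\{[(-1)^{\ell+1}\zeta_{q-1}^j] : 0\leq j\leq \ell-1\}$ is a complete set of representatives for the cyclic group $\co_F^\times/\co_F^{\times\ell}\cong \Z/\ell\Z$; this is immediate for odd $\ell$ and follows for $\ell = 2$ by a brief parity check using $-1 = \zeta_{q-1}^{(q-1)/2}$. Everything else reduces to basic linear algebra on the $\F_\ell$-vector space $F^\times/F^{\times\ell}$ together with the explicit content of Lemma~\ref{lem-norm-groups-of-tot-tame-ram-l-exts}.
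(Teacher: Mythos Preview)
Your proof is correct and follows the same approach the paper intends. The paper's proof is a one-liner (``This follows easily from Lemmas~\ref{lem-tot-tame-ram-classification} and~\ref{lem-norm-groups-of-tot-tame-ram-l-exts}''), and your argument is precisely the natural elaboration of that sentence: reduce to the structure of $F^\times/F^{\times\ell}$ as a two-dimensional $\F_\ell$-space, identify the $\ell$ norm-group images $\overline{N}_j$ as exactly the lines not equal to $\overline{\mathcal{O}_F^\times}$, and then do the obvious case split on $\dim_{\F_\ell}\overline{\mathcal{A}}^\ell$.
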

\begin{proof}
    This follows easily from Lemmas~\ref{lem-tot-tame-ram-classification}~and~\ref{lem-norm-groups-of-tot-tame-ram-l-exts}.
\end{proof}

\begin{corollary}
    \label{cor-mass-1^ell-tame-ram}
    Suppose that $p \neq \ell$, and let $(A_0, A_1)$ be a stratified generating set for $\overline{\mathcal{A}}^\ell$. The following two statements are true:
    \begin{enumerate}
        \item If $\ell \nmid q-1$, then 
        $$
        \m\big(\Et{(1^\ell)/F}{\mathcal{A}}\big) = \frac{1}{q^{\ell - 1}}.
        $$
        \item If $\ell \mid q-1$, then  
        $$
        \m\big(\Et{(1^\ell)/F}{\mathcal{A}}\big) = \begin{cases}
            \frac{1}{q^{\ell-1}} \quad\text{if $\mathcal{A}\subseteq F^{\times \ell}$} ,
            \\
            \frac{1}{\ell q^{\ell - 1}} \quad\text{if $A_0 = \varnothing$ and $\#A_1 = 1$},
            \\
            0 \quad\text{otherwise}. 
        \end{cases}
        $$
    \end{enumerate}
\end{corollary}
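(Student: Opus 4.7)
The plan is to compute the pre-mass directly from its definition
$$
\m(S) = \sum_{L \in S}\frac{\Nm(\disc(L/F))^{-1}}{\#\Aut(L/F)},
$$
by using the preceding lemmas to obtain a uniform value for both the discriminant term and the automorphism term on $\Et{(1^\ell)/F}{}$, so that the pre-mass reduces to a constant multiple of $\#\Et{(1^\ell)/F}{\mathcal{A}}$ already computed in Lemma~\ref{lem-size-of-Et-1^ell-A-for-l-divides-(q-1)}.

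First I would observe that every $L \in \Et{(1^\ell)/F}{}$ is totally, tamely ramified of degree $\ell$ (tameness holds because $p\neq\ell$), so Lemma~\ref{lem-disc-val-of-tame-extension} gives $v_F(d_{L/F}) = 1\cdot(\ell - 1)$ and hence $\Nm(d_{L/F})^{-1} = q^{-(\ell-1)}$ for every such $L$. Next, Lemma~\ref{lem-norm-groups-of-tot-tame-ram-l-exts} pins down the automorphism group: when $\ell \nmid q-1$ the unique extension has trivial automorphism group, and when $\ell \mid q-1$ each of the $\ell$ extensions is Galois with $\#\Aut(L/F) = \ell$. In particular the ratio $\Nm(d_{L/F})^{-1}/\#\Aut(L/F)$ is a constant on $\Et{(1^\ell)/F}{}$ in each of the two cases, equal to $q^{-(\ell-1)}$ and $\ell^{-1}q^{-(\ell-1)}$ respectively.

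For the case $\ell \nmid q - 1$, Lemma~\ref{lem-norm-groups-of-tot-tame-ram-l-exts} also tells us that the unique totally ramified degree $\ell$ extension has norm group equal to all of $F^\times$, so it automatically lies in $\Et{(1^\ell)/F}{\mathcal{A}}$ and the pre-mass equals $q^{-(\ell - 1)}$, proving (1). For the case $\ell \mid q - 1$, I would just multiply the constant ratio $\ell^{-1}q^{-(\ell-1)}$ by the three possible values of $\#\Et{(1^\ell)/F}{\mathcal{A}}$ from Lemma~\ref{lem-size-of-Et-1^ell-A-for-l-divides-(q-1)} (namely $\ell$, $1$, and $0$ according to the three sub-cases on $\mathcal{A}$), yielding the three claimed values in (2).

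There is no real obstacle here: all substantive work has been done in the preceding lemmas, and the proof is a one-line bookkeeping step assembling them. The only subtlety to flag is that one must check that in each sub-case the quantity $\Nm(d_{L/F})^{-1}/\#\Aut(L/F)$ is indeed independent of the particular $L\in \Et{(1^\ell)/F}{\mathcal{A}}$, which is exactly what Lemmas~\ref{lem-disc-val-of-tame-extension} and \ref{lem-norm-groups-of-tot-tame-ram-l-exts} ensure.
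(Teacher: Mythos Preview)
Your proposal is correct and takes essentially the same approach as the paper, which simply cites Lemmas~\ref{lem-disc-val-of-tame-extension}, \ref{lem-norm-groups-of-tot-tame-ram-l-exts}, and \ref{lem-size-of-Et-1^ell-A-for-l-divides-(q-1)} and leaves the bookkeeping to the reader. Your write-up just spells out explicitly how those three lemmas combine.
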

\begin{proof}
    This is immediate from Lemmas~\ref{lem-disc-val-of-tame-extension}, \ref{lem-norm-groups-of-tot-tame-ram-l-exts} and \ref{lem-size-of-Et-1^ell-A-for-l-divides-(q-1)}. 
\end{proof}

\subsection{Finding $\m\big(\Et{(1^\ell)/F}{\mathcal{A}}\big)$ when $p = \ell$}
\label{subsec-wild-ram-Cp-exts}
In the current subsection, we will address the wildly ramified case $p=\ell$. 
\begin{lemma}
    \label{lem-m-Et-1^p-A-in-terms-of-other-quantities}
    We have 
    $$
    \m\big(\Et{(1^p)/F}{\mathcal{A}}\big) = \m\big(\Et{(1^p)/F}{}\big) - \m\big(\Et{(1^p)/F}{C_p/F}\big) + \m\big(\Et{(1^p)/F}{C_p/F,\mathcal{A}}\big).
    $$
\end{lemma}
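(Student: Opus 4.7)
The strategy is to show that for every $L \in \Et{(1^p)/F}{}$ which is not a $C_p$-extension of $F$, the containment $\mathcal{A}\subseteq N_{L/F}L^\times$ holds automatically. Once this is done, the claimed identity falls out immediately from the disjoint decomposition
$$
\Et{(1^p)/F}{} = \Et{(1^p)/F}{C_p/F} \,\sqcup\, \{L \in \Et{(1^p)/F}{} : L/F \text{ is not Galois}\}.
$$

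The heart of the argument is a general local class field theory fact: for any finite separable extension $L/F$ of local fields, if $L^{\mathrm{ab}}$ denotes the maximal subextension of $L/F$ that is abelian over $F$, then
$$
N_{L/F}L^\times = N_{L^{\mathrm{ab}}/F}(L^{\mathrm{ab}})^\times.
$$
I would establish this by using the compatibility of Artin maps under norms, which gives $\theta_F(N_{L/F}L^\times) = \Gal(F^{\mathrm{ab}}/F^{\mathrm{ab}}\cap L)$; since $F^{\mathrm{ab}}\cap L = L^{\mathrm{ab}}$ by definition, Galois correspondence yields the identity. Applying this to $L/F$ non-Galois of prime degree $p$, the index $[L^{\mathrm{ab}}:F]$ divides $p$ and cannot equal $p$ (else $L^{\mathrm{ab}} = L$ would be abelian over $F$, forcing $L/F$ to be Galois). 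Hence $L^{\mathrm{ab}} = F$ and $N_{L/F}L^\times = F^\times$, so such $L$ lies in $\Et{(1^p)/F}{\mathcal{A}}$ for every finitely generated $\mathcal{A}\subseteq F^\times$.

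With this in hand, we obtain
$$
\Et{(1^p)/F}{\mathcal{A}} = \{L \in \Et{(1^p)/F}{} : L/F \text{ is not Galois}\} \,\sqcup\, \Et{(1^p)/F}{C_p/F,\mathcal{A}},
$$
and taking pre-masses on both sides gives exactly
$$
\m\big(\Et{(1^p)/F}{\mathcal{A}}\big) = \m\big(\Et{(1^p)/F}{}\big) - \m\big(\Et{(1^p)/F}{C_p/F}\big) + \m\big(\Et{(1^p)/F}{C_p/F,\mathcal{A}}\big).
$$

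The only real obstacle is the class field theory identity $N_{L/F}L^\times = N_{L^{\mathrm{ab}}/F}(L^{\mathrm{ab}})^\times$ for non-normal $L/F$; this is standard (see e.g.\ Serre's \emph{Local Fields}) but worth either citing or giving a short Artin-map proof as above. Everything else is formal: the decomposition by Galois closure type is set-theoretic, and passing to pre-masses is additive on disjoint unions.
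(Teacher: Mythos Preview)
Your proof is correct and follows essentially the same approach as the paper's. The paper's argument is terser---it simply observes that any $L \in \Et{(1^p)/F}{} \setminus \Et{(1^p)/F}{C_p/F}$ has maximal abelian subextension equal to $F$, hence $N_{L/F}L^\times = F^\times$ by class field theory, and then writes down the same disjoint decomposition you give---but the substance is identical, and your extra detail on the identity $N_{L/F}L^\times = N_{L^{\mathrm{ab}}/F}(L^{\mathrm{ab}})^\times$ is a welcome elaboration rather than a different route.
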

\begin{proof}
    Let 
    $
    L \in \Et{(1^p)/F}{} \setminus \Et{(1^p)/F}{C_p/F}. 
    $
    Then the maximal abelian subextension of $L/F$ is $F$, so by class field theory we have $N_{L/F}L^\times = F^\times$. Therefore, we have
    $$
    \Et{(1^p)/F}{\mathcal{A}} = \Et{(1^p)/F}{C_p/F,\mathcal{A}} \cup \Big(\Et{(1^p)/F}{} \setminus \Et{(1^p)/F}{C_p/F}\Big),
    $$
    and the result follows. 
\end{proof}

Let $L/F$ be an abelian extension of $p$-adic fields, and let $G = \Gal(L/F)$. Write $\mff(L/F)$ for the conductor of the extension $L/F$, defined to be the smallest integer $m$ such that $U_F^{(m)}\subseteq N_{L/F}L^\times$.
For a character $\chi:G \to \C^\times$, the \emph{Artin conductor} of $\chi$ is defined to be 
$$
\mff(\chi) = \sum_{i\geq 0}\frac{\lvert G_i\rvert}{\lvert G_0\rvert}\Big(\chi(1) - \chi(G_i)\Big),
$$
where $(G_i)_i$ is the ramification filtration of $G$, defined by
$$
G_i = \Big\{\varphi \in G: v_L\Big(\frac{\varphi(\pi_L) - \pi_L}{\pi_L}\Big) \geq i\Big\},
$$
and $\chi(G_i)$ is the average value of $\chi(g)$ as $g$ ranges over $G_i$. Write $G^\vee$ for the set of characters $\chi:G\to\C^\times$.
\begin{theorem}
    \label{thm-conductor-discriminant}
    Let $L/F$ be an abelian extension of $p$-adic fields, and let $G = \Gal(L/F)$. We have \begin{enumerate}
        \item $\mff(L/F) = \max_{\chi \in G^\vee} \mff(\chi)$.
        \item $v_F(d_{L/F}) = \sum_{\chi \in G^\vee} \mff(\chi)$. 
    \end{enumerate}
\end{theorem}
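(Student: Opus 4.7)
The plan is to handle the two parts of the theorem with different standard machinery. Part (2) is the classical conductor--discriminant formula (Führerdiskriminantenproduktformel); part (1) is a consequence of local class field theory together with orthogonality of characters on finite abelian groups. Both are essentially in \cite{neukirch2013algebraic} or Serre's \emph{Local Fields}, but let me sketch how I would assemble them.

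For part (2), I would start from Serre's formula for the valuation of the different:
$$
v_L(\mathfrak{d}_{L/F}) = \sum_{i = 0}^\infty (|G_i| - 1).
$$
Applying the ideal norm $d_{L/F} = N_{L/F}(\mathfrak{d}_{L/F})$, and using that $N_{L/F}(\p_L) = \p_F^{f}$ with $f = |G|/|G_0|$, this yields
$$
v_F(d_{L/F}) = \frac{|G|}{|G_0|} \sum_{i=0}^\infty (|G_i| - 1).
$$
On the other side, interchanging the sums in $\sum_\chi \mff(\chi)$ gives
$$
\sum_{\chi \in G^\vee} \mff(\chi) = \frac{1}{|G_0|}\sum_{i \geq 0} |G_i|\sum_{\chi \in G^\vee}\big(\chi(1) - \chi(G_i)\big).
$$
For fixed $i$, orthogonality of characters on the finite abelian group $G$ gives $\sum_\chi \chi(1) = |G|$ and $\sum_\chi \chi(G_i) = \frac{1}{|G_i|}\sum_{g \in G_i}\sum_\chi \chi(g) = |G|/|G_i|$, since the inner sum vanishes unless $g = 1$. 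Thus $\sum_\chi(\chi(1) - \chi(G_i)) = \frac{|G|}{|G_i|}(|G_i|-1)$, and collapsing the outer sum recovers the same expression $\frac{|G|}{|G_0|}\sum_i (|G_i|-1)$, proving (2).

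For part (1), the plan is to use local class field theory. The reciprocity isomorphism $F^\times / N_{L/F}L^\times \xrightarrow{\sim} G$ identifies $G^\vee$ with the character group of $F^\times / N_{L/F}L^\times$, and under this identification the Artin conductor $\mff(\chi)$ is the smallest $m$ such that $\chi$, viewed as a character of $F^\times$, is trivial on $U_F^{(m)}$ (this is the compatibility of the local reciprocity map with the ramification filtration on both sides; see e.g.\ \cite[Chapter V]{neukirch2013algebraic}). Since characters separate points of a finite abelian group, $U_F^{(m)} \subseteq N_{L/F}L^\times$ if and only if every $\chi \in G^\vee$ kills $U_F^{(m)}$, which is to say $\mff(\chi) \leq m$ for every $\chi$. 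Taking the least such $m$ gives $\mff(L/F) = \max_\chi \mff(\chi)$.

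The only non-routine step is the identification in part (1) of $\mff(\chi)$ with the jump in the unit filtration under reciprocity; everything else is a manipulation of the ramification filtration or an application of orthogonality. In a paper of this style, I would expect these to be cited from Serre or Neukirch rather than reproved, since the theorem is stated as a well-known fact being used as a tool for the explicit mass computations in Section~\ref{subsec-wild-ram-Cp-exts}.
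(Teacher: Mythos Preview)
Your proposal is correct, and you anticipated exactly what the paper does: it simply cites standard references (Artin--Tate for part (1) and Keune for part (2)) without any argument. Your sketch goes further, giving the actual proofs of these classical results---the orthogonality computation for the conductor--discriminant formula in (2) is the standard one, and your class-field-theoretic argument for (1) via the compatibility of reciprocity with the unit and ramification filtrations is also the usual route. So in this case you have supplied strictly more than the paper, which treats the theorem purely as an imported tool for the downstream computations in Section~\ref{subsec-wild-ram-Cp-exts}.
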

\begin{proof}
    The first item is \cite[Page 135, Corollary to Theorem~14]{artin-CFT}. The second is \cite[Theorem 17.50]{keune-nf}.
\end{proof}
\begin{lemma}
    \label{lem-cond-disc-for-Cp}
    Let $L/F$ be a $C_p$-extension of $p$-adic fields. Then 
    $$
    \mff(L/F) = \frac{v_F(d_{L/F})}{p-1}. 
    $$
\end{lemma}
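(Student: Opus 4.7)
The plan is to apply Theorem~\ref{thm-conductor-discriminant} and exploit the fact that $G = \Gal(L/F) \cong C_p$ has no nontrivial proper subgroups. Since each ramification group $G_i$ is a subgroup of $G$, we must have $G_i \in \{G, \{1\}\}$ for every $i \geq 0$. In particular, the extension $L/F$ is either unramified (so $G_0 = \{1\}$) or totally ramified (so $G_0 = G$); in the unramified case, every Artin conductor $\mff(\chi)$ vanishes and both sides of the claimed equality are zero, so I will focus on the totally ramified case.

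Assume $L/F$ is totally ramified. Let $m$ be the largest integer such that $G_m = G$, so that $G_i = G$ for $0 \leq i \leq m$ and $G_i = \{1\}$ for $i \geq m+1$. The character group $G^\vee$ consists of the trivial character, which contributes $\mff(\triv) = 0$, together with $p-1$ nontrivial characters. Each nontrivial character $\chi$ of $C_p$ is automatically faithful (its kernel is a proper subgroup of a group of prime order). Therefore $\chi(G_i) = 0$ whenever $G_i = G$ (by character orthogonality) and $\chi(G_i) = \chi(1) = 1$ whenever $G_i = \{1\}$.

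Plugging into the definition of the Artin conductor, for any nontrivial $\chi \in G^\vee$,
$$
\mff(\chi) = \sum_{i=0}^{m} \frac{p}{p}(1 - 0) + \sum_{i \geq m+1} \frac{1}{p}(1 - 1) = m+1.
$$
In particular, the $p-1$ nontrivial characters all share the same Artin conductor. Applying the two parts of Theorem~\ref{thm-conductor-discriminant} then gives
$$
\mff(L/F) = \max_{\chi \in G^\vee} \mff(\chi) = m+1 \qquad \text{and} \qquad v_F(d_{L/F}) = \sum_{\chi \in G^\vee} \mff(\chi) = (p-1)(m+1),
$$
from which the identity $\mff(L/F) = \frac{v_F(d_{L/F})}{p-1}$ is immediate. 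There is no real obstacle here: the entire argument rests on the elementary observation that $C_p$ has no nontrivial proper subgroups, which forces the ramification filtration and the character Artin conductors into an especially rigid shape.
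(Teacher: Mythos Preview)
Your proof is correct and follows essentially the same approach as the paper's: both compute the Artin conductor of each nontrivial character of $C_p$ using the rigidity of the ramification filtration (each $G_i$ is either all of $G$ or trivial), then invoke Theorem~\ref{thm-conductor-discriminant}. The paper writes $c$ for the unique jump index (so $G_{c-1} \neq G_c$), which is your $m+1$; you are slightly more explicit in separating out the unramified case and in justifying why $\chi(G_i) = 0$ for nontrivial $\chi$, but the argument is the same.
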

\begin{proof}
Take $G = \Gal(L/F) = \langle g \rangle$, and write $c$ for the unique integer with $G_{c-1} \neq G_c$. Writing $\omega$ for the complex number $e^{\frac{2\pi i}{p}}$, we have 
$$
G^\vee = \{\chi_j : j=0,1,2,\ldots, p-1\},
$$
where $\chi_j(g) = \omega^j$. We have 
$$
\mff(\chi_j) = \begin{cases}
    0\quad\text{if $j=0$},
    \\
    c\quad\text{otherwise},
\end{cases}
$$
so Theorem~\ref{thm-conductor-discriminant} tells us that $\mff(L/F) = c$ and $v_F(d_{L/F}) = (p-1)c$, and the result follows. 
\end{proof}

Given a set $\Et{\bullet/F}{\bullet}$ of \'etale algebras over $F$, write $\Et{\bullet/F, m}{\bullet}$ and $\Et{\bullet/F, \leq m}{\bullet}$ for the sets of $L \in \Et{\bullet/F}{\bullet}$ with $v_F(d_{L/F}) = m$ and $v_F(d_{L/F}) \leq m$, respectively. 

\begin{lemma}
    \label{lem-chars-to-Cp-exts-(p-1)-to-1}
    Let $m$ be a positive integer. There is a surjective $(p-1)$-to-$1$ map 
    $$
    \operatorname{Epi}_{\F_p}\Big(F^\times / U_F^{(\lfloor\frac{m}{p-1}\rfloor)}F^{\times p} , \F_p\Big) \to \Et{p/F, \leq m}{C_p/F},
    $$
    where the map $\chi : F^\times / U_F^{(\lfloor\frac{m}{p-1}\rfloor)}F^{\times p} \twoheadrightarrow \F_p$ is sent to the unique abelian extension $L/F$ with 
    $$
    N_{L/F}L^\times = \ker \Big(F^\times \to F^\times / U_F^{(\lfloor\frac{m}{p-1}\rfloor)}F^{\times p} \overset{\chi}{\to} \F_p\Big).
    $$
\end{lemma}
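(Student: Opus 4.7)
The plan is to combine local class field theory with Lemma~\ref{lem-cond-disc-for-Cp} to translate degree-$p$ cyclic extensions with bounded discriminant into surjective $\F_p$-linear maps out of a certain finite $\F_p$-vector space. Set $t = \lfloor m/(p-1) \rfloor$. Note first that any $L \in \Et{p/F}{C_p/F}$ is automatically Galois over $F$: since $[L:F] = p$ is prime, $L$ equals its own normal closure, and $\Gal(L/F) \cong C_p$, so local class field theory applies directly to such $L$.

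By Lemma~\ref{lem-cond-disc-for-Cp}, any such $L/F$ satisfies $v_F(d_{L/F}) = (p-1)\mff(L/F)$, so $v_F(d_{L/F})$ is always a multiple of $p-1$ and the bound $v_F(d_{L/F}) \leq m$ is equivalent to the integer inequality $\mff(L/F) \leq t$ (the floor arises because $\mff(L/F)$ is an integer). By the very definition of the conductor, this is in turn equivalent to $U_F^{(t)} \subseteq N_{L/F}L^\times$.

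Next I would invoke the existence theorem of local class field theory, which gives a bijection between (isomorphism classes of) degree-$p$ cyclic extensions $L/F$ and open index-$p$ subgroups $N \leq F^\times$ via $L \mapsto N_{L/F}L^\times$. Since any such $N$ automatically contains $F^{\times p}$, combining this with the conductor condition above puts $\Et{p/F,\leq m}{C_p/F}$ in bijection with the codimension-$1$ $\F_p$-subspaces of the finite-dimensional $\F_p$-vector space $V := F^\times/U_F^{(t)}F^{\times p}$. A codimension-$1$ $\F_p$-subspace of $V$ is precisely the kernel of an element of $\operatorname{Epi}_{\F_p}(V,\F_p)$, and two such epimorphisms have the same kernel if and only if they differ by scaling by an element of $\F_p^\times$, of which there are $p-1$; unwinding the composition yields exactly the map described in the statement.

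The argument is essentially a direct unwinding of local class field theory; the only step needing any care is the conversion of the discriminant bound to a conductor bound via Lemma~\ref{lem-cond-disc-for-Cp}, which is responsible for the floor function in the definition of $t$. I do not anticipate any serious obstacle.
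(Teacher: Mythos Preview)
Your proposal is correct and is essentially an explicit unpacking of the paper's own proof, which simply reads ``This follows from Lemma~\ref{lem-cond-disc-for-Cp} by basic class field theory.'' The only minor imprecision is your phrasing ``since $[L:F]=p$ is prime, $L$ equals its own normal closure'': primality alone does not force this, but the hypothesis $\Gal(\widetilde{L}/F)\cong C_p$ gives $[\widetilde{L}:F]=p=[L:F]$ and hence $\widetilde{L}=L$, which is what you need.
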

\begin{proof}
    This follows from Lemma~\ref{lem-cond-disc-for-Cp} by basic class field theory. 
\end{proof}
\begin{corollary}
    \label{cor-num-(1^p)-exts-in-terms-of-quotient}
    For every positive integer $m$, we have 
    $$
    \#\Et{p/F,\leq m}{C_p/F} = \frac{1}{p-1}\Big(\#\Big(F^\times / U_F^{(\lfloor \frac{m}{p-1}\rfloor)}F^{\times p}\Big) - 1\Big).
    $$
\end{corollary}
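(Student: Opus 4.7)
The plan is to deduce this directly from Lemma~\ref{lem-chars-to-Cp-exts-(p-1)-to-1}, which says there is a surjective $(p-1)$-to-$1$ map
$$
\operatorname{Epi}_{\F_p}\bigl(F^\times / U_F^{(\lfloor m/(p-1)\rfloor)}F^{\times p} , \F_p\bigr) \twoheadrightarrow \Et{p/F,\leq m}{C_p/F},
$$
so it suffices to count the domain. Write $V = F^\times / U_F^{(\lfloor m/(p-1)\rfloor)}F^{\times p}$.

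First I would observe that $V$ is an $\F_p$-vector space: since $F^{\times p}$ lies in the subgroup being quotiented out, every element of $V$ is killed by $p$, hence the abelian group $V$ carries a canonical $\F_p$-module structure. This justifies the notation $\operatorname{Epi}_{\F_p}(V, \F_p)$.

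Next I would count the $\F_p$-linear epimorphisms $V \twoheadrightarrow \F_p$. Because $\F_p$ is one-dimensional, any nonzero $\F_p$-linear map from $V$ to $\F_p$ is automatically surjective, so $\operatorname{Epi}_{\F_p}(V,\F_p) = \Hom_{\F_p}(V,\F_p) \setminus \{0\}$. The $\F_p$-dual of a finite $\F_p$-vector space has the same cardinality as the space itself, giving
$$
\#\operatorname{Epi}_{\F_p}(V,\F_p) = \#V - 1.
$$
Dividing by the fibre size $p-1$ from the lemma yields the claimed formula.

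The argument is essentially immediate once the lemma is in hand; there is no genuine obstacle, only the bookkeeping observation that passage to the quotient by $F^{\times p}$ makes the group into an $\F_p$-vector space so that the counting $\#\Hom_{\F_p}(V,\F_p) = \#V$ applies.
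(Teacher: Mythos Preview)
Your proof is correct and follows exactly the same approach as the paper, which simply records the corollary as ``immediate from Lemma~\ref{lem-chars-to-Cp-exts-(p-1)-to-1}.'' You have merely made explicit the standard counting of nonzero linear functionals on a finite $\F_p$-vector space that the paper leaves implicit.
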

\begin{proof}
    This is immediate from Lemma~\ref{lem-chars-to-Cp-exts-(p-1)-to-1}. 
\end{proof}

\begin{lemma}
    \label{lem-lift-squares-mod-p-n}
    Let $u \in \co_F^\times$ and let $n$ be an integer with $n > \frac{pe_F}{p-1}$. Then $U_F^{(n)}F^{\times p} = U_F^{(n+1)}F^{\times p}$. 
\end{lemma}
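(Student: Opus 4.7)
The plan is to show that for $n > \frac{pe_F}{p-1}$, every element of $U_F^{(n)}$ differs from a $p$-th power by something in $U_F^{(n+1)}$; the reverse containment is trivial because $U_F^{(n+1)} \subseteq U_F^{(n)}$. So the heart of the argument is to prove that the $p$-th power map induces a surjection $U_F^{(m)}/U_F^{(m+1)} \twoheadrightarrow U_F^{(n)}/U_F^{(n+1)}$ for a suitable $m$, and then take $m = n - e_F$.

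First, I would analyze the effect of raising to the $p$-th power on the filtration. For $y = 1 + b\pi_F^m \in U_F^{(m)}$, the binomial expansion gives
$$
(1 + b\pi_F^m)^p = 1 + pb\pi_F^m + \sum_{i=2}^{p-1}\binom{p}{i}b^i\pi_F^{im} + b^p\pi_F^{pm}.
$$
The intermediate terms have valuation $e_F + im \geq e_F + 2m$ (since $p \mid \binom{p}{i}$ for $1 \leq i \leq p-1$), the linear term has valuation $e_F + m$, and the final term has valuation $pm$.

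Second, I would specialize to $m > \frac{e_F}{p-1}$, i.e. $pm > e_F + m$. In this range every term except the linear one lies in $U_F^{(e_F+m+1)}$, so
$$
(1 + b\pi_F^m)^p \equiv 1 + pb\pi_F^m \pmod{\p_F^{\,e_F+m+1}}.
$$
Writing $p = u_0\pi_F^{e_F}$ with $u_0 \in \co_F^\times$, we see that $b \mapsto (1+b\pi_F^m)^p$ descends to the $\F_F$-linear map $b \mapsto u_0 b$ from $U_F^{(m)}/U_F^{(m+1)} \cong \F_F$ to $U_F^{(e_F+m)}/U_F^{(e_F+m+1)} \cong \F_F$, which is an isomorphism.

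Finally, I would set $m = n - e_F$. The hypothesis $n > \frac{pe_F}{p-1}$ is exactly $m > \frac{e_F}{p-1}$, so the previous step applies. Given any $x \in U_F^{(n)}$, the isomorphism above provides $y \in U_F^{(m)}$ with $y^p x^{-1} \in U_F^{(n+1)}$, so $x \in F^{\times p}\cdot U_F^{(n+1)}$. Thus $U_F^{(n)}F^{\times p} \subseteq U_F^{(n+1)}F^{\times p}$, and the reverse inclusion is immediate. There is no real obstacle here — it is a careful but routine valuation count in the binomial expansion, and the threshold $n > \frac{pe_F}{p-1}$ is precisely the point past which the $p$-th power map becomes additive to leading order, making it a bijection between successive graded pieces of the unit filtration.
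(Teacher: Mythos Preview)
Your proof is correct and follows essentially the same approach as the paper. Both arguments use the binomial expansion to show that, in the range $m > \frac{e_F}{p-1}$, raising $1+y$ with $v_F(y)=m$ to the $p$-th power yields $1+py$ modulo $\p_F^{e_F+m+1}$; the paper simply writes down the preimage $1+x/p$ directly, while you phrase the same computation as a bijection between successive graded pieces of the unit filtration before lifting.
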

\begin{proof}
    Let $u \in U_F^{(n)}F^{\times p}$. Then there exist elements $a \in F^\times$ and $x \in \p_F^n$ with
    $
    u = a^p(1+x).
    $
    Since $v_F(x) \geq n$ and $n > \frac{pe_F}{p-1}$, it is easy to see that
    $$
    \Big(1 + \frac{x}{p}\Big)^p \equiv 1 + x \pmod{\p_F^{n+1}}, 
    $$
    and it follows that $u \in U_F^{(n+1)}F^{\times p}$, as required. 
\end{proof}
\begin{corollary}
    {\label{cor-p-power-iff-p-power-mod-p-pi}}
    We have $U_F^{(\lfloor \frac{pe_F}{p-1}\rfloor + 1)}\subseteq F^{\times p}$. 
\end{corollary}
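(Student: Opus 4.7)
The plan is to bootstrap the single-step lemma into a full construction of a $p$-th root by building a convergent sequence of approximate roots. Set $N := \lfloor \frac{pe_F}{p-1}\rfloor + 1$ and fix $u \in U_F^{(N)}$; the task is to exhibit $a \in F^\times$ with $a^p = u$.

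I will inductively construct elements $a_k \in \co_F^\times$ and $x_k \in \p_F^{N+k}$ with $u = a_k^p(1 + x_k)$, beginning with $a_0 = 1$ and $x_0 = u - 1$. The inductive step mirrors the computation from Lemma~\ref{lem-lift-squares-mod-p-n}: expanding binomially gives
$$
(1 + x_k/p)^p = (1 + x_k)(1 + y_k)
$$
for some $y_k \in \p_F^{N+k+1}$, where the error is controlled using the divisibility $p \mid \binom{p}{j}$ for $1 \leq j \leq p-1$ (which handles the middle terms) together with the sharper form $N \geq \frac{pe_F + 1}{p-1}$ of the hypothesis on $N$ (which handles the top term $(x_k/p)^p$). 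Setting $a_{k+1} = a_k(1 + x_k/p)$ and $x_{k+1} = -y_k/(1+y_k) \in \p_F^{N+k+1}$ continues the induction.

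Since $a_{k+1} - a_k = a_k\cdot x_k/p$ and $v_F(x_k/p) \geq N + k - e_F \to \infty$, the sequence $(a_k)$ is Cauchy in the complete field $F$, converging to some $a \in \co_F^\times$. Passing to the limit in $u = a_k^p(1 + x_k)$ yields $u = a^p$, proving $u \in F^{\times p}$. The main obstacle is really just verifying that the hypothesis on $N$ forces $y_k$ to gain one extra power of $\p_F$ at each step, so that the iteration genuinely converges rather than merely preserving an approximation modulo $F^{\times p}$; all other ingredients are already present in the preceding lemma.
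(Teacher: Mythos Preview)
Your proof is correct and follows essentially the same route as the paper's. The paper simply says ``This follows from Lemma~\ref{lem-lift-squares-mod-p-n} by Hensel's lemma,'' whereas you unpack that black box into an explicit Cauchy-sequence construction: both arguments iterate the single-step improvement from the lemma and then use completeness of $F$ to pass to the limit. Your verification that the error $y_k$ gains one power of $\p_F$ at each step (using $(p-1)N \geq pe_F + 1$ for the top term $(x_k/p)^p$ and $N \geq e_F + 1$ for the middle terms) is exactly the content needed to make the iteration converge, and it is sound.
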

\begin{proof}
    This follows from Lemma~\ref{lem-lift-squares-mod-p-n} by Hensel's lemma.
\end{proof}

\begin{lemma}
    \label{lem-iso-to-direct-sum-of-Wi}
    For $0 \leq i \leq \lfloor \frac{pe_F}{p-1}\rfloor$, let $W_i = U_F^{(i)}F^{\times p} / U_F^{(i+1)}F^{\times p}$. For each positive integer $c$, there is a group isomorphism
    $$
    F^\times / U_F^{(c)}F^{\times p} \cong \Big(F^\times / U_F^{(0)}F^{\times p}\Big) \oplus \bigoplus_{i=0}^{\min\{c-1,\lfloor\frac{pe_F}{p-1}\rfloor\}} W_i. 
    $$
\end{lemma}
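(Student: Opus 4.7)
The crucial observation is that $F^\times / U_F^{(c)} F^{\times p}$ has exponent dividing $p$: for every $x \in F^\times$, the class of $x^p$ already lies in $F^{\times p} \subseteq U_F^{(c)} F^{\times p}$. Thus $F^\times / U_F^{(c)} F^{\times p}$ is an $\F_p$-vector space, and consequently every short exact sequence formed from its subquotients automatically splits as a sequence of $\F_p$-vector spaces.

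With that in hand, I would consider the descending chain
$$
F^\times \;\supseteq\; U_F^{(0)} F^{\times p} \;\supseteq\; U_F^{(1)} F^{\times p} \;\supseteq\; \cdots \;\supseteq\; U_F^{(c)} F^{\times p}.
$$
Modding out by $U_F^{(c)} F^{\times p}$, the successive quotients in this filtration are $F^\times / U_F^{(0)} F^{\times p}$ on top, followed by $W_0, W_1, \ldots, W_{c-1}$. Peeling off one layer at a time, each short exact sequence
$$
0 \;\to\; U_F^{(i+1)} F^{\times p} / U_F^{(c)} F^{\times p} \;\to\; U_F^{(i)} F^{\times p} / U_F^{(c)} F^{\times p} \;\to\; W_i \;\to\; 0
$$
splits by the previous paragraph, and a short induction on the number of layers produces
$$
F^\times / U_F^{(c)} F^{\times p} \;\cong\; \bigl(F^\times / U_F^{(0)} F^{\times p}\bigr) \oplus \bigoplus_{i=0}^{c-1} W_i.
$$

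To match the truncation in the statement, I would then invoke Lemma~\ref{lem-lift-squares-mod-p-n}: whenever $i > p e_F / (p-1)$, we have $U_F^{(i)} F^{\times p} = U_F^{(i+1)} F^{\times p}$, hence $W_i = 0$. Dropping these zero summands collapses the index range to $0 \le i \le \min\{c-1, \lfloor p e_F / (p-1)\rfloor\}$, yielding the claimed isomorphism. I do not expect a genuinely hard step: the whole argument hinges on the single observation that $F^\times / U_F^{(c)} F^{\times p}$ is killed by $p$, which turns every successive extension into a split short exact sequence of $\F_p$-vector spaces; the only real bookkeeping is using Lemma~\ref{lem-lift-squares-mod-p-n} to justify truncating the index.
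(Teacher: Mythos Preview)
Your proof is correct and takes essentially the same approach as the paper: both arguments rest on the observation that $F^\times / U_F^{(c)} F^{\times p}$ is an $\F_p$-vector space, so the filtration by the $U_F^{(i)}F^{\times p}$ splits. The paper is simply more terse---it observes that the two sides have the same cardinality (by definition of the $W_i$ as successive quotients) and are both $p$-torsion, hence isomorphic---whereas you spell out the inductive splitting; for the truncation the paper cites Corollary~\ref{cor-p-power-iff-p-power-mod-p-pi} rather than Lemma~\ref{lem-lift-squares-mod-p-n}, but these are equivalent here.
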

\begin{proof}
    Corollary~\ref{cor-p-power-iff-p-power-mod-p-pi} tells us that 
    $$
    F^\times / U_F^{(c)}F^{\times p} = F^\times / U_F^{(\lfloor \frac{pe_F}{p-1}\rfloor + 1)}F^{\times p}
    $$
    whenever $c \geq \lfloor\frac{pe_F}{p-1}\rfloor + 1$, so we only need to prove the result for $c \leq \lfloor \frac{pe_F}{p-1}\rfloor + 1$.  
    
    The left- and right-hand side have the same cardinality by definition of the $W_i$. Moreover, both are $p$-torsion groups, hence $\F_p$-vector spaces, so they are isomorphic as groups. 
\end{proof}
\begin{lemma}
    \label{lem-UF0F*p-eq-UF1F*p}
    We have 
    $$
    U_F^{(0)}F^{\times p} = U_F^{(1)}F^{\times p}. 
    $$
\end{lemma}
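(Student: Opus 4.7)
The plan is to reduce the statement to a claim about the residue field $\mathbb{F}_F$. The containment $U_F^{(1)}F^{\times p} \subseteq U_F^{(0)}F^{\times p}$ is immediate since $U_F^{(1)} \subseteq U_F^{(0)} = \co_F^\times$, so the content lies in the reverse inclusion.

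To establish $U_F^{(0)} \subseteq U_F^{(1)}F^{\times p}$, I would use the fact that $U_F^{(0)}/U_F^{(1)} \cong \F_F^\times$ is cyclic of order $q-1$. Since $F$ has residue characteristic $p$, we have $q = p^{f_F}$, so $\gcd(q-1,p) = 1$. Consequently, the $p$-th power map is a bijection on the cyclic group $\F_F^\times$, meaning every element of $\F_F^\times$ is a $p$-th power. Lifting this back, for any $u \in \co_F^\times$ one can find $v \in \co_F^\times$ with $u \equiv v^p \pmod{\p_F}$, so $u/v^p \in U_F^{(1)}$ and therefore $u \in U_F^{(1)}F^{\times p}$.

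This essentially completes the proof; there is no real obstacle, as the argument is just the observation that raising to the $p$-th power is surjective on the tame part $\co_F^\times / U_F^{(1)}$ of the unit filtration. Combining the two inclusions yields $U_F^{(0)}F^{\times p} = U_F^{(1)}F^{\times p}$ as desired. One could equivalently phrase this via the Teichmüller decomposition $\co_F^\times = \mu_{q-1}(F) \times U_F^{(1)}$, noting that $\mu_{q-1}(F) \subseteq F^{\times p}$ since $p \nmid q-1$.
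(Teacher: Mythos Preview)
Your proposal is correct and takes essentially the same approach as the paper: both arguments reduce to the observation that the $p$-th power map on $\F_F^\times$ is a bijection, so every unit is congruent to a $p$-th power modulo $\p_F$. The paper phrases this as ``injective hence bijective'' on the finite set $\F_F^\times$, while you invoke $\gcd(q-1,p)=1$ on the cyclic group, but the content is identical.
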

\begin{proof}
    The $p$-power map 
    $$
    \F_F^\times \to \F_F^\times,\quad x \mapsto x^p 
    $$
    is injective, hence bijective. Thus, every element of $U_F^{(0)}$ is congruent to a $p^\mathrm{th}$ power modulo $\p_F$, and the result follows. 
\end{proof}
\begin{lemma}
    \label{lem-lift-to-pth-power-mod-n}
    Let $0 \leq i \leq \frac{pe_F}{p-1} $, and let $m \in U_F^{(i)}$. 
    \begin{enumerate}
        \item If $p \nmid i$, then $m \in U_F^{(i+1)}F^{\times p}$ if and only if $m \in U_F^{(i+1)}$.
        \item If $p\mid i$, then $m \in U_F^{(i+1)}F^{\times p}$ if and only if there is some $x \in \p_F^{\frac{i}{p}}$ such that $(1 + x)^p \equiv m\pmod{\p_F^{i+1}}$.
        \item If $p\mid i$ and $i < \frac{pe_F}{p-1}$, then we always have $m \in U_F^{(i+1)}F^{\times p}$, and in particular 
        $$
        m \equiv \Big(1 + \pi_F^{\frac{i}{p}}y\Big)^p\pmod{\p_F^{i+1}},
        $$
         where $[y] \in \co_F/\p_F$
        is the unique element with $[y]^p = \Big[\frac{m-1}{\pi_F^{i}}\Big],$ which exists by Lemma~\ref{lem-UF0F*p-eq-UF1F*p}.
        \item If $i = \frac{pe_F}{p-1}$, then $m \in U_F^{(i+1)}F^{\times p}$ if and only if $\Big[\frac{m-1}{{\pi_F^{e_F/(p-1)}p}}\Big]\in \co_F/\p_F$ is in the image of the map 
        $$
        \co_F/\p_F \to \co_F/\p_F, \quad y \mapsto y + \frac{\pi_F^{e_F}}{p}y^p,
        $$
        and in that case $m \equiv \Big(1 + \pi_F^{\frac{i}{p}}y\Big)^p \pmod{\p_F^{i+1}}$, for each $y$ in the preimage. 
    \end{enumerate}
\end{lemma}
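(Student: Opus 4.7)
The plan is to extract all four parts from a single analysis of the binomial expansion of $(1 + c\pi_F^j)^p - 1$. The key setup: if $m \in U_F^{(i+1)}F^{\times p}$, write $m = a^p u$ with $u \in U_F^{(i+1)}$. Then $\bar a^p = \bar m = 1$ in $\F_F$, and injectivity of Frobenius on $\F_F$ (cf.\ Lemma~\ref{lem-UF0F*p-eq-UF1F*p}) forces $a \in U_F^{(1)}$. Writing $a = 1 + c\pi_F^j$ with $c \in \co_F^\times$ and $j \geq 1$ and using $p \mid \binom{p}{k}$ for $1 \leq k \leq p-1$, the middle binomial terms have valuation $\geq e_F + 2j$, strictly larger than both $pj$ and $e_F + j$. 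Hence $v_F(a^p - 1)$ equals $pj$ when $j < e_F/(p-1)$, equals $e_F + j$ when $j > e_F/(p-1)$, and is at least $pj = e_F + j$ at the boundary $j = e_F/(p-1)$.

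For (1), the hypotheses $p \nmid i$ and $i \leq pe_F/(p-1)$ rule out $v_F(a^p - 1) = i$ in every case: the regime $v_F = pj$ would force $p \mid i$; the regime $v_F = e_F + j$ requires $j > e_F/(p-1)$, hence $v_F > pe_F/(p-1) \geq i$; the boundary gives $v_F \geq pe_F/(p-1)$, which equals $i$ only if $i = pe_F/(p-1)$, again forcing $p \mid i$. So $v_F(a^p - 1) \geq i + 1$, and combined with $v_F(a^p - m) \geq i + 1$ this yields $m \in U_F^{(i+1)}$; the converse is trivial. For (2), assuming $p \mid i$, the inequality $v_F(a^p - 1) \geq i$ gives $j \geq i/p$ in each of the three cases (using $i/p \leq e_F/(p-1)$ at and above the boundary), so $x := a - 1$ is the desired element; the converse is a direct calculation.

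For (3) and (4) we construct $x$ explicitly. Set $x = \pi_F^{i/p} y$ with unknown $y \in \co_F$; in the expansion $(1 + x)^p - 1 = px + \sum_{k=2}^{p-1}\binom{p}{k}x^k + x^p$, the middle terms have valuation $\geq e_F + 2(i/p) > i$ (since $i \leq pe_F/(p-1)$), so they vanish modulo $\p_F^{i+1}$. Only the extreme terms remain. When $i < pe_F/(p-1)$ we have $v_F(px) = e_F + i/p > i = v_F(x^p)$, so $(1+x)^p - 1 \equiv \pi_F^i y^p \pmod{\p_F^{i+1}}$, and the equation $y^p \equiv (m-1)/\pi_F^i \pmod{\p_F}$ has a unique solution by bijectivity of Frobenius on $\F_F$, proving (3). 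When $i = pe_F/(p-1)$, both extreme terms have valuation exactly $i$: writing $p = u\pi_F^{e_F}$ with $u \in \co_F^\times$ (so $u^{-1} = \pi_F^{e_F}/p$), we get $(1+x)^p - 1 \equiv \pi_F^i (uy + y^p) \pmod{\p_F^{i+1}}$, and dividing $(1+x)^p \equiv m \pmod{\p_F^{i+1}}$ through by $p\pi_F^{e_F/(p-1)} = u\pi_F^i$ yields precisely the condition that $\bigl[(m-1)/(p\pi_F^{e_F/(p-1)})\bigr]$ lies in the image of $y \mapsto y + (\pi_F^{e_F}/p)y^p$, proving (4).

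The main obstacle is the boundary $j = e_F/(p-1)$ in the initial analysis, where $px$ and $x^p$ have matching valuation and the value of $v_F(a^p - 1)$ is not determined by $j$ alone; happily, this boundary coincides precisely with the regime $i = pe_F/(p-1)$ of part (4), so it is absorbed into the finer two-term equation there rather than complicating parts (1)--(3).
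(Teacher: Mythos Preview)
Your proof is correct and takes essentially the same approach as the paper: both arguments analyze the binomial expansion of $(1+x)^p - 1$, compare the valuations $pj$ and $e_F + j$ of the two extreme terms, and split into the three regimes $j < e_F/(p-1)$, $j = e_F/(p-1)$, $j > e_F/(p-1)$ to pin down $v_F(a^p - 1)$. One harmless overstatement: your claim that the middle binomial terms have valuation strictly greater than $pj$ can fail when $j$ is large (e.g.\ $p \geq 3$, $j \geq e_F$), but in each regime you only need them to exceed $\min(pj, e_F + j)$, and $e_F + 2j > e_F + j$ always holds since $j \geq 1$; likewise the assertion $\bar m = 1$ needs $i \geq 1$, but $i = 0$ only arises in parts (2) and (3), which your direct construction handles without it.
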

\begin{proof}
    By Lemma~\ref{lem-UF0F*p-eq-UF1F*p}, the case $i=0$ is trivial, so we assume that $i\geq 1$. For the first two statements, the ``if'' directions are trivial, so we focus on the ``only if''.
    
    Suppose that $m \in U_F^{(i+1)}F^{\times p} \setminus U_F^{(i+1)}$, so $v_F(m-1) = i$. We will show that this implies $p\mid i$ and there is some $x \in \p_F^{\frac{i}{p}}$ with $(1 + x)^p \equiv m\pmod{\p_F^{i+1}}$, proving the first two statements. 
    
    Since $m \in U_F^{(i+1)}F^{\times p}$, there is some $c \in F^\times$ such that $v_F(c) = 0$ and $m\equiv c^p\pmod{\p_F^{i+1}}$. Write $c = 1 + x$ for $x \in \co_F$, so that 
    $$
    m - 1 \equiv \sum_{j=1}^{p-1} \binom{p}{j}x^j + x^p\pmod{\p_F^{i+1}}. 
    $$
    Since $v_F(m-1) > 0$, we have $v_F(x) > 0$, so 
    \begin{equation}
    \label{eqn-binom-expansion-valuations}
    \begin{cases}
    v_F\Big(\sum_{j=1}^{p-1}\binom{p}{j}x^j\Big) = e_F + v_F(x) ,
    \\
    v_F(x^p) = pv_F(x). 
    \end{cases}
\end{equation}
    Suppose for contradiction that $v_F(x) > \frac{e_F}{p-1}$. Then $v_F(x) + e_F < pv_F(x)$, so 
    $$
    i = v_F(m-1) = v_F(x) + e_F > \frac{pe_F}{p-1},
    $$
    which is impossible since by assumption $i \leq \frac{pe_F}{p-1}$. 
    
    Therefore, $v_F(x) \leq \frac{e_F}{p-1}$. We will consider the cases $v_F(x) = \frac{e_F}{p-1}$ and $v_F(x) < \frac{e_F}{p-1}$ separately. Suppose first that $v_F(x) = \frac{e_F}{p-1}$. Then $pv_F(x) = e_F + v_F(x) = \frac{pe_F}{p-1}$, so Equation~(\ref{eqn-binom-expansion-valuations}) tells us that
    $$
    i = v_F(m-1) \geq \frac{pe_F}{p-1},
    $$
    and therefore $i = \frac{pe_F}{p-1}$ and $v_F(x) \geq \frac{i}{p}$, as required. 
    
    Suppose instead that $v_F(x) < \frac{e_F}{p-1}$. Then $pv_F(x) < v_F(x) + e_F$, so Equation~(\ref{eqn-binom-expansion-valuations}) tells us that
    $$
    i = v_F(m-1) = pv_F(x),
    $$
    and therefore $p\mid i$ and $v_F(x) \geq \frac{i}{p}$. Thus we have proved Statements $(1)$ and $(2)$. 
    
    Suppose that $p\mid i$ and $i < \frac{pe_F}{p-1}$. By Lemma~\ref{lem-UF0F*p-eq-UF1F*p}, there is a $y \in \co_F$ with $y^p \equiv \frac{m-1}{\pi_F^i}\pmod{\p_F}$. Let $x = \pi_F^{\frac{i}{p}}y$. Then we have 
    $$
    (1 + x)^p \equiv 1 + x^p \equiv m\pmod{\p_F^{i+1}},
    $$
    so Statement $(3)$ follows from Statement $(2)$. 

    Suppose that $i = \frac{pe_F}{p-1}$. Statement $(2)$ tells us that $m \in U_F^{(i+1)}F^{\times p}$ if and only if there is some $x \in \p_F^{\frac{e_F}{p-1}}$ with 
    $$
    m-1 \equiv px + x^p\pmod{\p_F^{\frac{pe_F}{p-1} + 1}},
    $$
    and Statement $(4)$ follows easily. 
\end{proof}
We note the following algorithm as an immediate consequence of Lemma~\ref{lem-lift-to-pth-power-mod-n}:
\begin{algorithm}
    \label{algo-c-alpha}
    \hfill 

    \textbf{Input:} $\alpha \in \co_F^\times$. 

    \textbf{Output:} Returns a pair $(i,\lambda)$. If $\alpha \in F^{\times p}$, then $i=\infty$ and $\lambda \in \co_F^\times$ is such that 
    $$
    \alpha\equiv \lambda^p\pmod{\p_F^{\lfloor\frac{pe_F}{p-1}\rfloor + 1}}.
    $$
    Otherwise, $i$ is the largest integer with $\alpha \in U_F^{(i)}F^{\times p}$, and $\lambda \in \co_F^\times$ is an element such that $\alpha \equiv \lambda^p\pmod{\p_F^{i}}$. 

    \textbf{Algorithm:} 
    
    \begin{enumerate}
        \item Set $m_0 = \alpha$ and $\lambda_0 = 1$. 
        \item For $0 \leq i \leq \frac{pe_F}{p-1}$, do the following:
        
        \hfill

        \noindent If $p \nmid i$, then:
        \begin{itemize}\item If $m_i \equiv 1\pmod{\p_F^{i+1}}$, then set $m_{i+1} = m_i$ and $\lambda_{i+1} = \lambda_i$. \item Otherwise, return $(i, \lambda_i)$ and break the for loop. 
        \end{itemize}

        \noindent If $p\mid i$ and $i < \frac{pe_F}{p-1}$, then: \begin{itemize}
            \item Let $[y] \in \co_F/\p_F$ be the unique element with $[y]^p = \Big[\frac{m_i-1}{\pi_F^{i}}\Big]$, and set $\lambda_{i+1} = \lambda_i(1+\pi_F^{i/p}y)$ and $m_{i+1} = \frac{m_i}{(1 + \pi_F^{i/p}y)^p}$. 
        \end{itemize}

        \noindent If $i = \frac{pe_F}{p-1}$, then:
        \begin{itemize}\item  If $\Big[\frac{m_i-1}{{\pi_F^{e_F/(p-1)}p}}\Big]\in \co_F/\p_F$ is in the image of the map 
        $$
        \co_F/\p_F \to \co_F/\p_F, \quad y \mapsto y + \frac{\pi_F^{e_F}}{p}y^p,
        $$
        then take $y$ in the preimage and set $\lambda_{i+1} = \lambda_{i}(1 + \pi_F^{i/p}y)$ and $m_{i + 1} = \frac{m_{i}}{(1 + \pi_F^{i/p}y)^p}$.
        \item Otherwise, return $(\frac{pe_F}{p-1}, \lambda_{\frac{pe_F}{p-1}})$. 
    \end{itemize}
    By Lemma~\ref{lem-lift-to-pth-power-mod-n}, we see inductively that $m_i \equiv 1\pmod{\p_F^i}$ and $\alpha = \lambda_i^p m_i$ for all $i$ that it is defined.
        \item If the for loop from the previous step finishes, then return $(\infty, \lambda_{\frac{pe_F}{p-1}+1})$. 
    \end{enumerate}
\end{algorithm}

\begin{lemma}
    \label{lem-im-of-coef-times-y^p+y-map}
    Suppose that $(p-1) \mid e_F$. Define $\varphi$ to be the map  
    $$
    \varphi: \co_F/\p_F \to \co_F/\p_F, \quad y \mapsto y + \frac{\pi_F^{e_F}}{p}y^p.
    $$
    We have 
    $$
    \#\im\varphi =\begin{cases}
        q/p\quad\text{if $\mu_p\subseteq F$},
        \\
        q\quad\text{if $\mu_p\not\subseteq F$}. 
    \end{cases}
    $$
\end{lemma}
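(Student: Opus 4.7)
The plan is to compute $\#\im\varphi$ indirectly, by using $\varphi$ to compute $\#F^\times/F^{\times p}$ via Lemma~\ref{lem-iso-to-direct-sum-of-Wi}, and then comparing to the structure-theoretic value of $\#F^\times/F^{\times p}$.

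First, I would observe that $\varphi$ is $\F_p$-linear (since $y \mapsto y^p$ is the Frobenius on $\F_F$, hence additive on the $\F_p$-vector space $\F_F$). Picking $c$ large enough in Lemma~\ref{lem-iso-to-direct-sum-of-Wi} that the summation runs through all $i$ from $0$ to $\lfloor pe_F/(p-1)\rfloor = pe_F/(p-1)$ (valid since $(p-1) \mid e_F$), we obtain
$$
F^\times/F^{\times p} \;\cong\; F^\times / U_F^{(0)}F^{\times p} \;\oplus\; \bigoplus_{i=0}^{pe_F/(p-1)} W_i.
$$
The first factor has order $p$ (it is the quotient by $\co_F^\times$). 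For the $W_i$, I would invoke Lemma~\ref{lem-lift-to-pth-power-mod-n} and Lemma~\ref{lem-UF0F*p-eq-UF1F*p}: when $0 < i < pe_F/(p-1)$, the group $W_i$ is $\F_F \cong \F_q$ if $p \nmid i$ by part (1), and trivial if $p \mid i$ by part (3). Finally, for $i = pe_F/(p-1)$, part (4) shows that $W_i$ is the cokernel of $\varphi$ viewed as a map $U_F^{(i)}/U_F^{(i+1)} \cong \F_F \to \F_F$, so $\#W_i = q/\#\im\varphi$.

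Writing $e_F = (p-1)t$, so $pe_F/(p-1) = pt$, a quick count shows that the number of integers $i \in \{1,\ldots,pt-1\}$ with $p \nmid i$ equals $(pt-1) - (t-1) = (p-1)t = e_F$. Multiplying everything together gives
$$
\#\bigl(F^\times/F^{\times p}\bigr) \;=\; p \cdot q^{e_F} \cdot \frac{q}{\#\im\varphi}.
$$

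On the other hand, the structure theorem for units in a $p$-adic field gives $\co_F^\times \cong \mu(F) \times \Z_p^{[F:\Q_p]}$, so $\co_F^\times / \co_F^{\times p} \cong \mu_p(F) \times (\Z/p\Z)^{[F:\Q_p]}$, which has order $\#\mu_p(F)\cdot p^{e_F f_F} = \#\mu_p(F)\cdot q^{e_F}$. Hence $\#F^\times/F^{\times p} = p\cdot q^{e_F}\cdot \#\mu_p(F)$. Comparing the two expressions yields $\#\im\varphi = q/\#\mu_p(F)$, which equals $q/p$ or $q$ according to whether $\mu_p \subseteq F$ or not.

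There is no serious obstacle here: the only points requiring care are verifying the index count $e_F$ of ``live'' tame slots, and correctly identifying that $W_{pe_F/(p-1)}$ is exactly the cokernel of $\varphi$ (not of some twist), which is immediate from Lemma~\ref{lem-lift-to-pth-power-mod-n}(4). If one preferred a more direct route, one could instead compute $\ker\varphi$ by solving $y(1 + cy^{p-1}) = 0$ in $\F_F$, but matching that up with $\mu_p \subseteq F$ requires handling the cases $(p-1) \mid (q-1)$ and whether $-1/c$ lies in $(\F_F^\times)^{p-1}$, which is somewhat more delicate than the global counting argument above.
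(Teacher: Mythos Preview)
Your proof is correct and takes a genuinely different route from the paper's. The paper argues directly: since $\varphi$ is $\F_p$-linear, $\ker\varphi$ is the zero set of $X\bigl(X^{p-1} + \pi_F^{e_F}/p\bigr)$ in $\F_F$; as $\F_F$ contains all $(p-1)$-th roots of unity, this polynomial has $p$ roots or $1$ root according to whether $-p/\pi_F^{e_F}$ is a $(p-1)$-th power in $\F_F$, and by Hensel's lemma and the identity $\Q_p(\sqrt[p-1]{-p}) = \Q_p(\zeta_p)$ (Washington, Lemma~14.6) this is exactly the condition $\mu_p\subseteq F$. So the paper takes precisely the ``more delicate'' kernel route you mention at the end, and handles the matching with $\mu_p\subseteq F$ via that external identity.

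Your global counting argument trades the Washington reference for the standard structure theorem $\co_F^\times \cong \mu(F)\times \Z_p^{[F:\Q_p]}$. This is arguably cleaner, and nicely avoids the case analysis on $(\F_F^\times)^{p-1}$. What you lose is the explicit identification of $\ker\varphi$; the paper's approach actually tells you which elements of $\F_F$ lie in the kernel, which is slightly more informative (though not needed elsewhere in the paper). Both proofs rely only on lemmas that precede this one, so there is no circularity in either direction.
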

\begin{proof}
    The map is $\F_p$-linear, and its kernel consists of the roots of the polynomial 
    $$
    X\Big(X^{p-1} + \frac{\pi_F^{e_F}}{p}\Big) \in \F_F[X].
    $$
    Since $\F_F$ contains all $(p-1)^\mathrm{st}$ roots of unity, the polynomial has either $1$ or $p$ roots. By Hensel's lemma, any root of $X^{p-1} + \frac{\pi_F^{e_F}}{p}$ in $\F_F$ lifts to a root in $F$, which exists if and only if $-p \in F^{\times (p-1)}$. The result \cite[Lemma~14.6]{washington1997introduction} states that $\Q_p(\sqrt[p-1]{-p}) = \Q_p(\zeta_p)$, and the result follows. 
\end{proof}
Recall from Lemma~\ref{lem-iso-to-direct-sum-of-Wi} that we are interested in the groups $W_i = U_F^{(i)}F^{\times p} / U_F^{(i+1)} F^{\times p}$, for integers $i$ with $ 0 \leq i \leq \lfloor \frac{pe_F}{p-1}\rfloor$.
\begin{corollary}
    \label{cor-size-of-Wi}
    Let $i$ be an integer with $0 \leq i \leq \frac{pe_F}{p-1}$. We have group isomorphisms
    $$
     W_i \cong \begin{cases}
        \F_F \quad\text{if $i< \frac{pe_F}{p-1}$ and $p\nmid i$},
        \\
        \F_p \quad\text{if $i = \frac{pe_F}{p-1}$ and $\mu_p\subseteq F$},
        \\
        1 \quad\text{otherwise}.
    \end{cases}
    $$
\end{corollary}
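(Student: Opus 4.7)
The plan is to realise $W_i$ as a quotient of the simpler one-step graded piece $U_F^{(i)}/U_F^{(i+1)}$, with the kernel computed via Lemma~\ref{lem-lift-to-pth-power-mod-n}. First observe that every element of $U_F^{(i)}F^{\times p}$ can be written $u a^p$ with $u \in U_F^{(i)}$ and $a \in F^\times$, so modulo $U_F^{(i+1)}F^{\times p}$ it represents the class of $u$; hence the composite
$$
U_F^{(i)} \hookrightarrow U_F^{(i)}F^{\times p} \twoheadrightarrow W_i
$$
is surjective with kernel $U_F^{(i)} \cap U_F^{(i+1)}F^{\times p}$. The case $i=0$ follows immediately from Lemma~\ref{lem-UF0F*p-eq-UF1F*p}, giving $W_0 = 1$ as in the ``otherwise'' clause (since $p \mid 0$). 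For $i \geq 1$, I would use the standard isomorphism $U_F^{(i)}/U_F^{(i+1)} \xrightarrow{\sim} \F_F$ sending $1 + \pi_F^i a \mapsto [a]$ to translate the kernel computation into a statement about a subgroup of the additive group $\F_F$.

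Next I split into three subcases according to Lemma~\ref{lem-lift-to-pth-power-mod-n}. When $p \nmid i$ and $i < pe_F/(p-1)$, part~(1) of that lemma gives $U_F^{(i)} \cap U_F^{(i+1)}F^{\times p} = U_F^{(i+1)}$, hence $W_i \cong \F_F$. When $p \mid i$ with $0 < i < pe_F/(p-1)$, part~(3) says the entire group $U_F^{(i)}$ already lies in $U_F^{(i+1)}F^{\times p}$, giving $W_i = 1$. When $i = pe_F/(p-1)$ (which forces $(p-1) \mid e_F$ and automatically $p \mid i$), part~(4) identifies the kernel, via the isomorphism above, with the image of the $\F_p$-linear map $\varphi(y) = y + (\pi_F^{e_F}/p)y^p$. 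Lemma~\ref{lem-im-of-coef-times-y^p+y-map} then yields $|W_i| = q/|\im\varphi| = p$ when $\mu_p \subseteq F$ and $|W_i| = 1$ otherwise.

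To upgrade the cardinality statement in the third subcase into a group isomorphism, I would note that $W_i$ is $p$-torsion: for any $x \in U_F^{(i)}F^{\times p}$ we have $x^p \in F^{\times p} \subseteq U_F^{(i+1)}F^{\times p}$. Hence $W_i$ is an $\F_p$-vector space, and $|W_i| = p$ forces $W_i \cong \F_p$. The only subtle bookkeeping point is verifying that the two uniformiser-coefficients appearing — $\pi_F^i$ in the standard identification $U_F^{(i)}/U_F^{(i+1)} \cong \F_F$ and $\pi_F^{e_F/(p-1)}p$ in the statement of Lemma~\ref{lem-lift-to-pth-power-mod-n}(4) — differ by a unit of $\co_F$ (both have valuation $i$), and therefore do not affect the additive image computation. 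Beyond that, the proof is a routine assembly of the three cases of Lemma~\ref{lem-lift-to-pth-power-mod-n} with Lemma~\ref{lem-im-of-coef-times-y^p+y-map}, and I do not anticipate any genuine difficulty.
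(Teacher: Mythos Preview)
Your proposal is correct and follows essentially the same approach as the paper: both arguments realise $W_i$ as a quotient of $U_F^{(i)}$ (equivalently, set up the exact sequence $1 \to U_F^{(i+1)} \to U_F^{(i)} \to W_i \to 1$ or its variant with $\im\varphi$), invoke parts (1), (3), (4) of Lemma~\ref{lem-lift-to-pth-power-mod-n} for the three cases, and appeal to Lemma~\ref{lem-im-of-coef-times-y^p+y-map} for the critical case $i = pe_F/(p-1)$. Your explicit remarks about $p$-torsion and the uniformiser normalisation are extra care that the paper simply leaves implicit.
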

\begin{proof}
    Suppose first that $p\nmid i$. Then Lemma~\ref{lem-lift-to-pth-power-mod-n}, Part (1) tells us that the natural sequence 
    $$
    1 \to U_F^{(i+1)} \to U_F^{(i)} \to W_i \to 1,
    $$
    is exact, so $W_i \cong \F_F$. If $p \mid i$ and $i < \frac{pe_F}{p-1}$, then Lemma~\ref{lem-lift-to-pth-power-mod-n}, Part (3) tells us that $W_i = 0$. Finally, suppose that $i = \frac{pe_F}{p-1}$. Write $\varphi$ for the map 
    $$
    \varphi: \co_F/\p_F \to \co_F/\p_F, \quad y \mapsto y + \frac{\pi_F^{e_F}}{p}y^p.
    $$
    By Lemma~\ref{lem-lift-to-pth-power-mod-n}, Part (4), we have an exact sequence 
    $$
    1 \to \im\varphi \to U_F^{(i)}/U_F^{(i+1)} \to W_i \to 1,
    $$
    where the map $\im\varphi \to U_F^{(i)}/U_F^{(i+1)}$ is given by $[x] \mapsto [1 + \pi_F^{\frac{e_F}{p-1}}px]$. The result then follows by Lemma~\ref{lem-im-of-coef-times-y^p+y-map}.
\end{proof}
\begin{corollary}
    \label{cor-dim-of-F-mod-Ut-and-pth-powers}
    Let $c$ be a nonnegative integer. If $0 \leq c \leq \lceil\frac{pe_F}{p-1}\rceil$, then we have 
    $$
    \#\big(F^\times / U_F^{(c)}F^{\times p}\big) = pq^{c - 1 - \lfloor\frac{c-1}{p}\rfloor}.
    $$
    If $c > \lceil \frac{pe_F}{p-1}\rceil$, then we have
    $$
    \#\big(F^\times / U_F^{(c)}F^{\times p}\big) = \begin{cases}p^2q^{e_F}\quad\text{if $\mu_p\subseteq F$} 
        \\
        pq^{e_F}\quad\text{otherwise}. 
    \end{cases}
    $$
\end{corollary}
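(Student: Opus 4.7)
My strategy is to combine the decomposition in Lemma~\ref{lem-iso-to-direct-sum-of-Wi} with the explicit description of $\#W_i$ in Corollary~\ref{cor-size-of-Wi}. First I identify the ``background'' factor $F^\times/U_F^{(0)}F^{\times p}$: since $U_F^{(0)} = \co_F^\times$, the valuation map realises this quotient as $\Z/p\Z$, contributing the factor of $p$ present in every claimed formula. All the remaining work consists of counting the sizes of the $W_i$ that appear in the direct sum.

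For the range $0 \leq c \leq \lceil pe_F/(p-1)\rceil$, the sum in Lemma~\ref{lem-iso-to-direct-sum-of-Wi} ranges over $0 \leq i \leq c-1$. I first check that every such $i$ satisfies $i < pe_F/(p-1)$: indeed $i = pe_F/(p-1)$ would require both $(p-1)\mid e_F$ (so the value is integral) and $c-1 \geq pe_F/(p-1) = \lceil pe_F/(p-1)\rceil$, contradicting $c \leq \lceil pe_F/(p-1)\rceil$. So Corollary~\ref{cor-size-of-Wi} reduces to $\#W_i = q$ when $p\nmid i$ and $\#W_i = 1$ otherwise. Counting
$$
\#\{0 \leq i \leq c-1 : p\nmid i\} = c - \bigl(\lfloor (c-1)/p\rfloor + 1\bigr) = c - 1 - \lfloor (c-1)/p\rfloor
$$
and multiplying by the background factor $p$ gives the first formula.

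For the range $c > \lceil pe_F/(p-1)\rceil$, Lemma~\ref{lem-iso-to-direct-sum-of-Wi} gives the product of all $W_i$ for $0 \leq i \leq M := \lfloor pe_F/(p-1)\rfloor$. Write $e_F = (p-1)a + b$ with $0 \leq b < p-1$; then $M = e_F + a$ and a short arithmetic check gives
$$
\#\{0\leq i \leq M : p\nmid i\} = M - \lfloor M/p\rfloor = (e_F + a) - a = e_F,
$$
so the indices $i < pe_F/(p-1)$ contribute $q^{e_F}$. The only possible additional factor comes from $W_{pe_F/(p-1)}$, which is nontrivial precisely when $(p-1)\mid e_F$ and $\mu_p \subseteq F$ by Corollary~\ref{cor-size-of-Wi}. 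But $\mu_p \subseteq F$ already forces $\Q_p(\zeta_p) \subseteq F$, and since $\Q_p(\zeta_p)/\Q_p$ is totally ramified of degree $p-1$, this implies $(p-1)\mid e_F$ automatically. So the divisibility condition is redundant and the extra factor is $p$ if $\mu_p\subseteq F$, and $1$ otherwise, giving the claimed expressions $p^2 q^{e_F}$ or $p q^{e_F}$.

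The proof is essentially a bookkeeping exercise; the conceptual content is already packaged in Lemma~\ref{lem-iso-to-direct-sum-of-Wi} and Corollary~\ref{cor-size-of-Wi}. The only subtle point, which I expect to be the minor obstacle, is correctly handling the boundary index $i = pe_F/(p-1)$: one must verify both that it is excluded from the range $c \leq \lceil pe_F/(p-1)\rceil$ and that, when it does appear in the second regime, the implication $\mu_p\subseteq F \Rightarrow (p-1)\mid e_F$ lets one drop the parity condition on $e_F$ from the final statement.
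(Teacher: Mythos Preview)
Your proposal is correct and follows exactly the approach the paper intends: the paper's proof is the single line ``This follows easily from Lemma~\ref{lem-iso-to-direct-sum-of-Wi} and Corollary~\ref{cor-size-of-Wi}'', and you have simply carried out that bookkeeping in full, including the boundary check at $i = pe_F/(p-1)$ and the observation that $\mu_p\subseteq F$ forces $(p-1)\mid e_F$.
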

\begin{proof}
    This follows easily from Lemma~\ref{lem-iso-to-direct-sum-of-Wi} and Corollary~\ref{cor-size-of-Wi}.
\end{proof}
\begin{corollary}
    \label{cor-num-local-Cp-exts-with-bounded-disc}
    Let $L \in \Et{p/F}{C_p/F}$. Then $v_F(d_{L/F}) = (p-1)c$ for some integer $c$ with $0 \leq c \leq \lfloor\frac{pe_F}{p-1}\rfloor + 1$. 
  
    If $1 \leq c \leq \lceil \frac{pe_F}{p-1} \rceil$, then we have 
    $$
    \# \Et{p/F, \leq (p-1)c}{C_p/F} = 
        \frac{1}{p-1}\Big(pq^{c-1-\lfloor \frac{c-1}{p}\rfloor} - 1\Big).
    $$  
    If $(p-1)\mid e_F$, then we have 
    $$
    \# \Et{p/F, \leq pe_F + p - 1}{C_p/F} = \begin{cases}
        \frac{1}{p-1}\Big(p^2q^{e_F} - 1\Big)\quad\text{if $\mu_p\subseteq F$},
        \\
        \frac{1}{p-1}\Big(pq^{e_F} - 1\Big)\quad\text{otherwise}. 
    \end{cases}
    $$
\end{corollary}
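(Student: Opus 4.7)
The plan is to assemble the corollary directly from Lemma~\ref{lem-cond-disc-for-Cp}, Corollary~\ref{cor-num-(1^p)-exts-in-terms-of-quotient}, and Corollary~\ref{cor-dim-of-F-mod-Ut-and-pth-powers}, since each of the three assertions is a routine translation of one of these ingredients into the language of discriminants. There is no real obstacle here; the main care needed is matching floor and ceiling functions correctly.

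First I would dispatch the divisibility and the bound on $c$. Lemma~\ref{lem-cond-disc-for-Cp} says $v_F(d_{L/F}) = (p-1)\mff(L/F)$, and since $\mff(L/F)$ is a nonnegative integer this forces $v_F(d_{L/F}) = (p-1)c$ for some $c \geq 0$. For the upper bound, Corollary~\ref{cor-p-power-iff-p-power-mod-p-pi} gives $U_F^{(\lfloor pe_F/(p-1)\rfloor + 1)} \subseteq F^{\times p} \subseteq N_{L/F}L^\times$, so by definition of the conductor $\mff(L/F) \leq \lfloor pe_F/(p-1)\rfloor + 1$, which is exactly the bound $c \leq \lfloor pe_F/(p-1)\rfloor + 1$.

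Next I would handle the counting formula for $1 \leq c \leq \lceil pe_F/(p-1)\rceil$ by applying Corollary~\ref{cor-num-(1^p)-exts-in-terms-of-quotient} with $m = (p-1)c$. Since $\lfloor m/(p-1)\rfloor = c$, this gives
$$
\#\Et{p/F, \leq (p-1)c}{C_p/F} = \frac{1}{p-1}\Big(\#\big(F^\times / U_F^{(c)}F^{\times p}\big) - 1\Big),
$$
and the range of $c$ places us in the first case of Corollary~\ref{cor-dim-of-F-mod-Ut-and-pth-powers}, giving $\#(F^\times/U_F^{(c)}F^{\times p}) = pq^{c-1-\lfloor (c-1)/p\rfloor}$. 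Substituting yields the stated formula.

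Finally, for the case $(p-1)\mid e_F$, I would take $c = pe_F/(p-1) + 1$, so that $(p-1)c = pe_F + p - 1$ and $\lfloor (p-1)c/(p-1)\rfloor = c$. Since $c > \lceil pe_F/(p-1)\rceil$, the second case of Corollary~\ref{cor-dim-of-F-mod-Ut-and-pth-powers} applies, giving $\#(F^\times/U_F^{(c)}F^{\times p})$ equal to $p^2q^{e_F}$ or $pq^{e_F}$ according as $\mu_p\subseteq F$ or not. Plugging this into Corollary~\ref{cor-num-(1^p)-exts-in-terms-of-quotient} yields the claimed expressions. The only mild subtlety is checking that $pe_F + p - 1$ corresponds to taking the full conductor bound from Corollary~\ref{cor-p-power-iff-p-power-mod-p-pi}, which is immediate from $\lfloor pe_F/(p-1)\rfloor + 1 = pe_F/(p-1) + 1 = c$.
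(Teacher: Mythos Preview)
Your proposal is correct and follows essentially the same route as the paper, which simply cites Corollaries~\ref{cor-num-(1^p)-exts-in-terms-of-quotient} and \ref{cor-dim-of-F-mod-Ut-and-pth-powers}; you have merely filled in the details (including the use of Lemma~\ref{lem-cond-disc-for-Cp} and Corollary~\ref{cor-p-power-iff-p-power-mod-p-pi} for the first assertion) that the paper leaves implicit.
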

\begin{proof}
    This follows easily from Corollaries~\ref{cor-num-(1^p)-exts-in-terms-of-quotient} and \ref{cor-dim-of-F-mod-Ut-and-pth-powers}.
\end{proof}

\begin{theorem}
    \label{thm-num-Cp-exts-with-disc-val}
    If $\Et{(1^p)/F,m}{C_p/F}$ is nonempty, then $m = (p-1)c$ for an integer $c$ with $1 \leq c \leq \lfloor\frac{pe_F}{p-1}\rfloor+1$, and 
    $$
    \#\Et{(1^p)/F,m}{C_p/F} = \begin{cases}
        \frac{p(q-1)}{p-1}\cdot q^{c-2-\lfloor\frac{c-2}{p}\rfloor}\quad\text{if $c \not \equiv 1\pmod{p}$},
        \\
        pq^{e_F}\quad\text{if $c = \frac{pe_F}{p-1} + 1$ and $\mu_p\subseteq F$},
        \\
        0\quad\text{otherwise}. 
    \end{cases}
    $$
\end{theorem}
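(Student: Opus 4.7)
The first assertion, that $m = (p-1)c$ for some integer $c$ with $1 \leq c \leq \lfloor pe_F/(p-1)\rfloor + 1$, follows immediately from Lemma~\ref{lem-cond-disc-for-Cp} together with Corollary~\ref{cor-p-power-iff-p-power-mod-p-pi}: the discriminant valuation of any $C_p$-extension equals $(p-1)$ times its conductor, and Corollary~\ref{cor-p-power-iff-p-power-mod-p-pi} forces the conductor to be at most $\lfloor pe_F/(p-1)\rfloor + 1$, since $U_F^{(\lfloor pe_F/(p-1)\rfloor + 1)} \subseteq F^{\times p}$ is contained in every such norm group.

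For the counting formula, set $N(c) = \#\Et{p/F,\leq (p-1)c}{C_p/F}$. Since $F$ is $p$-adic, $q$ is a power of $p$, so $p\nmid q-1$ and there are no tamely ramified $C_p$-extensions of $F$; the unique unramified $C_p$-extension is therefore the sole element of $\Et{p/F}{C_p/F} \setminus \Et{(1^p)/F}{C_p/F}$, and it has discriminant valuation $0$. Consequently, for every $c \geq 1$,
$$\#\Et{(1^p)/F,(p-1)c}{C_p/F} = N(c) - N(c-1),$$
where we set $N(0) = 1$ (which, one checks, is what the first formula of Corollary~\ref{cor-num-local-Cp-exts-with-bounded-disc} would give at $c = 0$). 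The whole theorem is thus a telescoping calculation based on that corollary.

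The key floor identity is that $\lfloor (c-1)/p\rfloor - \lfloor (c-2)/p\rfloor$ equals $1$ if $c\equiv 1\pmod p$ and $0$ otherwise. For $1 \leq c \leq \lceil pe_F/(p-1)\rceil$ with $c\not\equiv 1\pmod p$, the exponents in $N(c)$ and $N(c-1)$ share the same floor, and the difference simplifies to
$$N(c) - N(c-1) = \tfrac{p(q-1)}{p-1}\cdot q^{c-2-\lfloor (c-2)/p\rfloor},$$
matching the theorem; for $c\equiv 1\pmod p$ in the same range, the two exponents of $q$ coincide, so $N(c)-N(c-1)=0$, matching the ``otherwise'' case.

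The most delicate case is the top value $c = \lfloor pe_F/(p-1)\rfloor + 1$ when $(p-1)\mid e_F$, where one must switch from the first to the second formula of Corollary~\ref{cor-num-local-Cp-exts-with-bounded-disc}. Here $c = pe_F/(p-1)+1\equiv 1\pmod p$, and a direct subtraction gives $\frac{1}{p-1}(p^2q^{e_F}-pq^{e_F}) = pq^{e_F}$ in the case $\mu_p\subseteq F$, and $\frac{1}{p-1}(pq^{e_F}-pq^{e_F})=0$ in the case $\mu_p\not\subseteq F$, reflecting the extra $\F_p$ factor contributed by $W_{pe_F/(p-1)}$ in Corollary~\ref{cor-size-of-Wi}. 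The main obstacle is purely bookkeeping---ensuring that the ranges $\lfloor \cdot\rfloor + 1$ and $\lceil \cdot\rceil$ from the various earlier statements line up correctly at this boundary---and no further mathematical ingredient is needed beyond what Corollaries~\ref{cor-num-local-Cp-exts-with-bounded-disc} and \ref{cor-size-of-Wi} already supply.
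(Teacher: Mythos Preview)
Your proof is correct and follows exactly the approach the paper intends: the paper's own proof is the single sentence ``This follows easily from Corollary~\ref{cor-num-local-Cp-exts-with-bounded-disc}'', and you have simply written out the telescoping difference $N(c)-N(c-1)$ in full, including the floor identity and the boundary case $c=\frac{pe_F}{p-1}+1$. Nothing is missing or different.
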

\begin{proof}
    This follows easily from Corollary~\ref{cor-num-local-Cp-exts-with-bounded-disc}. 
\end{proof}
\begin{lemma}
    \label{lem-preliminary-sum-for-Cp-mass}
    Let $p$ be an integer with $p \geq 2$ and let $q$ be a positive rational number. In Appendix~\ref{appendix-helpers}, we define functions $A(t)$ and $B(t)$ in terms of $p$ and $q$. For any integer $t$ with $t \geq 2$, we have
    $$
    \sum_{\substack{1 \leq c \leq t \\ c \not \equiv 1\pmod{p}}} q^{-(p-2)c - \lfloor\frac{c-2}{p}\rfloor} = \mathbbm{1}_{t \geq p}\cdot A(t) + \mathbbm{1}_{t \not\equiv 0,1\pmod{p}}\cdot B(t).
    $$
\end{lemma}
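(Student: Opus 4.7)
The plan is to split the sum by residue classes of $c$ modulo $p$, reducing the statement to the evaluation of a small number of geometric series. The key preliminary observation is that the exponent $\lfloor (c-2)/p\rfloor$ is essentially constant on each residue class: writing $c = kp+r$ with $0\leq r\leq p-1$, we have $\lfloor (c-2)/p\rfloor = k-1$ when $r=0$ (and $k\geq 1$), and $\lfloor (c-2)/p\rfloor = k$ when $2\leq r\leq p-1$. Thus, on each admissible residue class, the summand $q^{-(p-2)c-\lfloor(c-2)/p\rfloor}$ is a geometric progression in $k=\lfloor c/p\rfloor$ with common ratio $q^{-((p-2)p+1)}$.

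Next, I would set $K = \lfloor t/p\rfloor$ and group the admissible $c$ into blocks $B_k$ indexed by $k=0,1,\ldots,K$, where $B_k$ consists of the $c$ with $\lfloor c/p\rfloor = k$, $c\not\equiv 1\pmod{p}$, and $1\leq c\leq t$. For each $1\leq k\leq K-1$ the block is complete, consisting of $c=kp$ together with $c=kp+2,\ldots,kp+(p-1)$, so its total contribution is a fixed constant (depending only on $p$ and $q$) times $q^{-k((p-2)p+1)}$. Summing over $k$ yields a finite geometric series; combined with $B_0$ (the terms $c=2,\ldots,p-1$) and the single point $c=Kp$ from the final block, this gives the ``complete'' part of the sum. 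This part vanishes when $K=0$, which is exactly the condition $t<p$, so it carries the factor $\mathbbm{1}_{t\geq p}$ and should match $A(t)$.

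The remaining tail of $B_K$ consists of the terms $c=Kp+2,Kp+3,\ldots,t$, which is nonempty precisely when $t\not\equiv 0,1\pmod{p}$; summed as a short geometric progression, these give $B(t)$, governed by the indicator $\mathbbm{1}_{t\not\equiv 0,1\pmod{p}}$. Adding the two contributions yields the claimed identity.

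The only genuine obstacle is bookkeeping: ensuring the boundary cases $K=0$ and $K=1$ and the edge case $p=2$ (where $B_0$ and the ``interior'' parts of every block are empty) are accounted for correctly, and that the closed forms produced by the geometric sums agree on the nose with the specific definitions of $A(t)$ and $B(t)$ from Appendix~\ref{appendix-helpers}. I would organise this cleanly by writing $S(t) = S(Kp) + \bigl(S(t)-S(Kp)\bigr)$ to separate complete and tail contributions, and then verify the constants by checking a handful of small cases (e.g.\ $p\in\{2,3\}$ with small $t$) to nail down any remaining ambiguities.
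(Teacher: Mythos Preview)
Your proposal is correct and is precisely the kind of explicit computation the paper alludes to; the paper's own proof says only that ``the proof is a straightforward computation'' and defers to a numerical check. Your decomposition $S(t)=S(Kp)+(S(t)-S(Kp))$, together with the observation that $\lfloor(c-2)/p\rfloor$ is constant on each admissible residue class so that each block is a geometric progression, is exactly how one carries out that computation by hand and correctly identifies $S(Kp)$ with $\mathbbm{1}_{t\geq p}A(t)$ and the tail with $\mathbbm{1}_{t\not\equiv 0,1\pmod{p}}B(t)$.
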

\begin{proof}
    The proof is a straightforward computation. To eliminate the possibility of a manipulation error, we have checked the identity numerically (see the Python notebook in the Github repository \url{https://github.com/Sebastian-Monnet/Sn-n-ics-paper-checks}). 
\end{proof}

\begin{corollary}
    \label{cor-mass-1^p-Cp}
    Recall the explicit functions $A(t)$ and $B(t)$ from Appendix~\ref{appendix-helpers}. We have 
    $$
    \m\big(\Et{(1^p)/F}{C_p/F}\big) = \frac{q-1}{p-1}q^{-2}\Big(\mathbbm{1}_{e_F \geq p-1}\cdot A\Big(\Big\lceil\frac{pe_F}{p-1}\Big\rceil\Big) + \mathbbm{1}_{(p-1)\nmid e_F} \cdot B\Big(\Big\lceil \frac{pe_F}{p-1}\Big\rceil\Big)\Big) + \mathbbm{1}_{\mu_p \subseteq F} \cdot q^{-(p-1)(e_F+1)}. 
    $$
\end{corollary}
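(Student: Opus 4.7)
The plan is to unfold the definition of pre-mass, stratify the sum by discriminant valuation using the counting formula of Theorem~\ref{thm-num-Cp-exts-with-disc-val}, and then apply Lemma~\ref{lem-preliminary-sum-for-Cp-mass} to close the computation. Since every $L \in \Et{(1^p)/F}{C_p/F}$ is Galois over $F$ with Galois group $C_p$, we have $\#\Aut(L/F) = p$, and $\Nm(d_{L/F}) = q^{v_F(d_{L/F})}$. Thus
$$
\m\big(\Et{(1^p)/F}{C_p/F}\big) = \frac{1}{p}\sum_{L} q^{-v_F(d_{L/F})} = \frac{1}{p}\sum_{c \geq 1} \#\Et{(1^p)/F,(p-1)c}{C_p/F}\cdot q^{-(p-1)c},
$$
where the second equality uses the fact from Theorem~\ref{thm-num-Cp-exts-with-disc-val} that $v_F(d_{L/F}) = (p-1)c$ for some $c$ with $1 \leq c \leq \lfloor \tfrac{pe_F}{p-1}\rfloor + 1$.

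Next, I would split the sum into the two cases appearing in Theorem~\ref{thm-num-Cp-exts-with-disc-val}. First, the special case $c = \tfrac{pe_F}{p-1}+1$, which only contributes when $(p-1)\mid e_F$ and $\mu_p\subseteq F$, gives
$$
\frac{1}{p}\cdot pq^{e_F}\cdot q^{-(p-1)(\frac{pe_F}{p-1}+1)} = q^{-(p-1)(e_F+1)},
$$
matching the last term of the stated formula. Second, the generic contribution from $c \not\equiv 1 \pmod p$ with $1 \leq c \leq \lceil \tfrac{pe_F}{p-1}\rceil$ is
$$
\frac{1}{p}\cdot \frac{p(q-1)}{p-1}\sum_{\substack{1\leq c \leq \lceil\frac{pe_F}{p-1}\rceil \\ c\not\equiv 1 \pmod p}} q^{c-2-\lfloor\frac{c-2}{p}\rfloor}\cdot q^{-(p-1)c} = \frac{q-1}{p-1}\,q^{-2}\sum_{\substack{1\leq c \leq \lceil\frac{pe_F}{p-1}\rceil \\ c\not\equiv 1 \pmod p}} q^{-(p-2)c - \lfloor\frac{c-2}{p}\rfloor}.
$$
(Note that when $(p-1)\mid e_F$, the would-be duplicate index $c=\tfrac{pe_F}{p-1}+1$ is automatically excluded here since it is $\equiv 1 \pmod p$.)

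Finally, I would apply Lemma~\ref{lem-preliminary-sum-for-Cp-mass} with $t = \lceil \tfrac{pe_F}{p-1}\rceil$ to rewrite the inner sum as $\mathbbm{1}_{t\geq p}\cdot A(t) + \mathbbm{1}_{t\not\equiv 0,1\pmod p}\cdot B(t)$. The last step is a small check translating these conditions on $t$ into conditions on $e_F$: the inequality $\lceil \tfrac{pe_F}{p-1}\rceil \geq p$ is equivalent to $e_F \geq p-1$, and writing $e_F = (p-1)k+r$ with $0\leq r \leq p-2$ shows $\lceil \tfrac{pe_F}{p-1}\rceil \equiv 0 \pmod p$ when $r=0$ and $\lceil \tfrac{pe_F}{p-1}\rceil \equiv r+1 \pmod p$ otherwise, so $t\not\equiv 0,1\pmod p$ is equivalent to $(p-1)\nmid e_F$. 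Combining all three contributions yields the stated formula. The main obstacle is purely bookkeeping: ensuring the indicator conditions and the shifts in the exponents line up exactly; there is no conceptual difficulty, as all the heavy lifting was done in Theorem~\ref{thm-num-Cp-exts-with-disc-val} and Lemma~\ref{lem-preliminary-sum-for-Cp-mass}.
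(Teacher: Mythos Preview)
Your proposal is correct and follows essentially the same approach as the paper: stratify the pre-mass by $c$, plug in Theorem~\ref{thm-num-Cp-exts-with-disc-val}, apply Lemma~\ref{lem-preliminary-sum-for-Cp-mass} with $t=\lceil pe_F/(p-1)\rceil$, and translate the indicator conditions on $t$ into conditions on $e_F$. Your write-up is in fact slightly more explicit than the paper's (you spell out the computation of the special term and note why the index $c=\tfrac{pe_F}{p-1}+1$ is excluded from the generic sum), but the argument is the same.
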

\begin{proof}
    By Theorem~\ref{thm-num-Cp-exts-with-disc-val}, the mass $\m\big(\Et{(1^p)/F}{C_p/F}\big)$ is the sum of the following two quantities:
    \begin{enumerate}
        \item $$
            \frac{q-1}{p-1}\cdot q^{-2} \cdot \sum_{\substack{1 \leq c < \frac{pe_F}{p-1} + 1 \\ c \not \equiv 1\pmod{p}}} q^{- \big((p-2)c + \lfloor\frac{c-2}{p}\rfloor\big)} .
        $$
        \item $$\mathbbm{1}_{\mu_p\subseteq F}\cdot q^{-(p-1)(e_F+1)}.$$
    \end{enumerate}
    For $c \in \Z$, we have $c < \frac{pe_F}{p-1} + 1$ if and only if $c \leq \lceil \frac{pe_F}{p-1}\rceil$. Setting $t = \lceil \frac{pe_F}{p-1}\rceil$, Lemma~\ref{lem-preliminary-sum-for-Cp-mass} tells us that 
    $$
    \sum_{\substack{1 \leq c < \frac{pe_F}{p-1} + 1 \\ c \not \equiv 1\pmod{p}}} q^{- \big((p-2)c + \lfloor\frac{c-2}{p}\rfloor\big)} = \mathbbm{1}_{\lceil \frac{pe_F}{p-1}\rceil \geq p}\cdot A\Big(\Big\lceil \frac{pe_F}{p-1}\Big\rceil\Big) + \mathbbm{1}_{\lceil \frac{pe_F}{p-1}\rceil \not \equiv 0,1\pmod{p}}\cdot B\Big(\Big\lceil \frac{pe_F}{p-1}\Big\rceil\Big) .
    $$
    It is easy to see that $\lceil \frac{pe_F}{p-1}\rceil \geq p$ if and only if $e_F \geq p-1$. We claim that $\lceil \frac{pe_F}{p-1}\rceil\equiv 0,1\pmod{p}$ if and only if $(p-1)\mid e_F$. To see this, write $e_F = m(p-1) + r$ for integers $m$ and $r$ with $0 \leq r \leq p-2$. Then 
    $$
    \Big\lceil\frac{pe_F}{p-1}\Big\rceil = pm + \Big\lceil \frac{pr}{p-1}\Big\rceil.
    $$
    If $(p-1)\mid e_F$, then $r = 0$, so $\big\lceil\frac{pe_F}{p-1}\big\rceil \equiv 0\pmod{p}$. Otherwise, we have $r \geq 1$, so $1 < \frac{pr}{p-1} \leq p-1$, hence $\big\lceil\frac{pe_F}{p-1}\big\rceil \not\equiv 0,1\pmod{p}$. The result follows.
\end{proof}
Recall that, given nonnegative integers $n$ and $t$ with $n\geq 1$ and $t \geq 0$, we write 
$$
\overline{\mathcal{A}}^n = \mathcal{A}F^{\times n}/ F^{\times n} 
$$
and 
$$
\overline{\mathcal{A}}^n_t = \overline{\mathcal{A}}^n \cap \Big(U_F^{(t)} F^{\times n}/F^{\times n}\Big).
$$
\begin{lemma}
    \label{lem-wildly-ram-Cp-exts-with-norms-and-bounded-disc-basic-form}
    Let $c$ be an integer with $0 \leq c \leq \frac{pe_F}{p-1} + 1$. We have 
    $$
    \# \Et{p/F, \leq (p-1)c}{C_p/F,\mathcal{A}} = \frac{1}{p-1}\cdot \Big(
    \frac{\#\overline{\mathcal{A}}^p_c}{\#\overline{\mathcal{A}}^p} \cdot \#\big(F^\times / U_F^{(c)}F^{\times p}\big) - 1    
    \Big).
    $$
\end{lemma}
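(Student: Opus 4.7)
The plan is to apply Lemma~\ref{lem-chars-to-Cp-exts-(p-1)-to-1} and then impose the norm condition as a linear constraint on the parametrising characters. First, recall that Lemma~\ref{lem-chars-to-Cp-exts-(p-1)-to-1} gives a $(p-1)$-to-$1$ surjection
$$
\operatorname{Epi}_{\F_p}\bigl(F^\times / U_F^{(c)}F^{\times p},\,\F_p\bigr) \twoheadrightarrow \Et{p/F,\leq (p-1)c}{C_p/F},
$$
sending $\chi$ to the extension $L_\chi$ whose norm group is the kernel of the composite $F^\times \to F^\times / U_F^{(c)}F^{\times p} \xrightarrow{\chi} \F_p$. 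By local class field theory, $\mathcal{A}\subseteq N_{L_\chi/F}L_\chi^\times$ if and only if the image of $\mathcal{A}$ in $F^\times/U_F^{(c)}F^{\times p}$ lies in $\ker\chi$. So it suffices to count epimorphisms $\chi$ of the vector space $F^\times/U_F^{(c)}F^{\times p}$ to $\F_p$ that vanish on the image of $\mathcal{A}$, and then divide by $p-1$.

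Next, I would identify the image of $\mathcal{A}$. Applying the second and third isomorphism theorems to $\overline{\mathcal{A}}^p = \mathcal{A} F^{\times p}/F^{\times p}$ and its subgroup $\overline{\mathcal{A}}^p_c$, we get
$$
\overline{\mathcal{A}}^p / \overline{\mathcal{A}}^p_c \;\cong\; \mathcal{A} F^{\times p}\bigl/\bigl(\mathcal{A} F^{\times p}\cap U_F^{(c)}F^{\times p}\bigr) \;\cong\; \mathcal{A}\,U_F^{(c)}F^{\times p}\bigl/U_F^{(c)}F^{\times p},
$$
and the right-hand group is precisely the image of $\mathcal{A}$ in $F^\times/U_F^{(c)}F^{\times p}$. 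Thus this image has order $\#\overline{\mathcal{A}}^p/\#\overline{\mathcal{A}}^p_c$, and the quotient
$$
V \;:=\; \bigl(F^\times/U_F^{(c)}F^{\times p}\bigr)\,\big/\,\bigl(\mathrm{image\ of\ }\mathcal{A}\bigr)
$$
has order $\#V = \frac{\#\overline{\mathcal{A}}^p_c}{\#\overline{\mathcal{A}}^p}\cdot \#(F^\times/U_F^{(c)}F^{\times p})$.

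Finally, since $F^\times/U_F^{(c)}F^{\times p}$ is killed by $p$ it is an $\F_p$-vector space, and so is $V$. The epimorphisms $\chi$ we wish to count correspond bijectively to nonzero $\F_p$-linear maps $V\to\F_p$, and there are exactly $\#V - 1$ of these (every nonzero linear map to $\F_p$ is surjective). Dividing by the $(p-1)$-fold redundancy of Lemma~\ref{lem-chars-to-Cp-exts-(p-1)-to-1} yields
$$
\#\Et{p/F,\leq (p-1)c}{C_p/F,\mathcal{A}} \;=\; \frac{\#V - 1}{p-1} \;=\; \frac{1}{p-1}\left(\frac{\#\overline{\mathcal{A}}^p_c}{\#\overline{\mathcal{A}}^p}\cdot \#\bigl(F^\times/U_F^{(c)}F^{\times p}\bigr) - 1\right),
$$
as claimed. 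No step looks especially hard; the only thing to be careful about is justifying that one really is counting surjections to $\F_p$ (rather than arbitrary homomorphisms), which follows because a linear functional on an $\F_p$-vector space is either zero or surjective, and that the diamond/third isomorphism manipulation genuinely identifies the image of $\mathcal{A}$ with $\overline{\mathcal{A}}^p/\overline{\mathcal{A}}^p_c$.
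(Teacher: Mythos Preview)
Your proof is correct and follows essentially the same approach as the paper: both use Lemma~\ref{lem-chars-to-Cp-exts-(p-1)-to-1} and then count the $\F_p$-linear maps vanishing on the image of $\mathcal{A}$, the paper by computing the annihilator $\bigl(\overline{\mathcal{A}}^p + U_F^{(c)}F^{\times p}/F^{\times p}\bigr)^\bot$ via dimensions and you by passing to the quotient $V$ and counting nonzero functionals there. These are equivalent phrasings of the same linear-algebra computation.
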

\begin{proof}
    By Lemma~\ref{lem-chars-to-Cp-exts-(p-1)-to-1}, we need to count $\F_p$-linear transformations  
    $$
    \chi : F^\times / F^{\times p} \to \F_p
    $$
    such that 
    $$
    \chi\big(\overline{\mathcal{A}}^p\big) = \chi\big(U_F^{(c)}F^{\times p}/F^{\times p}\big) = 0.
    $$
    In other words, we need to compute the size of the annihilator 
    $$
    \Big(\overline{\mathcal{A}}^p + \Big(U_F^{(c)}F^{\times p}/F^{\times p}\Big)\Big)^\bot.
    $$
    It is easy to see that 
    $$
    \dim_{\F_p}\Big(\big(\overline{\mathcal{A}}^p + (U_F^{(c)}F^{\times p}/F^{\times p})\big)^\bot\Big) = \dim_{\F_p}\Big(F^\times / U_F^{(c)}F^{\times p}\Big) - \dim_{\F_p}\big(\overline{\mathcal{A}}^p/\overline{\mathcal{A}}^p_c\big),
    $$
    so 
    $$
    \#\Big(\overline{\mathcal{A}}^p + (U_F^{(c)}F^{\times p}/F^{\times p})\Big)^\bot = \frac{\#\overline{\mathcal{A}}^p_c}{\#\overline{\mathcal{A}}^p} \cdot \#\Big(F^\times / U_F^{(c)}F^{\times p}\Big). 
    $$
    The result then follows by Lemma~\ref{lem-chars-to-Cp-exts-(p-1)-to-1}. 
\end{proof}
\begin{corollary}
    \label{cor-size-of-wildly-ram-Cp-exts-with-norms}
    Let $c$ be an integer with $0 \leq c \leq \frac{pe_F}{p-1} + 1$. We have 
    $$
    \# \Et{p/F, \leq (p-1)c}{C_p/F,\mathcal{A}} = \frac{1}{p-1}\cdot \Big(
    \frac{\#\overline{\mathcal{A}}^p_c}{\#\overline{\mathcal{A}}^p}\Big(1 + (p-1)\#\Et{p/F,\leq (p-1)c}{C_p/F}\Big) - 1
    \Big).
    $$
\end{corollary}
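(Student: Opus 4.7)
The plan is to observe that this corollary is essentially a direct algebraic rewrite of Lemma~\ref{lem-wildly-ram-Cp-exts-with-norms-and-bounded-disc-basic-form}, using Corollary~\ref{cor-num-(1^p)-exts-in-terms-of-quotient} to eliminate the factor $\#\big(F^\times/U_F^{(c)}F^{\times p}\big)$ in favour of the quantity $\#\Et{p/F,\leq (p-1)c}{C_p/F}$.

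More precisely, the first step is to apply Corollary~\ref{cor-num-(1^p)-exts-in-terms-of-quotient} with $m = (p-1)c$. Since $0 \leq c \leq \frac{pe_F}{p-1}+1$, we have $\lfloor m/(p-1)\rfloor = c$, so that corollary yields
$$
\#\Et{p/F,\leq (p-1)c}{C_p/F} = \frac{1}{p-1}\Big(\#\big(F^\times/U_F^{(c)}F^{\times p}\big) - 1\Big),
$$
which rearranges to
$$
\#\big(F^\times/U_F^{(c)}F^{\times p}\big) = 1 + (p-1)\#\Et{p/F,\leq (p-1)c}{C_p/F}.
$$

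The second step is simply to substitute this identity into the expression given by Lemma~\ref{lem-wildly-ram-Cp-exts-with-norms-and-bounded-disc-basic-form}, namely
$$
\#\Et{p/F,\leq (p-1)c}{C_p/F,\mathcal{A}} = \frac{1}{p-1}\Big(\frac{\#\overline{\mathcal{A}}^p_c}{\#\overline{\mathcal{A}}^p}\cdot \#\big(F^\times/U_F^{(c)}F^{\times p}\big) - 1\Big),
$$
obtaining the claimed formula. There is no real obstacle here: the only thing to verify carefully is that the hypothesis $0 \leq c \leq \frac{pe_F}{p-1}+1$ of the corollary matches (via the floor identity $\lfloor (p-1)c/(p-1)\rfloor = c$) the hypothesis under which Lemma~\ref{lem-wildly-ram-Cp-exts-with-norms-and-bounded-disc-basic-form} was invoked, which is immediate.
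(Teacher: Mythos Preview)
Your proposal is correct and is exactly the approach the paper takes: the paper's proof consists of the single sentence ``This is immediate from Corollary~\ref{cor-num-(1^p)-exts-in-terms-of-quotient} and Lemma~\ref{lem-wildly-ram-Cp-exts-with-norms-and-bounded-disc-basic-form}'', and you have spelled out that substitution in full. The only microscopic nitpick is that Corollary~\ref{cor-num-(1^p)-exts-in-terms-of-quotient} is stated for positive $m$, so the boundary case $c=0$ is not literally covered, but the underlying class field theory argument applies there verbatim.
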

\begin{proof}
    This is immediate from Corollary~\ref{cor-num-(1^p)-exts-in-terms-of-quotient} and Lemma~\ref{lem-wildly-ram-Cp-exts-with-norms-and-bounded-disc-basic-form}.
\end{proof}
\begin{corollary}
    \label{cor-num-wild-Cp-exts-with-A-and-disc-val-less-explicit-form}
    Let $c$ be an integer with $1 \leq c \leq \frac{pe_F}{p-1} + 1$. We have 
    $$
    \# \Et{p/F, (p-1)c}{C_p/F,\mathcal{A}} = \frac{\#\big(F^{\times p}/ U_F^{(c-1)}F^{\times p}\big)}{(p-1)\#\overline{\mathcal{A}}^p}\cdot \Big(\#\overline{\mathcal{A}}_c^p \cdot \# W_{c-1} - \#\overline{\mathcal{A}}^p_{c-1}\Big)
    $$
\end{corollary}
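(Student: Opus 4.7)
The plan is to extract this from Lemma~\ref{lem-wildly-ram-Cp-exts-with-norms-and-bounded-disc-basic-form} by a one-step telescoping. By Lemma~\ref{lem-cond-disc-for-Cp}, every $L \in \Et{p/F}{C_p/F}$ satisfies $v_F(d_{L/F}) \in (p-1)\Z_{\geq 0}$, so the discriminant valuations skip values that are not multiples of $p-1$. In particular,
$$
\# \Et{p/F, (p-1)c}{C_p/F,\mathcal{A}} = \# \Et{p/F, \leq (p-1)c}{C_p/F,\mathcal{A}} - \# \Et{p/F, \leq (p-1)(c-1)}{C_p/F,\mathcal{A}}.
$$
First I would apply Lemma~\ref{lem-wildly-ram-Cp-exts-with-norms-and-bounded-disc-basic-form} with parameters $c$ and $c-1$ and subtract. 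The two additive $-\tfrac{1}{p-1}$ constants cancel, leaving
$$
\frac{1}{(p-1)\#\overline{\mathcal{A}}^p}\Big(\#\overline{\mathcal{A}}^p_c\cdot \#\big(F^\times / U_F^{(c)}F^{\times p}\big) - \#\overline{\mathcal{A}}^p_{c-1}\cdot \#\big(F^\times / U_F^{(c-1)}F^{\times p}\big)\Big).
$$

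Next I would use the definition of $W_{c-1}$ to refactor. Since $W_{c-1} = U_F^{(c-1)}F^{\times p}/U_F^{(c)}F^{\times p}$, the short exact sequence
$$
1 \to W_{c-1} \to F^\times/U_F^{(c)}F^{\times p} \to F^\times/U_F^{(c-1)}F^{\times p} \to 1
$$
gives $\#\big(F^\times/U_F^{(c)}F^{\times p}\big) = \#W_{c-1}\cdot \#\big(F^\times/U_F^{(c-1)}F^{\times p}\big)$. Pulling the common factor $\#\big(F^\times/U_F^{(c-1)}F^{\times p}\big)$ out of the subtraction yields exactly the stated formula (where $F^{\times p}$ in the numerator should read $F^\times$; the literal expression $F^{\times p}/U_F^{(c-1)}F^{\times p}$ is the trivial group, so I interpret this as a typographical slip).

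The only point that requires any checking at all is the boundary $c = 1$, where one must confirm that Lemma~\ref{lem-wildly-ram-Cp-exts-with-norms-and-bounded-disc-basic-form} at $c=0$ correctly counts the (single) unramified $C_p$-extension of $F$. Since the unramified $C_p$-extension has norm group $\co_F^\times F^{\times p}$, it lies in $\Et{p/F}{C_p/F,\mathcal{A}}$ iff $\overline{\mathcal{A}}^p_0 = \overline{\mathcal{A}}^p$; using $\#\big(F^\times/\co_F^\times F^{\times p}\big) = p$ and that $[\overline{\mathcal{A}}^p:\overline{\mathcal{A}}^p_0] \in \{1,p\}$, the formula gives $1$ or $0$ in the two cases, as desired. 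Beyond this sanity check the argument is mechanical, and I expect no substantive obstacle: the whole corollary is a difference of two instances of the previous lemma, repackaged via the obvious filtration quotient.
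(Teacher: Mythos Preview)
Your proof is correct and matches the paper's approach exactly: the paper simply says the result ``follows immediately from Lemma~\ref{lem-wildly-ram-Cp-exts-with-norms-and-bounded-disc-basic-form}, together with the definition of the groups $W_i$,'' which is precisely the telescoping-plus-refactoring you carry out. Your observation that the numerator should read $F^\times$ rather than $F^{\times p}$ is a genuine typo in the statement, and your boundary check at $c=1$ is a nice extra verification that the paper omits.
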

\begin{proof}
    This follows immediately from Lemma~\ref{lem-wildly-ram-Cp-exts-with-norms-and-bounded-disc-basic-form}, together with the definition of the groups $W_i$. 
\end{proof}

\begin{corollary}
    \label{cor-num-wild-Cp-exts-with-A-and-disc-val}
    For $1 \leq c \leq \frac{pe_F}{p-1} + 1$, we have 
    $$
    \#\Et{p/F,(p-1)c}{C_p/F,\mathcal{A}} = \begin{cases}
        \frac{p}{(p-1)\#\overline{\mathcal{A}}^p} q^{c - 2- \lfloor\frac{c-2}{p}\rfloor} \Big(q\cdot \#\overline{\mathcal{A}}^p_c - \#\overline{\mathcal{A}}^p_{c-1}\Big)\quad\text{if $c \not\equiv 1\pmod{p}$},
        \\
        \frac{pq^{e_F}}{\#\overline{\mathcal{A}}^p}\quad\text{if $c = \frac{pe_F}{p-1} + 1$ and $\mu_p\subseteq F$ and $\overline{\mathcal{A}}^p_{\frac{pe_F}{p-1}} = 0$},
        \\
        0\quad\text{otherwise}. 
    \end{cases}
    $$
\end{corollary}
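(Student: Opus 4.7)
My plan is to specialise the formula from Corollary~\ref{cor-num-wild-Cp-exts-with-A-and-disc-val-less-explicit-form}, namely
\[
\# \Et{p/F, (p-1)c}{C_p/F,\mathcal{A}} = \frac{\#\bigl(F^{\times}/ U_F^{(c-1)}F^{\times p}\bigr)}{(p-1)\#\overline{\mathcal{A}}^p}\cdot \Bigl(\#\overline{\mathcal{A}}^p_c \cdot \# W_{c-1} - \#\overline{\mathcal{A}}^p_{c-1}\Bigr),
\]
by substituting the explicit sizes of $W_{c-1}$ from Corollary~\ref{cor-size-of-Wi} and of $F^\times/U_F^{(c-1)}F^{\times p}$ from Corollary~\ref{cor-dim-of-F-mod-Ut-and-pth-powers}, case-splitting on the residue of $c$ modulo $p$.

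First I would treat the case $c\not\equiv 1\pmod p$. Since $\tfrac{pe_F}{p-1}$, when it is an integer, is a multiple of $p$, the hypothesis $c\not\equiv 1\pmod p$ automatically forces $c-1<\tfrac{pe_F}{p-1}$, so Corollary~\ref{cor-size-of-Wi} gives $\#W_{c-1}=q$. The range constraint $c\le\tfrac{pe_F}{p-1}+1$ also yields $c-1\le\lceil\tfrac{pe_F}{p-1}\rceil$, so by Corollary~\ref{cor-dim-of-F-mod-Ut-and-pth-powers} we have $\#\bigl(F^\times/U_F^{(c-1)}F^{\times p}\bigr)=pq^{c-2-\lfloor(c-2)/p\rfloor}$. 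Substituting recovers the first case of the claimed formula.

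Next, for $c\equiv 1\pmod p$, suppose either $c-1<\tfrac{pe_F}{p-1}$, or $c-1=\tfrac{pe_F}{p-1}$ with $\mu_p\not\subseteq F$. Then Corollary~\ref{cor-size-of-Wi} gives $\#W_{c-1}=1$, i.e.\ $U_F^{(c)}F^{\times p}=U_F^{(c-1)}F^{\times p}$, so $\overline{\mathcal{A}}^p_c=\overline{\mathcal{A}}^p_{c-1}$ and the bracketed factor vanishes, matching the \emph{otherwise $=0$} branch. The remaining subcase is $c=\tfrac{pe_F}{p-1}+1$ with $\mu_p\subseteq F$; here $\#W_{c-1}=p$, and a short arithmetic check using $(p-1)\mid e_F$ yields $c-2-\lfloor(c-2)/p\rfloor=e_F$, giving $\#\bigl(F^\times/U_F^{(c-1)}F^{\times p}\bigr)=pq^{e_F}$. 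Corollary~\ref{cor-p-power-iff-p-power-mod-p-pi} shows that $U_F^{(c)}F^{\times p}=F^{\times p}$, so $\#\overline{\mathcal{A}}^p_c=1$ and the bracketed factor simplifies to $p-\#\overline{\mathcal{A}}^p_{c-1}$.

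The only genuine subtlety is closing out this last subcase. The key observation is that $\overline{\mathcal{A}}^p_{c-1}=\overline{\mathcal{A}}^p_{pe_F/(p-1)}$ is a subgroup of $U_F^{(pe_F/(p-1))}F^{\times p}/F^{\times p}\cong W_{pe_F/(p-1)}\cong\F_p$, so $\#\overline{\mathcal{A}}^p_{c-1}\in\{1,p\}$. When $\overline{\mathcal{A}}^p_{pe_F/(p-1)}=0$, the bracketed factor $p-1$ combines with the prefactor $\tfrac{pq^{e_F}}{(p-1)\#\overline{\mathcal{A}}^p}$ to produce $\tfrac{pq^{e_F}}{\#\overline{\mathcal{A}}^p}$; otherwise the factor is exactly $0$, dropping us into the `otherwise' branch. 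This exhausts all cases.
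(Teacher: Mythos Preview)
Your proof is correct and follows essentially the same approach as the paper. The paper's proof merely cites Corollary~\ref{cor-dim-of-F-mod-Ut-and-pth-powers}, Theorem~\ref{thm-num-Cp-exts-with-disc-val}, and Lemma~\ref{lem-wildly-ram-Cp-exts-with-norms-and-bounded-disc-basic-form} and leaves the case analysis implicit; you instead start from the differenced form in Corollary~\ref{cor-num-wild-Cp-exts-with-A-and-disc-val-less-explicit-form} (itself derived from Lemma~\ref{lem-wildly-ram-Cp-exts-with-norms-and-bounded-disc-basic-form}) and carry out that case analysis explicitly, including the delicate boundary subcase $c=\tfrac{pe_F}{p-1}+1$.
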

\begin{proof}
    This follows easily from Corollary~\ref{cor-dim-of-F-mod-Ut-and-pth-powers}, Theorem~\ref{thm-num-Cp-exts-with-disc-val}, and Lemma~\ref{lem-wildly-ram-Cp-exts-with-norms-and-bounded-disc-basic-form}. 
\end{proof}
\begin{corollary}
    \label{cor-mass-1^p-Cp-A}
    The mass $\m\big(\Et{(1^p)/F}{C_p/F,\mathcal{A}}\big)$ is given by
    $$
    \m\big(\Et{(1^p)/F}{C_p/F,\mathcal{A}}\big) = \mathbbm{1}_{\mu_p\subseteq F} \cdot \mathbbm{1}_{\overline{\mathcal{A}}^p_{\frac{pe_F}{p-1}} = 0}\cdot \frac{q^{-(p-1)(e_F+1)}}{\#\overline{\mathcal{A}}^p} +  \frac{1}{(p-1)\#\overline{\mathcal{A}}^p}\cdot q^{-2}\cdot\sum_{\substack{1 \leq c \leq \lceil \frac{pe_F}{p-1}\rceil \\ c \not \equiv 1 \pmod{p}}} \frac{q\#\overline{\mathcal{A}}^p_c - \#\overline{\mathcal{A}}^p_{c-1}}{q^{(p-2)c + \lfloor\frac{c-2}{p}\rfloor}}.
        $$
\end{corollary}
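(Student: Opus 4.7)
The plan is to derive this mass formula directly by summing the counts from Corollary~\ref{cor-num-wild-Cp-exts-with-A-and-disc-val} against the appropriate weights. First I would unpack the definition of pre-mass. Since every $L \in \Et{(1^p)/F}{C_p/F,\mathcal{A}}$ is a Galois $C_p$-extension of $F$, we have $\#\Aut(L/F) = p$ and $\Nm(\disc(L/F)) = q^{v_F(d_{L/F})}$. By Theorem~\ref{thm-num-Cp-exts-with-disc-val}, only valuations of the form $m = (p-1)c$ with $1 \leq c \leq \lfloor\frac{pe_F}{p-1}\rfloor+1$ contribute, so
\[
\m\big(\Et{(1^p)/F}{C_p/F,\mathcal{A}}\big) = \frac{1}{p}\sum_{c=1}^{\lfloor pe_F/(p-1)\rfloor+1} \#\Et{p/F,(p-1)c}{C_p/F,\mathcal{A}} \cdot q^{-(p-1)c}.
\]

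Next I would substitute the case analysis from Corollary~\ref{cor-num-wild-Cp-exts-with-A-and-disc-val}. For $c \not\equiv 1\pmod{p}$, inserting the count and simplifying the exponent of $q$ gives
\[
\frac{1}{p}\cdot \frac{p}{(p-1)\#\overline{\mathcal{A}}^p}\cdot q^{c-2-\lfloor\frac{c-2}{p}\rfloor-(p-1)c}\cdot \big(q\cdot\#\overline{\mathcal{A}}^p_c - \#\overline{\mathcal{A}}^p_{c-1}\big),
\]
and the exponent collapses to $-2 - (p-2)c - \lfloor\frac{c-2}{p}\rfloor$, producing exactly the summand in the stated formula. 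For the exceptional case $c = \frac{pe_F}{p-1}+1$ (requiring $\mu_p \subseteq F$ and $\overline{\mathcal{A}}^p_{pe_F/(p-1)} = 0$), the count is $\frac{pq^{e_F}}{\#\overline{\mathcal{A}}^p}$, and after dividing by $p$ and multiplying by $q^{-(p-1)(pe_F/(p-1)+1)} = q^{-pe_F-(p-1)}$ the contribution simplifies to $\frac{q^{-(p-1)(e_F+1)}}{\#\overline{\mathcal{A}}^p}$, matching the isolated term.

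Two small bookkeeping points need to be reconciled with the statement. First, the upper limit in the sum is written as $\lceil\frac{pe_F}{p-1}\rceil$ rather than $\lfloor\frac{pe_F}{p-1}\rfloor+1$; when $(p-1)\nmid e_F$ these agree, and when $(p-1)\mid e_F$ the extra value $c = \frac{pe_F}{p-1}+1$ satisfies $c \equiv 1\pmod{p}$ and is excluded by the congruence condition anyway. Second, the indicator $\mathbbm{1}_{(p-1)\mid e_F}$ is absent because $\mu_p \subseteq F$ forces $F \supseteq \Q_p(\zeta_p)$, and since $\Q_p(\zeta_p)/\Q_p$ is totally ramified of degree $p-1$, we automatically have $(p-1)\mid e_F$. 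With these observations the two formulas agree term by term.

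The computation is essentially routine once Corollary~\ref{cor-num-wild-Cp-exts-with-A-and-disc-val} is in place; the only mild obstacle is keeping the arithmetic of the $q$-exponents and the boundary case $c=\frac{pe_F}{p-1}+1$ straight, together with verifying the congruence-vs-divisibility reconciliations described above. No new ideas beyond careful substitution are needed.
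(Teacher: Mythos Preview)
Your proposal is correct and follows essentially the same approach as the paper, which simply says the result is immediate from Theorem~\ref{thm-num-Cp-exts-with-disc-val} and Corollary~\ref{cor-num-wild-Cp-exts-with-A-and-disc-val}. You have just unpacked this in more detail, including the exponent arithmetic and the two bookkeeping reconciliations (the upper limit and the implicit $(p-1)\mid e_F$ condition), both of which are handled correctly.
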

\begin{proof}
    This is immediate from Theorem~\ref{thm-num-Cp-exts-with-disc-val} and Corollary~\ref{cor-num-wild-Cp-exts-with-A-and-disc-val}.
\end{proof}

\begin{proof}
    [Proof of Theorem~\ref{thm-m-A-p-for-prime-n}] 
    We prove the statements one by one. 
    \begin{enumerate}
        \item This is immediate from Corollary~\ref{cor-m-et-ell-F-A} and Lemma~\ref{lem-mass-Et-ell-epi}. 
        \item This is precisely Lemma~\ref{lem-mass-of-l-ic-unram}.
        \item This is precisely Corollary~\ref{cor-mass-1^ell-tame-ram}.
        \item 
        Serre's mass formula \cite[Theorem~2]{serre} tells us that
        $$
        \m\big(\Et{(1^p)/F}{}\big) = \frac{1}{q^{p-1}},
        $$
        and the first part of the statement follows from Lemma~\ref{lem-m-Et-1^p-A-in-terms-of-other-quantities}. The rest of the theorem is given by Corollaries~\ref{cor-mass-1^p-Cp} and  \ref{cor-mass-1^p-Cp-A}.
    \end{enumerate}
\end{proof}

\begin{definition}
    \label{defi-c-alpha}
    Let $\alpha \in F^\times$. We define $c_\alpha \in \Z \cup \{\infty\}$ as follows:
    \begin{itemize}
        \item If $\alpha \in F^{\times p}$, then $c_\alpha = \infty$. 
        \item Otherwise, adopting the convention that $U_F^{(-1)} = F^\times$, we define $c_\alpha$ to be the largest integer $c$ such that $\alpha \in U_F^{(c)}F^{\times p}$. 
    \end{itemize}
\end{definition}
\begin{remark}
    We can compute $c_\alpha$ using Algorithm~\ref{algo-c-alpha}. We will state the time complexity of this computation in Lemma~\ref{lem-time-complexity-of-c-alpha-algo}.
\end{remark}

\begin{lemma}
    \label{lem-classification-of-c-alpha}
    Let $\alpha \in F^\times \setminus F^{\times p}$. Then we have $-1 \leq c_\alpha \leq \lfloor \frac{pe_F}{p-1} \rfloor$. Moreover, if $c_\alpha < \frac{pe_F}{p-1}$ then $p\nmid c_\alpha$, and if $c_\alpha = \frac{pe_F}{p-1}$ then $\mu_p\subseteq F$.
\end{lemma}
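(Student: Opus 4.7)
The plan is to deduce all four assertions directly from results already established about the filtration quotients $W_i = U_F^{(i)}F^{\times p}/U_F^{(i+1)}F^{\times p}$. The basic idea is that $c_\alpha$ is the ``level'' at which the class of $\alpha$ becomes trivial in the tower
\[
F^\times/F^{\times p} \;\supseteq\; U_F^{(0)}F^{\times p}/F^{\times p} \;\supseteq\; U_F^{(1)}F^{\times p}/F^{\times p}\;\supseteq\;\ldots,
\]
and the points where the tower is stationary (so that no $\alpha$ can have $c_\alpha$ equal to such an index) are exactly recorded by which $W_i$'s vanish.

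For the lower bound $c_\alpha \ge -1$, simply note that $\alpha \in F^\times = U_F^{(-1)} \subseteq U_F^{(-1)}F^{\times p}$. For the upper bound, Corollary~\ref{cor-p-power-iff-p-power-mod-p-pi} gives $U_F^{(\lfloor pe_F/(p-1)\rfloor + 1)} \subseteq F^{\times p}$; if we had $c_\alpha \ge \lfloor pe_F/(p-1)\rfloor + 1$ this would force $\alpha \in F^{\times p}$, contrary to hypothesis.

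For the constraint that $p\nmid c_\alpha$ when $c_\alpha < pe_F/(p-1)$, I would show the stronger statement that $U_F^{(c)}F^{\times p} = U_F^{(c+1)}F^{\times p}$ whenever $p\mid c$ and $0\le c < pe_F/(p-1)$; this immediately rules out $c_\alpha = c$ for any such $c$, and the remaining case $c_\alpha = 0$ is killed by Lemma~\ref{lem-UF0F*p-eq-UF1F*p} (which also handles $c=0$ in the statement above). For $c\ge 1$ with $p\mid c$ and $c < pe_F/(p-1)$, writing any element of $U_F^{(c)}F^{\times p}$ as $u\beta^p$ with $u\in U_F^{(c)}$ and applying Lemma~\ref{lem-lift-to-pth-power-mod-n}, Part (3), gives $u\in U_F^{(c+1)}F^{\times p}$, and hence $u\beta^p\in U_F^{(c+1)}F^{\times p}$, as required.

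For the final claim, suppose $c_\alpha = pe_F/(p-1)$; in particular this integer exists, which already forces $(p-1)\mid e_F$. Then $\alpha\in U_F^{(pe_F/(p-1))}F^{\times p}$ but $\alpha\notin U_F^{(pe_F/(p-1)+1)}F^{\times p}$, so the quotient $W_{pe_F/(p-1)}$ is nontrivial. By Corollary~\ref{cor-size-of-Wi}, this quotient is nontrivial only when $\mu_p\subseteq F$, so $\mu_p\subseteq F$ as claimed. None of the steps involve a real obstacle; the only point requiring any care is correctly marshalling Lemma~\ref{lem-lift-to-pth-power-mod-n}, Part (3), to convert statements about $U_F^{(c)}$ into statements about the enlarged set $U_F^{(c)}F^{\times p}$, which is straightforward once one writes out the factorisation $\alpha = u\beta^p$.
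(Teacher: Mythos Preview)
Your proof is correct and follows essentially the same approach as the paper, which simply cites Corollaries~\ref{cor-p-power-iff-p-power-mod-p-pi} and \ref{cor-size-of-Wi}. Your only deviation is that for the $p\mid c_\alpha$ constraint you re-derive the vanishing of $W_c$ from Lemma~\ref{lem-UF0F*p-eq-UF1F*p} and Lemma~\ref{lem-lift-to-pth-power-mod-n}(3) rather than invoking Corollary~\ref{cor-size-of-Wi} directly, which already packages that computation.
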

\begin{proof}
    This follows from Corollaries~\ref{cor-p-power-iff-p-power-mod-p-pi} and \ref{cor-size-of-Wi}. 
\end{proof}
\begin{lemma}
    \label{lem-num-Cp-exts-with-disc-and-single-elt-norm}
    Let $\alpha \in F^\times$ and let $c$ be an integer with $1 \leq c \leq \frac{pe_F}{p-1} + 1$. Then we have 
    \begin{enumerate}
        \item If $c \leq c_\alpha$, then we have 
        $$
        \#\Et{p/F,(p-1)c}{C_p/F,\alpha} = \begin{cases}
            \frac{p(q-1)}{p-1}\cdot q^{c-2-\lfloor\frac{c-2}{p}\rfloor}\quad\text{if $c \not \equiv 1\pmod{p}$},
            \\
            pq^{e_F}\quad \text{if $c = \frac{pe_F}{p-1} + 1$ and $\mu_p \subseteq F$},
            \\
            0\quad\text{otherwise}. 
        \end{cases}
        $$
        \item If $1 \leq c_\alpha < \frac{pe_F}{p-1}$, then  
        $$
        \#\Et{p/F,(p-1)(c_\alpha+1)}{C_p/F,\alpha} = \frac{q - p}{p-1} \cdot q^{c_\alpha - 1 - \lfloor\frac{c_\alpha}{p}\rfloor}. 
        $$
        \item If $c_\alpha = \frac{pe_F}{p-1}$, then 
        $$
        \# \Et{p/F, (p-1)(c_\alpha+1)}{C_p/F, \alpha} = 0.
        $$
        \item If $c_\alpha + 2 \leq c \leq \frac{pe_F}{p-1} + 1$, then 
        $$
        \#\Et{p/F,(p-1)c}{C_p/F,\alpha} = \begin{cases}
            \frac{q-1}{p-1}q^{c-2-\lfloor\frac{c-2}{p}\rfloor}\quad\text{if $c \not \equiv 1\pmod{p}$},
            \\
            q^{e_F}\quad\text{if $c = \frac{pe_F}{p-1} + 1$ and $\mu_p\subseteq F$},
            \\
            0\quad\text{otherwise}. 
        \end{cases}
        $$
    \end{enumerate}
\end{lemma}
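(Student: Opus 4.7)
The plan is to specialise Corollary~\ref{cor-num-wild-Cp-exts-with-A-and-disc-val} to $\mathcal{A} = \langle\alpha\rangle$ and simplify the factor $q\cdot \#\overline{\mathcal{A}}^p_c - \#\overline{\mathcal{A}}^p_{c-1}$ in each of the four cases. The crucial preliminary observation is that $\overline{\mathcal{A}}^p = \langle\overline{\alpha}\rangle$ is trivial when $\alpha \in F^{\times p}$ and otherwise has order exactly $p$, in which case every non-identity element has the form $\overline{\alpha}^j$ with $\gcd(j,p)=1$ and therefore lies in $U_F^{(c)}F^{\times p}/F^{\times p}$ if and only if $\overline{\alpha}$ itself does. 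Combined with the definition of $c_\alpha$ and the convention $U_F^{(-1)} = F^\times$, this yields
$$
\#\overline{\mathcal{A}}^p_c = \begin{cases} \#\overline{\mathcal{A}}^p & \text{if } c\leq c_\alpha, \\ 1 & \text{if } c > c_\alpha, \end{cases}
$$
which is the only ingredient needed beyond the corollary.

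In Case~(1), both $\#\overline{\mathcal{A}}^p_c$ and $\#\overline{\mathcal{A}}^p_{c-1}$ equal $\#\overline{\mathcal{A}}^p$, so $q\cdot \#\overline{\mathcal{A}}^p_c - \#\overline{\mathcal{A}}^p_{c-1} = (q-1)\#\overline{\mathcal{A}}^p$ and the leading $\#\overline{\mathcal{A}}^p$ cancels with the denominator, producing $\frac{p(q-1)}{p-1}\cdot q^{c-2-\lfloor (c-2)/p\rfloor}$ in the $c \not\equiv 1\pmod p$ sub-case of Corollary~\ref{cor-num-wild-Cp-exts-with-A-and-disc-val}. The special value $c = \frac{pe_F}{p-1}+1$ can occur in Case~(1) only when $\alpha \in F^{\times p}$, since Lemma~\ref{lem-classification-of-c-alpha} forces $c_\alpha \leq \lfloor pe_F/(p-1)\rfloor$ otherwise; in this degenerate situation $\overline{\mathcal{A}}^p_{pe_F/(p-1)} = 0$ holds automatically, so the $\mu_p\subseteq F$ sub-case of the corollary yields $pq^{e_F}$. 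All remaining $c \equiv 1\pmod p$ values trigger neither sub-case and hence contribute $0$, as claimed. As a sanity check, when $\alpha \in F^{\times p}$ the norm condition is vacuous, so our formula must, and does, agree with Theorem~\ref{thm-num-Cp-exts-with-disc-val}.

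For Case~(2), taking $c = c_\alpha + 1 \leq \frac{pe_F}{p-1}$ gives $\#\overline{\mathcal{A}}^p_c = 1$ and $\#\overline{\mathcal{A}}^p_{c-1} = p$, so $q \cdot 1 - p = q - p$ and the prefactor becomes $\frac{q-p}{p-1}$ after cancellation. Lemma~\ref{lem-classification-of-c-alpha} furnishes $p \nmid c_\alpha$, which simultaneously ensures $c \not\equiv 1\pmod p$ and $\lfloor (c-2)/p\rfloor = \lfloor c_\alpha/p\rfloor$, matching the stated exponent. In Case~(3), the equality $c_\alpha = \frac{pe_F}{p-1}$ forces $\mu_p\subseteq F$ by Lemma~\ref{lem-classification-of-c-alpha} and makes $c = c_\alpha + 1 \equiv 1\pmod p$; the generic sub-case of the corollary is inactive, while the $\mu_p\subseteq F$ sub-case is blocked by $\#\overline{\mathcal{A}}^p_{c_\alpha} = p \neq 1$, so the count vanishes. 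In Case~(4), $c - 1 > c_\alpha$ gives $\#\overline{\mathcal{A}}^p_c = \#\overline{\mathcal{A}}^p_{c-1} = 1$, hence $q\cdot 1 - 1 = q - 1$, and $\frac{q-1}{p-1}q^{c-2-\lfloor(c-2)/p\rfloor}$ drops out directly; at $c = \frac{pe_F}{p-1}+1$ with $\mu_p\subseteq F$, the triviality $\overline{\mathcal{A}}^p_{pe_F/(p-1)} = 0$ now holds, giving $\frac{pq^{e_F}}{p} = q^{e_F}$. The entire argument is bookkeeping and matching of congruence classes; the only delicate step is tracking which sub-case of Corollary~\ref{cor-num-wild-Cp-exts-with-A-and-disc-val} is active, and no substantive new input is required beyond Lemma~\ref{lem-classification-of-c-alpha}.
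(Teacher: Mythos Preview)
Your proof is correct and takes essentially the same route as the paper. The only cosmetic difference is that you invoke Corollary~\ref{cor-num-wild-Cp-exts-with-A-and-disc-val} (the fully explicit count) directly, whereas the paper's one-line proof cites the less-processed Corollary~\ref{cor-num-wild-Cp-exts-with-A-and-disc-val-less-explicit-form} together with Corollaries~\ref{cor-size-of-Wi} and~\ref{cor-dim-of-F-mod-Ut-and-pth-powers}; since the former is derived from the latter three, the two arguments are equivalent.
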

\begin{proof} 
    This follows easily from Corollary~\ref{cor-size-of-Wi}, Corollary~\ref{cor-dim-of-F-mod-Ut-and-pth-powers}, Corollary~\ref{cor-num-wild-Cp-exts-with-A-and-disc-val-less-explicit-form}, and Lemma~\ref{lem-classification-of-c-alpha}. 
\end{proof}

\begin{theorem}
    \label{thm-closed-form-for-m-alpha-p-with-prime-n}
    Let $\alpha \in F^\times \setminus F^{\times p}$. If $p\nmid v_F(\alpha)$, then 
    $$
    \m\big(\Et{(1^p)/F}{C_p/F,\alpha}\big) = \frac{1}{p}\cdot \m\big(\Et{(1^p)/F}{C_p/F}\big).
    $$
    If $p \mid v_F(\alpha)$, then $\m\big(\Et{(1^p)/F}{C_p/F,\alpha}\big)$ is the sum of the following seven quantities:
    \begin{enumerate}
        \item $$\mathbbm{1}_{c_\alpha \geq p}\cdot \frac{q-1}{p-1}\cdot q^{-2}\cdot A(c_\alpha).$$
        \item $$\mathbbm{1}_{c_\alpha\not\equiv0,1\pmod{p}}\cdot \frac{q-1}{p-1}\cdot q^{-2}\cdot B(c_\alpha).$$
        \item $$\mathbbm{1}_{c_\alpha < \frac{pe_F}{p-1}}\cdot \frac{q-p}{p-1}\cdot\frac{1}{p}\cdot q^{-2 - (p-2)(c_\alpha+1) - \lfloor\frac{c_\alpha}{p}\rfloor}.$$
        \item $$\mathbbm{1}_{c_\alpha < \frac{pe_F}{p-1} - 1}\cdot \frac{q-1}{pq^2(p-1)} \cdot \Big(\mathbbm{1}_{e_F \geq p-1}\cdot A\Big(\Big\lceil \frac{pe_F}{p-1} \Big\rceil\Big) - \mathbbm{1}_{c_\alpha \geq p - 1}\cdot A(c_\alpha + 1)\Big).$$
        \item $$\mathbbm{1}_{c_\alpha < \frac{pe_F}{p-1} - 1}\cdot \mathbbm{1}_{(p-1)\nmid e_F}\cdot \frac{q-1}{pq^2(p-1)}\cdot B\Big(\Big\lceil\frac{pe_F}{p-1}\Big\rceil\Big).$$
        \item $$-\mathbbm{1}_{c_\alpha < \frac{pe_F}{p-1} - 1}\cdot \mathbbm{1}_{c_\alpha\not\equiv -1,0\pmod{p}}\cdot \frac{q-1}{pq^2(p-1)}\cdot B(c_\alpha + 1).$$
        \item $$ 
        \mathbbm{1}_{c_\alpha < \frac{pe_F}{p-1}}\cdot \mathbbm{1}_{\mu_p\subseteq F}\cdot \frac{1}{p}\cdot q^{-(p-1)(e_F + 1)}.
        $$
    \end{enumerate}
\end{theorem}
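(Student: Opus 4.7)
The plan is to specialise the general formula for $\m\big(\Et{(1^p)/F}{C_p/F,\mathcal{A}}\big)$ given in Corollary~\ref{cor-mass-1^p-Cp-A} to the case $\mathcal{A} = \langle \alpha\rangle$. Since $\alpha \notin F^{\times p}$, the group $\overline{\mathcal{A}}^p$ is cyclic of order $p$, and for each $c\geq -1$ the subgroup $\overline{\mathcal{A}}^p_c$ is either all of $\overline{\mathcal{A}}^p$ (when $c \leq c_\alpha$) or trivial (when $c > c_\alpha$). So $\#\overline{\mathcal{A}}^p_c \in \{1,p\}$, and the only thing that varies between cases is the cutoff $c_\alpha$.

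First I would handle the case $p \nmid v_F(\alpha)$. Here $\alpha \notin U_F^{(0)}F^{\times p}$, so by definition $c_\alpha = -1$. Then $\#\overline{\mathcal{A}}^p_c = 1$ for all $c \geq 0$ and $\#\overline{\mathcal{A}}^p_{-1} = p$. The indicator $\mathbbm{1}_{\overline{\mathcal{A}}^p_{pe_F/(p-1)} = 0}$ is thus $1$, and in the summation range $c \geq 1$ of Corollary~\ref{cor-mass-1^p-Cp-A}, the numerator $q\cdot\#\overline{\mathcal{A}}^p_c - \#\overline{\mathcal{A}}^p_{c-1}$ equals $q-1$ for every $c$. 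Applying Lemma~\ref{lem-preliminary-sum-for-Cp-mass} with $t = \lceil pe_F/(p-1)\rceil$, and using that $\lceil pe_F/(p-1)\rceil \geq p \iff e_F\geq p-1$ while $\lceil pe_F/(p-1)\rceil\not\equiv 0,1\pmod p \iff (p-1)\nmid e_F$ (as established in the proof of Corollary~\ref{cor-mass-1^p-Cp}), the resulting expression is exactly $\tfrac{1}{p}\m\big(\Et{(1^p)/F}{C_p/F}\big)$.

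Next I would handle the case $p \mid v_F(\alpha)$. Since $\alpha\notin F^{\times p}$ and $U_F^{(0)}F^{\times p} = U_F^{(1)}F^{\times p}$ (Lemma~\ref{lem-UF0F*p-eq-UF1F*p}), one has $c_\alpha \geq 1$; by Lemma~\ref{lem-classification-of-c-alpha}, $c_\alpha \leq \lfloor pe_F/(p-1)\rfloor$, with $p\nmid c_\alpha$ whenever $c_\alpha < pe_F/(p-1)$, and $\mu_p\subseteq F$ if $c_\alpha = pe_F/(p-1)$. I would partition the sum in Corollary~\ref{cor-mass-1^p-Cp-A} into three ranges: $1\leq c\leq c_\alpha$, where the numerator equals $p(q-1)$; $c = c_\alpha+1$ (present only when $c_\alpha < pe_F/(p-1)$, and automatically satisfying $c\not\equiv 1\pmod p$), where the numerator equals $q - p$; and $c_\alpha+2 \leq c \leq \lceil pe_F/(p-1)\rceil$ (present only when $c_\alpha < pe_F/(p-1) - 1$), where the numerator equals $q-1$. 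Each of the first and third pieces is handled by Lemma~\ref{lem-preliminary-sum-for-Cp-mass}: the first piece (with $t = c_\alpha$) produces Quantities (1) and (2); the third piece, computed as the difference between the sum up to $\lceil pe_F/(p-1)\rceil$ and the sum up to $c_\alpha + 1$, produces Quantities (4), (5), and (6). The middle piece produces Quantity (3), using the identity $\lfloor(c_\alpha-1)/p\rfloor = \lfloor c_\alpha/p\rfloor$ (which holds since $p\nmid c_\alpha$ in this range). Finally, the $\mu_p$-term of Corollary~\ref{cor-mass-1^p-Cp-A}, divided by $\#\overline{\mathcal{A}}^p = p$ and gated by the condition that $\overline{\mathcal{A}}^p_{pe_F/(p-1)} = 0$, which is equivalent to $c_\alpha < pe_F/(p-1)$, yields Quantity (7).

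The only real obstacle is bookkeeping: translating the natural conditions on the endpoints of summation ($c_\alpha + 2 \leq \lceil pe_F/(p-1)\rceil$, $c_\alpha + 1 \leq \lceil pe_F/(p-1)\rceil$) into the cleaner indicator functions $\mathbbm{1}_{c_\alpha < pe_F/(p-1) - 1}$ and $\mathbbm{1}_{c_\alpha < pe_F/(p-1)}$ that appear in the statement, and matching the conditional appearances of $A$ and $B$ produced by Lemma~\ref{lem-preliminary-sum-for-Cp-mass} with the indicators $\mathbbm{1}_{c_\alpha\geq p}$, $\mathbbm{1}_{c_\alpha\not\equiv 0,1\pmod p}$, $\mathbbm{1}_{e_F\geq p-1}$, $\mathbbm{1}_{(p-1)\nmid e_F}$, $\mathbbm{1}_{c_\alpha\geq p-1}$, and $\mathbbm{1}_{c_\alpha\not\equiv -1,0\pmod p}$ in Quantities (1), (2), (4), (5), (6). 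Once the three sub-sums have been evaluated, collecting the common prefactor $\tfrac{q^{-2}}{p(p-1)}$ yields the stated form term by term. Since all of the arithmetic is routine, no genuinely hard ingredient is needed beyond Lemmas~\ref{lem-classification-of-c-alpha} and \ref{lem-preliminary-sum-for-Cp-mass} and the formula in Corollary~\ref{cor-mass-1^p-Cp-A}.
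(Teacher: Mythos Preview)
Your proposal is correct and follows essentially the same approach as the paper: split off the case $p\nmid v_F(\alpha)$ (where the paper invokes Corollary~\ref{cor-size-of-wildly-ram-Cp-exts-with-norms} to get the factor $1/p$ directly, while you obtain it by specialising Corollary~\ref{cor-mass-1^p-Cp-A}), and for $p\mid v_F(\alpha)$ break the sum into the ranges $c\leq c_\alpha$, $c=c_\alpha+1$, and $c\geq c_\alpha+2$, then apply Lemma~\ref{lem-preliminary-sum-for-Cp-mass} to each piece. The paper packages the three-range decomposition as Lemma~\ref{lem-num-Cp-exts-with-disc-and-single-elt-norm} before summing, but the content is identical.
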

\begin{proof}
    If $p\nmid v_F(\alpha)$, then the result follows from Corollary~\ref{cor-size-of-wildly-ram-Cp-exts-with-norms}, so we will assume that $p \mid v_F(\alpha)$. Lemma~\ref{lem-classification-of-c-alpha} tells us that $c_\alpha \geq 1$. By Lemma~\ref{lem-num-Cp-exts-with-disc-and-single-elt-norm}, the pre-mass is the sum of the following four quantities:
    \begin{enumerate}
        \item $$
        \frac{q-1}{p-1}\cdot q^{-2}\cdot \sum_{\substack{1 \leq c \leq c_\alpha \\ c\not\equiv 1\pmod{p}}} q^{-(p-2)c - \lfloor \frac{c-2}{p}\rfloor}.
        $$
        \item $$
        \mathbbm{1}_{c_\alpha < \frac{pe_F}{p-1}}\cdot 
        \frac{q-p}{p-1}\cdot\frac{1}{p}\cdot q^{-2 - (p-2)(c_\alpha+1) - \lfloor\frac{c_\alpha}{p}\rfloor}.
        $$
        \item $$
        \frac{q-1}{p-1}\cdot \frac{1}{p}\cdot q^{-2} \cdot \sum_{\substack{c_\alpha + 2 \leq c < \frac{pe_F}{p-1} + 1 \\ c \not \equiv 1\pmod{p}}} q^{-(p-2)c - \lfloor\frac{c-2}{p}\rfloor}.
        $$
        \item $$
        \mathbbm{1}_{(p-1)\mid e_F}\cdot \mathbbm{1}_{c_\alpha < \frac{pe_F}{p-1}}\cdot \mathbbm{1}_{\mu_p\subseteq F}\cdot \frac{1}{p}\cdot q^{-(p-1)(e_F + 1)}.
        $$  
    \end{enumerate}
    By Lemma~\ref{lem-preliminary-sum-for-Cp-mass}, the first three quantities rearrange to:
    \begin{enumerate}
        \item $$
        \frac{q-1}{p-1}\cdot q^{-2} \cdot (\mathbbm{1}_{c_\alpha \geq p}\cdot A(c_\alpha) + \mathbbm{1}_{c_\alpha\not\equiv 0,1\pmod{p}}B(c_\alpha)).
        $$
        \item $$
        \mathbbm{1}_{c_\alpha < \frac{pe_F}{p-1}}\cdot \frac{q-p}{p-1}\cdot\frac{1}{p}\cdot q^{-2 - (p-2)(c_\alpha+1) - \lfloor\frac{c_\alpha}{p}\rfloor}.
        $$
        \item \begin{align*}
        \mathbbm{1}_{c_\alpha < \frac{pe_F}{p-1} - 1}\cdot \frac{q-1}{p-1}\cdot \frac{1}{p}\cdot q^{-2}\cdot \Big(&\mathbbm{1}_{e_F\geq p-1}\cdot A\Big(\Big\lceil \frac{pe_F}{p-1}\Big\rceil\Big) - \mathbbm{1}_{c_\alpha \geq p-1}A(c_\alpha + 1) 
        \\
        &+ \mathbbm{1}_{\lceil\frac{pe_F}{p-1}\rceil \not\equiv 0,1\pmod{p}}B\Big(\Big\lceil\frac{pe_F}{p-1}\Big\rceil\Big) - \mathbbm{1}_{c_\alpha\not\equiv -1,0\pmod{p}}B(c_\alpha + 1)\Big).
        \end{align*}
    \end{enumerate}
    We saw in the proof of Corollary~\ref{cor-mass-1^p-Cp} that $\big\lceil\frac{pe_F}{p-1}\big\rceil \equiv 0,1\pmod{p}$ if and only if $(p-1)\mid e_F$, so 
    $$
    \mathbbm{1}_{\lceil\frac{pe_F}{p-1}\rceil \not\equiv 0,1\pmod{p}} = \mathbbm{1}_{(p-1)\nmid e_F}. 
    $$ 
    The result \cite[Lemma~14.6]{washington1997introduction} states that $\Q_p(\sqrt[p-1]{-p}) = \Q_p(\zeta_p)$, and it follows that 
    $$
    \mathbbm{1}_{(p-1)\mid e_F}\cdot \mathbbm{1}_{\mu_p\subseteq F} = \mathbbm{1}_{\mu_p\subseteq F},
    $$
    and the result follows. 
\end{proof}
\section{$S_4$-quartic extensions}
\label{sec-S4-quartics}

Throughout Section~\ref{sec-S4-quartics}, let $p$ be a rational prime, let $F$ be a $p$-adic field, and let $\mathcal{A}\subseteq F^\times$ be a finitely generated subgroup. With $F$ fixed, write $q$ for the size $\# \F_F$ of the residue field of $F$. The goal of this section is to understand the pre-mass $\m\big(\Et{4/F}{\mathcal{A}}\big)$. The main results are Theorems~\ref{thm-n=4-for-odd-primes}, \ref{thm-1^21^2-2-adic}, \ref{thm-2^2-2-adic}, and \ref{thm-1^4-2-adic}.  

\subsection{Tamely ramified parts}
The goal of this subsection is to prove the following theorem: 

\begin{theorem}
    \label{thm-n=4-for-odd-primes}
    For all $p$, we have 
    $$
    \m\big(\Et{4/F}{\mathcal{A}}\big) = \frac{5q^2 + 8q + 8}{8q^2} + \sum_{\sigma \in \{(4),(22),(1^21^2),(2^2),(1^4)\}}\m\big(\Et{\sigma/F}{\mathcal{A}}\big).
    $$
    Moreover, the following statements are true: 
    \begin{enumerate}
        \item For all $p$, we have 
        $$
        \m\big(\Et{(4)/F}{\mathcal{A}}\big) = \begin{cases}
            \frac{1}{4} \quad\text{if $4 \mid v_F(\alpha)$ for all $\alpha \in \mathcal{A}$},
            \\
            0 \quad\text{otherwise},
        \end{cases}
        $$
        and
        $$
        \m\big(\Et{(22)/F}{\mathcal{A}}\big) = \begin{cases}
            \frac{1}{8} \quad\text{if $2 \mid v_F(\alpha)$ for all $\alpha \in \mathcal{A}$},
            \\
            0 \quad\text{otherwise}.
        \end{cases}
        $$
        \item Suppose that $p \neq 2$. Let $(A_0, A_1)$ be a stratified generating set for $\overline{\mathcal{A}}^2$. 
        Then we have 
        $$
        \m\big(\Et{(1^21^2)/F}{\mathcal{A}}\big) = \begin{cases}
            \frac{1}{2q^2}\quad\text{if $\mathcal{A}\subseteq F^{\times 2}$},
            \\
            \frac{3}{8q^2}\quad\text{else if $A_0 = \varnothing$ and $\# A_1 = 1$},
            \\
            \frac{1}{4q^2}\quad\text{otherwise}.
        \end{cases}
        $$
        \item \label{point-defi-of-A_r-for-r=0,1,2} Suppose that $p \neq 2$. Let $(A_0, A_1, A_2)$ be a stratified generating set for $\overline{\mathcal{A}}^4$. 
        Then we have 
        $$
        \m\big(\Et{(2^2)/F}{\mathcal{A}}\big) = \begin{cases}
            \frac{1}{2q^2}\quad\text{if $A_0 \subseteq F^{\times 2}$ and $A_1 = A_2 = \varnothing$},
            \\
            \frac{1}{4q^2}\quad\text{else if $A_0\subseteq F^{\times 2}$ and $A_1 = \varnothing$ and $\frac{\alpha_i}{\alpha_j}\in F^{\times 2}$ for all $\alpha_i,\alpha_j\in A_2$},
            \\
            0\quad\text{otherwise}.
        \end{cases}
        $$
        If $q \equiv 1\pmod{4}$, then we have 
        $$
        \m\big(\Et{(1^4)/F}{\mathcal{A}}\big) =\begin{cases}
            \frac{1}{q^3} \quad\text{if $\mathcal{A}\subseteq F^{\times 4}$},
            \\
            \frac{1}{2q^3}\quad\text{if $A_0 = A_1 = \varnothing$ and $\#A_2 = 1$ and $A_2 \subseteq F^{\times 2}$,}
            \\
            \frac{1}{4q^3}\quad\text{if $A_0 = A_2 = \varnothing$ and $\# A_1 = 1$},
            \\
            0 \quad\text{otherwise}. 
        \end{cases}
        $$
        If $q \equiv 3\pmod{4}$, then 
        $$
        \m\big(\Et{(1^4)/F}{\mathcal{A}}\big) = \begin{cases}
            \frac{1}{q^3}\quad\text{if $\mathcal{A}\subseteq F^{\times 2}$},
            \\
            \frac{1}{2q^3}\quad\text{if $A_0 = A_2 = \varnothing$ and $\# A_1 = 1$},
            \\
            0\quad\text{otherwise}. 
        \end{cases}
        $$
    \end{enumerate}
\end{theorem}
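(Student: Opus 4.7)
The plan is to decompose $\Et{4/F}{}$ according to its eleven possible splitting symbols $(4), (31), (22), (211), (1111), (1^211), (1^22), (1^21^2), (2^2), (1^31), (1^4)$ and analyze each piece. The eight \emph{predictable} symbols (all except $(1^21^2), (2^2), (1^4)$) are treated directly via Lemma~\ref{lem-norm-group-of-pred-symbol}: the six epimorphic symbols $(211), (1111), (31), (1^31), (1^211), (1^22)$ have norm group $F^\times$ and contribute their full pre-masses unconditionally, while $(4)$ and $(22)$ have norm groups $\{x : 4\mid v_F(x)\}$ and $\{x : 2\mid v_F(x)\}$ respectively. Pre-masses computed via Lemma~\ref{lem-mass-of-all-algebras-with-symbol} give $1/4, 1/24, 1/3, 1/q^2, 1/(2q), 1/(2q)$ for the six epimorphic symbols, summing to exactly $\frac{5q^2+8q+8}{8q^2}$; together with the pre-masses $1/4$ for $(4)$ and $1/8$ for $(22)$, this yields both the main decomposition formula and Statement~(1). (The contribution $1/3$ from $(31)$ is easy to miss but essential for the arithmetic.)

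For Statements~(2) and~(3) I assume $p\neq 2$ and analyze the three non-predictable strata. For $\sigma = (1^21^2)$, the algebras are the three unordered pairs $E_i \times E_j$ where $E_1, E_2$ are the two ramified quadratic extensions of $F$. Using $F^\times / F^{\times 2} \cong \langle \bar\pi_F\rangle \oplus \langle \bar u\rangle$, the norm subgroups $N(E_1)/F^{\times 2}$ and $N(E_2)/F^{\times 2}$ are the two index-$2$ subgroups $\{\bar 1, \bar\pi_F\}$ and $\{\bar 1, \bar u \bar\pi_F\}$, so that $N(E_i \times E_i) = N(E_i)$ while $N(E_1 \times E_2) = F^\times$. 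Case-splitting on a stratified generating set $(A_0, A_1)$ for $\overline{\mathcal{A}}^2$ then produces the three values $\frac{1}{2q^2}, \frac{3}{8q^2}, \frac{1}{4q^2}$.

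For $\sigma = (2^2)$, the algebras are the two totally ramified quadratic extensions $M_1, M_2$ of the unramified quadratic $E/F$; $M_1 = E(\sqrt{\pi_F})$ is biquadratic $V_4$ over $F$ with $N(M_1) = F^{\times 2}$, while $M_2 = E(\sqrt{\pi_F u_E})$ for a non-square unit $u_E \in E \setminus F$ is cyclic $C_4$ over $F$ (the Galois structure follows from $u_E^{q-1} \in E^{\times 2}$, a short calculation in the cyclic group $\F_{q^2}^\times$). Computing $N_{M_2/F}(\sqrt{\pi_F u_E})$ explicitly and tracking its class in $F^\times / F^{\times 4}$ determines $N(M_2)$, after which a case analysis on the stratified generating set $(A_0, A_1, A_2)$ for $\overline{\mathcal{A}}^4$ yields the stated formula. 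Finally, $(1^4)$ is handled via Lemma~\ref{lem-tot-tame-ram-classification}, which lists the $\gcd(4, q-1)$ totally ramified quartic extensions; one case-splits on $q \bmod 4$, using the order-$4$ Hilbert symbol when $q \equiv 1 \pmod 4$ (four cyclic $C_4$-extensions) and Kummer theory for the two non-Galois extensions with trivial $\Aut$ when $q \equiv 3 \pmod 4$. The main obstacle is the $(2^2)$ and $(1^4)$ analyses, where the norm subgroups must be tracked at the finer level $F^\times / F^{\times 4}$ rather than $F^\times / F^{\times 2}$ and where the internal structure differs markedly between $q \equiv 1 \pmod 4$ and $q \equiv 3 \pmod 4$, even though the theorem states a uniform answer that absorbs the distinction into the conditions on the $A_i$.
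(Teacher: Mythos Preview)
Your plan matches the paper's approach closely: decompose by splitting symbol, dispose of the six epimorphic symbols via Lemma~\ref{lem-norm-group-of-pred-symbol} and Lemma~\ref{lem-mass-of-all-algebras-with-symbol} (your arithmetic for $\frac{5q^2+8q+8}{8q^2}$ is correct), handle $(4)$ and $(22)$ by the same lemma, and then do case analyses for $(1^21^2)$, $(2^2)$, $(1^4)$ when $p\neq 2$.

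There is one concrete error. For $q\equiv 3\pmod 4$ you assert that the two totally ramified quartic fields $L_j = F(\sqrt[4]{\zeta_{q-1}^j\pi_F})$ have \emph{trivial} automorphism group. In fact $\#\Aut(L_j/F)=2$: the polynomial $X^4-\zeta_{q-1}^j\pi_F$ has exactly the two roots $\pm\sqrt[4]{\zeta_{q-1}^j\pi_F}$ in $L_j$ (since $-1\in F$ but $i\notin L_j$, the latter because $F(i)/F$ is unramified while $L_j/F$ is totally ramified). You can also see this from Lemma~\ref{lem-mass-of-all-algebras-with-symbol}, which forces $\m(\Et{(1^4)/F}{})=q^{-3}$; with two fields of discriminant exponent $3$ this is only possible if each has $\#\Aut=2$. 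With trivial $\Aut$ you would get $2/q^3$ in the first case of the $q\equiv 3$ table, not $1/q^3$.

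One stylistic remark: for $(2^2)$ the paper avoids computing $N_{M_2/F}(\sqrt{\pi_F u_E})$ or identifying $V_4$ versus $C_4$. It simply writes down two explicit index-$4$ subgroups $\langle \co_F^{\times 2},\pi_F^2\rangle$ and $\langle \co_F^{\times 2},\zeta_{q-1}\pi_F^2\rangle$, observes that the corresponding abelian extensions must have splitting symbol $(2^2)$, and concludes they are $L_0,L_1$ in some order. This sidesteps the Galois-group computation entirely; your route works too but is more laborious.
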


\begin{lemma}
    \label{lem-list-quartic-splitting-symbols}
    We have 
    $$
    \Split_4^{\epi} = \{(13),(1^22), (112), (1^211), (1^31), (1111)\}
    $$
    and 
    $$
    \Split_4 = \{(4), (22), (1^21^2), (2^2), (1^4)\} \cup \Split_4^{\epi}. 
    $$
\end{lemma}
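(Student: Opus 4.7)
Both claims are pure combinatorial enumerations over finitely many objects, so the plan is to list every degree $4$ splitting symbol and classify each one. Since $\sum_i f_ie_i = 4$ for any degree $4$ splitting symbol, I would first enumerate the integer partitions of $4$, namely $4$, $3+1$, $2+2$, $2+1+1$, and $1+1+1+1$, and then, for each partition, lift each part $v$ to the set of pairs $(f_i,e_i)$ of positive integers with $f_ie_i = v$. Identifying multisets of such pairs (to account for the convention that splitting symbols are defined up to permutation of their blocks), this produces the eleven symbols
\[
(1^4),\ (2^2),\ (4);\quad (1^31),\ (31);\quad (1^21^2),\ (1^22),\ (22);\quad (1^211),\ (211);\quad (1111).
\]

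Next I would compute $\gcd(e_1,\ldots,e_r)$ and $\gcd(f_1,\ldots,f_r)$ for each of these eleven symbols, using the convention (compatible with the proof of Lemma~\ref{lem-epi-symbols-for-ell}, where $(1^\ell)$ is excluded as non-predictable and $(\ell)$ as non-epimorphic) that the gcd of a single integer $a$ is $a$ itself. A symbol is predictable iff the $e$-gcd equals $1$ and epimorphic iff, in addition, the $f$-gcd equals $1$. One checks that $(1^4)$, $(2^2)$, $(1^21^2)$ fail predictability because of $e$-gcd at least $2$; that $(4)$ and $(22)$ are predictable but fail epi-ness because of $f$-gcd at least $2$; and that the remaining six symbols $(1^31)$, $(31)$, $(1^22)$, $(1^211)$, $(211)$, $(1111)$ satisfy both conditions. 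Since $(31) = (13)$ and $(211) = (112)$ up to permutation of blocks, this matches the two displayed sets in the statement, yielding both the identification of $\Split_4^{\epi}$ and the union decomposition of $\Split_4$.

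There is no genuine mathematical obstacle here; the entire argument is bookkeeping. The only care required is to apply the notational conventions correctly (suppression of exponent $1$, identification of permutation-equivalent symbols) so that one neither double-counts a symbol nor misses one during the enumeration.
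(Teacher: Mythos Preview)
Your proposal is correct and follows essentially the same approach as the paper: enumerate the eleven degree $4$ splitting symbols and check which are epimorphic. The only difference is that the paper cites the list of eleven symbols from Bhargava's HCL III rather than deriving it via partitions of $4$, while you carry out the enumeration explicitly; your version is thus slightly more self-contained.
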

\begin{proof}
    The eleven possible splitting symbols are listed on \cite[Page~1353]{bhargava-hcl3}, and it is clear that the epimorphic ones are as stated.
\end{proof}

\begin{lemma}
    \label{lem-mass-of-quartic-epis}
    For all $p$, we have 
    $$
    \m\big(\Et{4/F}{\epi}\big) = \frac{5q^2 + 8q + 8}{8q^2}. 
    $$
\end{lemma}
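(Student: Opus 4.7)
The plan is to decompose $\Et{4/F}{\epi}$ as the disjoint union $\bigsqcup_{\sigma \in \Split_4^\epi} \Et{\sigma/F}{}$, enumerated by Lemma~\ref{lem-list-quartic-splitting-symbols}, and compute each summand via Lemma~\ref{lem-mass-of-all-algebras-with-symbol}, which gives $\m\big(\Et{\sigma/F}{}\big) = (q^{d_\sigma} \cdot \#\Aut(\sigma))^{-1}$. Since this formula is independent of the characteristic $p$ of $F$, wild ramification in the individual factors causes no complication, which is why the statement holds uniformly in $p$.

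First I would tabulate $(d_\sigma, \#\Aut(\sigma))$ for each of the six epimorphic symbols. Three of them have $d_\sigma = 0$: namely $(13)$, $(112)$, and $(1111)$, with $\#\Aut$ equal to $3$, $4$, and $24$ respectively. Two have $d_\sigma = 1$: namely $(1^22)$ and $(1^211)$, both with $\#\Aut = 2$. The remaining symbol $(1^31)$ has $d_\sigma = 2$ and $\#\Aut = 1$. The only subtlety here is counting the permutations of factors with coinciding $(e_i, f_i)$ together with the $\prod_i f_i$ factor: for example, $(112)$ contributes a $2!$ from swapping its two trivial factors and a $2$ from $f = 2$, while $(1111)$ contributes the full $4! = 24$.

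Summing the six pre-masses gives
\[
\m\big(\Et{4/F}{\epi}\big) = \tfrac{1}{3} + \tfrac{1}{4} + \tfrac{1}{24} + \tfrac{1}{2q} + \tfrac{1}{2q} + \tfrac{1}{q^2} = \tfrac{5}{8} + \tfrac{1}{q} + \tfrac{1}{q^2} = \frac{5q^2 + 8q + 8}{8q^2},
\]
as required. There is no real obstacle here; once Lemma~\ref{lem-mass-of-all-algebras-with-symbol} is in hand the proof is pure bookkeeping, and the only place one can go wrong is in the combinatorics of $\#\Aut(\sigma)$ for the symbols with repeated factor types.
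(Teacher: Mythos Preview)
Your proof is correct and follows exactly the approach indicated in the paper: invoke Lemma~\ref{lem-list-quartic-splitting-symbols} to enumerate $\Split_4^{\epi}$ and Lemma~\ref{lem-mass-of-all-algebras-with-symbol} to evaluate each term. The paper leaves the tabulation of $(d_\sigma,\#\Aut(\sigma))$ and the final summation implicit, but your values and arithmetic are all correct.
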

\begin{proof}
    This follows from Lemmas~\ref{lem-mass-of-all-algebras-with-symbol} and \ref{lem-list-quartic-splitting-symbols}. 
\end{proof}
\begin{lemma}
    \label{lem-mass-of-(4)-and-(22)}
    For all $p$, we have 
    $$
    \m\big(\Et{(4)/F}{\mathcal{A}}\big) = \begin{cases}
        \frac{1}{4}\quad\text{if $4 \mid v_F(\alpha)$ for all $\alpha \in \mathcal{A}$},
        \\
        0 \quad\text{otherwise},
    \end{cases}
    $$
    and 
    $$
    \m\big(\Et{(22)/F}{\mathcal{A}}\big) = \begin{cases}
        \frac{1}{8}\quad\text{if $2 \mid v_F(\alpha)$ for all $\alpha \in \mathcal{A}$},
        \\
        0 \quad\text{otherwise}.
    \end{cases}
    $$
\end{lemma}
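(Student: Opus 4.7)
The plan is to reduce both cases to direct applications of already-proved lemmas. Both splitting symbols $(4) = (4^1)$ and $(22) = (2^1 2^1)$ are predictable, since the ramification indices (all equal to $1$) are trivially mutually coprime, so we can invoke Lemma~\ref{lem-norm-group-of-pred-symbol}. For any $L \in \Et{(4)/F}{}$ the lemma (with $g = \gcd(4) = 4$) gives $N_{L/F}L^\times = \{x \in F^\times : 4 \mid v_F(x)\}$, and for any $L \in \Et{(22)/F}{}$ it gives (with $g = \gcd(2,2) = 2$) $N_{L/F}L^\times = \{x \in F^\times : 2\mid v_F(x)\}$.

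Since these norm groups depend only on the splitting symbol and not on the particular $L$, the condition $\mathcal{A}\subseteq N_{L/F}L^\times$ either holds for every $L \in \Et{\sigma/F}{}$ or for none. In other words, $\Et{\sigma/F}{\mathcal{A}}$ is either all of $\Et{\sigma/F}{}$ or empty, for $\sigma \in \{(4),(22)\}$. Specifically, $\Et{(4)/F}{\mathcal{A}} = \Et{(4)/F}{}$ iff $4 \mid v_F(\alpha)$ for all $\alpha \in \mathcal{A}$, and $\Et{(22)/F}{\mathcal{A}} = \Et{(22)/F}{}$ iff $2 \mid v_F(\alpha)$ for all $\alpha \in \mathcal{A}$.

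To finish, I would apply Lemma~\ref{lem-mass-of-all-algebras-with-symbol} to compute the full pre-masses. For $\sigma = (4)$, we have $d_\sigma = 4(1-1) = 0$ and $\#\Aut((4)) = 4$ (the product $\prod_i f_i = 4$, with only one part to permute), giving $\m\big(\Et{(4)/F}{}\big) = \tfrac{1}{4}$. For $\sigma = (22)$, we have $d_\sigma = 2(1-1) + 2(1-1) = 0$ and $\#\Aut((22)) = (2 \cdot 2) \cdot 2 = 8$, where the final factor of $2$ comes from the permutation swapping the two identical parts, giving $\m\big(\Et{(22)/F}{}\big) = \tfrac{1}{8}$.

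Since every step is a direct invocation of lemmas already established, there is no real obstacle in this argument — it is essentially a bookkeeping result. The only point requiring a moment's care is the automorphism count for $(22)$, where the $\#\Aut(\sigma)$ formula correctly supplies the extra factor accounting for the symmetry between the two isomorphic field factors.
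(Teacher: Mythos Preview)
Your proof is correct and follows the same approach as the paper, which simply cites Lemma~\ref{lem-norm-group-of-pred-symbol} and leaves the rest implicit. You have spelled out the details the paper omits, including the invocation of Lemma~\ref{lem-mass-of-all-algebras-with-symbol} to obtain the pre-mass values $\tfrac{1}{4}$ and $\tfrac{1}{8}$.
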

\begin{proof}
    This follows easily from Lemma~\ref{lem-norm-group-of-pred-symbol}. 
\end{proof}
\begin{lemma}
    \label{lem-1^21^2-odd-p}
    Suppose that $p$ is odd. Let $(A_0, A_1)$ be a stratified generating set for $\overline{\mathcal{A}}^2$.  
    Then we have 
    $$
    \m\big(\Et{(1^21^2)/F}{\mathcal{A}}\big) = \begin{cases}
        \frac{1}{2q^2}\quad\text{if $\mathcal{A}\subseteq F^{\times 2}$},
        \\
        \frac{3}{8q^2}\quad \text{if $A_0 = \varnothing$ and $\# A_1 = 1$},
        \\
        \frac{1}{4q^2}\quad\text{otherwise}. 
    \end{cases}
    $$
\end{lemma}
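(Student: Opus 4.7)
The plan is to enumerate the three étale algebras in $\Et{(1^21^2)/F}{}$, identify their norm groups, and determine contributions by a case analysis on $\overline{\mathcal{A}}^2$.

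First, by Lemma~\ref{lem-tot-tame-ram-classification} applied with $e = 2$ (using $\gcd(2,q-1) = 2$ since $p$ is odd), the totally ramified quadratic extensions of $F$ are exactly $E_1 = F(\sqrt{\pi_F})$ and $E_2 = F(\sqrt{u\pi_F})$, for any non-square unit $u \in \co_F^\times$. Hence $\Et{(1^21^2)/F}{}$ consists of the three isomorphism classes $E_1 \times E_1$, $E_2 \times E_2$, and $E_1 \times E_2$. Each has discriminant valuation $2$ by Lemma~\ref{lem-disc-val-of-tame-extension}, so each $L$ contributes $q^{-2}/\#\Aut(L/F)$ to the pre-mass. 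The automorphism groups are $\Aut(E_i \times E_i/F) \cong C_2 \wr S_2$ of order $8$ for $i = 1, 2$, and $\Aut(E_1 \times E_2/F) \cong C_2 \times C_2$ of order $4$.

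Second, I will identify the norm groups. Lemma~\ref{lem-norm-groups-of-tot-tame-ram-l-exts} gives that $N_{E_i/F}E_i^\times$ is generated over $F^{\times 2}$ by $-\pi_F$ (resp.\ $-u\pi_F$) for $i = 1$ (resp.\ $i = 2$). Writing $\overline{N_{E_i}}^2$ for the image of $N_{E_i/F}E_i^\times$ in $F^\times/F^{\times 2} \cong (\Z/2\Z)^2$, we obtain the two distinct index-$2$ subgroups generated by the two (distinct, with ratio $u$) uniformiser classes $[-\pi_F], [-u\pi_F]$; in particular, neither contains the non-square unit class $[u]$. Since the norm group of a product étale algebra is the product of the factors' norm groups, we find $N_{(E_1 \times E_2)/F}(E_1\times E_2)^\times = N_{E_1}\cdot N_{E_2} = F^\times$, so $E_1 \times E_2 \in \Et{(1^21^2)/F}{\mathcal{A}}$ unconditionally, while $E_i \times E_i \in \Et{(1^21^2)/F}{\mathcal{A}}$ iff $\overline{\mathcal{A}}^2 \subseteq \overline{N_{E_i}}^2$.

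Third, I will read off $\overline{\mathcal{A}}^2$ from the stratified generating set $(A_0, A_1)$ and split into three cases. If $\mathcal{A}\subseteq F^{\times 2}$, i.e.\ $\overline{\mathcal{A}}^2 = 1$, then all three algebras contribute and the pre-mass is $q^{-2}(1/8 + 1/8 + 1/4) = 1/(2q^2)$. If $A_0 = \varnothing$ and $\#A_1 = 1$, then $\overline{\mathcal{A}}^2$ is generated by a single uniformiser class and therefore equals exactly one of $\overline{N_{E_1}}^2, \overline{N_{E_2}}^2$; exactly two algebras contribute, giving $(1/8 + 1/4)/q^2 = 3/(8q^2)$. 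In every remaining configuration, either $A_0 \neq \varnothing$, or $\#A_1 \geq 2$ (in which case $[\pi_F]\cdot [u\pi_F] = [u]$ forces the non-square unit class into $\overline{\mathcal{A}}^2$); in both subcases $\overline{\mathcal{A}}^2 \not\subseteq \overline{N_{E_i}}^2$ for either $i$, so only $E_1 \times E_2$ contributes, giving $1/(4q^2)$.

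There is no real obstacle beyond bookkeeping: the explicit form of the two norm groups is immediate from Lemma~\ref{lem-norm-groups-of-tot-tame-ram-l-exts}, and the key observation powering the case split is that the dichotomy between the second and third cases is controlled entirely by whether or not $\overline{\mathcal{A}}^2$ meets the non-square unit class $[u]$.
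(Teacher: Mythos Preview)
Your proof is correct and follows essentially the same approach as the paper: enumerate the three algebras $L_0\times L_0$, $L_0\times L_1$, $L_1\times L_1$ via Lemma~\ref{lem-tot-tame-ram-classification}, compute their individual pre-masses from the discriminant and automorphism counts, identify the norm groups via Lemma~\ref{lem-norm-groups-of-tot-tame-ram-l-exts}, and read off the answer by a case split on $\overline{\mathcal{A}}^2$. Your treatment of the ``otherwise'' case is slightly more explicit than the paper's (which simply says ``the result then follows''), but the underlying argument is the same.
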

\begin{proof}
    By Lemma~\ref{lem-tot-tame-ram-classification}, we have 
    $$
    \Et{(1^21^2)/F}{} = \{L_0\times L_0, L_0\times L_1, L_1\times L_1\}, 
    $$
    where 
    $$
    L_j = F\Big(\sqrt{\zeta_{q-1}^j\pi_F}\Big),\quad j=0,1.
    $$
    Lemma~\ref{lem-disc-val-of-tame-extension} tells us that $v_F(d_{L_j/F}) = 1$ for each $j$. It follows that, for $i,j \in \{0,1\}$, we have  
    $$
    \m\big(\{L_i \times L_j\}\big) = \begin{cases}
        \frac{1}{8q^2}\quad\text{if $i = j$},
        \\
        \frac{1}{4q^2}\quad\text{if $i\neq j$}. 
    \end{cases}
    $$
    The result then follows from the fact that 
    $$
    N_{L_j/F}L_j^\times = \langle \co_F^{\times 2}, -\zeta_{q-1}^j\pi_F\rangle.
    $$
\end{proof}
\begin{lemma}
    \label{lem-2^2-and-1^4-odd-p}
    Suppose that $p$ is odd. Let $(A_0, A_1, A_2)$ be a stratified generating set for $\overline{\mathcal{A}}^4$. 
    Then we have 
    $$
    \m\big(\Et{(2^2)/F}{\mathcal{A}}\big) = \begin{cases}
        \frac{1}{2q^2} \quad\text{if $A_0 \subseteq F^{\times 2}$ and $A_1 = A_2 = \varnothing$},
        \\
        \frac{1}{4q^2} \quad\text{else if $A_0 \subseteq F^{\times 2}$ and $A_1 = \varnothing$ and $\frac{\alpha_i}{\alpha_j} \in F^{\times 2}$ for all $\alpha_i,\alpha_j \in A_2$},
        \\
        0 \quad\text{otherwise}.
    \end{cases}
    $$
    If $q \equiv 1 \pmod{4}$, then we have 
    $$
    \m\big(\Et{(1^4)/F}{\mathcal{A}}\big) = \begin{cases}
        \frac{1}{q^3}\quad\text{if $\mathcal{A}\subseteq F^{\times 4}$},
        \\
        \frac{1}{2q^3}\quad\text{if $A_0 = A_1 = \varnothing$ and $\# A_2 = 1$ and $A_2 \subseteq F^{\times 2}$},
        \\
        \frac{1}{4q^3}\quad\text{if $A_0 = A_2 = \varnothing$ and $\# A_1 = 1$},
        \\
        0\quad\text{otherwise}.
    \end{cases}
    $$
    If $q \equiv 3\pmod{4}$, then we have 
    $$
    \m\big(\Et{(1^4)/F}{\mathcal{A}}\big) = \begin{cases}
        \frac{1}{q^3}\quad\text{if $\mathcal{A}\subseteq F^{\times 2}$},
        \\
        \frac{1}{2q^3}\quad\text{else if $A_0 = A_2 = \varnothing$ and $\# A_1 = 1$},
        \\
        0\quad\text{otherwise}. 
    \end{cases}
    $$
\end{lemma}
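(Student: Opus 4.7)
The strategy follows that of Lemma~\ref{lem-1^21^2-odd-p}: I will classify the relevant totally ramified extensions via Lemma~\ref{lem-tot-tame-ram-classification}, compute their norm groups using local class field theory, and then translate the condition $\mathcal{A}\subseteq N_{L/F}L^\times$ into the claimed combinatorial conditions on the stratified generating set $(A_0,A_1,A_2)$.

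For the $(2^2)$-part, a $(2^2)$-extension of $F$ is the same as a totally ramified quadratic extension of the unramified quadratic extension $F'/F$, taken up to $F$-isomorphism. Applying Lemma~\ref{lem-tot-tame-ram-classification} over $F'$ produces two such extensions: the biquadratic $L_a=F(\sqrt{u},\sqrt{\pi_F})$ (with $u$ a non-square unit of $F$) and $L_b=F'(\sqrt{\zeta_{q^2-1}\pi_F})$, which a short Galois-theoretic computation shows is cyclic of order $4$ over $F$. Both satisfy $v_F(d_{L/F})=2$ by Lemma~\ref{lem-disc-val-of-tame-extension} and $\#\Aut(L/F)=4$, so each contributes $\tfrac{1}{4q^2}$ to the pre-mass. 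Intersecting the norm groups of the three quadratic subfields of $L_a$ yields $N_{L_a/F}L_a^\times=F^{\times 2}$, while a direct computation of $N_{L_b/F}(\sqrt{\zeta_{q^2-1}\pi_F})=\zeta_{q-1}\pi_F^2$ (with $\zeta_{q-1}:=\zeta_{q^2-1}^{q+1}$ a generator of $\F_F^\times$) yields $N_{L_b/F}L_b^\times=\langle\co_F^{\times 2},\zeta_{q-1}\pi_F^2\rangle$. The intersection $N_{L_a}\cap N_{L_b}=\co_F^{\times 2}\cdot\langle\pi_F^4\rangle$ translates precisely to ``$A_0\subseteq F^{\times 2}$ and $A_1=A_2=\varnothing$'', while $\mathcal{A}$ lying in exactly one of $N_{L_a},N_{L_b}$ amounts to $A_0\subseteq F^{\times 2}$, $A_1=\varnothing$, and the elements of $A_2$ all belonging to a common coset of $F^{\times 2}$; any other configuration pushes some generator outside both norm groups.

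For the $(1^4)$-part I split by $q\pmod 4$. When $q\equiv 1\pmod 4$, Lemma~\ref{lem-tot-tame-ram-classification} gives four totally ramified cyclic quartic extensions $L_j=F(\sqrt[4]{\zeta_{q-1}^j\pi_F})$, each with $\#\Aut=4$ and $v_F(d_{L_j/F})=3$, contributing $\tfrac{1}{4q^3}$ each. Computing $N_{L_j/F}(\sqrt[4]{\zeta_{q-1}^j\pi_F})=-\zeta_{q-1}^j\pi_F$ and using $\co_F^{\times 4}\subseteq N_{L_j}$ forces $N_{L_j}=\langle\co_F^{\times 4},-\zeta_{q-1}^j\pi_F\rangle$ by index counting. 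Identifying $F^\times/F^{\times 4}\cong C_4\oplus C_4$ (with generators $\zeta_{q-1},\pi_F$), these four subgroups appear as the distinct cyclic index-$4$ subgroups $\langle(a_j,1)\rangle$ with $a_j\equiv (q-1)/2+j\pmod 4$; one then checks directly that their pairwise intersections are trivial when $j_1-j_2$ is odd and cyclic of order $2$ when $j_1-j_2\equiv 2\pmod 4$, and that any three of them already intersect trivially. Hence $\mathcal{A}$ lies in $0$, $1$, $2$, or all $4$ norm groups (never exactly $3$), and the two possible nontrivial pair-intersections are generated modulo $F^{\times 4}$ by $\pi_F^2$ and $\zeta_{q-1}^2\pi_F^2$, which are exactly the classes of the squares $(u\pi_F)^2$ as $u$ ranges over the two cosets of $\co_F^{\times 2}$ in $\co_F^\times$. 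Matching each possibility against minimal stratified generating sets yields the four claimed cases. When $q\equiv 3\pmod 4$, only two extensions exist, both non-Galois with $\#\Aut(L_j/F)=2$; their maximal abelian subextensions are the quadratic subfields $F(\sqrt{\zeta_{q-1}^j\pi_F})$, so by the existence theorem $N_{L_j}$ equals the norm group of this subfield, namely $\langle\co_F^{\times 2},-\pi_F\rangle$ and $\langle\co_F^{\times 2},\pi_F\rangle$ (using $-1\equiv\zeta_{q-1}\pmod{F^{\times 2}}$). Their intersection is $F^{\times 2}$, and the cases read off directly.

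The principal obstacle is the bookkeeping in the $q\equiv 1\pmod 4$ case of $(1^4)$: I must correctly identify the four cyclic index-$4$ subgroups of $F^\times/F^{\times 4}$, establish the pattern of pairwise and triple intersections, and then match each possible minimal stratified generating set of $\overline{\mathcal{A}}^4$ to the correct number of norm groups containing $\mathcal{A}$. In particular, some care is required to argue that generators with valuations $\geq 3$ can always be replaced (via multiplication by a power of $\pi_F^4$) by equivalent generators of valuation $\leq 2$, so that the conditions on $(A_0,A_1,A_2)$ alone are exhaustive.
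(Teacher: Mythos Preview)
Your proposal is correct and follows essentially the same approach as the paper: classify the extensions via Lemma~\ref{lem-tot-tame-ram-classification}, determine their norm groups (the paper does this for $(2^2)$ by matching against the two evident index-$4$ subgroups $\langle\co_F^{\times 2},\pi_F^2\rangle$ and $\langle\co_F^{\times 2},\zeta_{q-1}\pi_F^2\rangle$ rather than by your direct $V_4$/$C_4$ computation, but the outcome is identical), and read off the pre-masses. Your write-up supplies more detail on the final translation into conditions on $(A_0,A_1,A_2)$---in particular the $C_4\oplus C_4$ intersection analysis in the $q\equiv 1\pmod 4$ case---whereas the paper simply asserts ``the result follows'' at that point.
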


\begin{proof}
    Let $E/F$ be the quadratic unramified extension. Lemma~\ref{lem-tot-tame-ram-classification} tells us that 
    $$
    \Et{(1^2)/E}{} = \{L_0, L_1\},
    $$
    where 
    $$
    L_j = E\Big(\sqrt{\zeta_{q^2-1}^j\pi_F}\Big),
    $$
    for each $j$. On the other hand, we have two index $4$ subgroups 
    $$
    \langle \co_F^{\times 2}, \pi_F^2\rangle, \quad \langle \co_F^{\times 2}, \zeta_{q-1}\pi_F^2\rangle
    $$
    of $F^\times$. Clearly the quartic abelian extensions corresponding to these subgroups have splitting symbol $(2^2)$, so they must be equal to $L_0$ and $L_1$ in some order, which implies that $L_0$ and $L_1$ are nonisomorphic abelian extensions of $F$. Lemma~\ref{lem-disc-val-of-tame-extension} tells us that $v_F(d_{L_j/F}) = 2$ for each $j$, so $\m(\{L_j\})= \frac{1}{4q^2}$, and the formula for $\m\big(\Et{(2^2)/F}{\mathcal{A}}\big)$ follows. 

    Lemma~\ref{lem-tot-tame-ram-classification} tells us that 
    $$
    \Et{(1^4)/F}{} = \begin{cases}
        \{L_0, L_1,L_2,L_3\} \quad\text{if $q \equiv 1\pmod{4}$},
        \\
        \{L_0, L_1\}\quad\text{if $q\equiv 3\pmod{4}$},
    \end{cases}
    $$
    where 
    $$
    L_j = F\Big(\sqrt[4]{\zeta_{q-1}^j\pi_F}\Big). 
    $$
    Suppose first that $q \equiv 1 \pmod{4}$. Then the minimal polynomial of $\sqrt[4]{\pi_F}$ over $F$ splits in $L_0$, so $L_0/F$ is Galois. Since $\pi_F$ is an arbitrary choice of uniformiser, it follows that $L_j/F$ is Galois for each $j$, and therefore 
    $$
    N_{L_j/F}L_j^\times = \langle \co_F^{\times 4}, -\zeta_{q-1}^j\pi_F\rangle.
    $$
    Lemma~\ref{lem-disc-val-of-tame-extension} tells us that 
    $
    \m(\{L_j\}) = \frac{1}{4q^3}
    $ 
    for each $j$, and the result follows. Finally, suppose that $q\equiv 3\pmod{4}$. Then $\mu_4\not\subseteq F$, so $\sqrt[4]{\pi_F}$ has only two conjugates in $L_0$, and therefore $L_0$ is non-Galois with a maximal abelian subextension $F(\sqrt{\pi_F})$, so 
    $$
    N_{L_0/F}L_0^\times = \langle \co_F^{\times 2},-\pi_F\rangle,
    $$
    and similarly $L_1/F$ is non-Galois with 
    $$
    N_{L_1/F} L_1^\times = \langle \co_F^{\times 2}, - \zeta_{q-1}\pi_F\rangle. 
    $$
    By Lemma~\ref{lem-disc-val-of-tame-extension}, we have $v_F(d_{L_j/F}) = 3$ for each $j$, and the result follows. 
\end{proof}

\begin{proof}[Proof of Theorem~\ref{thm-n=4-for-odd-primes}]
    This is immediate from Lemmas~\ref{lem-list-quartic-splitting-symbols} to \ref{lem-2^2-and-1^4-odd-p} inclusive. 
\end{proof}

\subsection{$2$-adic fields} 

In this subsection, we specialise to the case $p=2$, so that $F$ is a $2$-adic field. Recall that, for an integer $m$, the decorators $\Et{\bullet/F, m}{\bullet}$ and $\Et{\bullet/F,\leq m}{\bullet}$ denote the sets of $L \in \Et{\bullet/F}{\bullet}$ with $v_F(d_{L/F}) = m$ and $v_F(d_{L/F}) \leq m$, respectively.  By Theorem~\ref{thm-n=4-for-odd-primes}, in order to compute $\m\big(\Et{4/F}{\mathcal{A}}\big)$, all we need is to compute $\m\big(\Et{\sigma/F}{\mathcal{A}}\big)$ for each $\sigma \in \{(1^21^2), (2^2), (1^4)\}$. We state the first two of these quantities in the next two theorems:

\begin{theorem}
    \label{thm-1^21^2-2-adic}
    The pre-mass $\m\big(\Et{(1^21^2)/F}{\mathcal{A}}\big)$ is equal to the sum of the following quantities:
    \begin{enumerate}
        \item $$
            \frac{1}{8}\cdot \sum_{4 \leq m \leq 4e_F + 2}q^{-m}\#\Et{(1^2)/F,\frac{m}{2}}{\mathcal{A}}, 
        $$
        which can be made explicit using Corollary~\ref{cor-num-wild-Cp-exts-with-A-and-disc-val}.
        \item $$
        \frac{1}{4}\cdot \sum_{4 \leq m \leq 4e_F + 2} q^{-m} N_{(1^21^2)}^{\neq}(m),
        $$
        where $N_{(1^21^2)}^{\neq}$ is the explicit function defined in Appendix~\ref{appendix-helpers}. 
    \end{enumerate}
\end{theorem}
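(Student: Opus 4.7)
The plan is to partition $\Et{(1^21^2)/F}{\mathcal{A}}$ according to whether its two totally ramified quadratic factors are isomorphic. Any $L \in \Et{(1^21^2)/F}{}$ decomposes as $L = L_1 \times L_2$ with $L_1, L_2$ totally ramified quadratic extensions of $F$; since quadratic extensions are Galois, $\#\Aut(L_i/F) = 2$, giving $\#\Aut(L/F) = 8$ in the diagonal case $L_1 \cong L_2$ (the wreath product with $S_2$) and $\#\Aut(L/F) = 4$ otherwise. Additivity of the discriminant valuation for products of \'etale algebras yields $v_F(d_{L/F}) = v_F(d_{L_1/F}) + v_F(d_{L_2/F})$.

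The key observation is that in the off-diagonal case, the norm condition on $\mathcal{A}$ is vacuous. By class field theory, the norm groups $N_i := N_{L_i/F}L_i^\times$ are distinct index-$2$ subgroups of $F^\times$, so $N_1 \cdot N_2 = F^\times$; since $N_{L/F}L^\times = N_1 \cdot N_2$ for a product of \'etale algebras, every such $L$ automatically lies in $\Et{(1^21^2)/F}{\mathcal{A}}$. In the diagonal case $L = L_0 \times L_0$, by contrast, $N_{L/F}L^\times = N_{L_0/F}L_0^\times$, so the condition reduces to $L_0 \in \Et{(1^2)/F}{\mathcal{A}}$.

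Combining these observations, the pre-mass splits as
$$
\m\big(\Et{(1^21^2)/F}{\mathcal{A}}\big) = \frac{1}{8}\sum_{L_0 \in \Et{(1^2)/F}{\mathcal{A}}} q^{-2v_F(d_{L_0/F})} + \frac{1}{4}\sum_{\substack{\{L_1,L_2\}\\ L_1\not\cong L_2}} q^{-v_F(d_{L_1/F}) - v_F(d_{L_2/F})},
$$
where the second sum is over unordered pairs of distinct totally ramified quadratic extensions of $F$. Re-indexing both sums by the total discriminant valuation $m = v_F(d_{L/F})$ yields the claimed expression, provided we identify
$$
N_{(1^21^2)}^{\neq}(m) = \tfrac{1}{2}\!\left(\sum_{c_1+c_2 = m} g(c_1)g(c_2) \; - \; g(m/2)\cdot \mathbbm{1}_{m/2\in\Z}\right),
$$
where $g(c) := \#\Et{(1^2)/F,c}{C_2/F}$ (with $g(c) = 0$ when $c$ is not a valid disc val). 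The stated range $4 \leq m \leq 4e_F + 2$ is exactly the set of possible sums of two valid disc vals of totally ramified $C_2$-extensions of $F$, read off from Theorem~\ref{thm-num-Cp-exts-with-disc-val}.

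The main obstacle is matching the combinatorial identity above to the explicit closed form of $N_{(1^21^2)}^{\neq}(m)$ promised by Appendix~\ref{appendix-helpers}. Substituting the piecewise formulae for $g(c)$ from Theorem~\ref{thm-num-Cp-exts-with-disc-val} and simplifying requires careful case analysis, especially at the exceptional boundary $c = 2e_F + 1$ where $\mu_2 \subseteq F$ produces an extra contribution, and at the parity condition $c_1, c_2 \in \{2,4,\ldots,2e_F\}$ versus the odd boundary value; everything else is bookkeeping.
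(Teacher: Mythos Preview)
Your proposal is correct and follows essentially the same approach as the paper: the paper's proof cites two lemmas, one (Lemma~\ref{lem-characterise-1^21^2-A}) being precisely your diagonal/off-diagonal decomposition with the norm-group argument that distinct index-$2$ subgroups generate $F^\times$, and the other (Lemma~\ref{lem-size-of-1^21^2-neq}) being your combinatorial identity for $N_{(1^21^2)}^{\neq}(m)$ evaluated via Theorem~\ref{thm-num-Cp-exts-with-disc-val}. The automorphism counts, discriminant additivity, and range of $m$ are all handled identically.
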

\begin{proof}
    Postponed.
\end{proof}
Call a quadratic extension $E/F$ \emph{$C_4$-extendable} if it can be embedded into a $C_4$-extension of $F$. Let $E \in \Et{2/F}{C_2/F, \mathcal{A}}$ be $C_4$-extendable, and let $\omega \in E^\times$ be such that $E(\sqrt{\omega})$ is an element of $\Et{2/E}{C_4/F}$ with minimal discriminant valuation. For each $\alpha \in F^\times$, write 
$$
N_\alpha = N_{F(\sqrt{\alpha})/F}F(\sqrt{\alpha})^\times,
$$
and define
$$
N_\omega = N_{E(\sqrt{\omega})/F}E(\sqrt{\omega})^\times. 
$$
Moreover, given a choice of finitely generated subgroup $\mathcal{A}\subseteq F^\times$, write $\mathcal{G}_4(\mathcal{A})$ for a subset of $F^\times$, such that the image of the map $\mathcal{G}_4(\mathcal{A}) \to F^\times / F^{\times 4}$ generates $\overline{\mathcal{A}}^4$. We fix an arbitrary choice of $\mathcal{G}_4(\mathcal{A})$ for each possible subgroup $\mathcal{A}$. Define the set 
$$
N_\omega^{\mathcal{A}} = \bigcap_{\alpha \in \mathcal{G}_4(\mathcal{A}) \cap N_\omega} \overline{N}_\alpha^2 \setminus \bigcup_{\alpha \in \mathcal{G}_4(\mathcal{A}) \setminus N_\omega} \overline{N}_\alpha^2 \subseteq F^\times / F^{\times 2}. 
$$
For each such $E$, fix once and for all a choice of element $\omega$, and write 
$$
N_{E}^{\mathcal{A}} = N_\omega^{\mathcal{A}}.
$$
For each nonnegative integer $c$, define 
$$
N_{E,c}^\mathcal{A} = N_E^{\mathcal{A}} \cap \Big(U_F^{(c)}F^{\times 2}/F^{\times 2}\Big).
$$
Note that the sets $N_{E,c}^\mathcal{A}$ depend on our choices of element $\omega$ and $\mathcal{G}_4(\mathcal{A})$. However, we will see in Theorem~\ref{thm-2^2-2-adic} and \ref{thm-1^4-2-adic} that (at least for $C_4$-extendable $E$) their \emph{sizes} depend only on $E$ and $\mathcal{A}$, hence our notation.

\begin{theorem}
    \label{thm-2^2-2-adic}
    If $2\nmid v_F(\alpha)$ for some $\alpha\in\mathcal{A}$, then $\Et{(2^2)/F}{\mathcal{A}} = \varnothing$. Otherwise, the following four statements are true:
    \begin{enumerate}
        \item For $G \in \{S_4, A_4\}$, we have $\Et{(2^2)/F}{G/F} = \varnothing$, so 
        $$
        \m\big(\Et{(2^2)/F}{G/F,\mathcal{A}}\big) = 0.
        $$
        \item We have 
        $$
        \m\big(\Et{(2^2)/F}{D_4/F,\mathcal{A}}\big) = \frac{1}{2}\cdot\Big(q^{-2} - q^{-2e_F - 2} - \frac{1}{q^2+q+1}\Big(q^{-1} - q^{-3e_F-1}\Big) + q^{-3e_F - 2}\Big(q^{e_F} - 1\Big)\Big). 
        $$
        \item We have 
        $$
        \m\big(\Et{(2^2)/F}{V_4/F,\mathcal{A}}\big) = \frac{1}{4\#\overline{\mathcal{A}}^2}\Big( \mathbbm{1}_{\overline{\mathcal{A}}^2_{2e_F}=0}\cdot q^{-3e_F - 2} + 
        \sum_{c=1}^{e_F}q^{-3c-1}\Big(q\cdot \#\overline{\mathcal{A}}^2_{2c} - \#\overline{\mathcal{A}}^2_{2c-1}\Big)    
        \Big).
        $$
        \item  
        Let $E$ be the unique unramified quadratic extension of $F$. The pre-mass $\m\big(\Et{(2^2)/F}{C_4/F,\mathcal{A}}\big)$ is the sum of the following two quantities:
        \begin{enumerate}
            \item $$
            \frac{1}{8}\cdot \sum_{c=1}^{e_F}q^{-4c}(\# N_{E,2e_F - 2c}^{\mathcal{A}} - \# N_{E,2e_F - 2c + 2}^{\mathcal{A}}),
            $$
            \item $$
            \frac{1}{8} \cdot q^{-4e_F - 2} \cdot (\# N_E^\mathcal{A} - \#N_{E,0}^\mathcal{A}).  
            $$
        \end{enumerate}
    \end{enumerate}
\end{theorem}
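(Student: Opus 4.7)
The plan is to reduce to the case $2\mid v_F(\alpha)$ for every $\alpha\in\mathcal{A}$, then partition $\Et{(2^2)/F}{\mathcal{A}}$ by the Galois closure group $G$ of $L/F$. If some $\alpha\in\mathcal{A}$ has odd valuation, Lemma~\ref{lem-norm-group-of-pred-symbol} (with $g=f=2$) forces $\Et{(2^2)/F}{\mathcal{A}}=\varnothing$. Henceforth assume $2\mid v_F(\alpha)$ for all $\alpha\in\mathcal{A}$. Every $L\in\Et{(2^2)/F}{}$ contains the unramified quadratic extension $E/F$ as its maximal unramified subfield, so $L/E$ is a ramified $C_2$-extension. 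The cases $G\in\{S_4,A_4\}$ are ruled out because the root-stabilizers $S_3$ and $C_3$ are maximal, and hence $S_4$- and $A_4$-quartics admit no quadratic subfields; this gives Statement $(1)$.

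For $G=D_4$, the unique quadratic subfield of $L/F$ coincides with its maximal abelian subextension (the fixed field of $H\cdot[G,G]=\langle s,r^2\rangle$ in the standard presentation), and hence equals $E$. By class field theory, $N_{L/F}L^\times=N_{E/F}E^\times=\{x\in F^\times:2\mid v_F(x)\}$, which contains $\mathcal{A}$ automatically, so the $D_4$ pre-mass is independent of $\mathcal{A}$. I would compute it as the difference $\m(\Et{(2^2)/F}{})-\m(\Et{(2^2)/F}{V_4/F})-\m(\Et{(2^2)/F}{C_4/F})$, obtaining $\m(\Et{(2^2)/F}{})$ from the classification of ramified $C_2$-extensions of $E$ (Theorem~\ref{thm-num-Cp-exts-with-disc-val} applied over $E$ with $p=2$), translating from $E$-data to $F$-data via $v_F(d_{L/F})=2v_E(d_{L/E})$ and accounting for how $E$-isomorphism classes descend to $F$-isomorphism classes via the $\Gal(E/F)$-action $L\leftrightarrow\sigma(L)$. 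Substituting the $V_4$- and $C_4$-quantities derived below (with $\mathcal{A}$ trivial) yields the stated closed-form expression.

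For $G=V_4$, parametrize $L=F(\sqrt u,\sqrt\beta)$ with $u$ a non-square unit of $F$ and $\beta\in F^\times$ generating a ramified quadratic extension; each $V_4$-quartic of $F$ with symbol $(2^2)$ corresponds to an unordered pair $\{\bar\beta,\bar u\bar\beta\}$. Conductor-discriminant gives $v_F(d_{L/F})=2\mff(\chi_\beta)$. Since $\mathcal{A}\subseteq N_u$ holds by assumption, $\mathcal{A}\subseteq N_{L/F}L^\times=N_u\cap N_\beta\cap N_{u\beta}$ reduces to $\mathcal{A}\subseteq N_\beta$. Counting eligible $\bar\beta$ at each conductor $2c'$ via Corollary~\ref{cor-num-wild-Cp-exts-with-A-and-disc-val} with $p=2$, weighting by $\tfrac14 q^{-4c'}$, and halving for the $\bar\beta\leftrightarrow\bar u\bar\beta$ identification, produces the stated main sum, while the top conductor $c=2e_F+1$ (where $\mu_2\subseteq F$ is automatic and requires $\overline{\mathcal{A}}^2_{2e_F}=0$) contributes the boundary term.

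For $G=C_4$, fix $\omega_0\in E^\times$ giving a $C_4$-extension $E(\sqrt{\omega_0})/F$ of minimal discriminant. By Albert's cyclicity criterion and Kummer theory, every $C_4$-extension of $F$ containing $E$ is of the form $E(\sqrt{\omega_0 y})$ for some $y\in F^\times$, and because $u\in E^{\times 2}$ (since $E=F(\sqrt u)$), this parametrization by $\bar y\in F^\times/F^{\times 2}$ is $2$-to-$1$ onto the set of such extensions. The condition $\mathcal{A}\subseteq N_{L/F}L^\times$ is captured exactly by $\bar y\in N_E^\mathcal{A}$, via the Hilbert-symbol description of the sets $\overline{N}_\alpha^2$ in the preamble. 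The unit-filtration depth of $\bar y$ controls $v_F(d_{L/F})$: the stratum $\bar y\in N_{E,2e_F-2c}^\mathcal{A}\setminus N_{E,2e_F-2c+2}^\mathcal{A}$ yields $v_F(d_{L/F})=4c$, and contributes $\tfrac14\cdot\tfrac12\cdot q^{-4c}=\tfrac18 q^{-4c}$ per element (the factors being $|\Aut(L/F)|=4$ and the $2$-to-$1$ parametrization); summing produces the stated main sum, while the deepest stratum $\bar y\in N_{E,0}^\mathcal{A}$ (which corresponds to $L=E(\sqrt{\omega_0})$ itself) yields the boundary $\tfrac18 q^{-4e_F-2}(\#N_E^\mathcal{A}-\#N_{E,0}^\mathcal{A})$. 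The main obstacle is making this filtration-to-discriminant dictionary rigorous --- specifically the shift $2e_F-2c$ between the depth of the $F$-twist $\bar y$ and the conductor of $E(\sqrt{\omega_0 y})/F$ --- which requires a careful layer-by-layer analysis of how twisting by a unit of $F$ perturbs the wild ramification of the base $C_4$-extension.
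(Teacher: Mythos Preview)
Your outline is essentially the paper's approach: partition by Galois closure group, exploit that every $(2^2)$-quartic contains the unramified quadratic $E$, and parametrize the abelian cases via Kummer theory and class field theory. Two small corrections and one pointer.

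First, $(2^2)$ is \emph{not} predictable: predictability requires $\gcd(e_i)=1$ (this is what the proof of Lemma~\ref{lem-norm-group-of-pred-symbol} actually uses, via B\'ezout), and here the single exponent is $e_1=2$. Indeed that lemma's conclusion is false for $C_4$-extensions of type $(2^2)$, whose norm groups have index $4$ in $F^\times$. You only need the containment $N_{L/F}L^\times\subseteq\{x:2\mid v_F(x)\}$, and this follows directly from $E\subseteq L$.

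Second, your reading of the boundary term in~(4) is inverted. The set $N_E^{\mathcal{A}}\setminus N_{E,0}^{\mathcal{A}}$ consists of the classes $\bar y$ of \emph{odd} valuation; these twists produce the extensions of \emph{maximal} discriminant $v_F(d_{L/F})=4e_F+2$, not the base extension. The minimal-discriminant base $E(\sqrt{\omega_0})$ is in fact unramified over $E$, hence lies in $\Et{(4)/F}{}$ rather than $\Et{(2^2)/F}{}$, and is silently excluded from the count.

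For the obstacle you flag, the paper's key input is Lemma~\ref{lem-UE2t-cap-F-unramified}: for $E/F$ unramified quadratic and $0\le t\le e_F$ one has $U_E^{(2t)}E^{\times 2}\cap F^\times=U_F^{(2t)}F^{\times 2}$. Combined with Lemma~\ref{lem-C4-bijection-with-norm-condition}, this is exactly the filtration-to-discriminant dictionary you want, and Statement~(4) then follows via Corollary~\ref{cor-size-of-Et-2^2-m-C4/F}.
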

\begin{proof}
    Postponed.
\end{proof}

For $\sigma = (1^4)$, the actual mass is too cumbersome to state in closed form, so we instead study the quantities $\#\Et{(1^4)/F,m}{G/F,\mathcal{A}}$ for each $G$. For $G \neq C_4$, we state these quantities explicitly in Theorem~\ref{thm-1^4-2-adic}. For $G = C_4$, also in Theorem~\ref{thm-1^4-2-adic}, we give a formula for $\#\Et{(1^2)/E,m_2}{C_4/F,\mathcal{A}}$, whenever $E$ is a totally ramified quadratic extension of $F$ and $m_2$ is an integer. This formula depends on $E$ in a nontrivial way, but it can be evaluated efficiently by Lemma~\ref{lem-complexity-of-finding-top-half-of-tower}, allowing us to compute $\#\Et{(1^4)/F,m}{C_4/F,\mathcal{A}}$ for each $m$, and hence the pre-mass $\m(\Et{(1^4)/F}{C_4/F,\mathcal{A}})$.

\begin{theorem}
    \label{thm-1^4-2-adic}
    The following four statements are true:
    \begin{enumerate}
        \item For $G \in \{S_4, A_4\}$, we have 
        $$
        \m\big(\Et{(1^4)/F}{G/F,\mathcal{A}}\big) = \m\big(\Et{(1^4)/F}{G/F}\big),
        $$
        which is known from \cite[Corollaries~1.6 and 1.7]{monnet2024counting}. 
        \item If $\Et{(1^4)/F,m}{D_4/F,\mathcal{A}}$ is nonempty, then one of the following three statements is true: $m$ is even with $6 \leq m \leq 8e_F + 2$; or $m \equiv 1\pmod{4}$ with $4e_F + 5 \leq m \leq 8e_F + 1$; or $m = 8e_F + 3$. For such $m$, we have  
        $$
        \#\Et{(1^4)/F,m}{D_4/F,\mathcal{A}} = \frac{1}{2}\sum_{0 < m_1 < m/2} \#\Et{(1^2)/F,m_1}{\mathcal{A}} \cdot\Big( N^{C_2}(m-2m_1) - N^{C_4}(m_1,m-2m_1) - N^{V_4}(m_1,m-2m_1)\Big),
        $$
        where $N^{C_2}$, $N^{C_4}$, and $N^{V_4}$ are the functions defined in Appenix~\ref{appendix-helpers}. We can compute this quantity explicitly using Corollary~\ref{cor-num-wild-Cp-exts-with-A-and-disc-val}. 
        \item If $\Et{(1^4)/F, m}{V_4/F, \mathcal{A}}$ is nonempty, then $m$ is an even integer with $6 \leq m \leq 6e_F + 2$. For such $m$, the quantity
        $
        \#\Et{(1^4)/F, m}{V_4/F, \mathcal{A}}
        $
        is equal to the sum of the following two quantities:
        \begin{enumerate}
            \item 
        $$
        \frac{1}{2}\cdot \sum_{\substack{m_1 < m_2 \\ m_1 + 2m_2 = m}} \big(\#\Et{(1^2)/F,m_1}{\mathcal{A}}\big)\big(\#\Et{(1^2)/F,m_2}{\mathcal{A}}\big).
        $$
        \item $$ \mathbbm{1}_{3\mid m}\cdot 
        \frac{2}{3(\#\overline{\mathcal{A}}^2)^2}\cdot q^{\frac{m}{3} - 2}\Big(q\# \overline{\mathcal{A}}^2_{m/3} - \#\overline{\mathcal{A}}^2_{m/3-1}\Big)\Big(q\# \overline{\mathcal{A}}^2_{m/3} - 2\#\overline{\mathcal{A}}^2_{m/3-1}\Big).
        $$
        \end{enumerate}
        This expression can be made explicit using Corollary~\ref{cor-num-wild-Cp-exts-with-A-and-disc-val}.
        \item 
        Let $E \in \Et{(1^2)/F}{\mathcal{A}}$ be $C_4$-extendable, and let $m_1 = v_F(d_{E/F})$. If $m_1 > e_F$, then 
        $$
        \#\Et{(1^2)/E, m_2}{C_4/F,\mathcal{A}} = \begin{cases}
            \frac{1}{2}\cdot\# N_E^{\mathcal{A}} \quad\text{if $m_2 = m_1 + 2e_F$},
            \\
            0\quad\text{otherwise}.
        \end{cases}
        $$
        Suppose instead that $m_1 \leq e_F$.
        For even integers $m_2$, define 
        $$
        c(m_2) = 2e_F - 2\Big\lfloor
        \frac{m_1+m_2}{4}
    \Big\rfloor.
        $$
        Then we have 
        $$
        \#\Et{(1^2)/E,m_2}{C_4/F,\mathcal{A}} = \begin{cases}
            \frac{1}{2}\cdot \# N_{E, c(m_2)}^{\mathcal{A}}\quad\text{if $m_2 = 3m_1 - 2$},
            \\
            \frac{1}{2}\cdot \Big( \# N_{E, c(m_2)}^{\mathcal{A}} - \# N_{E, c(m_2 - 2)}^{\mathcal{A}} \Big) \quad\text{if $3m_1 \leq m_2 \leq 4e_F - m_1 + 1$ and $m_2$ is even},
            \\
            \frac{1}{2}\cdot \Big(\#N^\mathcal{A}_E - \# N^\mathcal{A}_{E, c(m_2 - 2)}\Big)\quad\text{if $m_2 = 4e_F - m_1 + 2$},
            \\
            0\quad\text{otherwise}. 
        \end{cases}
        $$
    \end{enumerate}
\end{theorem}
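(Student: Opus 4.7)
The plan is to prove the four parts in order. Parts $(1)$--$(3)$ are relatively routine applications of class field theory (via the dictionary between norm groups and maximal abelian subextensions) and the conductor--discriminant formula (Theorem~\ref{thm-conductor-discriminant}); Part $(4)$ is the main obstacle.

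\textbf{Parts (1)--(2).} For Part $(1)$, observe that for any $L \in \Et{(1^4)/F}{G/F}$ with $G \in \{S_4, A_4\}$, the maximal abelian subextension of $L/F$ is $F$ itself: if $L$ corresponds to a point stabilizer $H \le G$ of index $4$, this subextension corresponds to $H \cdot [G,G]$, and the computations $S_3 \cdot A_4 = S_4$ and $A_3 \cdot V_4 = A_4$ give $H \cdot [G,G] = G$. Hence $N_{L/F}L^\times = F^\times \supseteq \mathcal{A}$ vacuously, yielding the two mass equalities. For Part $(2)$, each $D_4$-quartic $L/F$ contains a unique quadratic subextension $E$, but as an $E$-extension it has a non-isomorphic Galois conjugate $\sigma L$ (with $\sigma$ the nontrivial element of $\Gal(E/F)$) that is $F$-isomorphic to $L$; summing over pairs $(E, L/E)$ therefore double-counts, producing the factor $\tfrac{1}{2}$. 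The tower formula gives $v_E(d_{L/E}) = m - 2m_1$, and for each $E \in \Et{(1^2)/F,m_1}{\mathcal{A}}$ the $D_4$-closing extensions are all quadratic extensions of $E$ with the appropriate discriminant and $\mathcal{A}$-norm condition (counted by $N^{C_2}$), minus those giving $C_4$- or $V_4$-closure over $F$ (counted by $N^{C_4}, N^{V_4}$). The admissible ranges of $m$ follow from the standard constraints on $D_4$-discriminants modulo~$4$ coming from the conductor--discriminant formula.

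\textbf{Part (3).} A $V_4$-quartic $L/F$ corresponds to a $2$-dimensional $\F_2$-subspace $\{1,\overline a,\overline b,\overline{ab}\} \subseteq F^\times / F^{\times 2}$, with three quadratic subextensions $E_i$. The condition $\mathcal{A} \subseteq N_{L/F}L^\times = \bigcap_i N_{E_i/F}E_i^\times$ reduces to the norm condition holding for any two of the three $E_i$. The conductor--discriminant formula gives $m = m_1+m_2+m_3$, and a brief filtration argument shows that $c_a < c_b$ forces $c_{ab} = c_a$; since a smaller $c$-value corresponds to a larger $m$-value, the multiset $\{m_1, m_2, m_3\}$ has either all three entries equal or exactly two equal with the third strictly smaller. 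Part $(a)$ counts the latter configuration by enumerating ordered pairs $(E_1, E_2) \in \Et{(1^2)/F, m_1}{\mathcal{A}} \times \Et{(1^2)/F, m_2}{\mathcal{A}}$ with $m_1 < m_2 = m_3$, divided by $2$ for the two choices of $E_2$ in each $L$. Part $(b)$ handles the all-equal case (requiring $3\mid m$) by counting $V_4$-subspaces of $U_F^{(m/3)}F^{\times 2}/F^{\times 2}$ whose three nontrivial classes all sit at exactly level $m/3$ and satisfy the $\mathcal{A}$-norm condition; the sizes $\#\overline{\mathcal{A}}^2_{m/3}$ and $\#\overline{\mathcal{A}}^2_{m/3-1}$ enter via Corollary~\ref{cor-size-of-Wi}, and the prefactor $2/3$ converts ordered triples of nontrivial elements into $2$-dimensional subspaces.

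\textbf{Part (4).} This is the principal obstacle. Fix the totally ramified $C_4$-extendable $E/F$ with $m_1 = v_F(d_{E/F})$, and fix $\omega$ so that $L_0 := E(\sqrt\omega)$ realises the minimal discriminant in $\Et{2/E}{C_4/F}$. Writing $\chi_0$ for the $C_4$-character of $L_0/F$, the remaining elements of $\Et{2/E}{C_4/F}$ are the twists $L_\alpha := E(\sqrt{\omega\alpha})$ for $\alpha \in F^\times / F^{\times 2}$ with associated character $\chi_0 \chi_\alpha$, parametrised $2$-to-$1$ (with kernel $\{1, \overline d\}$ where $E = F(\sqrt d)$). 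Matching $\chi_0(\beta) \chi_\alpha(\beta) = 1$ against each generator $\beta \in \mathcal{G}_4(\mathcal{A})$ and applying the symmetry $(\alpha,\beta)_F = (\beta,\alpha)_F$ of the Hilbert symbol shows that $\mathcal{A} \subseteq N_{L_\alpha/F}L_\alpha^\times$ is equivalent to $\alpha \in N_E^{\mathcal{A}}$. The conductor--discriminant formula gives $v_E(d_{L_\alpha/E}) = 2\mathfrak{f}(\chi_0 \chi_\alpha) - m_1$, reducing the problem to relating $\mathfrak{f}(\chi_0 \chi_\alpha)$ to $c_\alpha$. Using the inclusion $U_F^{(c)} \hookrightarrow U_E^{(2c)}$ induced by the totally ramified $E/F$ and comparing the resulting level of $\omega\alpha$ in $E^\times/E^{\times 2}$ against the invariants of $\omega$, one extracts the threshold $c(m_2) = 2e_F - 2\lfloor(m_1+m_2)/4\rfloor$. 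The dichotomy $m_1 > e_F$ vs.\ $m_1 \le e_F$ reflects whether $\mathfrak{f}(\chi_0)$ is saturated --- in which case every twist has discriminant $m_1+2e_F$ --- or varies predictably with $c_\alpha$; the boundary cases $m_2 = 3m_1 - 2$ and $m_2 = 4e_F - m_1 + 2$ correspond to extreme values of $c_\alpha$ and need a little bespoke accounting. The principal difficulty throughout is the careful bookkeeping of unit filtrations through the tower $L_\alpha/E/F$ to verify the exact match with $c(m_2)$.
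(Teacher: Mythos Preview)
Your outline follows the paper's approach closely and is essentially correct, but two points deserve sharpening.

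In Part~(2) you describe the count as ``all quadratic extensions of $E$ with the appropriate discriminant and $\mathcal{A}$-norm condition (counted by $N^{C_2}$)''. This is not how the formula works, and $N^{C_2}$ carries no norm condition. The key observation, which you should make explicit, is that for a $D_4$-quartic $L/F$ the maximal abelian subextension is precisely the unique quadratic subfield $E$, so $N_{L/F}L^\times = N_{E/F}E^\times$. Hence once $E\in\Et{(1^2)/F,m_1}{\mathcal{A}}$ is fixed, \emph{every} non-Galois quadratic $L/E$ automatically satisfies $\mathcal{A}\subseteq N_{L/F}L^\times$; the three functions $N^{C_2}$, $N^{C_4}$, $N^{V_4}$ then count extensions of $E$ with no reference to $\mathcal{A}$ whatsoever, and the fact that they depend only on $(m_1,m_2)$ and not on the particular $E$ is what allows the sum to factor as stated. (This is the content of Lemma~\ref{lem-2-to-1-surjection-to-1^4-D4-exts} and Corollary~\ref{cor-1^4-D4-with-A-in-terms-of-quadratics}.)

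In Part~(4) your reduction via Hilbert symbols to $\alpha\in N_E^{\mathcal{A}}$ is exactly what the paper does (Lemma~\ref{lem-C4-bijection-with-norm-condition}, using Lemma~\ref{lem-norms-in-galois-tower}), and your conductor--discriminant identity $v_E(d_{L_\alpha/E}) = 2\mathfrak f(\chi_0\chi_\alpha)-m_1$ is correct. But the step you call ``careful bookkeeping of unit filtrations'' is the genuine technical core: one must compute $(U_E^{(2t)}E^{\times 2}\cap F^\times)/F^{\times 2}$ for a totally ramified quadratic $E/F$, which is a nontrivial result the paper imports as Lemma~\ref{lem-UE2t-mod-squares-intersect-with-F} from \cite[Proposition~3.11]{cohen-et-al}. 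Your sketch does not indicate how to carry this out, and the naive inclusion $U_F^{(c)}\hookrightarrow U_E^{(2c)}$ alone is not enough to pin down the answer (one needs the reverse inclusion up to squares, and the threshold depends delicately on $m_1$). You should either cite this result or reproduce the argument.
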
 
\begin{proof}
    Postponed.
\end{proof}
\begin{lemma}
    \label{lem-characterise-1^21^2-A}
    We have 
    $$
    \Et{(1^21^2)/F}{\mathcal{A}} = \big\{L_1 \times L_2 \in \Et{(1^21^2)/F}{} : L_1 \not \cong L_2\big\} \cup \big\{L_1 \times L_1 : L_1 \in \Et{(1^2)/F}{\mathcal{A}}\big\}.
    $$
\end{lemma}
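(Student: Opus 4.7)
The plan is to use local class field theory, together with the fact that for an étale algebra $L = L_1 \times L_2$ over $F$, the norm group factors as
$$
N_{L/F}L^\times = (N_{L_1/F}L_1^\times) \cdot (N_{L_2/F}L_2^\times)
$$
as a product of subgroups of $F^\times$. This reduces the question to understanding when $\mathcal{A}$ lies in the product of two norm groups of totally ramified quadratic extensions. Since any $L \in \Et{(1^21^2)/F}{}$ has such a decomposition, it suffices to check each case separately according to whether $L_1 \cong L_2$.

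If $L_1 \cong L_2$ (as $F$-algebras), then the product of norm groups collapses to $N_{L_1/F}L_1^\times$, so membership in $\Et{(1^21^2)/F}{\mathcal{A}}$ is equivalent to $L_1 \in \Et{(1^2)/F}{\mathcal{A}}$, giving the second set in the union.

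If $L_1 \not\cong L_2$, I will show that $(N_{L_1/F}L_1^\times)(N_{L_2/F}L_2^\times) = F^\times$, so the condition $\mathcal{A} \subseteq N_{L/F}L^\times$ is automatic. By local class field theory, each $N_{L_i/F}L_i^\times$ is an open subgroup of $F^\times$ of index $[L_i:F] = 2$, and the assignment $L_i \mapsto N_{L_i/F}L_i^\times$ sets up a bijection between quadratic field extensions of $F$ and open index-$2$ subgroups of $F^\times$. Since $L_1 \not\cong L_2$, the two norm groups are distinct index-$2$ subgroups of $F^\times$. Any subgroup of $F^\times$ strictly containing an index-$2$ subgroup must equal $F^\times$, so the product of two distinct index-$2$ subgroups is $F^\times$. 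Thus every $L_1 \times L_2$ with $L_1 \not\cong L_2$ lies in $\Et{(1^21^2)/F}{\mathcal{A}}$, yielding the first set.

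There is no significant obstacle here; the argument is essentially immediate once one writes down the norm-group decomposition and invokes the index-$2$ subgroup observation. The only thing to be careful about is to note that the decomposition $L = L_1 \times L_2$ for an étale algebra with splitting symbol $(1^21^2)$ is into two totally ramified quadratic \emph{field} extensions of $F$, so that local class field theory applies directly to each factor.
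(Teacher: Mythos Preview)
Your proof is correct and follows essentially the same approach as the paper's: both arguments reduce to the observation that for $L_1 \not\cong L_2$, local class field theory gives distinct index-$2$ norm subgroups whose product must be all of $F^\times$, while for $L_1 \cong L_2$ the norm group of $L_1 \times L_1$ is just $N_{L_1/F}L_1^\times$. Your write-up is slightly more explicit about the norm-group factorisation for a product algebra, but the content is the same.
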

\begin{proof}
    It is clear that the left-hand side is contained inside the right-hand side, and also that 
    $$
    \{L_1 \times L_1 : L_1 \in \Et{(1^2)/F}{\mathcal{A}}\} \subseteq \Et{(1^21^2)/F}{\mathcal{A}}.
    $$
    Finally, for distinct elements $L_1, L_2 \in \Et{(1^2)/F}{}$, the norm groups $N_{L_i/F}L_i^\times$ are distinct index $2$ subgroups of $F^\times$, and therefore $\Nm(L_1\times L_2) = F^\times$, so $L_1\times L_2 \in \Et{(1^21^2)/F}{\mathcal{A}}$.
\end{proof}
\begin{lemma}
    \label{lem-size-of-1^21^2-neq}
    Let $m$ be an integer. We have 
    $$
    \# \{L_1 \times L_2 \in \Et{(1^21^2)/F, m}{} : L_1 \not \cong L_2\}  = N_{(1^21^2)}^{\neq}(m)
    $$
\end{lemma}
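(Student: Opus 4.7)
The plan is to stratify the set on the left-hand side by the pair of discriminant valuations of the two factors. Since discriminants of \'etale algebras multiply, we have $v_F(d_{L_1 \times L_2/F}) = v_F(d_{L_1/F}) + v_F(d_{L_2/F})$, so writing $m_i := v_F(d_{L_i/F})$ the condition $v_F(d_{L_1 \times L_2/F}) = m$ becomes $m_1 + m_2 = m$. As $L_1 \times L_2$ is unordered in its factors, we may arrange the labelling so that $m_1 \leq m_2$, and this reduction partitions the set of interest by the resulting unordered pair $\{m_1, m_2\}$.

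I would then split into two cases. First, for each pair of integers $(m_1, m_2)$ with $m_1 < m_2$ and $m_1 + m_2 = m$, any choice of $L_1 \in \Et{(1^2)/F, m_1}{}$ together with any $L_2 \in \Et{(1^2)/F, m_2}{}$ yields a valid algebra with $L_1 \not\cong L_2$ automatic, since the two fields have distinct discriminant valuations. This case contributes $\#\Et{(1^2)/F, m_1}{}\cdot \#\Et{(1^2)/F, m_2}{}$ to the count. Second, if $m$ is even and $m_1 = m_2 = m/2$, then we must select an unordered pair of \emph{distinct} elements from $\Et{(1^2)/F, m/2}{}$, contributing $\binom{\#\Et{(1^2)/F, m/2}{}}{2}$. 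Summing these two contributions over all valid $(m_1, m_2)$ should match the definition of $N^{\neq}_{(1^21^2)}(m)$ in Appendix~\ref{appendix-helpers}, completing the proof.

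The individual counts $\#\Et{(1^2)/F, m_i}{}$ appearing here are explicit via Theorem~\ref{thm-num-Cp-exts-with-disc-val} with $p=2$, as every totally ramified quadratic extension of a $2$-adic field is a (wildly ramified) $C_2$-extension. The argument is routine bookkeeping; the only subtlety worth flagging is the ordered versus unordered distinction when $m_1 = m_2$, which forces the binomial coefficient and is the sole reason the two cases of the sum cannot be unified into a single expression.
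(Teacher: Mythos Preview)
Your proposal is correct and follows essentially the same approach as the paper: the paper writes the count as $\tfrac{1}{2}\big(\sum_{m_1+m_2=m}\#\Et{(1^2)/F,m_1}{}\cdot\#\Et{(1^2)/F,m_2}{} - \#\Et{(1^2)/F,m/2}{}\big)$, which is algebraically identical to your split into $m_1<m_2$ plus $\binom{\#\Et{(1^2)/F,m/2}{}}{2}$, and both then feed in the explicit values from Theorem~\ref{thm-num-Cp-exts-with-disc-val} and check cases. The paper likewise leaves the final case-by-case verification to the reader.
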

\begin{proof}
    Theorem~\ref{thm-num-Cp-exts-with-disc-val} tells us that, for all $m_1$, we have 
    $$
    \# \Et{(1^2)/F,m_1}{} = \begin{cases}
        2(q-1)q^{\frac{m_1}{2} - 1}\quad\text{if $m_1$ is even with $2 \leq m_1 \leq 2e_F$},
        \\
        2q^{e_F}\quad\text{if $m_1 = 2e_F + 1$},
        \\
        0\quad\text{otherwise}. 
    \end{cases}
    $$
    We will use this fact without reference for the rest of this proof. For any $m$, the number we are looking for is equal to 
    \begin{equation}
        \frac{1}{2}\cdot \Big(\sum_{\substack{m_1 + m_2 = m}} \Big(\#\Et{(1^2)/F,m_1}{}\cdot \#\Et{(1^2)/F,m_2}{}\Big) -  \#\Et{(1^2)/F,m/2}{}\Big). 
    \end{equation}
    It is easy to see that this is $0$ unless one of the following is true:
    \begin{itemize}
        \item $4 \leq m \leq 4e_F$ and $m$ is even.
        \item $2e_F + 3 \leq m \leq 4e_F + 1$ and $m$ is odd.
        \item $m = 4e_F + 2$. 
    \end{itemize}
    The result follows by considering these cases separately.
\end{proof}

\begin{proof}
    [Proof of Theorem~\ref{thm-1^21^2-2-adic}]

    This is immediate from Lemmas~\ref{lem-characterise-1^21^2-A} and \ref{lem-size-of-1^21^2-neq}.
\end{proof}
\begin{lemma}
    \label{lem-disc-of-V4-subexts}
    Let $L/F$ be a $V_4$-extension with intermediate quadratic fields $E_1, E_2,$ and $E_3$. The following two statements are true:
    \begin{enumerate}
        \item We have $v_F(d_{L/F}) = \sum_{i=1}^3 v_F(d_{E_i/F})$.
        \item If $v_F(d_{E_1/F}) < v_F(d_{E_2/F})$, then $v_F(d_{E_3/F}) = v_F(d_{E_2/F})$. 
    \end{enumerate}
\end{lemma}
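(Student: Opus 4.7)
The plan is to attack both statements via the conductor--discriminant formula, which is Theorem~\ref{thm-conductor-discriminant}. Let $G = \Gal(L/F) \cong V_4$, and let $\chi_1, \chi_2, \chi_3 \in G^\vee$ be the three nontrivial characters, where each $\chi_i$ has kernel $\Gal(L/E_i)$ and hence factors as the inflation of the nontrivial character $\overline{\chi}_i$ of $\Gal(E_i/F) \cong C_2$. A standard compatibility of Artin conductors under inflation (a consequence of Herbrand's theorem on the behaviour of ramification groups under quotients) gives $\mff(\chi_i) = \mff(\overline{\chi}_i)$. Applying the second half of Theorem~\ref{thm-conductor-discriminant} to the $C_2$-extension $E_i/F$ yields $v_F(d_{E_i/F}) = \mff(\overline{\chi}_i)$, and a second application to $L/F$ then gives
$$
v_F(d_{L/F}) = \sum_{\chi \in G^\vee} \mff(\chi) = \sum_{i=1}^3 v_F(d_{E_i/F}),
$$
proving (1).

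For (2), the plan is to translate everything into group theory via local class field theory. For each integer $m$, let $U_m$ denote the image of $U_F^{(m)}$ in the quotient $F^\times / N_{L/F}L^\times$, which I identify with $V_4$ via the Artin map. Under this identification, the intermediate field $E_i$ corresponds to the index-two subgroup $H_i \leq V_4$ satisfying $N_{E_i/F}E_i^\times / N_{L/F}L^\times = H_i$, so $U_F^{(m)} \subseteq N_{E_i/F}E_i^\times$ if and only if $U_m \subseteq H_i$. Combining this with Lemma~\ref{lem-cond-disc-for-Cp} gives $c_i := v_F(d_{E_i/F}) = \mff(E_i/F) = \min\{m \geq 0 : U_m \subseteq H_i\}$. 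Assume the hypothesis $c_1 < c_2$. Since $(U_m)_m$ is a descending filtration, $U_m \subseteq U_{c_1} \subseteq H_1$ for all $m \geq c_1$. For $c_1 \leq m < c_2$ we have $U_m \not\subseteq H_2$, so $U_m$ is a subgroup of $H_1$ not contained in $H_2$, forcing $U_m = H_1$. At $m = c_2$, $U_{c_2} \subseteq H_1 \cap H_2 = \{1\}$, so $U_{c_2} \subseteq H_3$ as well, giving $c_3 \leq c_2$. Conversely, taking $m = c_2 - 1 \geq c_1$ in the previous step, $U_m = H_1 \not\subseteq H_3$ (since $H_1 \cap H_3 = \{1\}$), so $c_3 \geq c_2$. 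Hence $c_3 = c_2$, which is (2).

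The main obstacle is being clean about the inflation invariance of the Artin conductor and the matching of character-theoretic and norm-group pictures under the Artin map; both facts are entirely standard, but easy to mis-state without care. Everything else is essentially combinatorial bookkeeping inside $V_4$, using only the elementary fact that in $V_4$ any two distinct index-two subgroups intersect trivially.
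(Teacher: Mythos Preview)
Your proof is correct and follows essentially the same approach as the paper: both parts rest on the conductor--discriminant formula (Theorem~\ref{thm-conductor-discriminant}) together with Lemma~\ref{lem-cond-disc-for-Cp}. For (1) the paper simply cites an external reference, whereas you spell out the argument via inflation of characters; for (2) the paper uses the dual phrasing $\chi_3 = \chi_1\chi_2$ on $F^\times/F^{\times 2}$ and asserts $\mff(\chi_1\chi_2) = \mff(\chi_2)$, while your filtration argument in $V_4$ via the Artin map is exactly the group-theoretic verification of that assertion.
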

\begin{proof}
    The first statement is \cite[Lemma~2.17]{monnet2024counting}.
    For the second statement, suppose that $v_F(d_{E_1/F}) < v_F(d_{E_2/F})$. For each $i$, let $\chi_i:F^\times/F^{\times 2} \to C_2$ be the quadratic character associated to $E_i$, as in Lemma~\ref{lem-chars-to-Cp-exts-(p-1)-to-1}. Theorem~\ref{thm-conductor-discriminant} and Lemma~\ref{lem-cond-disc-for-Cp} tell us that 
    $$
    v_F(d_{E_i/F}) = \mff(E_i/F) = \mff(\chi_i),
    $$
    for each $i$. It is easy to see that $\chi_3 = \chi_1\chi_2$, so $\mff(\chi_3) = \mff(\chi_2)$, and the result follows. 
\end{proof}
\begin{lemma}
    \label{lem-num-V4-2^2-with-m-and-A}
    For each nonnegative integer $m$, we have 
    $$
    \# \Et{(2^2)/F, m}{V_4/F,\mathcal{A}} = \begin{cases}
        \frac{1}{2}\cdot \#\Et{(1^2)/F,m/2}{\mathcal{A}}\quad\text{if $2\mid v_F(\alpha)$ for all $\alpha \in \mathcal{A}$},
        \\
        0\quad\text{otherwise}. 
    \end{cases}
    $$
\end{lemma}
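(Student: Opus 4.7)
The plan is to establish a two-to-one correspondence between $\Et{(2^2)/F,m}{V_4/F,\mathcal{A}}$ and $\Et{(1^2)/F,m/2}{\mathcal{A}}$, with the ``otherwise'' branch dispatched separately at the start. First, I would observe that any $V_4$-extension $L/F$ with $(L,F) = (2^2)$ has inertia degree $2$, so the unique unramified quadratic extension $E_{\mathrm{ur}}/F$ is forced to appear as one of its three quadratic subfields, and the other two subfields $E_1, E_2$ are necessarily ramified (otherwise one would coincide with $E_{\mathrm{ur}}$), hence lie in $\Et{(1^2)/F}{}$. Since $N_{L/F}L^\times \subseteq N_{E_{\mathrm{ur}}/F}E_{\mathrm{ur}}^\times = \{x \in F^\times : 2 \mid v_F(x)\}$, the containment $\mathcal{A} \subseteq N_{L/F}L^\times$ forces $2 \mid v_F(\alpha)$ for every $\alpha \in \mathcal{A}$, giving the ``otherwise'' case.

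Assume now that $2 \mid v_F(\alpha)$ for every $\alpha \in \mathcal{A}$. I would count in two ways the set of pairs $(L, E)$ with $L \in \Et{(2^2)/F,m}{V_4/F,\mathcal{A}}$ and $E \in \{E_1, E_2\}$ a ramified quadratic subfield of $L$. For such $L$, applying Lemma~\ref{lem-disc-of-V4-subexts} to the triple $E_{\mathrm{ur}}, E_1, E_2$---whose discriminant valuations sum to $m$ and include a zero---forces $v_F(d_{E_1/F}) = v_F(d_{E_2/F}) = m/2$. By class field theory, $N_{L/F}L^\times = N_{E_1/F}E_1^\times \cap N_{E_2/F}E_2^\times \cap N_{E_{\mathrm{ur}}/F}E_{\mathrm{ur}}^\times$, so the hypothesis on $L$ gives $E_i \in \Et{(1^2)/F,m/2}{\mathcal{A}}$ for $i = 1, 2$. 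Summing over $L$, the pair count equals $2 \cdot \#\Et{(2^2)/F,m}{V_4/F,\mathcal{A}}$.

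On the other hand, given $E \in \Et{(1^2)/F,m/2}{\mathcal{A}}$, I would set $L := E \cdot E_{\mathrm{ur}}$: since $E \neq E_{\mathrm{ur}}$, this is a $V_4$-extension; since $L/E_{\mathrm{ur}}$ is totally ramified of degree $2$, we have $(L, F) = (2^2)$; and writing $E'$ for the third quadratic subfield, Lemma~\ref{lem-disc-of-V4-subexts} forces $v_F(d_{E'/F}) = m/2$, so $v_F(d_{L/F}) = m$. The main verification---and the only step I expect to be genuinely non-trivial---is that $\mathcal{A} \subseteq N_{L/F}L^\times$: the three quadratic characters of $F^\times / F^{\times 2}$ cutting out $E, E_{\mathrm{ur}}, E'$ satisfy $\chi_{E'} = \chi_E \cdot \chi_{E_{\mathrm{ur}}}$, so for each $\alpha \in \mathcal{A}$ the relations $\chi_E(\alpha) = 1$ (from $E \in \Et{(1^2)/F}{\mathcal{A}}$) and $\chi_{E_{\mathrm{ur}}}(\alpha) = 1$ (from $2 \mid v_F(\alpha)$) give $\chi_{E'}(\alpha) = 1$, whence $\alpha \in N_{E'/F}E'^\times$ and the intersection formula yields $\alpha \in N_{L/F}L^\times$. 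Since $L$ is determined by $E$ (via $L = E \cdot E_{\mathrm{ur}}$), each target $E$ appears in exactly one pair, so the pair count also equals $\#\Et{(1^2)/F,m/2}{\mathcal{A}}$. Equating the two counts gives the desired identity.
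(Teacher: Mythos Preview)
Your proof is correct and follows essentially the same strategy as the paper: both establish the $2$-to-$1$ surjection $E \mapsto E \cdot E_{\mathrm{ur}}$ from $\Et{(1^2)/F,m/2}{\mathcal{A}}$ to $\Et{(2^2)/F,m}{V_4/F,\mathcal{A}}$, with the ``otherwise'' case disposed of via the containment $N_{L/F}L^\times \subseteq N_{E_{\mathrm{ur}}/F}E_{\mathrm{ur}}^\times$. The only cosmetic difference is that you package the argument as a double count of pairs $(L,E)$, and you verify $\mathcal{A} \subseteq N_{L/F}L^\times$ via the character identity $\chi_{E'} = \chi_E \cdot \chi_{E_{\mathrm{ur}}}$, whereas the paper writes down $\Nm L = \{x \in \Nm E : 2 \mid v_F(x)\}$ directly.
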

\begin{proof}
    Write $E_{\mathrm{ur}}$ for the unramified quadratic extension of $F$. Let $L \in \Et{(2^2)/F,m}{V_4/F}$. It follows from Lemma~\ref{lem-disc-of-V4-subexts} that there are exactly two elements $E \in \Et{(1^2)/F,m/2}{}$ with $L = E_{\mathrm{ur}}E$. For such $E$, we have
    $$
    \Nm L = \Nm E_{ur} \cap \Nm E = \{x \in \Nm E : 2\mid v_F(x)\}.
    $$
    Therefore, if there is some $\alpha \in \mathcal{A}$ with $2\nmid v_F(\alpha)$, then $\Et{(2^2)/F}{V_4/F,\mathcal{A}} = \varnothing$. On the other hand, if $2\mid v_F(\alpha)$ for all $\alpha\in \mathcal{A}$, then we have a $2$-to-$1$ surjection 
    $$
    \Et{(1^2)/F,m/2}{\mathcal{A}} \to \Et{(2^2)/F,m}{V_4/F,\mathcal{A}},\quad E \mapsto E_\mathrm{ur}E,
    $$
    and the result follows. 
\end{proof}
\begin{lemma}
    \label{lem-2^2-D4-exts-with-A}
    We have 
    $$
    \Et{(2^2)/F}{D_4/F,\mathcal{A}} = \begin{cases}
        \Et{(2^2)/F}{D_4/F}\quad\text{if $2\mid v_F(\alpha)$ for all $\alpha \in \mathcal{A}$},
        \\
        \varnothing \quad\text{otherwise}.
    \end{cases}
    $$
\end{lemma}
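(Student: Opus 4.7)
The plan is to show directly that every $L\in \Et{(2^2)/F}{D_4/F}$ has norm group exactly $\{x\in F^\times : 2\mid v_F(x)\}$. Once this is established, the lemma follows immediately: if some $\alpha\in\mathcal{A}$ has $2\nmid v_F(\alpha)$, then no such $L$ can have $\mathcal{A}$ in its norm group; and if every $\alpha\in\mathcal{A}$ has even valuation, then $\mathcal{A}$ lies in the common norm group of all such $L$.

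The ``only if'' direction is immediate. Since $L$ has splitting symbol $(2^2)$, it has inertia degree $2$, so it contains the unique unramified quadratic extension $E_\mathrm{ur}$ of $F$. Hence
\[
N_{L/F}L^\times \subseteq N_{E_\mathrm{ur}/F}E_\mathrm{ur}^\times = \{x\in F^\times : 2\mid v_F(x)\}.
\]
The nontrivial part is the reverse inclusion. By local class field theory (the same principle used in the proof of Lemma~\ref{lem-m-Et-1^p-A-in-terms-of-other-quantities}), the norm group of any finite separable extension of local fields coincides with the norm group of its maximal abelian subextension. Writing $L^{\mathrm{ab}}$ for the maximal abelian subextension of $L/F$, it therefore suffices to identify $L^{\mathrm{ab}} = E_\mathrm{ur}$, since then $N_{L/F}L^\times = N_{E_\mathrm{ur}/F}E_\mathrm{ur}^\times = \{x : 2\mid v_F(x)\}$.

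This identification reduces to a short subgroup computation in $D_4$. Set $G = \Gal(\widetilde{L}/F)$ and $H = \Gal(\widetilde{L}/L)$, so $|H| = 2$. The subgroup $H$ is non-normal: otherwise $\Gal(L/F) = G/H$ would be an abelian quotient of $G \cong D_4$ of order $4$, forcing $L$ to be Galois and making its Galois closure $L$ itself, contradicting $G\cong D_4$. Writing $D_4 = \langle r,s : r^4 = s^2 = 1,\, srs = r^{-1}\rangle$ and taking $H = \langle s\rangle$ without loss of generality, the three order-$4$ subgroups of $D_4$ are $\langle r\rangle$, $\{1,s,r^2,r^2s\}$, and $\{1,rs,r^2,r^3s\}$; only the middle one contains $H$. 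Since $[D_4,D_4]$ equals the centre $\langle r^2\rangle$, we compute $H\cdot[G,G] = \{1,s,r^2,r^2s\}$, which has index $2$ in $G$. Hence $L$ has a unique quadratic subfield, namely the one corresponding to $H\cdot[G,G]$, and this subfield is $L^{\mathrm{ab}}$ by the Galois correspondence. Since $E_\mathrm{ur} \subseteq L$ is a quadratic subfield of $L$, we conclude $L^{\mathrm{ab}} = E_\mathrm{ur}$, finishing the argument.

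There is no substantive obstacle; the proof is a one-step application of local class field theory combined with the subgroup lattice of $D_4$. The only mild care needed is ruling out $L$ being Galois, which is forced by $G\cong D_4$ rather than a smaller group, and confirming that $H\cdot[G,G]$ has index exactly $2$ so that $L^{\mathrm{ab}}$ is genuinely the quadratic subfield and not $F$ or $L$.
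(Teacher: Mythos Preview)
Your proof is correct and follows essentially the same line as the paper's: both identify $N_{L/F}L^\times$ with $N_{E_{\mathrm{ur}}/F}E_{\mathrm{ur}}^\times$ via local class field theory. The paper compresses this into a single sentence, whereas you spell out the $D_4$ subgroup-lattice computation that pins down $L^{\mathrm{ab}}=E_{\mathrm{ur}}$.
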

\begin{proof}
    Let $E/F$ be the unramified quadratic extension. By class field theory, every element $L \in \Et{(2^2)/F}{D_4/F}$ has $N_{L/F}L^\times = N_{E/F}E^\times$, and the result follows. 
\end{proof}
\begin{lemma}
    \label{lem-(2^2)-D_4-in-terms-of-quad-exts-of-E}
    Let $E/F$ be the unique unramified quadratic extension. For each nonnegative integer $m$, we have 
    $$
    \#\Et{(2^2)/F, m}{D_4/F} =
        \frac{1}{2}\cdot\Big(\# \Et{(1^2)/E,m/2}{} - \# \Et{(1^2)/E,m/2}{C_4/F} - \# \Et{(1^2)/E,m/2}{V_4/F}\Big).
    $$
\end{lemma}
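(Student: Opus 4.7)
The plan is to set up a forgetful map $\Phi$ from $E$-isomorphism classes of totally ramified quadratic extensions of $E$ to $F$-isomorphism classes of quartic field extensions of $F$ with splitting symbol $(2^2)$, and then analyse its fibres according to whether the target is Galois over $F$.

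First, I would observe that $L \in \Et{(2^2)/F,m}{}$ is a quartic field extension of $F$ with inertia degree and ramification index both equal to $2$, so its maximal unramified subextension is the unique unramified quadratic extension $E$ of $F$, and $L/E$ is totally ramified of degree $2$. Conversely, any $L \in \Et{(1^2)/E}{}$ defines a quartic field extension of $F$ with splitting symbol $(2^2)$. The discriminant is computed using the tower formula $d_{L/F} = N_{E/F}(d_{L/E})\cdot d_{E/F}^{[L:E]}$, together with the fact that $E/F$ is unramified (so $d_{E/F} = \co_F$) and that $N_{E/F}(\p_E^{m'}) = \p_F^{2m'}$. This gives $v_F(d_{L/F}) = 2 v_E(d_{L/E})$, so elements of $\Et{(2^2)/F,m}{}$ correspond to $E$-extensions in $\Et{(1^2)/E,m/2}{}$ (and both sides vanish if $m$ is odd).

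Next, I would analyse the fibres of $\Phi: \Et{(1^2)/E,m/2}{} \to \{F\text{-iso classes of } (2^2)\text{-extensions of discriminant }m\}$. Let $\sigma$ generate $\Gal(E/F)$. Two extensions $L_1, L_2 \in \Et{(1^2)/E,m/2}{}$ are $F$-isomorphic precisely when an $F$-isomorphism $L_1 \to L_2$ exists; restricting such an isomorphism to $E$ gives either $\id_E$ (in which case $L_1 \cong_E L_2$) or $\sigma$ (in which case $L_1 \cong_E L_2^\sigma$). Thus the fibres of $\Phi$ are exactly the $\langle\sigma\rangle$-orbits on $\Et{(1^2)/E,m/2}{}$, where $\sigma$ acts by $L \mapsto L^\sigma$.

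The key step is to identify the fixed points of this action. I would argue that $L \cong_E L^\sigma$ if and only if there is an $F$-automorphism $\varphi$ of $L$ with $\varphi|_E = \sigma$. Since $L/E$ is always Galois (it is quadratic), $\Aut(L/E)$ has order $2$; together with such a $\varphi$, this forces $\#\Aut(L/F) = 4 = [L:F]$, so $L/F$ is Galois. Conversely, if $L/F$ is Galois, then the non-trivial coset of $\Gal(L/E)$ in $\Gal(L/F)$ supplies the required $\varphi$. Hence the fixed points of the $\sigma$-action are precisely the $L$ with $L/F$ Galois, i.e.\ with $\Gal(\widetilde{L}/F) \in \{C_4, V_4\}$, while the free orbits correspond exactly to $L$ with $\Gal(\widetilde{L}/F) \cong D_4$.

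I would then conclude by counting: Galois $L/F$ contribute $1$-to-$1$ to the fibres of $\Phi$, while non-Galois (i.e.\ $D_4$) $L/F$ contribute $2$-to-$1$. Since the target $\Et{(2^2)/F,m}{D_4/F}$ corresponds to the orbits of size two, we obtain
$$
\#\Et{(2^2)/F,m}{D_4/F} = \tfrac{1}{2}\#\Et{(1^2)/E,m/2}{D_4/F} = \tfrac{1}{2}\Big(\#\Et{(1^2)/E,m/2}{} - \#\Et{(1^2)/E,m/2}{C_4/F} - \#\Et{(1^2)/E,m/2}{V_4/F}\Big),
$$
where the last equality uses that the Galois closure of a degree $4$ extension containing a quadratic subfield lies in $\{C_4, V_4, D_4\}$. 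The main obstacle is the fixed-point analysis, i.e.\ upgrading an $E$-isomorphism $L \to L^\sigma$ to an $F$-automorphism of $L$ extending $\sigma$ and deducing that $L/F$ is Galois.
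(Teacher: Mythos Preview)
Your proposal is correct and follows essentially the same approach as the paper: both set up the natural map from $\Et{(1^2)/E,m/2}{}$ to $(2^2)$-extensions of $F$, use the discriminant tower law for the discriminant relation, and identify the $D_4$-extensions as those arising from $2$-to-$1$ fibres. The only difference is that the paper invokes an external lemma (\cite[Lemma~5.1, Part (2)]{monnet2024counting}) for the $2$-to-$1$ claim, whereas you supply a self-contained argument via the $\sigma$-orbit analysis and the fixed-point characterisation of Galois extensions.
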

\begin{proof}
    Using the tower law for discriminant, it is easy to see that there is a well-defined surjection 
    $$
    \Et{(1^2)/E,\frac{m}{2}}{} \setminus\Big(\Et{(1^2)/E,\frac{m}{2}}{C_4/F}\cup \Et{(1^2)/E,\frac{m}{2}}{V_4/F}\Big) \to \Et{(2^2)/F,m}{D_4/F}. 
    $$
    Moreover, \cite[Lemma~5.1, Part (2)]{monnet2024counting} tells us that this surjection is $2$-to-$1$.
\end{proof}
\begin{lemma}
    \label{lem-C4-exts-2-to-1-surjection}
    Let $E \in \Et{2/F}{C_2/F}$ and let $m_2$ be an integer such that $\Et{2/E, \leq m_2}{C_4/F}$ is nonempty. Let $\omega \in E^\times$ be such that $E(\sqrt{\omega}) \in \Et{2/E, \leq m_2}{C_4/F}$. If $m_2 \leq 2e_E$, then we have a $2$-to-$1$ surjection 
    $$
    \Big(U_E^{(2e_E - 2\lfloor\frac{m_2}{2}\rfloor)}E^{\times 2}\cap F^\times\Big)/F^{\times 2} \to \Et{2/E,\leq m_2}{C_4/F},\quad 
    t \mapsto E(\sqrt{\omega t}). 
    $$
    If $m_2 > 2e_E$, then we have a $2$-to-$1$ surjection 
    $$
    F^\times / F^{\times 2} \to \Et{2/E,\leq m_2}{C_4/F},\quad 
    t \mapsto E(\sqrt{\omega t}).
    $$
\end{lemma}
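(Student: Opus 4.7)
The plan is to identify $\Et{2/E}{C_4/F}$ as (the image under a two-to-one map from) $F^\times/F^{\times 2}$, and then cut out the discriminant bound via Kummer-theoretic depth.

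Writing $\sigma$ for the generator of $\Gal(E/F)$, the Galois condition for $E(\sqrt{\omega})/F$ says $\sigma(\omega)/\omega \in E^{\times 2}$. Since $t \in F^\times$ is $\sigma$-fixed, the same relation holds for $\omega t$, so $E(\sqrt{\omega t})/F$ is Galois. Its Galois group is $C_4$ (rather than $V_4$) precisely when $\omega \notin F^\times E^{\times 2}$, a condition evidently preserved by multiplication by $t \in F^\times$; thus the map is well-defined into $\Et{2/E}{C_4/F}$. For surjectivity, take any $L = E(\sqrt{\omega'})\in\Et{2/E}{C_4/F}$ and note that $E(\sqrt{\omega\omega'})/F$ is Galois of exponent dividing $2$, so $\omega\omega' \in F^\times E^{\times 2}$; hence $\omega'\equiv \omega t\pmod{E^{\times 2}}$ for some $t \in F^\times$. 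For the two-to-one fiber: the kernel of $F^\times/F^{\times 2}\to F^\times E^{\times 2}/E^{\times 2}$ is $(F^\times\cap E^{\times 2})/F^{\times 2}$, and writing $E = F(\sqrt{a})$ gives $F^\times\cap E^{\times 2} = F^{\times 2}\sqcup aF^{\times 2}$, a group of order $2$.

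For the discriminant constraint, Theorem~\ref{thm-conductor-discriminant} and Lemma~\ref{lem-cond-disc-for-Cp} give $v_E(d_{E(\sqrt{\omega t})/E}) = \mff(E(\sqrt{\omega t})/E)$. Combining Lemma~\ref{lem-chars-to-Cp-exts-(p-1)-to-1} with the filtration analysis of Corollary~\ref{cor-size-of-Wi}, one checks that
\[
v_E(d_{E(\sqrt{\omega t})/E}) \leq m_2 \iff \omega t \in U_E^{(c)} E^{\times 2}, \qquad c = 2e_E - 2\lfloor m_2/2\rfloor,
\]
with the floor absorbing the redundancy $U_E^{(c)}E^{\times 2} = U_E^{(c+1)}E^{\times 2}$ for even $c<2e_E$ (Corollary~\ref{cor-size-of-Wi} applied with base field $E$). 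Since $E(\sqrt{\omega})\in\Et{2/E,\leq m_2}{C_4/F}$ by hypothesis, $\omega$ already lies in $U_E^{(c)}E^{\times 2}$; hence the depth condition on $\omega t$ collapses by group closure to the condition $t\in U_E^{(c)}E^{\times 2}$. Intersecting with $F^\times$ and descending modulo $F^{\times 2}$ yields the displayed source. When $m_2 > 2e_E$ the integer $c$ is non-positive and the condition is vacuous, so the source is all of $F^\times/F^{\times 2}$, giving the second statement.

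The main obstacle is verifying the conductor-to-depth dictionary $c = 2e_E - 2\lfloor m_2/2\rfloor$ uniformly across all admissible $m_2$, particularly at the exceptional top conductor $m_2 = 2e_E+1$ where the filtration layer $W_{2e_E}$ has size $p$ rather than $q$ and the parity pattern of Theorem~\ref{thm-num-Cp-exts-with-disc-val} (specialised to $E$) must be matched to the layers $W_c$ correctly. Once this dictionary is pinned down, the remainder is routine Kummer-theoretic bookkeeping together with the standard Galois-theoretic criterion distinguishing $C_4$ from $V_4$.
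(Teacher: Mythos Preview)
Your overall strategy coincides with the paper's: first exhibit the $2$-to-$1$ surjection $F^\times/F^{\times 2} \to \Et{2/E}{C_4/F}$, then cut down by the discriminant condition. The paper dispatches both steps by citation (to \cite[Proposition~1.2]{cohen-et-al} and \cite[Lemma~4.6, Corollary~4.7]{monnet2024counting}), whereas you try to supply the details from inside the paper. That is a reasonable aim, but two steps are not adequately justified.

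First, a minor point: in your surjectivity argument you assert that $E(\sqrt{\omega\omega'})/F$ has exponent dividing $2$. Galois-ness follows as you say, but ruling out $C_4$ requires an extra observation. Writing $\sigma(\omega)/\omega = \gamma^2$, one has $N_{E/F}(\gamma) = \pm 1$, and $E(\sqrt{\omega})/F$ is cyclic precisely when $N_{E/F}(\gamma) = -1$ (the $+1$ case gives $\omega \in F^\times E^{\times 2}$ via Hilbert~90). For two $C_4$-type elements $\omega, \omega'$ the product $\gamma\gamma'$ then has norm $+1$, forcing $\omega\omega' \in F^\times E^{\times 2}$. This should be made explicit.

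Second, and more substantively: the dictionary
\[
v_E(d_{E(\sqrt{\omega t})/E}) \leq m_2 \iff \omega t \in U_E^{(2e_E - 2\lfloor m_2/2\rfloor)}E^{\times 2}
\]
does not follow from Lemma~\ref{lem-chars-to-Cp-exts-(p-1)-to-1} and Corollary~\ref{cor-size-of-Wi} alone. Those results parametrise extensions of bounded discriminant by \emph{characters} $\chi$ factoring through $E^\times/U_E^{(m_2)}E^{\times 2}$; the Kummer element $\omega t$ is related to $\chi$ via $\chi(\cdot) = (\omega t,\cdot)_E$, so the condition reads $\omega t \perp U_E^{(m_2)}E^{\times 2}$ under the Hilbert pairing. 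Identifying this annihilator with $U_E^{(c)}E^{\times 2}$ needs either the orthogonality relation $(U_E^{(i)},U_E^{(j)})_E = 1$ for $i+j > 2e_E$, or a direct computation of $v_E(d_{E(\sqrt{\alpha})/E})$ in terms of the depth $c_\alpha$. Counting (which is all Corollary~\ref{cor-size-of-Wi} supplies) shows the two subgroups have equal order, not that they coincide. This missing ingredient is exactly \cite[Lemma~4.6 and Corollary~4.7]{monnet2024counting}, which the paper invokes; you correctly flag the dictionary as the main obstacle, but the internal references you cite do not resolve it.
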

\begin{proof}
    By \cite[Proposition~1.2]{cohen-et-al}, there is a $2$-to-$1$ surjection 
    $$
    F^\times / F^{\times 2} \to \Et{2/E}{C_4/F},\quad t \mapsto E(\sqrt{\omega t}),
    $$
    and the result follows from \cite[Lemma~4.6 and Corollary~4.7]{monnet2024counting}.  
\end{proof}
\begin{lemma}
    \label{lem-UE2t-cap-F-unramified}
    Let $t \leq e_F$ be an integer and let $E/F$ be the unramified quadratic extension. Then 
    $$
    U_E^{(2t)}E^{\times 2} \cap F^\times = U_F^{(2t)}F^{\times 2}. 
    $$
\end{lemma}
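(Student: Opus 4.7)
The inclusion $U_F^{(2t)}F^{\times 2} \subseteq U_E^{(2t)}E^{\times 2} \cap F^\times$ is immediate from $U_F^{(2t)} \subseteq U_E^{(2t)}$ and $F^{\times 2} \subseteq E^{\times 2}$. The reverse inclusion is equivalent to showing that the natural inclusion-induced map
$$
\iota_c : F^\times / U_F^{(c)} F^{\times 2} \to E^\times / U_E^{(c)} E^{\times 2}
$$
is injective at $c = 2t$. My plan is to prove that $\iota_c$ is injective for every $0 \leq c \leq 2t$, by induction on $c$.

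For the base case $c = 0$, both quotients are cyclic of order two, generated by the class of any uniformiser $\pi_F$ of $F$. Since $E/F$ is unramified, $\pi_F$ is still a uniformiser of $E$, so in particular $\pi_F \not\in E^{\times 2}$, and $\iota_0$ sends generator to generator. For the inductive step, I consider the short exact sequence
$$
1 \to W_{c-1}^F \to F^\times/U_F^{(c)}F^{\times 2} \to F^\times/U_F^{(c-1)}F^{\times 2} \to 1,
$$
together with its analogue over $E$, connected by $\iota_{c-1}$, $\iota_c$, and the induced map on graded pieces $W_{c-1}^F \to W_{c-1}^E$. A short diagram chase reduces injectivity of $\iota_c$ to the combination of injectivity of $\iota_{c-1}$ (which holds by the inductive hypothesis) and injectivity on graded pieces.

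The crux of the argument is thus the injectivity of $W_{c-1}^F \to W_{c-1}^E$. Since $c \leq 2t \leq 2e_F$, we have $c - 1 < 2e_F$, so Corollary~\ref{cor-size-of-Wi} (applied to each of $F$ and $E$, using $e_E = e_F$ and $q_E = q^2$) tells us that $W_{c-1}^F$ is either trivial (when $c-1$ is even) or canonically isomorphic to $\mathbb{F}_F$ via $[1 + \pi_F^{c-1} u] \mapsto [u]$. In the latter case, using the fact that the \emph{same} element $\pi_F$ serves as a uniformiser of $E$, the map $W_{c-1}^F \to W_{c-1}^E$ corresponds under the parallel isomorphism to the standard inclusion $\mathbb{F}_F \hookrightarrow \mathbb{F}_E$ of residue fields, which is injective. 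The main point requiring care will be verifying that the isomorphisms $W_{c-1}^F \cong \mathbb{F}_F$ and $W_{c-1}^E \cong \mathbb{F}_E$ are compatible with $\iota_c$, so that the induced map on graded pieces is literally the inclusion of residue fields; this is a short unwinding of the explicit construction from Corollary~\ref{cor-size-of-Wi}. Taking $c = 2t$ then yields the lemma.
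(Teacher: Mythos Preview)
Your proof is correct. The paper does not actually give an argument here; it simply cites \cite[Proposition~3.6]{cohen-et-al}. Your approach is more self-contained: you deduce the result directly from Corollary~\ref{cor-size-of-Wi} by filtering both quotients $F^\times/U_F^{(c)}F^{\times 2}$ and $E^\times/U_E^{(c)}E^{\times 2}$ and identifying the induced map on graded pieces with the residue-field inclusion $\F_F \hookrightarrow \F_E$ (using that $\pi_F$ is a uniformiser of $E$). This is a clean internal argument that avoids the external dependence; the only delicate points are (i) that Corollary~\ref{cor-size-of-Wi} applied to $E$ uses $e_E = e_F$, so the condition $c-1 < 2e_F$ governs both fields simultaneously, and (ii) the compatibility of the two isomorphisms $W_{c-1}^F \cong \F_F$ and $W_{c-1}^E \cong \F_E$ with the inclusion, which you correctly flag and which unwinds immediately once one uses the same uniformiser $\pi_F$ on both sides.
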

\begin{proof}
    This is essentially \cite[Proposition~3.6]{cohen-et-al}. 
\end{proof}
\begin{lemma}
    \label{lem-(1^2)/E-C_4-and-V4/F-in-terms-of-(1^2)/F}
    Let $E/F$ be the unique unramified quadratic extension. For each nonnegative integer $m$, we have 
    $$
    \#\Et{(1^2)/E,m}{C_4/F} =
        \frac{1}{2}\cdot \#\Et{(1^2)/F, m}{}.
    $$
\end{lemma}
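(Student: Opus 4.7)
The plan is to compute $\#\Et{(1^2)/E,m}{C_4/F}$ in closed form by combining Lemma~\ref{lem-C4-exts-2-to-1-surjection} with Lemma~\ref{lem-UE2t-cap-F-unramified} and Corollary~\ref{cor-dim-of-F-mod-Ut-and-pth-powers}, and then to compare with $\#\Et{(1^2)/F, m}{} = \#\Et{(1^2)/F, m}{C_2/F}$, which is read off from Theorem~\ref{thm-num-Cp-exts-with-disc-val}.

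First I would note that since $E/F$ is unramified we have $e_E = e_F$, and that the unique element of $\Et{2/E}{C_4/F}$ not lying in $\Et{(1^2)/E}{C_4/F}$ is the unramified quadratic extension of $E$ (equivalently, the unique unramified $C_4$-extension of $F$), which has $v_E(d_{L/E}) = 0$. Hence for $m \geq 1$,
$$
\#\Et{(1^2)/E, m}{C_4/F} = \#\Et{2/E, \leq m}{C_4/F} - \#\Et{2/E, \leq m-1}{C_4/F}.
$$
Applying Lemma~\ref{lem-C4-exts-2-to-1-surjection} with $m_2 = m$, and then using Lemma~\ref{lem-UE2t-cap-F-unramified} with $t = e_F - \lfloor m/2 \rfloor \leq e_F$ to rewrite $U_E^{(2e_F - 2\lfloor m/2\rfloor)} E^{\times 2} \cap F^\times$ as $U_F^{(2e_F - 2\lfloor m/2\rfloor)} F^{\times 2}$, we get
$$
\#\Et{2/E, \leq m}{C_4/F} = \tfrac{1}{2}\cdot \#\big(U_F^{(2e_F - 2\lfloor m/2\rfloor)} F^{\times 2}/F^{\times 2}\big)
$$
for $0 \leq m \leq 2e_F$, and $\tfrac{1}{2}\cdot \#(F^\times/F^{\times 2})$ for $m = 2e_F + 1$.

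Next I would invoke Corollary~\ref{cor-dim-of-F-mod-Ut-and-pth-powers}: since $\mu_2 \subseteq F$ automatically, $\#(F^\times/F^{\times 2}) = 4q^{e_F}$, and $\#(F^\times/U_F^{(c)}F^{\times 2}) = 2q^{c - 1 - \lfloor(c-1)/2\rfloor}$ for $0 \leq c \leq 2e_F$. A short calculation with $c = 2e_F - 2\lfloor m/2\rfloor$ (so $c$ is even and $\lfloor(c-1)/2\rfloor = c/2 - 1$) gives
$$
\#\Et{2/E, \leq m}{C_4/F} = q^{\lfloor m/2\rfloor}\quad \text{for } 0 \leq m \leq 2e_F,\qquad \#\Et{2/E, \leq 2e_F+1}{C_4/F} = 2q^{e_F}.
$$
Taking first differences gives $\#\Et{(1^2)/E, m}{C_4/F} = (q-1)q^{m/2 - 1}$ for even $m$ with $2 \leq m \leq 2e_F$, equals $q^{e_F}$ for $m = 2e_F + 1$, and equals $0$ otherwise. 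Comparing with the formula in Theorem~\ref{thm-num-Cp-exts-with-disc-val} (applied with $p = 2$), which gives exactly twice these numbers in the corresponding ranges, proves the lemma.

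The main obstacle is not conceptual but purely arithmetical: one must correctly juggle the regime split $m \leq 2e_F$ versus $m = 2e_F + 1$ that appears in Lemma~\ref{lem-C4-exts-2-to-1-surjection}, the parity-dependent vanishing, and the floor exponent $c - 1 - \lfloor(c-1)/2\rfloor$ from Corollary~\ref{cor-dim-of-F-mod-Ut-and-pth-powers}. The factor of $\tfrac{1}{2}$ ultimately arises from the $2$-to-$1$ property of the surjection in Lemma~\ref{lem-C4-exts-2-to-1-surjection}, combined with the cancellation of the ``$-1$'' terms for the unramified extension when we pass from $\leq m$ to exactly $m$.
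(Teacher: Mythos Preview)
Your proof is correct and follows essentially the same route as the paper: both arguments apply Lemma~\ref{lem-C4-exts-2-to-1-surjection} together with Lemma~\ref{lem-UE2t-cap-F-unramified} (choosing $\omega$ so that $E(\sqrt{\omega})/E$ is unramified) to get $\#\Et{2/E,\leq m}{C_4/F}=\tfrac12\#\big(U_F^{(2e_F-2\lfloor m/2\rfloor)}F^{\times 2}/F^{\times 2}\big)$, then pass to first differences. The only cosmetic difference is that the paper relates this directly to $\#\Et{2/F,\leq m}{C_2/F}$ via \cite[Lemma~4.6 and Corollary~4.7]{monnet2024counting}, whereas you compute both sides in closed form using Corollary~\ref{cor-dim-of-F-mod-Ut-and-pth-powers} and Theorem~\ref{thm-num-Cp-exts-with-disc-val} and then match them.
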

\begin{proof}
    If $\Et{2/E,m}{C_4/F}\neq \varnothing$, then either $m$ is even with $0 \leq m \leq 2e_F$, or $m = 2e_F + 1$. Suppose that $m$ is an even integer with $0 \leq m \leq 2e_F$. Let $\omega \in E^\times$ be such that $E(\sqrt{\omega})/E$ is unramified, hence a $C_4$-extension of $F$. By Lemma~\ref{lem-UE2t-cap-F-unramified}, we have 
    $$
    U_E^{(2e_E - m)}E^{\times 2} \cap F^\times = U_F^{(2e_F-m)}F^{\times 2}, 
    $$
    so Lemma~\ref{lem-C4-exts-2-to-1-surjection} gives a $2$-to-$1$ surjection 
    $$
    U_F^{(2e_F-m)}F^{\times 2}/F^{\times 2} \to \Et{2/E, \leq m}{C_4/F},\quad t \mapsto E(\sqrt{\omega t}).
    $$
    But \cite[Lemma~4.6 and Corollary~4.7]{monnet2024counting} tell us that, for $u \in F^\times / F^{\times 2}$, we have 
    $$
    v_F(d_{F(\sqrt{u})/F}) \leq m \iff u \in U_F^{(2e_F-m)}F^{\times 2}/F^{\times 2},
    $$
    and therefore 
    $$
    \#\Et{2/F,\leq m}{C_2/F} = \#(U_F^{(2e_F-m)}F^{\times 2}/F^{\times 2}) - 1,
    $$
    so
    $$
    \# \Et{2/E, \leq m}{C_4/F} = \frac{1}{2}\#\Et{2/F,\leq m}{C_2/F} + \frac{1}{2},
    $$
    and the result follows for $2 \leq m \leq 2e_F$. By Lemma~\ref{lem-C4-exts-2-to-1-surjection} and \cite[Lemma~4.6]{monnet2024counting}, there is a $2$-to-$1$ surjection
    $$
    \{[x] \in F^\times / F^{\times 2} : v_F(x) = 1\} \to \Et{(1^2)/E,2e_F+1}{C_4/F},\quad x \mapsto E(\sqrt{\omega x}),
    $$
    and a bijection 
    $$
    \{[x] \in F^\times / F^{\times 2} : v_F(x) = 1\} \to \Et{(1^2)/F, 2e_F + 1}{},\quad x\mapsto F(\sqrt{x}),
    $$
    and the result for $m = 2e_F + 1$ follows.
\end{proof}
\begin{lemma}
    \label{lem-num-V4-exts-half-num-C2-exts}
    Let $E/F$ be the unique unramified quadratic extension. For each nonnegative integer $m$, we have 
    $$
    \# \Et{(1^2)/E, m}{V_4/F} = \frac{1}{2} \cdot \# \Et{(1^2)/F, m}{}. 
    $$
\end{lemma}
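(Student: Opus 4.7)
My plan is to construct an explicit $2$-to-$1$ surjection
$$
\Et{(1^2)/F, m}{} \longrightarrow \Et{(1^2)/E, m}{V_4/F}, \qquad E' \longmapsto EE',
$$
from which the claim follows by counting preimages. The construction exploits the fact that every $V_4$-extension of $F$ containing the unramified quadratic $E$ must have its other two quadratic subfields totally ramified over $F$, with (as will be crucial) the same discriminant valuation.

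First I would verify that the map is well-defined. If $E' \in \Et{(1^2)/F, m}{}$, then $E' \neq E$ because $E'/F$ is ramified while $E/F$ is not, so $L := EE'$ has degree $4$ over $F$ and is Galois with Galois group $V_4$. The ramification index of $L/F$ is $2$ (contributed by $E'/F$) and the residue degree is $2$ (contributed by $E/F$), so $L/E$ is totally ramified quadratic, i.e.\ $L \in \Et{(1^2)/E}{V_4/F}$.

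The main substantive step is checking that the discriminant valuation is preserved. The tower formula together with $d_{E/F} = \co_F$ yields $v_F(d_{L/F}) = f(E/F)\cdot v_E(d_{L/E}) = 2\,v_E(d_{L/E})$. On the other hand, Lemma~\ref{lem-disc-of-V4-subexts} Part~(1) gives $v_F(d_{L/F}) = v_F(d_{E/F}) + v_F(d_{E'/F}) + v_F(d_{E''/F})$, where $E''$ is the third intermediate quadratic of $L/F$. Since $v_F(d_{E/F}) = 0 < v_F(d_{E'/F})$, Part~(2) of the same lemma forces $v_F(d_{E''/F}) = v_F(d_{E'/F}) = m$, and therefore $v_E(d_{L/E}) = m$ as required.

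For surjectivity and the $2$-to-$1$ property, I would observe that every $L \in \Et{(1^2)/E, m}{V_4/F}$ has three intermediate quadratic subfields; since $E$ is the unique unramified quadratic extension of $F$, exactly one of these subfields is $E$, and the remaining two, $E_1$ and $E_2$, are both totally ramified over $F$. The same discriminant computation as above shows $v_F(d_{E_i/F}) = m$ for $i = 1,2$, so each $E_i$ lies in $\Et{(1^2)/F, m}{}$ and maps to $L$. Hence the fibre of the map over $L$ is exactly $\{E_1, E_2\}$, which has cardinality $2$. The only non-bookkeeping ingredient in the whole argument is the automatic equality $v_F(d_{E'/F}) = v_F(d_{E''/F})$, which is precisely Lemma~\ref{lem-disc-of-V4-subexts} Part~(2); there is no genuine obstacle.
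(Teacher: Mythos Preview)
Your proof is correct and uses essentially the same idea as the paper. The paper's one-line proof invokes Lemma~\ref{lem-num-V4-2^2-with-m-and-A} (whose proof constructs exactly your $2$-to-$1$ map $E'\mapsto E_{\mathrm{ur}}E'$) together with \cite[Lemma~5.1]{monnet2024counting} (the bijection $\Et{(1^2)/E,m}{V_4/F}\leftrightarrow \Et{(2^2)/F,2m}{V_4/F}$); you simply compose these and write out the resulting direct argument, with the discriminant equality coming from Lemma~\ref{lem-disc-of-V4-subexts} just as in the proof of Lemma~\ref{lem-num-V4-2^2-with-m-and-A}.
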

\begin{proof}
    This follows easily from Lemma~\ref{lem-num-V4-2^2-with-m-and-A} and \cite[Lemma~5.1]{monnet2024counting}. 
\end{proof}
\begin{lemma}
    \label{lem-num-2^2-D4-with-norms-and-disc-val}
    If $\Et{(2^2)/F, m}{D_4/F,\mathcal{A}}$ is nonempty, then either $m$ is a multiple of $4$ with $4 \leq m \leq 4e_F$, or $m = 4e_F + 2$. If $m$ is a multiple of $4$ with $4 \leq m \leq 4e_F$, then  
    $$
    \#\Et{(2^2)/F, m}{D_4/F,\mathcal{A}} =\begin{cases} (q-1)\Big(
    (q+1)q^{\frac{m}{2}-2} - q^{\frac{m}{4} - 1}    
    \Big)\quad\text{if $2\mid v_F(\alpha)$ for all $\alpha \in \mathcal{A}$},
    \\
    0\quad\text{otherwise}. 
    \end{cases}
    $$
    If $m = 4e_F + 2$, then 
    $$
    \#\Et{(2^2)/F, m}{D_4/F,\mathcal{A}} = \begin{cases} q^{e_F}(q^{e_F} - 1) \quad\text{if $2\mid v_F(\alpha)$ for all $\alpha \in \mathcal{A}$},
        \\
        0\quad\text{otherwise}. 
        \end{cases}
    $$
\end{lemma}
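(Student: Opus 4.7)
The plan is to reduce this to a direct application of Theorem~\ref{thm-num-Cp-exts-with-disc-val}, applied simultaneously to $F$ and to its unramified quadratic extension $E$, via the chain of reductions already established in the preceding lemmas.

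First, I would dispose of the condition on $\mathcal{A}$: Lemma~\ref{lem-2^2-D4-exts-with-A} tells us that if $v_F(\alpha)$ is odd for some $\alpha\in\mathcal{A}$ then $\Et{(2^2)/F}{D_4/F,\mathcal{A}}$ is empty, so the count is $0$; and if $2\mid v_F(\alpha)$ for all $\alpha\in\mathcal{A}$ then $\Et{(2^2)/F,m}{D_4/F,\mathcal{A}} = \Et{(2^2)/F,m}{D_4/F}$. Thus it suffices to compute $\#\Et{(2^2)/F,m}{D_4/F}$ under the assumption that the $\mathcal{A}$-condition is vacuous. Combining Lemma~\ref{lem-(2^2)-D_4-in-terms-of-quad-exts-of-E} with Lemmas~\ref{lem-(1^2)/E-C_4-and-V4/F-in-terms-of-(1^2)/F} and \ref{lem-num-V4-exts-half-num-C2-exts} collapses the three-term expression to
$$
\#\Et{(2^2)/F,m}{D_4/F} = \frac{1}{2}\Big(\#\Et{(1^2)/E,m/2}{} - \#\Et{(1^2)/F,m/2}{}\Big),
$$
where $E$ is the unramified quadratic extension of $F$.

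Next, I would apply Theorem~\ref{thm-num-Cp-exts-with-disc-val} (with $p=2$) to both $F$ and $E$. Since $E/F$ is unramified we have $e_E = e_F$ and $q_E = q^2$. For $F$ one obtains $\#\Et{(1^2)/F,c}{} = 2(q-1)q^{c/2-1}$ when $c$ is even with $2\le c\le 2e_F$, and $\#\Et{(1^2)/F,2e_F+1}{} = 2q^{e_F}$; for $E$ one obtains $\#\Et{(1^2)/E,c}{} = 2(q^2-1)q^{c-2}$ in the even range, and $\#\Et{(1^2)/E,2e_F+1}{} = 2q^{2e_F}$. In particular both sets are empty outside the prescribed range, so $\#\Et{(2^2)/F,m}{D_4/F}$ vanishes unless $m/2$ is a legal discriminant value, i.e., unless $m$ is divisible by $4$ with $4\leq m\leq 4e_F$ or $m = 4e_F + 2$.

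Substituting these formulas into the displayed identity yields, for $m\equiv 0\pmod 4$ with $4\le m\le 4e_F$,
$$
\#\Et{(2^2)/F,m}{D_4/F} = (q^2-1)q^{m/2-2} - (q-1)q^{m/4-1} = (q-1)\Big((q+1)q^{m/2-2} - q^{m/4-1}\Big),
$$
and for $m = 4e_F+2$,
$$
\#\Et{(2^2)/F,m}{D_4/F} = q^{2e_F} - q^{e_F} = q^{e_F}(q^{e_F}-1).
$$
There is no real obstacle here — the whole proof is a bookkeeping exercise — but the one point warranting care is to verify that the two parameter ranges (even $c$ with $c\le 2e_F$, and $c = 2e_F+1$) agree for $F$ and for $E$ (which they do, because $e_E = e_F$), so that the subtraction is index-for-index and no spurious contributions from $E$ are missed.
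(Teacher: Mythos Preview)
Your proposal is correct and follows essentially the same approach as the paper's proof, which simply cites Theorem~\ref{thm-num-Cp-exts-with-disc-val} together with Lemmas~\ref{lem-2^2-D4-exts-with-A}, \ref{lem-(2^2)-D_4-in-terms-of-quad-exts-of-E}, \ref{lem-(1^2)/E-C_4-and-V4/F-in-terms-of-(1^2)/F}, and \ref{lem-num-V4-exts-half-num-C2-exts}; you have spelled out the bookkeeping in more detail but the argument is identical.
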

\begin{proof}
    This follows easily from Theorem~\ref{thm-num-Cp-exts-with-disc-val} and Lemmas~\ref{lem-2^2-D4-exts-with-A}, \ref{lem-(2^2)-D_4-in-terms-of-quad-exts-of-E}, \ref{lem-(1^2)/E-C_4-and-V4/F-in-terms-of-(1^2)/F}, and \ref{lem-num-V4-exts-half-num-C2-exts}.
\end{proof}
\begin{lemma}
    \label{lem-2^2-D4-A-2-adic}
    If $2\mid v_F(\alpha)$ for all $\alpha \in \mathcal{A}$, then 
    $$
    \m\big(\Et{(2^2)/F}{D_4/F,\mathcal{A}}\big) = \frac{1}{2}\cdot\Big(q^{-2} - q^{-2e_F - 2} - \frac{1}{q^2+q+1}\big(q^{-1} - q^{-3e_F-1}\big) + q^{-3e_F - 2}\big(q^{e_F} - 1\big)\Big). 
    $$
\end{lemma}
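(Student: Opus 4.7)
The plan is to combine Lemma~\ref{lem-num-2^2-D4-with-norms-and-disc-val} with the definition of pre-mass, then collapse two geometric series.

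First I would verify that every $L \in \Et{(2^2)/F}{D_4/F}$ satisfies $\#\Aut(L/F) = 2$. Writing $L = \widetilde{L}^H$ with $\Gal(\widetilde{L}/F) \cong D_4$ and $H$ one of the four non-normal order-$2$ subgroups of $D_4$, the normaliser $N_{D_4}(H)$ is the Klein four-subgroup generated by $H$ and the centre, so $\Aut(L/F) = N_{D_4}(H)/H \cong C_2$. With this automorphism count uniform across the set, the pre-mass reduces to
\[
\m\bigl(\Et{(2^2)/F}{D_4/F,\mathcal{A}}\bigr) = \tfrac{1}{2}\sum_m q^{-m}\,\#\Et{(2^2)/F,m}{D_4/F,\mathcal{A}}.
\]

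Next I would substitute the cardinalities from Lemma~\ref{lem-num-2^2-D4-with-norms-and-disc-val}, valid under the hypothesis $2\mid v_F(\alpha)$ for all $\alpha\in\mathcal{A}$: the sum is supported on $m = 4k$ for $1 \le k \le e_F$, with count $(q-1)\bigl((q+1)q^{2k-2} - q^{k-1}\bigr) = (q^2-1)q^{2k-2} - (q-1)q^{k-1}$, together with the single term $m = 4e_F+2$ of count $q^{e_F}(q^{e_F}-1)$. The main body then splits as
\[
(q^2-1)q^{-2}\sum_{k=1}^{e_F} q^{-2k} \;-\; (q-1)q^{-1}\sum_{k=1}^{e_F} q^{-3k},
\]
and evaluating the two geometric series using $\sum_{k=1}^{e_F} q^{-2k} = (1-q^{-2e_F})/(q^2-1)$ and $\sum_{k=1}^{e_F} q^{-3k} = (1-q^{-3e_F})/(q^3-1)$, together with the identity $q^3 - 1 = (q-1)(q^2+q+1)$, yields $q^{-2} - q^{-2e_F-2} - (q^{-1} - q^{-3e_F-1})/(q^2+q+1)$. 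Adding the $m=4e_F+2$ contribution $q^{-3e_F-2}(q^{e_F}-1)$ and applying the overall factor of $\tfrac{1}{2}$ recovers the claimed closed form.

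The only step requiring care is verifying the automorphism count $\#\Aut(L/F) = 2$, which is where the outer $\tfrac{1}{2}$ comes from; the remainder is a routine geometric-series calculation, so there is no substantial obstacle.
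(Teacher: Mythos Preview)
Your proof is correct and follows the same route as the paper: both reduce to summing the counts of Lemma~\ref{lem-num-2^2-D4-with-norms-and-disc-val} weighted by $q^{-m}/\#\Aut(L/F)$, then collapse the resulting geometric series. Your write-up is in fact more complete than the paper's, which just says the result ``follows easily'' and defers to a numerical check; in particular you make explicit the uniform automorphism count $\#\Aut(L/F)=2$ via the normaliser computation in $D_4$, which the paper leaves implicit.
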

\begin{proof}
    This follows easily from Lemma~\ref{lem-num-2^2-D4-with-norms-and-disc-val}. To eliminate the possibility of a manipulation error, we have checked the required summation numerically in the Python notebook in the Github repository \url{https://github.com/Sebastian-Monnet/Sn-n-ics-paper-checks}.
\end{proof}

\begin{lemma}
    \label{lem-norms-in-galois-tower}
    Let $M/L/K$ be a tower of quadratic extensions of $p$-adic fields. Suppose that $M/K$ is Galois and let $\beta \in L^\times$. Let $\alpha = N_{L/K}\beta$. Then 
    $$
    \alpha \in N_{M/K}M^\times \iff \beta \in N_{M/L}M^\times. 
    $$
\end{lemma}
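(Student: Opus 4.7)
The plan is to prove the two directions separately. The ``$\Leftarrow$'' direction is formal: if $\beta = N_{M/L}\gamma$ for some $\gamma \in M^\times$, then transitivity of norms gives
$$
\alpha = N_{L/K}\beta = N_{L/K}N_{M/L}\gamma = N_{M/K}\gamma.
$$
Note that this direction makes no use of the hypothesis that $M/K$ be Galois.

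For the ``$\Rightarrow$'' direction, suppose $\alpha = N_{M/K}\gamma$ for some $\gamma \in M^\times$, and set $\beta' := N_{M/L}\gamma \in N_{M/L}M^\times$. Then $N_{L/K}\beta = \alpha = N_{L/K}\beta'$, so $\beta/\beta'$ has norm $1$ in the cyclic extension $L/K$. Writing $\sigma$ for the nontrivial element of $\Gal(L/K)$, Hilbert 90 produces some $\tau \in L^\times$ with $\beta/\beta' = \tau/\sigma(\tau)$. Since $\beta' \in N_{M/L}M^\times$ already, it will suffice to verify that $\tau/\sigma(\tau) \in N_{M/L}M^\times$.

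This final step is where the Galois hypothesis is actually used. Since $M/K$ is Galois of degree $4$ with a quadratic subextension, $\Gal(M/K)$ is abelian (isomorphic to $C_4$ or $V_4$), so $\Gal(M/L)$ is normal in $\Gal(M/K)$. Lifting $\sigma$ to $\tilde\sigma \in \Gal(M/K)$ and letting $\rho$ generate $\Gal(M/L)$, the identity $\tilde\sigma\rho = \rho\tilde\sigma$ gives $\sigma(N_{M/L}\mu) = N_{M/L}\tilde\sigma(\mu)$ for every $\mu \in M^\times$, so $N_{M/L}M^\times \subseteq L^\times$ is stable under $\sigma$. By local class field theory applied to the quadratic extension $M/L$, the quotient $L^\times / N_{M/L}M^\times$ has order $2$, so the induced $\sigma$-action on it is an automorphism of $C_2$, hence trivial. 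Therefore $\sigma(\tau) \equiv \tau \pmod{N_{M/L}M^\times}$, which forces $\tau/\sigma(\tau) \in N_{M/L}M^\times$, completing the proof. The only mild subtlety is the observation that $\sigma$ must act trivially on a group of order $2$, but once that is in hand, the Hilbert 90 step closes the argument immediately.
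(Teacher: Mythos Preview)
Your proof is correct. Both you and the paper start identically—the easy direction by transitivity, then Hilbert~90 to reduce to showing that the cocycle $\tau/\sigma(\tau)$ (the paper's $x/\bar x$) lies in $N_{M/L}M^\times$—but the final step differs. The paper writes $M = L(\sqrt d)$ and verifies $(\beta,d)_L = 1$ via a short Hilbert-symbol computation, using that $M/K$ Galois forces $d\bar d \in L^{\times 2}$. You instead observe that $N_{M/L}M^\times$ is $\sigma$-stable (since $\Gal(M/L)$ is normal of order~$2$, hence central) and that $L^\times/N_{M/L}M^\times \cong C_2$ has trivial automorphism group, so $\tau \equiv \sigma(\tau)$ modulo norms automatically. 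Your route is a bit more conceptual and avoids choosing a generator $d$ or manipulating symbols; the paper's route is more hands-on but equally short. One minor remark: you don't actually need the observation that $\Gal(M/K)$ is abelian to get $\tilde\sigma\rho = \rho\tilde\sigma$—it already follows from $\Gal(M/L)$ being normal of order $2$ (so conjugation sends $\rho$ to $\rho^{\pm 1} = \rho$).
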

\begin{proof}
    The $(\Leftarrow)$ direction is obvious. For $(\Rightarrow)$, suppose that $\alpha \in N_{M/K}M^\times$. Then $\alpha = N_{L/K}\theta$ for some $\theta \in N_{M/L}M^\times$. Since $N_{L/K}(\theta/\beta) = 1$, Hilbert's theorem 90 tells us that $\theta = \beta \frac{x}{\bar{x}}$ for some $x \in L^\times$. Writing $M = L(\sqrt{d})$ for $d \in L^\times$, we have 
    \begin{align*}
    (\beta, d)_L &= (d,\theta)_L(d,x)_L(d,\bar{x})_L 
    \\
    &= (d,x)_L (\bar{d},x)_L 
    \\
    &= 1,
    \end{align*}
    where the final equality comes from the fact that $M/K$ is Galois, so $L(\sqrt{d}) = L(\sqrt{\overline{d}}) = M$. 
\end{proof}
\begin{lemma}
    \label{lem-C4-bijection-with-norm-condition}
    Let $E \in \Et{2/F}{C_2/F, \mathcal{A}}$ be $C_4$-extendable, and let $m_2$ be an integer such that $\Et{2/E,\leq m_2}{C_4/F}$ is nonempty. If $m_2 \leq 2e_E$, then there is a $2$-to-$1$ surjection 
    $$
    \Big(U_E^{(2e_E - 2\lfloor\frac{m_2}{2}\rfloor)}E^{\times 2}\cap F^\times\Big)/ F^{\times 2} \cap N^{\mathcal{A}}_E \to \Et{2/E, \leq m_2}{C_4/F,\mathcal{A}}.  
    $$
    If $m_2 \geq 2e_E + 1$, then there is a $2$-to-$1$ surjection 
    $$
    N_E^\mathcal{A} \to \Et{2/E, \leq m_2}{C_4/F, \mathcal{A}}. 
    $$
\end{lemma}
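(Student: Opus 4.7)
The plan is to restrict the $2$-to-$1$ surjection of Lemma~\ref{lem-C4-exts-2-to-1-surjection} (with $\omega$ chosen as in its statement) to those $t$ for which $\mathcal{A}\subseteq N_{\omega t} := N_{E(\sqrt{\omega t})/F}E(\sqrt{\omega t})^\times$, and to identify this sub-domain with $N_E^{\mathcal{A}}$ intersected with the Lemma~\ref{lem-C4-exts-2-to-1-surjection} domain. The $2$-to-$1$ fibre structure is preserved because each fibre $\{[t], [td^{-1}]\}$ (with $E=F(\sqrt{d})$) maps to the same extension, so both elements satisfy or fail the norm condition together.

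The heart of the argument is the following \emph{switching identity}: for every $\alpha\in F^\times \cap N_{E/F}E^\times$ and every $t\in F^\times$,
\[
\alpha\in N_{\omega t}\ \Longleftrightarrow\ (\alpha\in N_\omega\ \text{and}\ [t]\in \overline{N}_\alpha^2)\ \text{or}\ (\alpha\notin N_\omega\ \text{and}\ [t]\notin \overline{N}_\alpha^2).
\]
Because $F^{\times 4}\subseteq N_{\omega t}$, checking $\mathcal{A}\subseteq N_{\omega t}$ reduces to checking $\alpha\in N_{\omega t}$ for each $\alpha\in \mathcal{G}_4(\mathcal{A})$. The hypothesis $E\in \Et{2/F}{C_2/F,\mathcal{A}}$ gives $\mathcal{A}\subseteq N_{E/F}E^\times$, so the identity applies to every generator; taking the conjunction over $\mathcal{G}_4(\mathcal{A})$ yields exactly the membership condition defining $N_\omega^{\mathcal{A}} = N_E^{\mathcal{A}}$, proving the lemma.

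To prove the switching identity, I would write $\alpha = N_{E/F}(\beta)$ for some $\beta\in E^\times$ and apply Lemma~\ref{lem-norms-in-galois-tower} to the tower $E(\sqrt{\omega t})/E/F$, which is Galois because $E(\sqrt{\omega t})/F$ is a $C_4$-extension; this converts $\alpha\in N_{\omega t}$ into the Hilbert-symbol identity $(\beta,\omega t)_E = 1$, and analogously $\alpha\in N_\omega \iff (\beta,\omega)_E = 1$. By bimultiplicativity the symmetric difference between these two conditions is controlled by $(\beta,t)_E$; a second application of Lemma~\ref{lem-norms-in-galois-tower} to the tower $E(\sqrt{t})/E/F$ translates $(\beta,t)_E=1$ into $\alpha\in N_{E(\sqrt{t})/F}E(\sqrt{t})^\times = N_E\cap N_t$ (the final equality being the standard biquadratic norm identity, or immediate by local class field theory), which, combined with $\alpha\in N_E$, is $\alpha\in N_t$, i.e.\ $[t]\in\overline{N}_\alpha^2$ by symmetry of the Hilbert symbol. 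The main bookkeeping obstacle is the degenerate cases $t\in F^{\times 2}$ and $[t]=[d]$, where $E(\sqrt{t})$ collapses to $E$; in both cases $E(\sqrt{\omega t}) = E(\sqrt{\omega})$ and $[t]\in\overline{N}_\alpha^2$ always (since $\alpha\in N_E=N_d$), so the switching identity is tautological, and these cases align consistently with the two-element kernel of the Lemma~\ref{lem-C4-exts-2-to-1-surjection} parametrisation.
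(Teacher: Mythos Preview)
Your proposal is correct and follows essentially the same route as the paper: restrict the parametrisation of Lemma~\ref{lem-C4-exts-2-to-1-surjection}, lift each $\alpha\in\mathcal{G}_4(\mathcal{A})$ to $E$, apply Lemma~\ref{lem-norms-in-galois-tower} to the tower $E(\sqrt{\omega t})/E/F$, and split $(\beta,\omega t)_E$ bimultiplicatively. The only cosmetic difference is that the paper converts $(\widetilde{\alpha},t)_E$ to $(\alpha,t)_F$ in one stroke via the projection formula for Hilbert symbols, whereas you route this through a second use of Lemma~\ref{lem-norms-in-galois-tower} on the biquadratic tower $E(\sqrt{t})/E/F$ and then invoke $N_{E(\sqrt{t})/F}=N_E\cap N_t$; your handling of the degenerate cases $[t]\in\{1,[d]\}$ is fine and the paper's direct approach simply avoids that case split.
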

\begin{proof}
    Let $\omega \in E^\times$ be the element we fixed in the definition of $N_E^\mathcal{A}$. If $m_2 \leq 2e_E$, then Lemma~\ref{lem-C4-exts-2-to-1-surjection} gives us a $2$-to-$1$ surjection 
    $$
    \Big(U_E^{(2e_E - 2\lfloor\frac{m_2}{2}\rfloor)}E^{\times 2}\cap F^\times\Big)/ F^{\times 2} \to \Et{2/E, \leq m_2}{C_4/F}, \quad t \mapsto E(\sqrt{\omega t}).
    $$
    If $m_2 \geq 2e_E + 1$, then Lemma~\ref{lem-C4-exts-2-to-1-surjection} gives a $2$-to-$1$ surjection 
    $$
    F^\times / F^{\times 2} \to \Et{2/E, \leq m_2}{C_4/F}, \quad t\mapsto E(\sqrt{\omega t}). 
    $$
    Let $t \in F^\times$ and let $L = E(\sqrt{\omega t})$. Let $\alpha \in \mathcal{A}$, and let $\widetilde{\alpha}\in E^\times$ be such that $N_{E/F}\widetilde{\alpha} = \alpha$. Lemma~\ref{lem-norms-in-galois-tower} tells us that $\alpha \in N_{L/F}L^\times$ if and only if $\widetilde{\alpha} \in N_{L/E}L^\times$. By Lemma~\ref{lem-norms-in-galois-tower} and properties of quadratic Hilbert symbols, we have 
    \begin{align*}
        \widetilde{\alpha} \in N_{L/E}L^\times &\iff (\widetilde{\alpha}, \omega t)_E  = 1 
        \\
        &\iff (\widetilde{\alpha}, t)_E = (\widetilde{\alpha}, \omega)_E 
        \\
        &\iff (\alpha, t)_F = \begin{cases}
        1 \quad \text{if $\alpha \in N_{E(\sqrt{\omega})/F}E(\sqrt{\omega})^\times,$}
        \\
        -1 \quad\text{otherwise.}
        \end{cases}
        \\
        &\iff \begin{cases}
            t \in N_\alpha \quad\text{if $\alpha \in N_\omega$},
            \\
            t \not \in N_\alpha \quad\text{otherwise}. 
        \end{cases}
    \end{align*} 
    It follows that $\mathcal{A}\subseteq N_{L/F}L^\times$ if and only if $t \in N_E^\mathcal{A}$, and the result follows. 
\end{proof}

\begin{lemma}
    \label{lem-size-of-Et-2/E-C4/F-A-for-half-ram}
    Let $E$ be the unramified quadratic extension of $F$, and let $m$ be a nonnegative integer. 
    If $m \leq 4e_F + 1$, then  
    $$
    \# \Et{(2^2)/F, \leq m}{C_4/F, \mathcal{A}} = \begin{cases}\frac{1}{2}\cdot \# N^\mathcal{A}_{E, 2e_F - 2\lfloor{\frac{m}{4}\rfloor}} - 1\quad\text{if $v_F(\alpha)\equiv 0\pmod{4}$ for all $\alpha\in\mathcal{A}$},
        \\
        \frac{1}{2}\cdot \# N^\mathcal{A}_{E, 2e_F - 2\lfloor{\frac{m}{4}\rfloor}} \quad\text{else if $2\mid v_F(\alpha)$ for all $\alpha\in\mathcal{A}$},
        \\
        0\quad\text{otherwise}. 
    \end{cases}
    $$
    If $m \geq 4e_F + 2$, then 
    $$
    \# \Et{(2^2)/F, \leq m}{C_4/F, \mathcal{A}} = \begin{cases}\frac{1}{2}\cdot \#N_E^\mathcal{A} - 1\quad\text{if $v_F(\alpha)\equiv 0\pmod{4}$ for all $\alpha\in\mathcal{A}$},
        \\
        \frac{1}{2}\cdot\#N_E^\mathcal{A} \quad\text{else if $2\mid v_F(\alpha)$ for all $\alpha\in\mathcal{A}$},
        \\
        0\quad\text{otherwise}. 
    \end{cases}
    $$
\end{lemma}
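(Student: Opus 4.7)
My strategy is to reduce the problem to counting $C_4$-extensions over the unique unramified quadratic $E/F$ via Lemma~\ref{lem-C4-bijection-with-norm-condition}, and then bookkeep the unique unramified quartic extension of $F$ separately. First I would verify that every $L \in \Et{(2^2)/F}{C_4/F}$ has $E$ as its maximal unramified subextension: since $L/F$ is cyclic of degree $4$ with ramification index $2$, the inertia subgroup is the unique index-$2$ subgroup of $\Gal(L/F)$, and its fixed field is the degree-$2$ unramified $E$. Hence restriction of base field gives a bijection $\Et{(2^2)/F}{C_4/F} \leftrightarrow \Et{(1^2)/E}{C_4/F}$, and since $E/F$ is unramified, the tower law for discriminants yields $v_F(d_{L/F}) = 2v_E(d_{L/E})$, so setting $m_2 = \lfloor m/2\rfloor$ the bound $v_F(d_{L/F})\leq m$ corresponds to $v_E(d_{L/E}) \leq m_2$. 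A quick multiplicativity-of-$e$-and-$f$ check also shows that the only element of $\Et{2/E}{C_4/F}$ not of symbol $(1^2)$ over $E$ is the unramified quartic $L_{\mathrm{ur}}/F$, so $\Et{(1^2)/E, \leq m_2}{C_4/F} = \Et{2/E, \leq m_2}{C_4/F} \setminus \{L_{\mathrm{ur}}\}$.

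Next, if there exists $\alpha \in \mathcal{A}$ with $v_F(\alpha)$ odd then $\mathcal{A}\not\subseteq N_{E/F}E^\times$, so no $L \in \Et{(2^2)/F}{C_4/F}$ can satisfy the norm condition, yielding the $0$ branch. Otherwise $E \in \Et{2/F}{C_2/F,\mathcal{A}}$ and $E$ is $C_4$-extendable (via $L_{\mathrm{ur}}$), so Lemma~\ref{lem-C4-bijection-with-norm-condition} applies to $\mathcal{A}$. For $m \leq 4e_F + 1$ I have $m_2 \leq 2e_F = 2e_E$; using the easy identity $2\lfloor m_2/2\rfloor = 2\lfloor m/4\rfloor$ and Lemma~\ref{lem-UE2t-cap-F-unramified} (with $t = e_F - \lfloor m/4\rfloor \leq e_F$) to replace $U_E^{(2t)}E^{\times 2}\cap F^\times$ by $U_F^{(2t)}F^{\times 2}$, I obtain
$$
\#\Et{2/E,\leq m_2}{C_4/F,\mathcal{A}} = \tfrac{1}{2}\,\#N^{\mathcal{A}}_{E,\, 2e_F - 2\lfloor m/4\rfloor}.
$$
For $m \geq 4e_F + 2$ I instead have $m_2 \geq 2e_E + 1$, and the second case of Lemma~\ref{lem-C4-bijection-with-norm-condition} gives $\tfrac{1}{2}\# N_E^{\mathcal{A}}$.

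Finally, to pass from $\Et{2/E,\leq m_2}{C_4/F,\mathcal{A}}$ to $\Et{(2^2)/F,\leq m}{C_4/F,\mathcal{A}}$ I remove $L_{\mathrm{ur}}$ when it lies in the former set. Since $N_{L_{\mathrm{ur}}/F}L_{\mathrm{ur}}^\times = \{x\in F^\times : 4\mid v_F(x)\}$ and $v_E(d_{L_{\mathrm{ur}}/E}) = 0 \leq m_2$ for every $m \geq 0$, the extension $L_{\mathrm{ur}}$ contributes exactly when $v_F(\alpha) \equiv 0 \pmod 4$ for all $\alpha\in\mathcal{A}$, producing the $-1$ correction in that case and leaving the count unchanged when $2\mid v_F(\alpha)$ for all $\alpha$ but some $v_F(\alpha)\not\equiv 0\pmod 4$. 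This exhausts the three branches in both ranges of $m$. The whole argument is essentially bookkeeping on top of Lemmas~\ref{lem-C4-bijection-with-norm-condition} and \ref{lem-UE2t-cap-F-unramified}, so I do not expect any serious obstacle beyond keeping the unramified-versus-ramified dichotomy for $L/E$ straight and tracking the two floor operations.
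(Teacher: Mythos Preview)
Your proposal is correct and follows essentially the same route as the paper: reduce to counting $C_4$-extensions over the unramified quadratic $E$ via the natural bijection $\Et{(2^2)/F,\leq m}{C_4/F}\leftrightarrow \Et{(1^2)/E,\leq \lfloor m/2\rfloor}{C_4/F}$, apply Lemma~\ref{lem-C4-bijection-with-norm-condition} together with Lemma~\ref{lem-UE2t-cap-F-unramified} and the identity $\lfloor\lfloor m/2\rfloor/2\rfloor=\lfloor m/4\rfloor$, and then subtract $1$ for the unramified quartic exactly when every $v_F(\alpha)\equiv 0\pmod 4$. The only cosmetic difference is that you prove the bijection directly via the inertia subgroup, whereas the paper cites \cite[Lemma~5.1]{monnet2024counting}.
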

\begin{proof}
    Clearly, if $2\nmid v_F(\alpha)$ for any $\alpha \in \mathcal{A}$, then $\Et{(2^2)/F,\leq m}{C_4/F,\mathcal{A}} = \varnothing$. Therefore, we will assume that $2\mid v_F(\alpha)$ for all $\alpha\in\mathcal{A}$. By \cite[Lemma~5.1]{monnet2024counting}, the natural map 
    $$
    \Et{(1^2)/E,\leq \lfloor\frac{m}{2}\rfloor}{C_4/F} \to \Et{(2^2)/F,\leq m}{C_4/F}
    $$
    is a bijection. Let $m_2 = \lfloor \frac{m}{2}\rfloor$. Suppose that $m \leq 4e_F + 1$. Then Lemmas~\ref{lem-UE2t-cap-F-unramified} and \ref{lem-C4-bijection-with-norm-condition} tell us that 
    $$
    \# \Et{2/E,\leq m_2}{C_4/F, \mathcal{A}} = \frac{1}{2}\cdot \# N_{E, 2e_F - 2\lfloor\frac{m_2}{2}\rfloor}^{\mathcal{A}}.
    $$
    It is easy to see that $\lfloor \frac{m_2}{2}\rfloor = \lfloor \frac{m}{4}\rfloor$, and also that $\Et{2/E,\leq m_2}{C_4/F, \mathcal{A}}$ contains the unramified quadratic extension if and only if $v_F(\alpha) \equiv 0\pmod{4}$ for all $\alpha \in \mathcal{A}$. The result for $m \leq 4e_F + 1$ follows. 
    
    The argument for $m \geq 4e_F + 2$ is similar but easier, so we omit it. 
\end{proof}
\begin{corollary}
    \label{cor-size-of-Et-2^2-m-C4/F}
    For each nonnegative integer $m$, we have 
    $$
    \#\Et{(2^2)/F, m}{C_4/F,\mathcal{A}} = \begin{cases}
        \frac{1}{2}\cdot\big(
        \# N_{E, 2e_F - \frac{m}{2}}^{\mathcal{A}} - \# N_{E, 2e_F - \frac{m}{2} + 2}^{\mathcal{A}}    
        \big)\quad\text{if $4\mid m$ and $4 \leq m \leq 4e_F$,}
        \\
        \frac{1}{2}\cdot \big(
            \# N_{E}^{\mathcal{A}} - \# N_{E, 0}^{\mathcal{A}} 
        \big)\quad\text{if $m = 4e_F + 2$},
        \\
        0 \quad\text{otherwise}. 
    \end{cases}
    $$
\end{corollary}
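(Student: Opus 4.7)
The plan is to deduce the corollary directly from Lemma~\ref{lem-size-of-Et-2/E-C4/F-A-for-half-ram} by taking differences of the cumulative counts, using the identity
\[
\#\Et{(2^2)/F, m}{C_4/F,\mathcal{A}} = \#\Et{(2^2)/F,\leq m}{C_4/F,\mathcal{A}} - \#\Et{(2^2)/F,\leq m-1}{C_4/F,\mathcal{A}}.
\]
Since the ``minus $1$'' correction in Lemma~\ref{lem-size-of-Et-2/E-C4/F-A-for-half-ram} (which accounts for the unramified $C_4$-extension) depends only on whether the norm conditions force $v_F(\alpha)\equiv 0\pmod{4}$ for all $\alpha\in\mathcal{A}$, it is the same for $m$ and $m-1$ and therefore cancels in every difference. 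Thus I only need to compare the terms $\tfrac{1}{2}\#N^{\mathcal{A}}_{E,\,2e_F - 2\lfloor m/4\rfloor}$ (for $m\leq 4e_F+1$) and $\tfrac{1}{2}\#N_E^{\mathcal{A}}$ (for $m\geq 4e_F+2$), with the convention that $\#\Et{(2^2)/F,\leq -1}{C_4/F,\mathcal{A}}=0$.

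First I would observe that $\lfloor m/4\rfloor = \lfloor (m-1)/4\rfloor$ whenever $4\nmid m$, so the difference vanishes for all $m$ with $1\leq m\leq 4e_F+1$ and $4\nmid m$. Combined with the fact that no $(2^2)$-$C_4$-extension has discriminant valuation $<4$ (which is immediate from Theorem~\ref{thm-num-Cp-exts-with-disc-val} applied to the $C_4$-subextensions, or from Lemma~\ref{lem-disc-of-V4-subexts}-type reasoning), this gives the vanishing in the ``otherwise'' case of the claim, except possibly at $m=4e_F+2$, which will be treated separately.

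Next I would handle $m=4k$ with $4\leq m\leq 4e_F$. Here $\lfloor m/4\rfloor = k$ and $\lfloor (m-1)/4\rfloor = k-1$, so Lemma~\ref{lem-size-of-Et-2/E-C4/F-A-for-half-ram} (in the regime $m\leq 4e_F+1$) gives
\[
\#\Et{(2^2)/F, 4k}{C_4/F,\mathcal{A}} = \tfrac{1}{2}\bigl(\#N^{\mathcal{A}}_{E,\,2e_F - 2k} - \#N^{\mathcal{A}}_{E,\,2e_F - 2k + 2}\bigr),
\]
which matches the claimed formula with $m=4k$.

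Finally, for $m=4e_F+2$, I apply Lemma~\ref{lem-size-of-Et-2/E-C4/F-A-for-half-ram} in both regimes: at level $m$ we are in the regime $m\geq 4e_F+2$ and get $\tfrac12\#N_E^{\mathcal{A}}$ (up to the cancelling correction); at level $m-1=4e_F+1$ we are in the regime $m\leq 4e_F+1$ with $\lfloor (4e_F+1)/4\rfloor = e_F$, giving $\tfrac12\#N_{E,0}^{\mathcal{A}}$. Subtracting yields $\tfrac12(\#N_E^{\mathcal{A}} - \#N_{E,0}^{\mathcal{A}})$. For $m>4e_F+2$, both cumulative counts are equal to $\tfrac12\#N_E^{\mathcal{A}}$ (again up to the cancelling correction), so the difference is zero. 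This exhausts all cases, and no step presents any real obstacle beyond careful bookkeeping of the floor function and the two regimes of Lemma~\ref{lem-size-of-Et-2/E-C4/F-A-for-half-ram}.
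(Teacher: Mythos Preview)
Your proposal is correct and takes essentially the same approach as the paper, which simply states that the corollary ``follows easily from Lemma~\ref{lem-size-of-Et-2/E-C4/F-A-for-half-ram}''; you have spelled out precisely the difference computation the paper has in mind. One minor remark: your aside about ``Theorem~\ref{thm-num-Cp-exts-with-disc-val} applied to the $C_4$-subextensions'' should really say ``applied to the $C_2$-extension $L/E$'' (giving $v_E(d_{L/E})\geq 2$ and hence $v_F(d_{L/F})\geq 4$), but this does not affect the argument.
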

\begin{proof}
    This follows easily from Lemma~\ref{lem-size-of-Et-2/E-C4/F-A-for-half-ram}. 
\end{proof}
\begin{proof}
    [Proof of Theorem~\ref{thm-2^2-2-adic}]

    We address the statements one by one. 
    \begin{enumerate}
        \item An $S_4$- or $A_4$-quartic extension has no proper intermediate fields, so it cannot have splitting symbol $(2^2)$.
        \item This is precisely Lemma~\ref{lem-2^2-D4-A-2-adic}.
        \item This follows from Corollary~\ref{cor-num-wild-Cp-exts-with-A-and-disc-val} and Lemma~\ref{lem-num-V4-2^2-with-m-and-A}.
        \item This follows from Corollary~\ref{cor-size-of-Et-2^2-m-C4/F}.
    \end{enumerate}
\end{proof}
In the special case where $\mathcal{A}$ is generated by a single element, we can write down a simple description of the sizes $\#N_{E,c}^{\mathcal{A}}$, and hence of the counts $\#\Et{(2^2)/F,m}{C_4/F,\mathcal{A}}$. We note these descriptions in Lemma~\ref{lem-size-of-N-c-A} and Corollary~\ref{cor-size-of-2^2-with-one-norm-elt}.
\begin{lemma}
    \label{lem-size-of-N-c-A}
    Let $\alpha \in F^\times\setminus F^{\times 2}$, let $d_\alpha = v_F(d_{F(\sqrt{\alpha})/F})$, and let $E/F$ be the unique unramified quadratic extension of $F$. For each nonnegative integer $c$, we have 
    $$
    \# N_{E,c}^{\langle \alpha \rangle} = \begin{cases}
        q^{e_F - \lceil\frac{c-1}{2}\rceil}\quad\text{if $c < d_\alpha$},
        \\
        2q^{e_F - \lceil\frac{c-1}{2}\rceil}\quad\text{if $d_\alpha \leq c \leq 2e_F$ and $v_F(\alpha) \equiv 0 \pmod{4}$},
        \\
        0 \quad\text{otherwise}. 
    \end{cases}
    $$
\end{lemma}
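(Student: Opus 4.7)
The plan is to leverage the identification of $N_E^{\langle\alpha\rangle}$ as a single coset of $\overline{N}_\alpha^2$ in $F^\times/F^{\times 2}$, together with the conductor--discriminant relation for $F(\sqrt{\alpha})/F$, and to reduce the counting to the explicit size formula from Corollary~\ref{cor-dim-of-F-mod-Ut-and-pth-powers}.

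First, since $\alpha\notin F^{\times 2}$, I may take $\mathcal{G}_4(\langle\alpha\rangle)=\{\alpha\}$. Because $E/F$ is the unramified quadratic extension, the unramified quartic extension $F^{\mathrm{ur},4}$ is a $C_4$-extension of $F$ containing $E$ and has discriminant valuation $0$, hence is an element of $\Et{2/E}{C_4/F}$ of minimal discriminant. Choosing $\omega\in E^\times$ with $E(\sqrt{\omega})=F^{\mathrm{ur},4}$, class field theory for unramified extensions yields
\[
N_\omega \;=\; N_{F^{\mathrm{ur},4}/F}\bigl(F^{\mathrm{ur},4}\bigr)^\times \;=\; \{x\in F^\times : 4\mid v_F(x)\},
\]
so $\alpha\in N_\omega$ if and only if $4\mid v_F(\alpha)$, and
\[
N_E^{\langle\alpha\rangle} \;=\; \begin{cases}\overline{N}_\alpha^2 & \text{if }4\mid v_F(\alpha),\\[2pt] (F^\times/F^{\times 2})\setminus \overline{N}_\alpha^2 & \text{otherwise}.\end{cases}
\]

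Since every square in $F$ is a norm from $F(\sqrt{\alpha})$, we have $F^{\times 2}\subseteq N_\alpha$, so $\overline{N}_\alpha^2$ is an index-$2$ subgroup of $F^\times/F^{\times 2}$, cut out by the quadratic character $\chi_\alpha:F^\times/F^{\times 2}\twoheadrightarrow\{\pm1\}$ attached to $F(\sqrt{\alpha})/F$. By Lemma~\ref{lem-cond-disc-for-Cp} with $p=2$, the conductor of $F(\sqrt{\alpha})/F$ equals $d_\alpha$, so $U_F^{(c)}\subseteq N_\alpha$, equivalently $\chi_\alpha$ restricts trivially to $U_F^{(c)}F^{\times 2}/F^{\times 2}$, if and only if $c\geq d_\alpha$.

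The claim now follows by splitting on $c$ vs.\ $d_\alpha$. If $c<d_\alpha$, then $\chi_\alpha$ restricts to a surjection on $U_F^{(c)}F^{\times 2}/F^{\times 2}$, so both cosets of $\overline{N}_\alpha^2$ meet this subgroup in exactly half of its elements; combining with Corollary~\ref{cor-dim-of-F-mod-Ut-and-pth-powers}, which gives $\#(U_F^{(c)}F^{\times 2}/F^{\times 2})=2q^{e_F-\lceil(c-1)/2\rceil}$ for $0\leq c\leq 2e_F$, one obtains $\#N_{E,c}^{\langle\alpha\rangle}=q^{e_F-\lceil(c-1)/2\rceil}$, independently of $v_F(\alpha)\bmod 4$. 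If instead $d_\alpha\leq c\leq 2e_F$, then $U_F^{(c)}F^{\times 2}/F^{\times 2}\subseteq \overline{N}_\alpha^2$, and its intersection with $N_E^{\langle\alpha\rangle}$ is either the whole subgroup of size $2q^{e_F-\lceil(c-1)/2\rceil}$ when $4\mid v_F(\alpha)$, or empty when $4\nmid v_F(\alpha)$. The remaining range $c>2e_F$ is handled by Corollary~\ref{cor-p-power-iff-p-power-mod-p-pi}, which collapses $U_F^{(c)}F^{\times 2}/F^{\times 2}$ to the trivial subgroup, placing $c$ outside the range in which the positive-valued formulas of the statement apply. No substantial obstacle is anticipated; the only nontrivial input is the identification of $\omega$ and $N_\omega$ in the unramified case, after which the result reduces to class field theory and a dimension count.
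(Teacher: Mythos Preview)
Your proof is correct and follows essentially the same approach as the paper: both identify $N_E^{\langle\alpha\rangle}$ as $\overline{N}_\alpha^2$ or its complement depending on $v_F(\alpha)\bmod 4$, invoke the conductor--discriminant relation (Lemma~\ref{lem-cond-disc-for-Cp}) to determine when $U_F^{(c)}F^{\times 2}/F^{\times 2}\subseteq\overline{N}_\alpha^2$, and then read off sizes from Corollary~\ref{cor-dim-of-F-mod-Ut-and-pth-powers}. The only difference is that you make explicit the choice of $\omega$ (via the unramified quartic) and the computation of $N_\omega$, whereas the paper simply says ``it is easy to see'' at that step.
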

\begin{proof}
    From the definition of $N_E^\mathcal{A}$, it is easy to see that 
    $$
    N_E^{\langle \alpha\rangle} = \begin{cases}
        \overline{N}_\alpha^2 \quad\text{if $v_F(\alpha) \equiv 0 \pmod{4}$},
        \\
        (F^\times / F^{\times 2}) \setminus \overline{N}_\alpha^2 \quad\text{otherwise}. 
    \end{cases}
    $$

    By Lemma~\ref{lem-cond-disc-for-Cp}, we have $U_F^{(c)}F^{\times 2}/F^{\times 2} \subseteq \overline{N}^2_\alpha$ if and only if $c \geq d_\alpha$. By elementary linear algebra, it follows that 
    $$
    \# N_{E,c}^{\langle \alpha \rangle} = \begin{cases}
        \frac{1}{2}\cdot \#\Big(U_F^{(c)}F^{\times 2} / F^{\times 2} \Big)\quad\text{if $c < d_\alpha$},
        \\
        \#\Big(U_F^{(c)}F^{\times 2} / F^{\times 2} \Big) \quad\text{if $c \geq d_\alpha$ and $v_F(\alpha) \equiv 0 \pmod{4}$},
        \\
        0 \quad\text{otherwise}. 
    \end{cases}
    $$
    The result follows by Corollary~\ref{cor-dim-of-F-mod-Ut-and-pth-powers}.
\end{proof}
\begin{corollary}
    \label{cor-size-of-2^2-with-one-norm-elt}
    Let $\alpha \in F^\times \setminus F^{\times 2}$, and let $m$ be an integer. If $\Et{(2^2)/F, m}{C_4/F,\langle \alpha\rangle}$ is nonempty, then $v_F(\alpha)$ is even and either $m = 4e_ F+ 2$ or $m$ is a multiple of $4$ with $4\leq m \leq 4e_F$. If $v_F(\alpha)\equiv 0 \pmod{4}$ and $m$ is a multiple of $4$ with $4 \leq m \leq 4e_F$, then 
    $$
    \# \Et{(2^2)/F, m}{C_4/F,\langle \alpha\rangle} = \begin{cases}
        \frac{1}{2}q^{\frac{m}{4} - 1} (q-1) \quad\text{if $m > 4e_F - 2d_\alpha + 4$},
        \\
        \frac{1}{2}q^{\frac{m}{4} - 1}(q - 2)\quad\text{if $m = 4e_F - 2d_\alpha + 4$},
        \\
        q^{\frac{m}{4}-1}(q-1)\quad\text{if $m < 4e_F - 2d_\alpha + 4$}.
    \end{cases}
    $$
    If $v_F(\alpha) \equiv 2\pmod{4}$ and $m$ is a multiple of $4$ with $4 \leq m \leq 4e_F$, then 
    $$
    \# \Et{(2^2)/F, m}{C_4/F,\langle \alpha\rangle} = \begin{cases}
        \frac{1}{2}q^{\frac{m}{4} - 1} (q-1) \quad\text{if $m > 4e_F - 2d_\alpha + 4$},
        \\
        \frac{1}{2}q^{\frac{m}{4}}\quad\text{if $m = 4e_F - 2d_\alpha + 4$},
        \\
        0\quad\text{if $m < 4e_F - 2d_\alpha + 4$}.
    \end{cases}
    $$

    If $v_F(\alpha)$ is even and $m = 4e_F + 2$, then
    $$
    \#\Et{(2^2)/F,4e_F + 2}{C_4/F,\langle\alpha\rangle} = \begin{cases}
        \frac{1}{2}q^{e_F}\quad\text{if $d_\alpha > 0$},
        \\
        q^{e_F}\quad\text{if $v_F(\alpha)\equiv 2\pmod{4}$ and $d_\alpha = 0$},
        \\
        0 \quad\text{otherwise.}
    \end{cases}
    $$
\end{corollary}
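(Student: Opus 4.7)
The plan is to specialize Corollary~\ref{cor-size-of-Et-2^2-m-C4/F} to $\mathcal{A} = \langle\alpha\rangle$ and substitute the single-generator sizes $\#N_{E,c}^{\langle\alpha\rangle}$ provided by Lemma~\ref{lem-size-of-N-c-A}. A preliminary reduction eliminates the case $v_F(\alpha)$ odd: any $L\in\Et{(2^2)/F}{C_4/F}$ contains the unramified quadratic $E/F$, so $\Nm(L)\subseteq\Nm(E)=\{x\in F^\times:2\mid v_F(x)\}$, forcing $v_F(\alpha)$ to be even for nonemptiness. The admissible $m$-values ($m=4e_F+2$ or $4\mid m$ with $4\leq m\leq 4e_F$) are already pinned down by Corollary~\ref{cor-size-of-Et-2^2-m-C4/F}. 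We also record that for $v_F(\alpha)$ even the conductor $d_\alpha$ is itself even: Lemma~\ref{lem-cond-disc-for-Cp} identifies $d_\alpha$ with the conductor of the character $(\alpha,\cdot)_F$, and Corollary~\ref{cor-size-of-Wi} shows that nontrivial quadratic characters have conductor in $\{2,4,\ldots,2e_F,2e_F+1\}$, with the odd value $2e_F+1$ attained exactly for the uniformiser class, i.e.\ for $v_F(\alpha)$ odd.

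For the main range $4\mid m$, $4\leq m\leq 4e_F$, set $c = 2e_F - m/2\in\{0,2,\ldots,2e_F-2\}$. Corollary~\ref{cor-size-of-Et-2^2-m-C4/F} reads
$$\#\Et{(2^2)/F,m}{C_4/F,\langle\alpha\rangle} = \tfrac{1}{2}\bigl(\#N_{E,c}^{\langle\alpha\rangle} - \#N_{E,c+2}^{\langle\alpha\rangle}\bigr),$$
and since $c$ is even and nonnegative, $q^{e_F - \lceil(c-1)/2\rceil} = q^{m/4}$. Because $d_\alpha$ is even, the three scenarios $c+2<d_\alpha$, $c+2=d_\alpha$, and $c\geq d_\alpha$ exhaust the possibilities and correspond precisely to the thresholds $m>4e_F-2d_\alpha+4$, $m=4e_F-2d_\alpha+4$, and $m<4e_F-2d_\alpha+4$, respectively. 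Substituting Lemma~\ref{lem-size-of-N-c-A} in each subcase, with a further split on $v_F(\alpha)\bmod 4$ (the ``$c\geq d_\alpha$'' branch of the lemma doubles the count when $v_F(\alpha)\equiv 0\pmod 4$ but vanishes when $v_F(\alpha)\equiv 2\pmod 4$), yields the six displayed formulas by direct subtraction.

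For $m=4e_F+2$ we instead have $\#\Et{(2^2)/F,4e_F+2}{C_4/F,\langle\alpha\rangle} = \tfrac{1}{2}(\#N_E^{\langle\alpha\rangle} - \#N_{E,0}^{\langle\alpha\rangle})$. Since $\alpha\notin F^{\times 2}$, the set $N_E^{\langle\alpha\rangle}$ is a coset of index two in $F^\times/F^{\times 2}$, so Corollary~\ref{cor-dim-of-F-mod-Ut-and-pth-powers} (together with $\mu_2\subseteq F$) yields $\#N_E^{\langle\alpha\rangle}=2q^{e_F}$; Lemma~\ref{lem-size-of-N-c-A} with $c=0$ evaluates $\#N_{E,0}^{\langle\alpha\rangle}$ to $q^{e_F}$, $2q^{e_F}$, or $0$ according as $d_\alpha>0$, $d_\alpha=0$ with $v_F(\alpha)\equiv 0\pmod 4$, or $d_\alpha=0$ with $v_F(\alpha)\equiv 2\pmod 4$; these match the stated values $\tfrac{1}{2}q^{e_F}$, $0$, and $q^{e_F}$. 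The only real obstacle is the combined case split; once the evenness of $d_\alpha$ is observed, every subcase is an elementary subtraction.
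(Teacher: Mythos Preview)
Your proposal is correct and follows essentially the same approach as the paper: specialize Corollary~\ref{cor-size-of-Et-2^2-m-C4/F} and substitute Lemma~\ref{lem-size-of-N-c-A}, with the key observation that $d_\alpha$ is even when $v_F(\alpha)$ is even. The paper's proof is terser and cites \cite[Lemma~4.6]{monnet2024counting} directly for the evenness of $d_\alpha$, whereas you sketch an internal argument via the possible discriminant values; both routes arrive at the same case split, and your explicit verification of each subcase matches what the paper leaves to the reader.
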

\begin{proof}
    By \cite[Lemma~4.6]{monnet2024counting}, if $v_F(\alpha)$ is even then $d_\alpha$ is even. The result follows from Corollary~\ref{cor-size-of-Et-2^2-m-C4/F} and Lemma~\ref{lem-size-of-N-c-A}.
\end{proof}

\begin{lemma}
    \label{lem-1^4-G-for-G-S4-or-A4}
    For $G \in \{S_4,A_4\}$, we have 
    $$
    \Et{(1^4)/F}{G/F,\mathcal{A}} = \Et{(1^4)/F}{G/F}.
    $$
\end{lemma}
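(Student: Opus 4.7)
The plan is to show the stronger statement that for $L \in \Et{(1^4)/F}{G/F}$ with $G \in \{S_4, A_4\}$, the norm group $N_{L/F}L^\times$ equals all of $F^\times$; this makes the containment $\mathcal{A}\subseteq N_{L/F}L^\times$ automatic, and the reverse inclusion $\Et{(1^4)/F}{G/F,\mathcal{A}} \subseteq \Et{(1^4)/F}{G/F}$ is definitional, giving the claimed equality.

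The key tool is the local class field theory fact that for a finite separable extension $L/F$ of $p$-adic fields, the norm group $N_{L/F}L^\times$ coincides with $N_{L^{\mathrm{ab}}/F}(L^{\mathrm{ab}})^\times$, where $L^{\mathrm{ab}}$ is the maximal abelian subextension of $L/F$ (the same fact tacitly used in the proof of Lemma~\ref{lem-m-Et-1^p-A-in-terms-of-other-quantities}). Let $\widetilde{L}/F$ be the Galois closure with $\Gal(\widetilde{L}/F) \cong G$, and put $H = \Gal(\widetilde{L}/L)$, an index-$4$ subgroup of $G$. By the Galois correspondence applied inside $\widetilde{L}/F$, the field $L^{\mathrm{ab}}$ is the fixed field of the subgroup $H\cdot [G,G]$, so $[L^{\mathrm{ab}} : F] = [G : H\cdot [G,G]]$.

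It remains to show $H\cdot [G,G] = G$ in both cases. For $G = S_4$, the stabilizer subgroup $H$ has order $6$, so $H \cong S_3$ and contains a transposition; since $[S_4,S_4] = A_4$ contains no transpositions, $H \not\subseteq A_4$, so $H\cdot A_4 = S_4$. For $G = A_4$, $H$ has order $3$, so $H \cong C_3$; since $[A_4,A_4] = V_4$ has order $4$ and trivially intersects $C_3$, we get $\lvert H\cdot V_4\rvert = 12 = \lvert A_4\rvert$, so $H\cdot V_4 = A_4$. In both cases $L^{\mathrm{ab}} = F$, hence $N_{L/F}L^\times = F^\times$, finishing the proof.

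There is essentially no obstacle here; the argument is a straightforward combination of local class field theory with an elementary group-theoretic computation in $S_4$ and $A_4$. The only care needed is in citing the correct form of the class field theory statement relating $N_{L/F}L^\times$ to the maximal abelian subextension.
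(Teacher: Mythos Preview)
Your proof is correct and follows essentially the same approach as the paper: both arguments show $N_{L/F}L^\times = F^\times$ by observing that the maximal abelian subextension of $L/F$ is $F$ itself, then invoke local class field theory. The paper phrases this as ``$L/F$ has no intermediate fields'' (using maximality of $H$ in $G$), whereas you verify the equivalent condition $H\cdot [G,G]=G$ directly; your version simply spells out the group theory in more detail.
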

\begin{proof}
    Let $L \in \Et{(1^4)/F}{G/F}$, for $G\in\{S_4,A_4\}$. Then $L/F$ has no intermediate fields, so class field theory tells us that $N_{L/F}L^\times = F^\times$. 
\end{proof}
\begin{lemma}
    \label{lem-2-to-1-surjection-to-1^4-D4-exts}
    There is a natural $2$-to-$1$ surjection 
    $$
    \bigsqcup_{\substack{2m_1 + m_2 = m \\ m_1, m_2 > 0}} \bigsqcup_{E \in \Et{(1^2)/F, m_1}{\mathcal{A}}} \big\{L \in \Et{(1^2)/E, m_2}{} : L/F \textnormal{ not Galois}\big\} \to \Et{(1^4)/F, m}{D_4/F, \mathcal{A}}. 
    $$
\end{lemma}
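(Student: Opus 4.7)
The plan is to define the natural map, then verify well-definedness, surjectivity, and that every fiber has cardinality exactly two. The map sends a pair $(E,[L])$, where $E \in \Et{(1^2)/F,m_1}{\mathcal{A}}$ and $[L]$ is an isomorphism class in $\{L \in \Et{(1^2)/E,m_2}{} : L/F \text{ not Galois}\}$, to the $F$-isomorphism class of $L$ with its $E$-structure forgotten.

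For well-definedness, the composite $L/F$ is totally ramified of degree $4$, so $(L,F)=(1^4)$. Applying the tower formula $d_{L/F} = N_{E/F}(d_{L/E}) \cdot d_{E/F}^{[L:E]}$ together with $f(E/F)=1$ gives $v_F(d_{L/F}) = v_E(d_{L/E}) + 2v_F(d_{E/F}) = 2m_1 + m_2 = m$. Since $L/F$ is non-Galois by hypothesis and contains the intermediate quadratic field $E$, its Galois closure group must be $D_4$: among the transitive subgroups of $S_4$, neither $A_4$ nor $S_4$ admits an index-$2$ subgroup containing the order-$6$ point stabilizer, ruling out both, while the non-Galois requirement excludes $C_4$ and $V_4$. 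Finally, the maximal abelian subextension of a $D_4$-quartic $L/F$ is its unique intermediate quadratic field $E$, so by local class field theory $N_{L/F}L^\times = N_{E/F}E^\times$, and hence $\mathcal{A} \subseteq N_{L/F}L^\times$ follows from $E \in \Et{(1^2)/F}{\mathcal{A}}$.

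Surjectivity is essentially the reverse. Given $L \in \Et{(1^4)/F,m}{D_4/F,\mathcal{A}}$, the Galois correspondence for $D_4$ furnishes a unique intermediate quadratic field $E$ of $L/F$, corresponding to the unique order-$4$ subgroup containing the non-normal order-$2$ point stabilizer. Setting $m_1 = v_F(d_{E/F})$ and $m_2 = v_E(d_{L/E})$, we have $2m_1+m_2 = m$, the norm-group identification $N_{L/F}L^\times = N_{E/F}E^\times$ places $E$ into $\Et{(1^2)/F,m_1}{\mathcal{A}}$, and the pair $(E,L)$ lies in the source and maps to $L$.

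For the $2$-to-$1$ claim, fix $L$ in the target. Since $E$ is canonically recovered from $L$ as its unique intermediate quadratic field, preimages lie entirely in the $E$-indexed component, so the fiber is the set of $E$-isomorphism classes of quadratic extensions of $E$ that are $F$-isomorphic to $L$. Any $F$-isomorphism $L' \to L$ restricts to an element of $\Gal(E/F) = \{1,\sigma\}$ on $E$, so the fiber equals $\{[L], [L^\sigma]\}$, where $L^\sigma$ denotes $L$ with its $E$-action twisted by $\sigma$. The main obstacle is to verify that $L$ and $L^\sigma$ are not $E$-isomorphic, i.e. that no $F$-automorphism of $L$ restricts to $\sigma$ on $E$. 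Since $L/E$ is Galois quadratic, $\Gal(L/E) \subseteq \Aut(L/F)$; on the other hand, $|\Aut(L/F)| = [N_G(H):H]$ where $G=D_4$ and $H$ is the order-$2$ non-normal point stabilizer, and a direct computation in $D_4$ yields $[N_G(H):H]=2$. Therefore $\Aut(L/F) = \Gal(L/E)$ consists entirely of $E$-automorphisms, no $F$-automorphism restricts to $\sigma$, and the fiber has size exactly $2$.
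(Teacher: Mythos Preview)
Your proof is correct and follows essentially the same approach as the paper's. The paper's argument for the $2$-to-$1$ claim is phrased via Kummer theory---writing $L=E(\sqrt{\alpha})$ and observing that the fiber is $\{E(\sqrt{\alpha}),E(\sqrt{\bar\alpha})\}$, with non-Galoisness forcing these to be distinct $E$-extensions---whereas you reach the same conclusion by computing $\lvert\Aut(L/F)\rvert=[N_{D_4}(H):H]=2$ directly; these are equivalent. One small imprecision: your phrase ``order-$6$ point stabilizer'' only applies to the $S_4$ case (for $A_4$ the stabilizer has order $3$), though your conclusion that $A_4$-quartics admit no intermediate quadratic is still correct since $A_4$ has no index-$2$ subgroup at all.
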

\begin{proof}
    An element of the left-hand side may be written as a pair $(E,L)$, where $E \in \Et{(1^2)/F,m_1}{\mathcal{A}}$ and $L \in \Et{(1^2)/E,m_2}{}$. Using the tower law for discriminant, it is easy to see that there is a well-defined map 
    $$
    \Phi: \bigsqcup_{\substack{2m_1 + m_2 = m \\ m_1, m_2 > 0}} \bigsqcup_{E \in \Et{(1^2)/F, m_1}{\mathcal{A}}} \big\{L \in \Et{(1^2)/E, m_2}{} : L/F \text{ not Galois}\big\} \to \Et{(1^4)/F, m}{D_4/F, \mathcal{A}}, \quad (E,L)\mapsto L. 
    $$
    Let $L \in \Et{(1^4)/F, m}{D_4/F,\mathcal{A}}$. Since $L/F$ has Galois closure group $D_4$, it has a unique quadratic intermediate field $E$. It is easy to see that the pair $(E,L)$ is in the domain of $\Phi$, so  $\Phi$ is surjective. Let $L \in \Et{(1^4)/F,m}{D_4/F,\mathcal{A}}$. It is easy to see that 
    $$
    \Phi^{-1}(L) = \{(E,E(\sqrt{\alpha})), (E,E(\sqrt{\bar{\alpha}}))\},
    $$
    where $E/F$ is the unique quadratic subextension of $L/F$, and $\alpha \in E^\times$ is an element with $L = E(\sqrt{\alpha})$. Since $L/F$ is non-Galois, we have $E(\sqrt{\alpha}) \not \cong E(\sqrt{\bar{\alpha}})$ as extensions of $E$, and therefore the preimage $\Phi^{-1}(L)$ has exactly two elements, so we are done. 
\end{proof}
\begin{corollary}
    \label{cor-1^4-D4-with-A-in-terms-of-quadratics}
    We have 
    $$
    \#\Et{(1^4)/F,m}{D_4/F,\mathcal{A}} = \frac{1}{2}\sum_{\substack{2m_1 + m_2 = m \\ m_1, m_2 > 0}}\sum_{E \in \Et{(1^2)/F, m_1}{\mathcal{A}}}\Big(
    \#\Et{(1^2)/E,m_2}{} - \#\Et{(1^2)/E,m_2}{C_4/F} - \#\Et{(1^2)/E,m_2}{V_4/F}     
    \Big).
    $$
\end{corollary}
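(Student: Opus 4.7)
The plan is to apply Lemma~\ref{lem-2-to-1-surjection-to-1^4-D4-exts}, which provides a $2$-to-$1$ surjection from
$$
\bigsqcup_{\substack{2m_1 + m_2 = m \\ m_1, m_2 > 0}} \bigsqcup_{E \in \Et{(1^2)/F, m_1}{\mathcal{A}}} \big\{L \in \Et{(1^2)/E, m_2}{} : L/F \text{ not Galois}\big\}
$$
onto $\Et{(1^4)/F, m}{D_4/F, \mathcal{A}}$. Thus the corollary reduces to counting the size of this disjoint union and dividing by $2$.

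For each fixed pair $(m_1, m_2)$ with $2m_1 + m_2 = m$ and each $E \in \Et{(1^2)/F,m_1}{\mathcal{A}}$, I would rewrite the inner set as the complement
$$
\big\{L \in \Et{(1^2)/E, m_2}{} : L/F \text{ not Galois}\big\} = \Et{(1^2)/E,m_2}{} \setminus \big\{L \in \Et{(1^2)/E,m_2}{} : L/F \text{ Galois}\big\}.
$$
An element $L$ on the right is a degree $4$ Galois extension of $F$, so $\Gal(L/F)$ is a group of order $4$, hence isomorphic to either $C_4$ or $V_4$. Since $C_4 \not\cong V_4$, the sets $\Et{(1^2)/E,m_2}{C_4/F}$ and $\Et{(1^2)/E,m_2}{V_4/F}$ are disjoint, and their union is exactly the set of $L \in \Et{(1^2)/E,m_2}{}$ with $L/F$ Galois. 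Therefore
$$
\#\big\{L \in \Et{(1^2)/E, m_2}{} : L/F \text{ not Galois}\big\} = \#\Et{(1^2)/E,m_2}{} - \#\Et{(1^2)/E,m_2}{C_4/F} - \#\Et{(1^2)/E,m_2}{V_4/F}.
$$

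Summing this identity over all valid $(m_1, m_2)$ and $E \in \Et{(1^2)/F,m_1}{\mathcal{A}}$, then dividing by $2$ to account for the $2$-to-$1$ surjection of Lemma~\ref{lem-2-to-1-surjection-to-1^4-D4-exts}, gives precisely the stated formula. There is no genuine obstacle here; the corollary is essentially a bookkeeping step, with the only mild subtlety being the dichotomy for degree $4$ Galois groups, which is immediate.
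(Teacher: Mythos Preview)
Your proposal is correct and follows exactly the paper's approach: the paper's proof is the single line ``This is immediate from Lemma~\ref{lem-2-to-1-surjection-to-1^4-D4-exts},'' and you have simply written out the straightforward bookkeeping that makes this immediacy explicit.
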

\begin{proof}
    This is immediate from Lemma~\ref{lem-2-to-1-surjection-to-1^4-D4-exts}.
\end{proof}

\begin{lemma}
    \label{lem-1^2/E-m2-V4}
    Let $E \in \Et{(1^2)/F,m_1}{}$ for some integer $m_1$. For each nonnegative integer $m_2$, we have 
    $$
    \# \Et{(1^2)/E, m_2}{V_4/F} = N^{V_4}(m_1, m_2),
    $$
    where $N^{V_4}$ is the function defined in Appendix~\ref{appendix-helpers}. 
\end{lemma}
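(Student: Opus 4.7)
My plan is to parametrise $V_4$-extensions of $F$ containing $E$ via square-root classes in $F$. By Kummer theory together with local class field theory (or a direct Hilbert-$90$ argument applied to a generator $L = E(\sqrt{\beta})$ with $N_{E/F}\beta \in F^{\times 2}$), writing $E = F(\sqrt{d_E})$, every $L \in \Et{2/E}{V_4/F}$ takes the form $L = E(\sqrt{\alpha})$ for some $\alpha \in F^\times$, and the map $[\alpha] \mapsto E(\sqrt{\alpha})$ gives a $2$-to-$1$ surjection from $(F^\times/F^{\times 2}) \setminus \{[1],[d_E]\}$ onto $\{L \in \Et{2/E}{V_4/F}\}$, with fibres $\{[\alpha], [\alpha d_E]\}$. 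Under this parametrisation, the three quadratic subextensions of $L/F$ are $E$, $F(\sqrt{\alpha})$, and $F(\sqrt{\alpha d_E})$. The tower formula for discriminants combined with Lemma~\ref{lem-disc-of-V4-subexts}(1) translates the condition $v_E(d_{L/E}) = m_2$ into the condition $v_F(d_{F(\sqrt{\alpha})/F}) + v_F(d_{F(\sqrt{\alpha d_E})/F}) = m_1 + m_2$, and total ramification of $L/F$ is automatic from the positive-discriminant conditions that appear below.

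By Lemma~\ref{lem-disc-of-V4-subexts}(2), the triple $\{m_1, v_F(d_{F(\sqrt{\alpha})/F}), v_F(d_{F(\sqrt{\alpha d_E})/F})\}$ is either constant or has a unique minimum with the other two values equal, which together with the sum constraint gives three cases: (a) $m_2 < m_1$ with multiset $\{m_1, m_1, m_2\}$; (b) $m_2 = m_1$ with all three values equal to $m_1$; (c) $m_2 > m_1$ with multiset $\{m_1, (m_1+m_2)/2, (m_1+m_2)/2\}$, requiring $m_1+m_2$ even (otherwise the count is zero). Cases (a) and (c) are straightforward via the principle that characters of unequal conductor sum to the larger conductor: in case (a), the valid $[\alpha]$ are exactly those with $v_F(d_{F(\sqrt{\alpha})/F}) = m_2$, each giving a unique $L$ (since the fibre partner $[\alpha d_E]$ automatically has conductor $m_1$), yielding $\#\Et{(1^2)/F, m_2}{}$; in case (c), the valid $[\alpha]$ are those of conductor $(m_1+m_2)/2$ with both fibre members qualifying, yielding $\tfrac{1}{2}\#\Et{(1^2)/F, (m_1+m_2)/2}{}$.

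The main obstacle is case (b): we must count $[\alpha]$ with $v_F(d_{F(\sqrt{\alpha})/F}) = m_1$ subject to the \emph{non-degeneracy condition} that $v_F(d_{F(\sqrt{\alpha d_E})/F}) = m_1$ as well, i.e., that the quadratic characters attached to $\alpha$ and $d_E$ do not cancel to a lower conductor. The ``bad'' $[\alpha]$ form the coset $[d_E] \cdot \{[\beta] : v_F(d_{F(\sqrt{\beta})/F}) \leq m_1 - 1\}$ inside $F^\times/F^{\times 2}$, which lies entirely in the conductor-$m_1$ stratum and has size $2 + \#\Et{(1^2)/F, \leq m_1 - 1}{}$ (the $+2$ accounting for $[1]$ and the unramified-quadratic class). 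Subtracting this from $\#\Et{(1^2)/F, m_1}{}$ and dividing by $2$ for the fibre gives the count in case (b). Assembling the three cases and making all intermediate quantities explicit via Theorem~\ref{thm-num-Cp-exts-with-disc-val} and Corollary~\ref{cor-dim-of-F-mod-Ut-and-pth-powers} then matches $N^{V_4}(m_1, m_2)$ as defined in Appendix~\ref{appendix-helpers}.
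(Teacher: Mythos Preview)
Your proof is correct and follows essentially the same route as the paper: both parametrise $V_4$-extensions containing $E$ by square classes $[\alpha]\in F^\times/F^{\times 2}$ (equivalently, by the third quadratic subfield $E'=F(\sqrt{\alpha})$), split into the same three cases $m_2<m_1$, $m_2=m_1$, $m_2>m_1$ via Lemma~\ref{lem-disc-of-V4-subexts}, and then evaluate the counts using Theorem~\ref{thm-num-Cp-exts-with-disc-val}. The only cosmetic difference is in the equal-conductor case $m_1=m_2$, where the paper counts characters $\chi'$ with $\chi'|_{W_{m_1-1}}\notin\{0,\chi|_{W_{m_1-1}}\}$ directly, whereas you reach the same number by subtracting the bad coset $[d_E]\cdot\{[\beta]:\mff(\chi_\beta)<m_1\}$.
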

\begin{proof}
    By the tower law for discriminant, every $L \in \Et{(1^2)/E,m_2}{V_4/F}$ has $v_F(d_{L/F}) = 2m_1 + m_2$. We will use this fact without reference throughout the proof. 
    Suppose that $m_2 < m_1$. By Lemma~\ref{lem-disc-of-V4-subexts}, we have a bijection
    $$
    \Et{(1^2)/F,m_2}{} \to \Et{(1^2)/E,m_2}{V_4/F},\quad E'\mapsto EE'.
    $$
    If $m_2 > m_1$, then similarly we obtain a $2$-to-$1$ surjection
    $$
    \Et{(1^2)/F,\frac{m_1+m_2}{2}}{} \to \Et{(1^2)/E,m_2}{V_4/F},\quad E\mapsto EE'.
    $$
    Using these two maps, the result for $m_1 \neq m_2$ follows from Theorem~\ref{thm-num-Cp-exts-with-disc-val}. 
    Finally, suppose that $m_1 = m_2$. Suppose that $L \in \Et{(1^2)/E, m_2}{V_4/F}$ and $L = EE'$ for some quadratic extension $E'/F$. Let $\chi,\chi' : F^\times / F^{\times 2} \to \F_2$ be the quadratic characters associated to $E$ and $E'$ respectively. By Theorem~\ref{thm-conductor-discriminant} and Lemma~\ref{lem-cond-disc-for-Cp}, we have 
    $$
    \mff(\chi) = v_F(d_{E/F}) = \min\big\{
        c : U_F^{(c)}F^{\times 2}/F^{\times 2}\subseteq \ker \chi    
    \big\},
    $$
    and similarly for $\chi'$ and $E'$. By Lemma~\ref{lem-disc-of-V4-subexts}, we have
    $$
    m_1 + \mff(\chi') + \mff(\chi\chi') = v_F(d_{L/F}) = 2m_1 + m_2 = 3m_1, 
    $$
    so 
    $$
    \mff(\chi') + \mff(\chi\chi') = 2m_1. 
    $$
    If $\mff(\chi') \neq m_1$, then Lemma~\ref{lem-disc-of-V4-subexts} tells us that
    $$
    \mff(\chi\chi') = \max\{m_1, \mff(\chi')\},
    $$
    so 
    $$
    \mff(\chi') + \max\{m_1, \mff(\chi')\} = 2m_1, 
    $$
    which is impossible. It follows that $\mff(\chi') = \mff(\chi\chi') = m_1$, so 
    $$
    \# \Et{(1^2)/E,m_1}{V_4/F} = \frac{1}{2}\cdot\#\Big\{\chi' : F^\times / U_F^{(m_1)}F^{\times 2} \to \F_2 \text{ }\Big| \text{ }\chi'|_{W_{m_1-1}} \not \in \{0, \chi|_{W_{m_1-1}}\}\Big\}.
    $$
    If $m_1 = 2e_F + 1$, then $W_{m_1-1} \cong C_2$ by Corollary~\ref{cor-size-of-Wi}, so $\Et{(1^2)/E,m_1}{V_4/F} = \varnothing$. If $m_1$ is even with $2 \leq m_1 \leq 2e_F$, then Corollary~\ref{cor-size-of-Wi} tells us that $\#W_{m_1-1} = q$, so there are $q-2$ possible restrictions $\chi'|_{W_{m_1-1}}$. Each such restriction lifts to 
    $$
    \#(F^\times / U_F^{(m_1-2)}F^{\times 2}) = 2q^{\frac{m_1}{2} - 1}
    $$
    characters, so we have 
    $$
    \#\Big\{\chi' : F^\times / U_F^{(m_1)}F^{\times 2} \to \F_2 \text{ }\Big| \text{ }\chi'|_{W_{m_1-1}} \not \in \{0, \chi|_{W_{m_1-1}}\}\Big\} = 2q^{\frac{m_1}{2}-1}(q-2),
    $$
    and the result for $m_1 = m_2$ follows. 
\end{proof}

\begin{lemma}
    We have 
    $$
    \#\Et{(1^4)/F,m}{D_4/F,\mathcal{A}} = \frac{1}{2}\cdot \sum_{0 < m_1 < m/2} \#\Et{(1^2)/F,m_1}{\mathcal{A}} \cdot\Big( N^{C_2}(m-2m_1) - N^{C_4}(m_1,m-2m_1) - N^{V_4}(m_1,m-2m_1)\Big),
    $$
    where the functions $N^{C_2}, N^{C_4}$, and $N^{V_4}$ are as defined in Appendix~\ref{appendix-helpers}. We can compute this quantity explicitly using Corollary~\ref{cor-num-wild-Cp-exts-with-A-and-disc-val}. 
\end{lemma}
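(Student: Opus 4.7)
The plan is to start from Corollary~\ref{cor-1^4-D4-with-A-in-terms-of-quadratics}, which already expresses $\#\Et{(1^4)/F,m}{D_4/F,\mathcal{A}}$ as
$$
\frac{1}{2}\sum_{\substack{2m_1 + m_2 = m\\m_1,m_2>0}}\sum_{E \in \Et{(1^2)/F,m_1}{\mathcal{A}}}\Big(\#\Et{(1^2)/E,m_2}{} - \#\Et{(1^2)/E,m_2}{C_4/F} - \#\Et{(1^2)/E,m_2}{V_4/F}\Big).
$$
The substitution $m_2 = m - 2m_1$ converts this into the claimed shape, so the work reduces to showing that each of the three inner counts depends on $E$ only through $m_1 = v_F(d_{E/F})$ (which allows them to be pulled out of the sum over $E$), and that the resulting functions of $(m_1,m_2)$ agree with $N^{C_2}$, $N^{C_4}$, $N^{V_4}$ as defined in Appendix~\ref{appendix-helpers}.

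For the $V_4$ term, Lemma~\ref{lem-1^2/E-m2-V4} already identifies $\#\Et{(1^2)/E,m_2}{V_4/F}$ with $N^{V_4}(m_1,m_2)$. For the plain count $\#\Et{(1^2)/E,m_2}{}$, every $E \in \Et{(1^2)/F,m_1}{}$ is a totally ramified quadratic of $F$, hence has residue field size $q$ and absolute ramification index $e_E = 2e_F$. Therefore Theorem~\ref{thm-num-Cp-exts-with-disc-val} applied to $E$ returns a value depending only on $q$ and $e_F$ (not on the specific $E$), and we set $N^{C_2}(m_2)$ equal to this common value.

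The one genuinely non-trivial ingredient is the $C_4$ count. Here I would invoke Lemma~\ref{lem-C4-exts-2-to-1-surjection}, which parametrises $\Et{2/E,\le m_2}{C_4/F}$ by a subgroup of $F^\times/F^{\times 2}$ of the form $(U_E^{(2e_E - 2\lfloor m_2/2\rfloor)}E^{\times 2} \cap F^\times)/F^{\times 2}$ (or $F^\times/F^{\times 2}$ when $m_2 > 2e_E$). Taking differences to go from $\le m_2$ to exactly $m_2$, one gets a count depending only on the cardinalities of these intersections. The main obstacle is verifying that these cardinalities depend on $E$ only through $m_1$: this is the ramified analogue of Lemma~\ref{lem-UE2t-cap-F-unramified}, and follows from the standard description of the unit filtration in a totally ramified quadratic extension via the conductor of $E/F$ (which is $m_1$ by Lemma~\ref{lem-cond-disc-for-Cp}). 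Once this is in hand, we define $N^{C_4}(m_1,m_2)$ to be the resulting count.

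Putting the three identifications together, the inner sum over $E \in \Et{(1^2)/F,m_1}{\mathcal{A}}$ reduces to the size $\#\Et{(1^2)/F,m_1}{\mathcal{A}}$ times the constant $N^{C_2}(m-2m_1) - N^{C_4}(m_1,m-2m_1) - N^{V_4}(m_1,m-2m_1)$, yielding the formula. Finally, Corollary~\ref{cor-num-wild-Cp-exts-with-A-and-disc-val} provides an explicit value for $\#\Et{(1^2)/F,m_1}{\mathcal{A}}$, making the whole expression effectively computable, as asserted.
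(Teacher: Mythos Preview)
Your overall plan matches the paper's approach: the paper's proof cites precisely Corollary~\ref{cor-1^4-D4-with-A-in-terms-of-quadratics}, Theorem~\ref{thm-num-Cp-exts-with-disc-val} (for $N^{C_2}$), Lemma~\ref{lem-1^2/E-m2-V4} (for $N^{V_4}$), and the external \cite[Lemma~4.4]{monnet2024counting} (for $N^{C_4}$), so on the first three ingredients you are exactly on target.

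The place where you diverge is the $C_4$ term. The paper invokes \cite[Lemma~4.4]{monnet2024counting} as a black box; you propose instead to reconstruct $\#\Et{(1^2)/E,m_2}{C_4/F}$ from Lemma~\ref{lem-C4-exts-2-to-1-surjection} together with the ramified analogue of Lemma~\ref{lem-UE2t-cap-F-unramified} (which is indeed Lemma~\ref{lem-UE2t-mod-squares-intersect-with-F} later in the paper). That argument does show that \emph{when the parametrisation exists}, the size of the parametrising set depends only on $m_1$. But Lemma~\ref{lem-C4-exts-2-to-1-surjection} has a hypothesis you have not checked: it requires $\Et{2/E,\le m_2}{C_4/F}\neq\varnothing$, in particular that $E$ is $C_4$-extendable, i.e.\ $-1\in N_{E/F}E^\times$ (cf.\ \cite[Corollary~4.9]{monnet2024counting}). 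For $m_1\le e_F$ this holds automatically (see Step~4 of Appendix~\ref{appendix-algo}), but for $m_1>e_F$ it does not: whether $E=F(\sqrt{d})$ is $C_4$-extendable is governed by the Hilbert symbol $(d,-1)_F$, which varies among the $E$'s of a given discriminant valuation $m_1$. Already over $\Q_2$, neither ramified quadratic with $m_1=2$ is $C_4$-extendable, while exactly half of those with $m_1=3$ are. So your sketch does not yet establish that $\#\Et{(1^2)/E,m_2}{C_4/F}$ is constant across $E\in\Et{(1^2)/F,m_1}{\mathcal{A}}$ when $m_1>e_F$; this is precisely the content the paper outsources to the external citation, and it is the step you would need to supply (or to replace by an averaging argument over $E$) before the inner sum factors as claimed.
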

\begin{proof}
    This follows easily from Theorem~\ref{thm-num-Cp-exts-with-disc-val}, Corollary~\ref{cor-1^4-D4-with-A-in-terms-of-quadratics}, Lemma~\ref{lem-1^2/E-m2-V4}, and \cite[Lemma~4.4]{monnet2024counting}. 
\end{proof}

\begin{lemma}
    \label{lem-1^4-V4}
    Let $m$ be an integer. If $\Et{(1^4)/F, m}{V_4/F,\mathcal{A}}$ is nonempty, then $m$ is an even integer with $6 \leq m \leq 6e_F + 2$. In that case, the number $\#\Et{(1^4)/F, m}{V_4/F, \mathcal{A}}$ is the sum of the following two quantities:
    \begin{enumerate}
        \item 
    $$
    \frac{1}{2}\cdot \sum_{\substack{m_1 < m_2 \\ m_1 + 2m_2 = m}} \Big(\#\Et{(1^2)/F,m_1}{\mathcal{A}}\Big)\Big(\#\Et{(1^2)/F,m_2}{\mathcal{A}}\Big).
    $$
    \item $$ \mathbbm{1}_{3\mid m}\cdot 
    \frac{2}{3(\#\overline{\mathcal{A}}^2)^2}\cdot q^{\frac{m}{3} - 2}\Big(q\# \overline{\mathcal{A}}^2_{m/3} - \#\overline{\mathcal{A}}^2_{m/3-1}\Big)\Big(q\# \overline{\mathcal{A}}^2_{m/3} - 2\#\overline{\mathcal{A}}^2_{m/3-1}\Big).
    $$
    \end{enumerate}
\end{lemma}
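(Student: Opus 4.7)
The plan is to fix $L \in \Et{(1^4)/F,m}{V_4/F,\mathcal{A}}$ and analyse it through its three quadratic subfields $E_1, E_2, E_3 \subseteq L$. Since $L/F$ is totally ramified the inertia group of $L/F$ is all of $V_4$, so each $E_i/F$ is ramified and hence totally ramified. Abelianness of $L/F$ then gives $\mathcal{A} \subseteq N_{L/F}L^\times$ if and only if $\mathcal{A} \subseteq N_{E_i/F}E_i^\times$ for each $i$. Lemma~\ref{lem-disc-of-V4-subexts} tells me that $m = \sum_i v_F(d_{E_i/F})$ and that either all three of these valuations are equal (call this \emph{case~2}), or two are equal and strictly larger than the third (\emph{case~1}). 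Combined with $v_F(d_{E_i/F}) \in \{2, 4, \ldots, 2e_F, 2e_F + 1\}$, this immediately gives $m$ even with $6 \leq m \leq 6e_F + 2$, and forces $3 \mid m$ and $c := m/3$ even in case~2.

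In case~1, for each valid $(m_1, m_2)$ with $m_1 < m_2$ and $m_1 + 2m_2 = m$, I would set up the map
\[
\Et{(1^2)/F,m_1}{\mathcal{A}} \times \Et{(1^2)/F,m_2}{\mathcal{A}} \to \Et{(1^4)/F,m}{V_4/F,\mathcal{A}}, \quad (E_1, E_2) \mapsto E_1 E_2.
\]
Lemma~\ref{lem-disc-of-V4-subexts} ensures that the third quadratic subfield $E_3 \subseteq E_1 E_2$ has discriminant valuation $m_2$, and its quadratic character is the product $\chi_1\chi_2$ of those of $E_1$ and $E_2$, so is automatically trivial on $\mathcal{A}$; since each $E_i$ is ramified the compositum $E_1 E_2$ has splitting symbol $(1^4)$. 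The map is $2$-to-$1$, as the preimage of $L$ consists of $(E_1, E_2)$ and $(E_1, E_3)$. Summing and halving yields quantity~$(a)$.

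In case~2, with $c = m/3$, class field theory together with Theorem~\ref{thm-conductor-discriminant} and Lemma~\ref{lem-cond-disc-for-Cp} identifies the desired $V_4$-extensions with $2$-dimensional $\F_2$-subspaces $W$ of $\Hom(F^\times / F^{\times 2}, \F_2)$ satisfying $W \subseteq V_0 := \operatorname{Ann}(\overline{\mathcal{A}}^2 + U_c)$ and $W \cap K = 0$ with $K := \operatorname{Ann}(\overline{\mathcal{A}}^2 + U_{c-1})$, where $U_t := U_F^{(t)}F^{\times 2}/F^{\times 2}$; the intersection condition is the translation of ``every nonzero character in $W$ has Artin conductor exactly $c$''. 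Counting such $W$ by lifting $2$-dimensional subspaces of the image $I$ of $V_0$ in $W_{c-1}^*$ yields
\[
\frac{(\#I - 1)(\#I - 2)}{6} \cdot \#K^2.
\]
I would then compute $\#I = q \cdot \#\overline{\mathcal{A}}^2_c / \#\overline{\mathcal{A}}^2_{c-1}$ (using $\#W_{c-1} = q$ by Corollary~\ref{cor-size-of-Wi}, since $c$ is even with $c \leq 2e_F$) and $\#K = (\#V / \#U_{c-1}) \cdot \#\overline{\mathcal{A}}^2_{c-1}/\#\overline{\mathcal{A}}^2$ with $\#V/\#U_{c-1} = 2q^{(c-2)/2}$ by Corollary~\ref{cor-dim-of-F-mod-Ut-and-pth-powers}; algebraic simplification then matches quantity~$(b)$ exactly.

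The main obstacle will be the bookkeeping in case~2: verifying that the conductor-$c$ condition really translates to $W \cap K = 0$, correctly identifying the image $I \subseteq W_{c-1}^*$, and carrying out the arithmetic simplification to pin down the coefficient $\tfrac{2}{3(\#\overline{\mathcal{A}}^2)^2}\, q^{c-2}$ of quantity~$(b)$. A useful sanity check is that when $c = 2e_F + 1$ one has $\#W_{c-1} = 2$ by Corollary~\ref{cor-size-of-Wi}, which is too small to admit a $2$-dimensional injection, so case~2 correctly vanishes past $m = 6e_F + 2$.
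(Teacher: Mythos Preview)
Your proposal is correct and follows essentially the same route as the paper: split according to whether the three intermediate quadratic subfields have equal or unequal discriminant valuations, handle the unequal case by the $2$-to-$1$ map $(E_1,E_2)\mapsto E_1E_2$, and handle the equal case via character counting. The only cosmetic difference is that the paper counts ordered pairs $(\chi_1,\chi_2)$ of characters with distinct nonzero restrictions to $W_{c-1}$ and divides by $6$, whereas you count $2$-dimensional subspaces $W\subseteq V_0$ with $W\cap K=0$ directly; since $\#V_0/\#K=\#((M+N)/N)$ and $\#K=\#(V/(M+N))$ in the paper's notation, the two computations are line-by-line equivalent. One small presentational point: your assertion that case~2 ``forces $c$ even'' is not immediate from the list $\{2,4,\ldots,2e_F,2e_F+1\}$ alone---it needs the observation (which you supply later as a sanity check) that three characters of conductor $2e_F+1$ cannot span a $2$-dimensional space since $\#W_{2e_F}=2$, or equivalently that the product of two odd-valuation square classes has even valuation.
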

\begin{proof}
    The necessary conditions on $m$, namely that $m$ is even with $6 \leq m \leq 6e_F + 2$, come from \cite[Lemma~3.2]{monnet2024counting}. Assume that $m$ satisfies these conditions. Define the map 
    $$
    \Phi: \bigsqcup_{\substack{m_1 < m_2 \\ m_1 + 2m_2 = m}} \Et{(1^2)/F,m_1}{\mathcal{A}} \times \Et{(1^2)/F,m_2}{\mathcal{A}} \to \Et{(1^4)/F,m}{V_4/F,\mathcal{A}},\quad (E_1,E_2)\mapsto E_1E_2. 
    $$
    By Lemma~\ref{lem-disc-of-V4-subexts}, this map is well-defined and $2$-to-$1$, so 
    $$
    \#\im \Phi = \frac{1}{2}\cdot \sum_{\substack{m_1 < m_2 \\ m_1 + 2m_2 = m}} \Big(\#\Et{(1^2)/F,m_1}{\mathcal{A}}\Big)\Big(\#\Et{(1^2)/F,m_2}{\mathcal{A}}\Big).
    $$
    If $3 \nmid m$, then $\Phi$ is surjective, so we are done. Suppose that $3 \mid m$. Let $S$ be the set of $L \in \Et{(1^4)/F,m}{V_4/F,\mathcal{A}}$ such that every intermediate quadratic field $E$ of $L$ has $v_F(d_{E/F}) = m/3$. Then 
    $$
    \Et{(1^4)/F,m}{V_4/F,\mathcal{A}} = S \sqcup \im\Phi, 
    $$
    so 
    $$
    \# \Et{(1^4)/F,m}{V_4/F,\mathcal{A}} = \# S + \frac{1}{2}\cdot \sum_{\substack{m_1 < m_2 \\ m_1 + 2m_2 = m}} \Big(\#\Et{(1^2)/F,m_1}{\mathcal{A}}\Big)\Big(\#\Et{(1^2)/F,m_2}{\mathcal{A}}\Big).
    $$
    Let $\Sigma$ be the set of pairs $(\chi_1,\chi_2)$, where: 
    \begin{enumerate}
        \item $\chi_1$ and $\chi_2$ are quadratic characters $\chi_i:F^\times / U_F^{(m/3)}F^{\times 2} \to \F_2$.
        \item The restrictions of $\chi_1$ and $\chi_2$ to $U_F^{(m/3 - 1)}F^{\times 2} / U_F^{(m/3)}F^{\times 2}$ are nonzero and distinct. 
        \item For $i=1,2$, we have 
        $$
        \chi_i\Big(\mathcal{A}U_F^{(m/3)}F^{\times 2} / F^{\times 2} U_F^{(m/3)}\Big) = 0. 
        $$
    \end{enumerate}
    Then there is a natural $6$-to-$1$ surjection 
    $
    \Sigma \to S,
    $
    so 
    $$
    \# S = \frac{1}{6}\cdot \#\Sigma.
    $$
    Evaluating $\#S$ amounts to a simple linear algebra problem. To emphasise this simplicity, we define:
    \begin{enumerate}
        \item $$V = F^\times / U_F^{(m/3)}F^{\times 2}.$$
        \item $$M = U_F^{(m/3 - 1)}F^{\times 2} / U_F^{(m/3)}F^{\times 2}.$$
        \item $$N = \mathcal{A}U_F^{(m/3)}F^{\times 2} / U_F^{(m/3)}F^{\times 2}.$$
    \end{enumerate}
    Then $V$ is an $\F_2$-vector space, $M$ and $N$ are subspaces of $V$, and we are looking for pairs of linear transformations $\chi_1,\chi_2 : V \to \F_2$ such that the following two statements are true:
    \begin{enumerate}
        \item $\chi_i(N) = 0$ for $i=1,2$.
        \item The restrictions $\chi_1|_M$ and $\chi_2|_M$ are nonzero and distinct. 
    \end{enumerate}
    These correspond bijectively to pairs $\overline{\chi}_1,\overline{\chi}_2: V/N \to \F_2$ such that the restrictions $\overline{\chi}_1|_{(M+N)/N}$ and $\overline{\chi}_2|_{(M+N)/N}$ are nonzero and distinct. There are 
    $$
    \Big(\#\Big(\frac{M+N}{N}\Big) - 1\Big)\Big(\#\Big(\frac{M+N}{N}\Big) - 2\Big)
    $$
    possibilities for the pair $(\overline{\chi}_1|_{(M+N)/N},\overline{\chi}_2|_{(M+N)/N})$. Each of these lifts to 
    $$
    \Big(\#\Big(\frac{V}{M+N}\Big) \Big)^2 
    $$
    pairs $(\overline{\chi}_1,\overline{\chi}_2)$, so we have 
    $$
    \# \Sigma = \Big(\#\Big(\frac{V}{M+N}\Big) \Big)^2 \cdot \Big(\#\Big(\frac{M+N}{N}\Big) - 1\Big)\Big(\#\Big(\frac{M+N}{N}\Big) - 2\Big).
    $$
    We evaluate the sizes of the relevant vector spaces.
    \begin{enumerate}
        \item Corollary~\ref{cor-dim-of-F-mod-Ut-and-pth-powers} tells us that $$\# V = 2q^{\frac{m}{6}}.$$
        \item By the second and third isomorphism theorems for groups, we have 
        $$
        N = \frac{\mathcal{A} U_F^{(m/3)}F^{\times 2}}{U_F^{(m/3)}F^{\times 2}} \cong \frac{\overline{\mathcal{A}}^2}{\overline{\mathcal{A}}^2_{m/3}},
        $$
        so 
        $$
        \# N = \frac{\#\overline{\mathcal{A}}^2}{\#\overline{\mathcal{A}}^2_{m/3}}. 
        $$
        \item We have 
        $$
        M + N = \frac{\mathcal{A}U_F^{(m/3-1)}F^{\times 2}}{U_F^{(m/3)}F^{\times 2}},
        $$
        so 
        $$
        \# (M + N) = \Big[ 
        \mathcal{A} U_F^{(m/3 - 1)}F^{\times 2} : U_F^{(m/3 - 1)} F^{\times 2}    
        \Big] \cdot \# W_{m/3 - 1}. 
        $$
        By the second and third isomorphism theorems for groups, we have
        $$
        \frac{\mathcal{A}U_F^{(m/3-1)}F^{\times 2}}{U_F^{(m/3 - 1)}F^{\times 2}} \cong \frac{\overline{\mathcal{A}}^2}{\overline{\mathcal{A}}^2_{m/3 - 1}},
        $$
        so Corollary~\ref{cor-size-of-Wi} tells us that 
        $$
        \# (M+N) = q \cdot \frac{\#\overline{\mathcal{A}}^2}{\#\overline{\mathcal{A}}_{m/3 - 1}^2}.
        $$
    \end{enumerate}
    It follows that 
    $$
    \# \Sigma = \frac{4q^{\frac{m}{3} - 2}}{(\#\overline{\mathcal{A}}^2)^2}\cdot \Big(q\# \overline{\mathcal{A}}^2_{m/3} - \#\overline{\mathcal{A}}^2_{m/3-1}\Big)\Big(q\# \overline{\mathcal{A}}^2_{m/3} - 2\#\overline{\mathcal{A}}^2_{m/3-1}\Big).
    $$
\end{proof}

\subsection{Totally wildly ramified $C_4$-extensions}
Our goal in this subsection is to prove Theorem~\ref{thm-time-complexity-mass-all-quartics-with-norms}. So far, we have addressed every case except $\Et{(1^4)/F}{C_4/F,\mathcal{A}}$ for $2$-adic fields $F$. In this subsection, we give two methods for computing the pre-mass of this set, and analyse their time complexities.

\begin{lemma}
    \label{lem-UE2t-mod-squares-intersect-with-F}
    Let $F$ be a $2$-adic field, let $E \in \Et{(1^2)/F, m_1}{}$ for a positive integer $m_1$. Let $t$ be an integer with $0 \leq t \leq 2e_F -\frac{m_1}{2}$. We have 
    $$
    (E^{\times 2}U_E^{(2t)}\cap F^\times)/F^{\times 2} = \begin{cases}
        F^\times / F^{\times 2}\quad\text{if $t \leq \frac{m_1}{2} - 1$},
        \\
        U_F^{\Big(2\Big\lceil \frac{t - \frac{m_1}{2}}{2}\Big\rceil\Big)}F^{\times 2}/F^{\times 2} \quad\text{if $t \geq \frac{m_1}{2}$}. 
    \end{cases}
    $$
\end{lemma}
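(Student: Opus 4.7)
The plan is to translate ``$x \in U_E^{(2t)}E^{\times 2}$'' into an inequality on the discriminant of $E(\sqrt{x})/E$, then compute that discriminant for $x \in F^\times$ using the structure of the $V_4$-extension $E(\sqrt{x})/F$, and finally recognize the resulting set of $x$ as a filtered piece of $F^\times/F^{\times 2}$.

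By Lemma~4.6 of \cite{monnet2024counting} applied over $E$, $x \in U_E^{(2t)}E^{\times 2}$ if and only if $v_E(d_{E(\sqrt{x})/E}) \leq 2e_E - 2t = 4e_F - 2t$, with the convention $v_E(d_{E/E}) = 0$. Writing $E = F(\sqrt{d_E})$, the trivial cases $x \in F^{\times 2}\cup d_EF^{\times 2}$ give $E(\sqrt{x}) = E$ and lie in the LHS for every admissible $t$. For all other $x \in F^\times$, $E(\sqrt{x})/F$ is a $V_4$-extension with intermediate quadratic fields $E$, $F(\sqrt{x})$, $F(\sqrt{xd_E})$, and combining the tower formula for discriminants with Lemma~\ref{lem-disc-of-V4-subexts}(1) yields
$$v_E(d_{E(\sqrt{x})/E}) = D(x) + D(xd_E) - m_1, \qquad D(y) := v_F(d_{F(\sqrt{y})/F}),$$
with $D(d_E) = m_1$.

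By the triangle property in Lemma~\ref{lem-disc-of-V4-subexts}(2), the top two of $\{D(x),\, D(xd_E),\, m_1\}$ must be equal, and a short case analysis shows that $v_E(d_{E(\sqrt{x})/E}) \leq m_1$ whenever $D(x) \leq m_1$, whereas in the ``overshoot'' case $D(x) = D(xd_E) > m_1$ one gets $v_E(d_{E(\sqrt{x})/E}) = 2D(x) - m_1$. For $t \leq m_1/2 - 1$ the bound $4e_F - 2t$ dominates the maximum possible $v_E$, namely $4e_F + 2 - m_1$ (attained at $D(x) = 2e_F + 1$), so the LHS is all of $F^\times/F^{\times 2}$; this also absorbs the tame case $m_1 = 2e_F + 1$, where the first regime is the only one in the admissible range. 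For $t \geq m_1/2$ set $s = t - m_1/2 \geq 0$; the non-overshoot cases give $v_E(d_{E(\sqrt{x})/E}) \leq m_1 \leq 4e_F - 2t$ for free (using $s \leq 2e_F - m_1$), so the condition reduces to $D(x) \leq 2e_F - s$, and the union of overshoot and non-overshoot contributions is simply $\{x : D(x) \leq 2e_F - s\}$. Since $D$ takes values only in $\{-\infty\}\cup\{0,2,\ldots,2e_F\}\cup\{2e_F+1\}$, this is equivalent to the parity-adjusted $D(x) \leq 2e_F - 2\lceil s/2\rceil$, and a last application of Lemma~4.6 of \cite{monnet2024counting} over $F$ identifies the resulting set with $U_F^{(2\lceil s/2\rceil)}F^{\times 2}/F^{\times 2}$, matching the claimed RHS.

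The main obstacle is the parity bookkeeping in the second regime: the ceiling in $\lceil s/2\rceil$ absorbs precisely the coincidences $U_F^{(c)}F^{\times 2} = U_F^{(c+1)}F^{\times 2}$ for $c$ even less than $2e_F$ coming from Corollary~\ref{cor-size-of-Wi}, and verifying this against the boundary values of $s$ (in particular $s = 2e_F - m_1$, where the RHS collapses to $\{x : D(x) \leq m_1\}$ and one must check that no $x$ with $D(x) > m_1$ sneaks into the LHS) requires a small but careful integer argument.
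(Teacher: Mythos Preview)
Your proof is correct and takes a genuinely different route from the paper's. The paper simply cites \cite[Proposition~3.11]{cohen-et-al} for the core content and invokes Corollary~\ref{cor-size-of-Wi} for the parity adjustment $U_F^{(2c)}F^{\times 2} = U_F^{(2c+1)}F^{\times 2}$. You instead give a direct argument entirely within the paper's internal machinery: you translate membership in $U_E^{(2t)}E^{\times 2}$ into a discriminant bound via \cite[Lemma~4.6]{monnet2024counting} over $E$, compute $v_E(d_{E(\sqrt{x})/E})$ for $x \in F^\times$ via the $V_4$-discriminant identity of Lemma~\ref{lem-disc-of-V4-subexts}, and then re-apply \cite[Lemma~4.6]{monnet2024counting} over $F$ to identify the resulting level set. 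Your approach is more self-contained relative to the tools already developed in the paper and makes transparent \emph{why} the threshold sits at $t = m_1/2$ (it is exactly where the maximal overshoot value $4e_F + 2 - m_1$ first exceeds the bound $4e_F - 2t$) and \emph{why} the ceiling appears (the gap in admissible $D$-values between $2e_F$ and $2e_F + 1$). The paper's approach is shorter on the page but pushes the substance into the external reference.

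One small remark: your handling of the trivial classes $x \in F^{\times 2}$ and $x \in d_E F^{\times 2}$ is fine, but it is worth noting explicitly that both satisfy $D(x) \leq m_1$ and hence are already absorbed into the final description $\{x : D(x) \leq 2e_F - s\}$ in the second regime, so no separate bookkeeping is needed when passing to the RHS. You essentially say this, but making it a sentence rather than leaving it implicit would tighten the write-up.
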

\begin{proof}
    The case $t=0$ is obvious. For $t \geq 1$, the result is essentially \cite[Proposition~3.11]{cohen-et-al}, combined with the fact that 
    $$
    U_F^{(2c)}F^{\times 2} = U_F^{(2c+1)}F^{\times 2}
    $$
    for all nonnegative integers $c$ with $0 \leq c < e_F$, by Corollary~\ref{cor-size-of-Wi}.
\end{proof}
\begin{lemma}
    \label{lem-equivalent-floors-with-m1-and-m2}
    Let $m_1$ and $m_2$ be integers, with $m_1$ even. Then 
    $$
    \Big\lfloor \frac{m_1}{4} + \frac{1}{2} \Big\lfloor \frac{m_2}{2} \Big\rfloor \Big\rfloor = \Big\lfloor \frac{m_1+m_2}{4}\Big\rfloor. 
    $$
\end{lemma}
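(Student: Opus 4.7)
The plan is to reduce this to an elementary case analysis on the parity of $m_2$, after using the hypothesis that $m_1$ is even. First I would write $m_1 = 2k$ for an integer $k$, so that $m_1/4 = k/2$. The identity to prove then becomes
\[
\Big\lfloor \frac{k + \lfloor m_2/2\rfloor}{2}\Big\rfloor = \Big\lfloor \frac{2k + m_2}{4}\Big\rfloor,
\]
since $\lfloor k/2 + x/2\rfloor = \lfloor (k+x)/2\rfloor$ for any integer $x$ (here $x = \lfloor m_2/2\rfloor$).

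Next I would split on the parity of $m_2$. If $m_2$ is even, then $\lfloor m_2/2\rfloor = m_2/2$ is an integer, so the left-hand side is $\lfloor (k + m_2/2)/2\rfloor = \lfloor (2k+m_2)/4\rfloor$ on multiplying the numerator and denominator by $2$, matching the right-hand side. If $m_2$ is odd, then $\lfloor m_2/2\rfloor = (m_2-1)/2$, so the left-hand side is $\lfloor (2k+m_2-1)/4\rfloor$. Since $2k + m_2$ is then odd, it has remainder $1$ or $3$ modulo $4$; in either case $\lfloor (2k+m_2-1)/4\rfloor = \lfloor (2k+m_2)/4\rfloor$, since subtracting $1$ from an odd number cannot push it below the next lower multiple of $4$.

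There is no real obstacle here; the only thing to be careful about is the intermediate step $\lfloor k/2 + x/2\rfloor = \lfloor (k+x)/2\rfloor$, which requires $x$ to be an integer, and this is exactly why one must first apply $\lfloor \cdot\rfloor$ to $m_2/2$ before combining it with $k/2$. The hypothesis that $m_1$ is even is essential: without it, one would have a term $m_1/4$ with denominator $4$, and an analogous identity would fail (for example, $m_1 = 1$, $m_2 = 2$ gives LHS $= \lfloor 1/4 + 1/2\rfloor = 0$ while RHS $= \lfloor 3/4\rfloor = 0$, but $m_1 = 1$, $m_2 = 3$ gives LHS $= \lfloor 1/4 + 1/2\rfloor = 0$ and RHS $= \lfloor 1\rfloor = 1$).
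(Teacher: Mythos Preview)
Your proof is correct and takes essentially the same approach as the paper: the paper's proof simply says to write $m_1 = 2k_1$ and $m_2 = 2k_2 + r$ with $r\in\{0,1\}$, which is exactly your substitution $m_1 = 2k$ followed by a parity split on $m_2$.
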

\begin{proof}
    This is easy to see by writing $m_1 = 2k_1$ and $m_2 = 2k_2 + r$, for $r \in \{0, 1\}$. 
\end{proof}
\begin{lemma}
    \label{lem-conditions-to-be-all-or-nothing-or-other-thing}
    Let $F$ be a $2$-adic field and let $E \in \Et{(1^2)/F}{\mathcal{A}}$. If $-1 \not \in N_{E/F}E^\times$, then 
    $$
    \Et{(1^2)/E}{C_4/F} = \varnothing. 
    $$ 
    For the rest of the lemma, assume that $-1 \in N_{E/F}E^\times$. Let $m_1 = v_F(d_{E/F})$, and let $m_2$ be an integer. Let 
    $$
    c(m_2) = 2e_F - 2\Big\lfloor
    \frac{m_1+m_2}{4}
\Big\rfloor.
    $$
    If $m_1 \leq e_F$, then
    $$
    \#\Et{(1^2)/E,\leq m_2}{C_4/F,\mathcal{A}} = 
    \begin{cases}
        0 \quad\text{if $m_2 < 3m_1 - 2$},
        \\
        \frac{1}{2}\cdot \# N^\mathcal{A}_{E, c(m_2)} \quad\text{if $3m_1 - 2 \leq m_2 \leq 4e_F - m_1 + 1$},
        \\
        \frac{1}{2}\cdot\# N^\mathcal{A}_E \quad\text{if $m_2 \geq 4e_F - m_1 + 2$}. 
    \end{cases}
    $$
    If $m_1 > e_F$, then  
    $$
    \#\Et{(1^2)/E,\leq m_2}{C_4/F,\mathcal{A}} = \begin{cases}
        0\quad\text{if $m_2 < m_1 + 2e_F$},
        \\
        \frac{1}{2}\cdot\# N^\mathcal{A}_E \quad\text{if $m_2 \geq m_1 +2e_F$}.
    \end{cases}
    $$
\end{lemma}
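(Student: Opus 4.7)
The plan is to reduce the counting problem to the $2$-to-$1$ parametrisation of Lemma~\ref{lem-C4-bijection-with-norm-condition} and then unwind the resulting intersection using Lemmas~\ref{lem-UE2t-mod-squares-intersect-with-F} and \ref{lem-equivalent-floors-with-m1-and-m2}. First, I would dispatch the case $-1 \notin N_{E/F}E^\times$: a quadratic extension $E = F(\sqrt{a})$ of a local field lifts to a $C_4$-extension of $F$ if and only if $a$ is a sum of two squares in $F$, equivalently $-1 \in N_{E/F}E^\times$ (a classical obstruction in $\mathrm{Br}(F)[2]$). So in that case $\Et{(1^2)/E}{C_4/F}$ is empty and the claim is vacuous. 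Henceforth I assume $-1 \in N_{E/F}E^\times$, so Lemma~\ref{lem-C4-bijection-with-norm-condition} applies with the $\omega$ fixed in the definition of $N_E^\mathcal{A}$; note also $e_E = 2e_F$ since $E/F$ is totally ramified quadratic.

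Next I would fix $m_2$ large enough that $\Et{(1^2)/E, \leq m_2}{C_4/F}$ is nonempty (the emptiness threshold is tackled at the end). By Lemma~\ref{lem-C4-bijection-with-norm-condition}, the count equals half the size of either $(U_E^{(4e_F - 2\lfloor m_2/2\rfloor)}E^{\times 2} \cap F^\times)/F^{\times 2} \cap N_E^\mathcal{A}$ (when $m_2 \leq 4e_F$) or $N_E^\mathcal{A}$ (when $m_2 \geq 4e_F+1$). Setting $t = 2e_F - \lfloor m_2/2 \rfloor$ and applying Lemma~\ref{lem-UE2t-mod-squares-intersect-with-F} yields two sub-cases. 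If $t \geq m_1/2$, equivalently $m_2 \leq 4e_F - m_1 + 1$ (using that $m_1$ is even in the regime $m_1 \leq e_F$), then $(U_E^{(2t)}E^{\times 2} \cap F^\times)/F^{\times 2}$ equals $U_F^{(2\lceil(t-m_1/2)/2\rceil)}F^{\times 2}/F^{\times 2}$, and Lemma~\ref{lem-equivalent-floors-with-m1-and-m2} rewrites the exponent as exactly $c(m_2)$; intersecting with $N_E^\mathcal{A}$ then gives $N_{E,c(m_2)}^\mathcal{A}$, producing the middle case of the formula. If instead $t \leq m_1/2 - 1$, equivalently $m_2 \geq 4e_F - m_1 + 2$, the intersection is all of $F^\times/F^{\times 2}$, so the count becomes $\tfrac{1}{2}\# N_E^\mathcal{A}$; together with the $m_2 \geq 4e_F + 1$ branch of Lemma~\ref{lem-C4-bijection-with-norm-condition}, this yields the top case uniformly for $m_2 \geq 4e_F - m_1 + 2$.

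The main obstacle will be identifying the emptiness threshold $m_2^{\min}$. I would compute this via Theorem~\ref{thm-conductor-discriminant}: for any $C_4$-extension $L/F$ with quadratic subfield $E$, the four characters of $\Gal(L/F)$ have conductors $0, m_1, f, f$ with $f = \mff(L/F)$, so $v_F(d_{L/F}) = m_1 + 2f$, and comparing with the tower formula $v_F(d_{L/F}) = 2m_1 + m_2$ gives $f = (m_1 + m_2)/2$. Minimising $f$ over all $C_4$-characters of $F^\times$ lifting the order-$2$ character $\psi$ associated to $E$ (which has conductor $m_1$) reduces to an $\F_2$-linear algebra problem on $F^\times/F^{\times 4}$ via the filtration by the $W_i$ of Corollary~\ref{cor-size-of-Wi}. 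For $m_1 \leq e_F$, this minimum works out to $2m_1 - 1$, giving $m_2^{\min} = 3m_1 - 2$; for $m_1 > e_F$ (including the edge case $m_1 = 2e_F + 1$), the lift is essentially rigid and the unique admissible conductor is $m_1 + e_F$, yielding the single value $m_2^{\min} = m_1 + 2e_F$ of the final case. For $m_2 < m_2^{\min}$ the target set is empty and the count is trivially $0$, completing the argument.
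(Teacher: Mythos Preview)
Your approach is essentially the same as the paper's: you invoke Lemma~\ref{lem-C4-bijection-with-norm-condition} for the $2$-to-$1$ parametrisation, then Lemma~\ref{lem-UE2t-mod-squares-intersect-with-F} and Lemma~\ref{lem-equivalent-floors-with-m1-and-m2} to identify the intersection with $N_{E,c(m_2)}^\mathcal{A}$, exactly as the paper does. Your handling of the range $m_2 \geq 4e_F - m_1 + 2$ is in fact slightly cleaner than the paper's, since you read it off directly from the $t \leq m_1/2 - 1$ branch of Lemma~\ref{lem-UE2t-mod-squares-intersect-with-F}, whereas the paper invokes an external stabilisation statement from \cite{monnet2024counting}.

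The one place where your argument is thinner than the paper's is the emptiness threshold $m_2^{\min}$. The paper does not prove this internally either; it simply cites \cite[Corollary~4.9 and Lemma~4.21]{monnet2024counting} for the values $3m_1-2$ (when $m_1 \leq e_F$) and $m_1 + 2e_F$ (when $m_1 > e_F$). Your conductor--discriminant reduction is correct up to the point $f = (m_1+m_2)/2$, but the assertion that the minimal conductor of a $C_4$-character lifting $\psi$ ``works out to'' $2m_1-1$ is not actually proved, and your appeal to Corollary~\ref{cor-size-of-Wi} is not quite on target: that result describes the filtration on $F^\times/F^{\times 2}$, not $F^\times/F^{\times 4}$, and the minimisation over order-$4$ lifts requires the latter. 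Existence at the threshold for $m_1 \leq e_F$ is available internally via Theorem~\ref{thm-algo-for-small-disc-C4-works}, but minimality still needs an argument (or the same citation the paper uses).
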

\begin{proof}
    If $-1 \not \in N_{E/F}E^\times$, then $\Et{(1^2)/E}{C_4/F} = \varnothing$ by \cite[Corollary~4.9]{monnet2024counting}. Suppose that $-1 \in N_{E/F}E^\times$. Then \cite[Corollary~4.9]{monnet2024counting} and \cite[Lemma~4.21]{monnet2024counting} tell us that $\Et{(1^2)/E,\leq m_2}{C_4/F}$ is nonempty if and only if 
    $$
    m_2 \geq \begin{cases}
        3m_1 - 2 \quad\text{if $m_1 \leq e_F$,}
        \\
        2e_F + m_1\quad\text{if $m_1 > e_F$}. 
    \end{cases}
    $$
    Suppose that $m_1 \leq e_F$ and $m_2 \geq 3m_1 - 2$. If $m_2 \leq 4e_F - m_1 + 1$, then Lemmas~\ref{lem-C4-bijection-with-norm-condition}, \ref{lem-UE2t-mod-squares-intersect-with-F} and \ref{lem-equivalent-floors-with-m1-and-m2} give a $2$-to-$1$ surjection
    $$
    N_{E,c(m_2)}^\mathcal{A} \to \Et{2/E, \leq m_2}{C_4/F,\mathcal{A}}.
    $$
    If $m_2 \geq 4e_F - m_1 + 2$, then \cite[Lemma~4.4]{monnet2024counting} tells us that 
    $$
    \Et{2/E,\leq m_2}{C_4/F} = \Et{2/E, \leq 4e_F - m_1 + 2}{C_4/F},
    $$
    and we have 
    $$
    \#\Et{2/E, \leq 4e_F - m_1 + 2}{C_4/F} = \frac{1}{2}\cdot \# N_E^\mathcal{A}
    $$ 
    by Lemmas~\ref{lem-C4-bijection-with-norm-condition} and \ref{lem-UE2t-mod-squares-intersect-with-F}. 
    The result for $m_1 \leq e_F$ follows. The argument for $m_1 > e_F$ is similar but easier, so we omit it. 
\end{proof} 

\begin{proof}
    [Proof of Theorem~\ref{thm-1^4-2-adic}]
    We prove the statements one at a time:
    \begin{enumerate}
        \item This is immediate from Lemma~\ref{lem-1^4-G-for-G-S4-or-A4}.
        \item This follows easily from Theorem~\ref{thm-num-Cp-exts-with-disc-val}, Corollary~\ref{cor-1^4-D4-with-A-in-terms-of-quadratics}, Lemma~\ref{lem-1^2/E-m2-V4}, and \cite[Lemma~4.4]{monnet2024counting}. 
        \item This is precisely Lemma~\ref{lem-1^4-V4}.
        \item This is immediate from Lemma~\ref{lem-conditions-to-be-all-or-nothing-or-other-thing}.
    \end{enumerate}
\end{proof}

\begin{algorithm}
    \label{algo-omega-with-small-disc-val}
    Let $F$ be a $2$-adic field, let $m_1$ be an even integer with $m_1 \leq e_F$, and let $E \in \Et{(1^2)/F,m_1}{}$ be $C_4$-extendable. 
    \begin{enumerate}
        \item Take $d \in F^\times$ such that $E = F(\sqrt{d})$ and $v_F(d) = m_1$. 
        \item Take $a,b \in F^\times$ with $d = a^2 + 4b$, such that $v_F(a) = m_1/2$ and $v_F(b) = 1$.
        \item Define $\rho = \frac{a + \sqrt{d}}{2}$, which is a uniformiser of $E$.
        \item Choose $\omega \in E^\times$ such that $N_{E/F}\omega \in dF^{\times 2}$ and $v_E(\omega) = 0$.
        \item Take $\lambda \in F^\times$ such that 
        $$
        N_{E/F}\omega \equiv \lambda^2 \pmod{\p_F^{2e_F + 1 - m_1}}.
        $$
        \item Define 
        $$
        \omega_1 = \begin{cases}
            \omega \quad\text{if $v_E(\omega - \lambda) \geq m_1$},
            \\
            \frac{\omega b}{\rho^2}\quad\text{if $v_E(\omega - \lambda) = m_1 - 1$}. 
        \end{cases}
        $$
        \item Define 
        $$
        \omega_2 = \begin{cases}
            \omega_1 \quad\text{if $v_E(\omega_1 - \bar{\omega}_1) = 2m_1$},
            \\
            \omega_1(1 + \rho)^2 \quad\text{if $v_E(\omega_1 - \bar{\omega}_1) > 2m_1$},
        \end{cases}
        $$
        and 
        $$
        \lambda_2 = \begin{cases}
            \lambda \quad\text{if $v_E(\omega_1 - \bar{\omega}_1) = 2m_1$},
            \\
            \lambda(1 + a - b) \quad\text{if $v_E(\omega_1 - \bar{\omega}_1) > 2m_1$}.
        \end{cases}
        $$
        \item Write $\omega_2 = r_2 + s_2\rho$ for $r_2, s_2 \in F$, and define 
        $$
        q = \frac{r_2-\lambda_2}{s_2},\quad n = \frac{q^2 + b}{r_2}.
        $$
        \item Output 
        $
        \frac{\omega_2 n}{(q+\rho)^2}.
        $
    \end{enumerate}
\end{algorithm}
\begin{theorem}
    \label{thm-algo-for-small-disc-C4-works}
    Let $E, m_1$ be as in Algorithm~\ref{algo-omega-with-small-disc-val}. 
    \begin{enumerate} 
        \item All the steps of the algorithm are well-defined. 
        \item Let $\omega$ be the output of the algorithm. Then $E(\sqrt{\omega}) \in \Et{(1^2)/E,3m_1-2}{C_4/F}$. 
    \end{enumerate}
\end{theorem}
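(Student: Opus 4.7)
The plan is to split the argument into two tasks: verifying that each step of Algorithm~\ref{algo-omega-with-small-disc-val} is well-defined, and showing that the output $\omega$ satisfies the two defining conditions for membership in $\Et{(1^2)/E,3m_1-2}{C_4/F}$ --- namely $N_{E/F}(\omega) \in dF^{\times 2}$, which by \cite[Proposition~1.2]{cohen-et-al} guarantees that $E(\sqrt{\omega})/F$ is a $C_4$-extension, and $v_E(d_{E(\sqrt{\omega})/E}) = 3m_1 - 2$, the minimum allowed by Lemma~\ref{lem-conditions-to-be-all-or-nothing-or-other-thing}.

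For well-definedness I would proceed step by step. In Step~2, $a, b \in F^\times$ with $d = a^2 + 4b$, $v_F(a) = m_1/2$, $v_F(b) = 1$ exist because $m_1$ is even with $m_1 \leq e_F$: pick any $a$ with $a^2 \equiv d \pmod{\p_F^{m_1/2 + e_F}}$ and set $b = (d-a^2)/4$. The existence of $\omega$ in Step~4 is exactly the $C_4$-extendability hypothesis. The crucial observation for Step~5 is that, since $d = a^2 + 4b \equiv a^2 \pmod{\p_F^{2e_F+1}}$ and $N_{E/F}(\omega) = dv^2$ for some $v \in F^\times$ with $v_F(v) = -m_1/2$, one may take $\lambda = av$, giving $N_{E/F}(\omega) \equiv \lambda^2 \pmod{\p_F^{2e_F+1-m_1}}$. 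The remaining steps are explicit dichotomies whose branches exhaust all possibilities. Along the way one checks $N_{E/F}(\rho) = -b$, so $v_E(\rho) = 1$ and $\rho$ is a uniformiser of $E$, and that $\omega_1, \omega_2$ remain units.

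For the discriminant statement I would track two invariants through the construction: the class $[N_{E/F}(\omega_i)] \in F^\times/F^{\times 2}$, which must remain equal to $[d]$, and the valuation $v_E(\omega_i - \bar{\omega}_i)$, which governs how close $\omega_i$ lies to $F$. Step~6 multiplies by $b/\rho^2$ (an element of trivial norm class, since $N_{E/F}(b/\rho^2) = b^2/b^2 = 1$) to force $\omega_1 \equiv \lambda \pmod{\p_E^{m_1}}$, and Step~7 multiplies by the square $(1+\rho)^2$ to normalise $v_E(\omega_2 - \bar{\omega}_2) = 2m_1$, the minimum possible once $\omega_2 \notin F$. The final step multiplies by $n \in F^\times$ (contributing $n^2$ to the norm, preserving the class modulo $F^{\times 2}$) and divides by $(q+\rho)^2 \in E^{\times 2}$ (preserving both the norm class and the extension $E(\sqrt{\omega})/E$).

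The main obstacle will be the discriminant computation in Step~9. Writing $\omega_2 = r_2 + s_2 \rho$, the invariant $v_E(\omega_2 - \bar{\omega}_2) = v_E(s_2(\rho - \bar{\rho})) = 2m_1$ forces $v_F(s_2) = m_1/2$, while the relation $N_{E/F}(\omega_2) = r_2^2 + r_2 s_2 a - s_2^2 b \equiv \lambda_2^2 \pmod{\p_F^{2e_F+1-m_1}}$ together with the explicit choices $q = (r_2-\lambda_2)/s_2$ and $n r_2 = q^2 + b$ should pin down $v_F(q) = m_1/2$ and $v_F(n) = 0$. A direct expansion using $\rho^2 = a\rho + b$ yields
\[
\omega_2 n - (q+\rho)^2 = (s_2 n - 2q - a)\rho,
\]
and the remaining task is to show that the right-hand side has exactly the valuation needed for $\omega_2 n /(q+\rho)^2$ to lie in $U_E^{(c)}E^{\times 2} \setminus U_E^{(c+1)}E^{\times 2}$ at the value of $c$ corresponding, via the discriminant formula \cite[Corollary~4.7]{monnet2024counting}, to $v_E(d_{E(\sqrt{\omega})/E}) = 3m_1 - 2$. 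The delicate part is proving that the cancellation in $s_2 n - 2q - a$ is exactly as predicted and no more, which demands a careful precision accounting through Steps~5--8.
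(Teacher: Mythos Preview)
Your outline follows the same route as the paper (which in turn follows \cite{cohen-et-al}), but there are concrete gaps and one error.

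First, the dichotomy in Step~6 is \emph{not} obviously exhaustive: you must prove that $v_E(\omega-\lambda)\geq m_1-1$ always holds. The paper does this via a bootstrapping argument (Lemma~\ref{lem-omega-lambda-close}), using the trace bound $v_F(\Tr_{E/F}x)\geq m_1/2$ for $x\in\co_E$ together with the relation $\lambda\Tr_{E/F}\gamma + N_{E/F}\gamma\equiv 0\pmod{\p_F^{2e_F+1-m_1}}$ to iteratively push up the valuation of $\gamma=\omega-\lambda$. Your sentence ``the remaining steps are explicit dichotomies whose branches exhaust all possibilities'' hides real work here. Similarly, your Step~2 choice of $a$ with $a^2\equiv d\pmod{\p_F^{m_1/2+e_F}}$ is not sharp enough: you need $v_F(d-a^2)=2e_F+1$ exactly to get $v_F(b)=1$, which comes from the discriminant relation $v_F(d_{E/F})=m_1$ via \cite[Lemma~4.6]{monnet2024counting}.

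Second, your claim $v_F(q)=m_1/2$ is wrong. One has $v_F(s_2)=m_1/2$ and (this requires its own argument, Lemma~\ref{lem-val-r2-lambda2} in the paper) $v_F(r_2-\lambda_2)=m_1/2$, so $v_F(q)=0$. More importantly, you are missing the key simplification that makes Step~9 clean: the coefficient $s_2n-2q-a$ in your expansion is \emph{exactly} $(\lambda_2^2 - N_{E/F}\omega_2)/(r_2s_2)$. Once you see this, the congruence $N_{E/F}\omega_2\equiv\lambda_2^2\pmod{\p_F^{2e_F+1-m_1}}$ from Step~5 immediately gives $\omega_2n/(q+\rho)^2\equiv 1\pmod{\p_E^{4e_F+3-3m_1}}$, and \cite[Lemmas~4.4 and 4.6]{monnet2024counting} translate this into $v_E(d_{E(\sqrt{\omega})/E})\leq 3m_1-2$. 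No ``careful precision accounting'' is needed, and you do \emph{not} need to show the cancellation is ``no more'' than predicted: the matching lower bound $v_E(d_{E(\sqrt{\omega})/E})\geq 3m_1-2$ is automatic from Lemma~\ref{lem-conditions-to-be-all-or-nothing-or-other-thing}, since $E(\sqrt{\omega})/F$ is already known to be $C_4$.
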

\begin{proof}
    This is essentially \cite[Proposition~3.15]{cohen-et-al}. We rewrite their proof using our notation in Appendix~\ref{appendix-algo}. 
\end{proof}
\begin{definition}
    Let $K$ be a field and let $M$ be a matrix with entries in $K$. A \emph{reduced row decomposition of} $M$ is a triple $(\widetilde{M}, T, T^{-1})$, where $\widetilde{M}$ is a matrix in reduced row echelon form and $T$ is a composition of elementary matrices with $M = T\widetilde{M}$. For computational efficiency, we consider the inverse matrix $T^{-1}$ to be part of the data of the reduced row decomposition. 
\end{definition}
\begin{lemma}
    \label{lem-time-complexity-reduced-row-echelon}
    Let $M$ be an $m\times n$ matrix defined over a field $K$. We can compute a reduced row decomposition of $M$ using $O(mn\min\{m,n\})$ field operations in $K$.
\end{lemma}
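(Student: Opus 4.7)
The plan is to carry out Gauss-Jordan elimination on $M$ and to record the sequence of elementary row operations used, storing $T$ and $T^{-1}$ as lists of (inverse) elementary matrices rather than materialising them as dense $m \times m$ arrays. More precisely, I would maintain a working copy of $M$ together with two empty lists $\mathcal{L}$ and $\mathcal{L}'$, which will encode $T^{-1}$ and $T$ as ordered products of elementary matrices. I would then scan the columns of the working matrix from left to right, searching each time for a nonzero entry in a row not yet used as a pivot row. If none exists, the column is skipped; otherwise, I perform (i) a row swap to move the pivot into the next available row, (ii) a scaling of the pivot row to make the pivot equal to $1$, and (iii) an elimination subtracting appropriate multiples of the pivot row from each of the other $m-1$ rows. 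Each elementary operation $E$ used is appended to $\mathcal{L}$ and its inverse $E^{-1}$ is prepended to $\mathcal{L}'$. When the scan terminates, the working matrix equals $\widetilde{M}$ in RREF, the product of the entries of $\mathcal{L}$ is $T^{-1}$, the product of the entries of $\mathcal{L}'$ is $T$, and $M = T\widetilde{M}$ by construction.

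For the complexity, the number of pivot columns is at most $r := \min\{m,n\}$. For each pivot I perform at most one row swap, one scaling, and $m-1$ eliminations, for a total of $O(m)$ elementary row operations per pivot. Applying one such elementary operation to the working matrix costs $O(n)$ field operations, since it touches at most two rows of length $n$; appending to $\mathcal{L}$ or $\mathcal{L}'$ is $O(1)$. Summing over all pivots and all row operations gives $O(r \cdot m \cdot n) = O(mn\min\{m,n\})$ field operations overall, as required.

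There is no serious obstacle here; the argument is essentially the textbook analysis of Gauss-Jordan elimination. The one point worth checking is that storing $T$ and $T^{-1}$ as composition lists, rather than as explicit $m \times m$ matrices, is consistent with the definition of a reduced row decomposition: since that definition specifies $T$ as "a composition of elementary matrices", this is legitimate, and it is what prevents an unwanted $m^2$ term from materialising $T$ entry by entry from dominating the claimed bound when $m > n$.
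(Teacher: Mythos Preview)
Your proof is correct and follows essentially the same approach as the paper: both count the cost of Gauss--Jordan elimination as (number of pivots) $\times$ (row operations per pivot) $\times$ (field operations per row operation). Your additional remark about storing $T$ and $T^{-1}$ as lists of elementary matrices, rather than as dense $m\times m$ arrays, is a useful clarification that the paper's two-sentence proof omits; it is exactly what justifies the bound when $m>n$, and it is consistent with the paper's definition of $T$ as ``a composition of elementary matrices''.
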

\begin{proof}
    For $i=1,\ldots, \min\{m,n\}$, the $i^\mathrm{th}$ step of Gaussian elimination (i.e. reducing the $i^\mathrm{th}$ column) requires $O(n)$ elementary row operations. Each elementary row operation requires $O(m)$ field operations in $K$, so we are done. 
\end{proof}
\begin{definition}
    \label{defi-B0E}
    Let $F$ be a $p$-adic field for some rational prime $p$. Let $E/F$ be a field extension with $[E : F]\leq 4$, let $m(X) \in \Q_p[X]$ be a monic degree $f_E$ polynomial that is irreducible over $\F_p[X]$, and let $\alpha \in E$ be a root of $m(X)$. Define the set 
    $$
    \mathcal{B}_0^E = \{1, \alpha, \ldots, \alpha^{f_E - 1}\}.
    $$
    We will assume that (as is the case with Magma's `FldPad' object class) the maximal unramified subextension of $E/\Q_p$ is defined as $\frac{\Q_p[X]}{(m(X))}$, so that $\mathcal{B}_0^E$ is part of the data of $E$.
\end{definition}
\begin{lemma}
    \label{lem-time-complexity-of-operations-in-residue-class-field}
    Let $F$ and $E$ be as in Definition~\ref{defi-B0E}. The following three statements are true:
    \begin{enumerate}
        \item The set $\mathcal{B}_0^E$ descends to an $\F_p$-basis for the vector space $\F_E$. 
        \item For any $x \in \co_E$, we can compute the $\mathcal{B}_0^E$-coefficients of $[x] \in \F_E$ with time complexity $O_F(1)$. 
        \item Suppose that we have an $\F_p$-linear transformation $\varphi: \F_E \to \F_E$ that we can evaluate with time complexity $O_F(t)$, for some function $t$. We can compute the matrix $[\varphi]_{\mathcal{B}_0^E}$ with time complexity $O_F(f_F\cdot t)$.
    \end{enumerate}
\end{lemma}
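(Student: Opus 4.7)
The plan is to prove each of the three items by appealing to the standard properties of the Magma-style internal representation of $p$-adic field elements that Definition~\ref{defi-B0E} implicitly invokes. For item $(1)$, I would reduce the monic polynomial $m(X) \in \Z_p[X]$ modulo $p$ to obtain an irreducible polynomial $\overline{m}(X) \in \F_p[X]$ of degree $f_E$. Because the subring $\Q_p[\alpha]$ of $E$ is by construction the maximal unramified subextension of $E/\Q_p$, its residue field coincides with $\F_E$, and $[\alpha] \in \F_E$ is a root of $\overline{m}$. Since $\overline{m}$ is irreducible of degree $f_E = [\F_E : \F_p]$, the reductions $\{[1], [\alpha], \ldots, [\alpha]^{f_E - 1}\}$ automatically form an $\F_p$-basis of $\F_E$.

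For item $(2)$, I would exploit the fact that any $x \in \co_E$ is stored as a truncated expansion $\sum_i c_i(\alpha)\, \pi_E^i$, where each $c_i(\alpha)$ is itself a $\Z_p$-linear combination of the powers $1, \alpha, \ldots, \alpha^{f_E - 1}$. Extracting the $\mathcal{B}_0^E$-coordinates of $[x] \in \F_E$ then amounts to reading off the coefficient vector of $c_0$ and reducing each entry modulo $p$, which is a bounded number of elementary operations and therefore costs $O_F(1)$ under the convention of the $O_F$-notation.

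For item $(3)$, I would build the matrix $[\varphi]_{\mathcal{B}_0^E}$ one column at a time. For each $i = 0, 1, \ldots, f_E - 1$, evaluate $\varphi([\alpha^i])$ at cost $O_F(t)$ by hypothesis, then use item $(2)$ to read off its $\mathcal{B}_0^E$-coordinates at a further cost of $O_F(1)$. Assembling the $f_E$ columns therefore takes total time $O_F(f_E \cdot t)$. Since $E/F$ has degree at most $4$, the inertia degree of $E/F$ is at most $4$, whence $f_E \leq 4 f_F$ and hence $O_F(f_E \cdot t) = O_F(f_F \cdot t)$, giving the claimed bound.

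No step is expected to be difficult: this lemma is a bookkeeping statement about implementation cost rather than a genuine result. The only sanity check I would perform carefully is that item $(2)$ really does fit into $O_F(1)$, which relies on the polynomial $t([F:\Q_p])$ hidden in the $O_F$-notation already dominating the cost of extracting the $f_E$ digit coefficients of $c_0$; this is legitimate since $f_E$ is bounded in terms of $[F:\Q_p]$ and is always dominated by the time of a single field operation in $F$.
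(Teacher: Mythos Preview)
Your proposal is correct and follows essentially the same approach as the paper: both argue item~(1) by reducing $m(X)$ modulo $p$, item~(2) by appealing to the Magma-style power-series-in-$\pi_E$ representation with coefficients stored as $\Z_p$-linear combinations of $\mathcal{B}_0^E$, and item~(3) by evaluating $\varphi$ on each basis element. Your write-up is somewhat more detailed (e.g.\ making explicit the bound $f_E \leq 4f_F$), but the underlying argument is the same.
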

\begin{proof}
    Recall the polynomial $m(X) \in \Q_p[X]$ from Definition~\ref{defi-B0E}. The first statement follows from the fact that $\F_E$ is defined by $m(X)$ as an extension of $\F_p$. The second statement follows from the fact that (at least in Magma) $x$ is implemented as a power series in $\pi_E$ with coefficients in the maximal unramified subextension $E^{\mathrm{ur}}$ of $E$, and elements of $E^{\mathrm{ur}}$ are stored as $\Z_p$-linear combinations of $\mathcal{B}_0^E$. We can reduce all of these $\Z_p$-coefficients modulo $p$ with time complexity $O(f_F) \ll O_F(1)$. The third statement follows immediately from the second. 
\end{proof}
\begin{lemma}
    \label{lem-time-complexity-reduced-varphi-matrix}
    Let $F$ be a $p$-adic field for some rational prime $p$, and let $E/F$ be a field extension with $[E: F]\leq 4$. Recall the map 
    $$
    \varphi : \F_E \to \F_E,\quad [y] \mapsto \Big[y + \frac{\pi_E^{e_E}}{p}y^p\Big]
    $$
    from Algorithm~\ref{algo-c-alpha}. We can compute a reduced row decomposition of the matrix $[\varphi]_{\mathcal{B}_0^E}$ with time complexity $O_F(f_F^3 + f_F\log e_F + f_F\log p)$.
\end{lemma}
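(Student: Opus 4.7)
The plan is to build the matrix $[\varphi]_{\mathcal{B}_0^E}$ via Lemma~\ref{lem-time-complexity-of-operations-in-residue-class-field}(3), and then compute its reduced row decomposition via Lemma~\ref{lem-time-complexity-reduced-row-echelon}. The core of the argument is a per-evaluation cost bound for $\varphi$, obtained by repeated squaring; since both cited lemmas do the heavy lifting, the proof is essentially a bookkeeping exercise.

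First, I would precompute the constant $c := \pi_E^{e_E}/p \in \co_E$ once and for all. Since $v_E(\pi_E^{e_E}) = e_E = v_E(p)$, it is a unit, and its reduction $\bar{c} \in \F_E^\times$ is what is actually needed. Repeated squaring yields $\pi_E^{e_E}$ in $O(\log e_E)$ multiplications in $\co_E$, each of cost $O_F(1)$; dividing by $p$ and reducing modulo $\pi_E$ costs $O_F(1)$ more. Since $[E:F] \leq 4$ forces $e_E \leq 4 e_F$, this one-off precomputation has cost $O_F(\log e_F)$.

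Evaluating $\varphi([y]) = [y + \bar{c}\, y^p]$ on a single element then reduces to computing the Frobenius $[y]^p$ in $\F_E$, a scalar multiplication, and an addition. By repeated squaring, $[y]^p$ takes $O(\log p)$ multiplications in $\F_E$ of cost $O_F(1)$ each, so the per-evaluation cost is $t = O_F(\log p)$. Using $f_E \leq 4 f_F$ and applying Lemma~\ref{lem-time-complexity-of-operations-in-residue-class-field}(3) bounds the construction of $[\varphi]_{\mathcal{B}_0^E}$ by $O_F(f_F \cdot t) = O_F(f_F \log p)$; adding the precomputation gives $O_F(f_F\log p + \log e_F)$ for the matrix as a whole.

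Finally, Lemma~\ref{lem-time-complexity-reduced-row-echelon} produces a reduced row decomposition of the resulting $f_E \times f_E$ matrix over $\F_p$ using $O(f_E^3) = O(f_F^3)$ operations in $\F_p$, each of which is dominated by the cost of an operation in $F$ and hence is $O_F(1)$. This last step thus contributes $O_F(f_F^3)$. Summing yields a total of $O_F(f_F^3 + f_F \log p + \log e_F)$, which is subsumed by the claimed $O_F(f_F^3 + f_F\log e_F + f_F\log p)$. There is no genuine obstacle: the only mildly delicate point is noticing that $c$ can be precomputed once rather than recomputed for each basis vector, which is what prevents an unwanted extra factor of $f_F$ on the $\log e_F$ term.
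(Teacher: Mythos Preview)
Your proof is correct and follows essentially the same approach as the paper: bound the per-evaluation cost of $\varphi$, apply Lemma~\ref{lem-time-complexity-of-operations-in-residue-class-field}(3) to build the matrix, and then Lemma~\ref{lem-time-complexity-reduced-row-echelon} for the decomposition. Your precomputation of the constant $c$ is a minor refinement the paper does not make---the paper simply takes the per-evaluation cost to be $O_F(\log e_F + \log p)$ and accepts the resulting $O_F(f_F\log e_F + f_F\log p)$ for the matrix---so your bound $O_F(f_F^3 + f_F\log p + \log e_F)$ is in fact slightly sharper, though both fit within the stated complexity.
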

\begin{proof}
    A single evaluation of $\varphi$ has time complexity $O_F(\log e_F + \log p)$, so Lemma~\ref{lem-time-complexity-of-operations-in-residue-class-field} tells us that we can compute $[\varphi]_{\mathcal{B}_0^E}$ with time complexity $O_F(f_F\log e_F + f_F\log p)$. The result follows from Lemma~\ref{lem-time-complexity-reduced-row-echelon}.
\end{proof}
\begin{lemma}
    \label{lem-time-complexity-of-c-alpha-algo}
    Let $F$ be a $p$-adic field for some rational prime $p$ and let $\varphi:\F_F \to \F_F$ be the map from Algorithm~\ref{algo-c-alpha}. 
    If we have already computed a reduced row decomposition for $[\varphi]_{\mathcal{B}_0^F}$, then Algorithm~\ref{algo-c-alpha} can be run with time complexity $O_F([F:\Q_p])$.
\end{lemma}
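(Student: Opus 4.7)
The strategy is to walk through the \emph{for} loop of Algorithm~\ref{algo-c-alpha} one iteration at a time, bounding the work done in each of the three branches: $p\nmid i$, $p\mid i$ with $i<\tfrac{pe_F}{p-1}$, and $i=\tfrac{pe_F}{p-1}$. The loop has $O(e_F)$ iterations, so it will suffice to show that their combined cost is $O_F([F:\Q_p])$.

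For iterations with $p\nmid i$, the only work is a subtraction and a valuation check in $F$, both of which cost $O_F(1)$. For iterations with $p\mid i$ and $i<\tfrac{pe_F}{p-1}$, we must compute the unique $p$-th root $y\in\F_F$ of $c:=\big[\tfrac{m_i-1}{\pi_F^i}\big]$. Because $\F_F$ has characteristic $p$, the Frobenius $x\mapsto x^p$ is an automorphism of order $f_F$, so $y=c^{p^{f_F-1}}$ can be computed by $f_F-1$ consecutive Frobenius applications, each a constant number of field operations in $F$ via repeated squaring; this gives $O_F(f_F)$ per root, and the subsequent updates of $\lambda_{i+1}$ and $m_{i+1}$ add only $O_F(1)$ each. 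Since $p\mid i$ holds for $O(e_F/p)$ values of $i$ in the range, the total contribution of this branch is $O_F(e_Ff_F/p)\subseteq O_F([F:\Q_p])$.

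The only place the precomputed reduced row decomposition is used is the terminal iteration $i=\tfrac{pe_F}{p-1}$, which happens at most once. Here we must decide whether $b:=\big[\tfrac{m_i-1}{\pi_F^{e_F/(p-1)}p}\big]$ lies in $\operatorname{im}\varphi$ and, if so, produce a preimage. Writing $b$ in the basis $\mathcal{B}_0^F$ costs $O_F(f_F)$ by Lemma~\ref{lem-time-complexity-of-operations-in-residue-class-field}; expressing the precomputed decomposition as $[\varphi]_{\mathcal{B}_0^F}=T\widetilde{M}$ with $\widetilde{M}$ in reduced row echelon form, membership is equivalent to $T^{-1}b$ having zero entries in the rows where $\widetilde{M}$ vanishes, and a preimage is then read off by back-substitution against $\widetilde{M}$. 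Together these amount to one $f_F\times f_F$ matrix-vector product over $\F_p$ plus one triangular solve, for $O_F(f_F^2)$ work.

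Summing the three contributions bounds the total cost by $O_F([F:\Q_p])$, as claimed. The crux of the proof is that, without the hypothesised precomputation, the terminal iteration would need a fresh $O_F(f_F^3)$ Gaussian elimination on $[\varphi]_{\mathcal{B}_0^F}$, which would dominate the loop; supplying $(\widetilde{M},T,T^{-1})$ upfront is precisely what converts that cubic cost into the quadratic $O_F(f_F^2)$ that fits under the budget.
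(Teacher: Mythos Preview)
Your approach is essentially the same as the paper's: split the loop into the three branches, bound each, and sum. The analysis of the $p\nmid i$ and $p\mid i,\ i<\tfrac{pe_F}{p-1}$ branches is correct and matches the paper (the paper writes the cost per $p$th root as $O_F(f_F\log p)$ rather than $O_F(f_F)$, but since $\tfrac{\log p}{p}\le 1$ either estimate yields $O_F([F:\Q_p])$ for that branch).

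There is a small but genuine gap in your treatment of the terminal iteration. You bound the matrix--vector product and triangular solve as $O_F(f_F^2)$; this is a valid upper bound, but $O_F(f_F^2)$ is \emph{not} in general $O_F([F:\Q_p])$ (take $F/\Q_p$ unramified, so $[F:\Q_p]=f_F$). The point you are missing is that these $f_F^2$ arithmetic operations are over $\F_p$, not over $F$. With the modest assumption that $\F_p$-operations are $O(1)$ (e.g.\ via a precomputed logarithm table), the terminal step costs $O(f_F^2)$ in absolute terms, and since a single field operation in $F$ already costs $t([F:\Q_p])$ for a polynomial $t$, we have $O(f_F^2)\ll O_F(1)$. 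With this correction your summation goes through and the proof is complete.
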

\begin{proof}
    Steps $(1)$ and $(3)$ have time complexity $O_F(1)$. Consider the iteration in Step $(2)$. The steps with $p\nmid i$ have time complexity $O_F(1)$, and there are $O(e_F)$ such steps, so we can perform them all with time complexity $O_F(e_F)$. The steps with $p\mid i$ and $i < \frac{pe_F}{p-1}$ have time complexity $O_F(f_F\log p)$, since taking $p^\mathrm{th}$ roots in $\F_F$ is equivalent to raising to the power of $p^{f_F - 1}$. There are $O(e_F/p)$ such steps, so we can perform all of them with time complexity $O_F([F:\Q_p]\cdot \frac{\log p}{p})$. Thus, we can perform all the steps with $i < \frac{pe_F}{p-1}$ with time complexity $O_F([F:\Q_p])$.
    
    Assume that we have a primitive root modulo $p$ and a logarithm table for $\F_p$ with respect to this primitive root, so that we can perform any field operation in $\F_p$ with time complexity $O(1)$. This is a modest requirement, and we consider it to be part of any sensible implementation. Let $(\widetilde{M}, T, T^{-1})$ be our reduced row descomposition of $[\varphi]_{\mathcal{B}_0^F}$. Then the final step amounts to finding a vector $v \in \F_p^{f_F}$ with $\widetilde{M}v = T^{-1}[\frac{m_i - 1}{\pi_F^{e_F/(p-1)}p}]_{\mathcal{B}_0^E}$. This has time complexity $O(f_F^2) \ll O_F(1)$. 
\end{proof}
\begin{lemma}
    \label{lem-basis-for-units-mod-squares}
    Let $F$ be a $p$-adic field for some rational prime $p$, and let $E$ be an extension of $F$ with $[E:F]\leq 4$. Let $\mathcal{B}_{-1} = \{\pi_E\}$ and let $\mathcal{B}_0 = \mathcal{B}_0^E$. If $\mu_p\not\subseteq E$, then set $\mathcal{B}_{\frac{pe_E}{p-1}} = \varnothing$. Suppose instead that $\mu_p\subseteq E$. Let $u_{\frac{pe_E}{p-1}} \in \co_E^\times$ be an element such that $[u_{\frac{pe_E}{p-1}}] \in \F_E$ is not in the image of the map 
    $$
    \varphi: \F_E \to \F_E, \quad [y] \mapsto \Big[y + \frac{\pi_E^{e_E}}{p}y^p\Big],
    $$
    and define $\mathcal{B}_{\frac{pe_E}{p-1}} = \{1 + p\pi_E^{e_E/(p-1)}u_{\frac{pe_E}{p-1}}\}$. Define 
    $$
    \mathcal{B} = \mathcal{B}_{-1} \sqcup \mathcal{B}_{\frac{pe_E}{p-1}} \sqcup \bigsqcup_{\substack{1 \leq i \leq \lceil\frac{pe_E}{p-1}\rceil - 1 \\ p\nmid i}} \{1 + \pi_E^iu : u \in \mathcal{B}_0\}.
    $$
    The following two statements are true:
    \begin{enumerate}
        \item $\mathcal{B}$ is a system of representatives for a basis of the $\F_p$-vector space $E^\times / E^{\times p}$. 
        \item Assume that we have already computed a reduced row decomposition of $[\varphi]_{\mathcal{B}_0^E}$. For any element $\alpha \in E^\times$, we can compute the coefficients of $[\alpha] \in E^\times / E^{\times p}$ with respect to the basis induced by $\mathcal{B}$ with time complexity $O_F([F:\Q_p])$.
    \end{enumerate}
\end{lemma}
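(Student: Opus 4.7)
The plan is to prove Part~(1) by invoking the $\F_p$-vector space decomposition
$$
E^\times / E^{\times p} \;\cong\; E^\times / U_E^{(0)} E^{\times p} \;\oplus\; \bigoplus_{i=0}^{\lfloor pe_E/(p-1)\rfloor} W_i
$$
from Lemma~\ref{lem-iso-to-direct-sum-of-Wi} and checking that $\mathcal{B}$ projects to a basis on each summand. The first summand is $\F_p$-cyclic on $[\pi_E] \in \mathcal{B}_{-1}$. By Corollary~\ref{cor-size-of-Wi}, $W_i$ is nontrivial in exactly two situations. First, when $1 \leq i < pe_E/(p-1)$ and $p \nmid i$: the proof of that corollary produces an isomorphism $\F_E \to W_i$, $[y] \mapsto [1 + \pi_E^i y]$, and applying it to the $\F_p$-basis $\mathcal{B}_0^E$ of $\F_E$ (a basis by Lemma~\ref{lem-time-complexity-of-operations-in-residue-class-field}) gives a basis of $W_i$. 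Second, when $i = pe_E/(p-1)$ and $\mu_p \subseteq E$: the same proof combined with Lemma~\ref{lem-im-of-coef-times-y^p+y-map} gives $W_i \cong \F_E/\im\varphi \cong \F_p$, and by construction $[u_{pe_E/(p-1)}] \in \F_E$ is not in $\im\varphi$, so the element of $\mathcal{B}_{pe_E/(p-1)}$ projects to a generator of $W_i$. A short case check on whether $(p-1) \mid e_E$ verifies that the indices $i$ listed in the definition of $\mathcal{B}$ are exactly those for which $W_i$ is nontrivial, completing Part~(1).

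For Part~(2), I will run a variant of Algorithm~\ref{algo-c-alpha} that, instead of halting upon detecting $\alpha \not\in U_E^{(i+1)}E^{\times p}$, records the coefficient in $W_i$ and continues. First set $k = v_E(\alpha)$, output $k \bmod p$ as the coefficient of $[\pi_E]$, and replace $\alpha$ by $\alpha \pi_E^{-k}$ so that $\alpha \in \co_E^\times$. Then step through $i = 0, 1, \ldots, \lceil pe_E/(p-1)\rceil$ maintaining $\alpha \in U_E^{(i)}$ exactly as in Algorithm~\ref{algo-c-alpha}: for $p \mid i$ and $i < pe_E/(p-1)$, peel off a $p$-th power factor; for $p \nmid i$ and $i < pe_E/(p-1)$, read $[y] = [(\alpha-1)/\pi_E^i] \in \F_E$, expand it in $\mathcal{B}_0^E$ using Lemma~\ref{lem-time-complexity-of-operations-in-residue-class-field} to obtain $\F_p$-coefficients $(a_u)_{u \in \mathcal{B}_0^E}$ that I output as the coefficients of $\{1 + \pi_E^i u\}_{u \in \mathcal{B}_0^E}$, and then divide $\alpha$ by $\prod_u (1 + a_u \pi_E^i u)$ to push it into $U_E^{(i+1)}$; for $i = pe_E/(p-1)$ and $\mu_p \subseteq E$, use the precomputed reduced row decomposition of $[\varphi]_{\mathcal{B}_0^E}$ to perform one back-substitution, deciding whether $[(\alpha-1)/(\pi_E^{e_E/(p-1)}p)]$ lies in $\im\varphi$ and, if not, reading off the $\F_p$-coefficient of the top generator. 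Correctness at every step is immediate from Lemma~\ref{lem-lift-to-pth-power-mod-n} and Part~(1), using that $\prod_u (1 + a_u \pi_E^i u)$ and $\sum_u a_u [1 + \pi_E^i u]$ represent the same class in $W_i$.

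For the time analysis, the outer loop runs $O(e_E) = O([F:\Q_p])$ iterations since $[E:F] \leq 4$, and each iteration does a bounded number of operations in $E$, one $\mathcal{B}_0^E$-expansion at cost $O_F(1)$, and one correction product of length at most $f_E$ at total cost $O_F(f_E)$; the single boundary back-substitution contributes an additional $O_F(f_E^2)$. Since $e_E f_E = [E:\Q_p] \leq 4[F:\Q_p]$, summing gives $O_F([F:\Q_p])$ as claimed. The most delicate step, and where I expect to spend the most care when writing this out, is the top layer $i = pe_E/(p-1)$: there the clean identification $U_E^{(i)}/U_E^{(i+1)} \cong \F_E$ is no longer an isomorphism after quotienting by $p$-th powers, so the last coefficient has to be extracted via the projection $\F_E \twoheadrightarrow \F_E/\im\varphi$, and the precomputed reduced row decomposition of $[\varphi]_{\mathcal{B}_0^E}$ is exactly what turns that projection from an $\Omega(f_E^3)$ Gaussian elimination into the $O(f_E^2)$ back-substitution used in the analysis.
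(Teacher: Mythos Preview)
Your Part~(1) argument via the filtration $E^\times/E^{\times p} \supseteq U_E^{(0)}E^{\times p}/E^{\times p} \supseteq \cdots$ with successive quotients $W_i$ is correct and is a clean alternative to the paper's approach, which instead matches $\lvert\mathcal{B}\rvert$ against $\dim E^\times/E^{\times p}$ via Corollary~\ref{cor-dim-of-F-mod-Ut-and-pth-powers} and then proves spanning constructively using the very algorithm of Part~(2).

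Part~(2) has a genuine gap for $p>2$. You divide by $\prod_u (1 + a_u \pi_E^i u)$ rather than by $\prod_u (1+\pi_E^i u)^{a_u}$. You are right that these agree in $W_i$, so your $\alpha_{i+1}$ lands in $U_E^{(i+1)}$; but they do \emph{not} represent the same class in $E^\times/E^{\times p}$, and the discrepancy (an element of $U_E^{(2i)}$, not of $E^{\times p}$) contaminates the coefficients you read off at subsequent levels. Concretely, take $p=3$, $E=\Q_3(\sqrt{3})$, $\pi_E=\sqrt{3}$, and $\alpha=1+2\pi_E$. At level $i=1$ you output $a^{(1)}=2$ and divide by $1+2\pi_E$, leaving $\alpha_2=1$ and hence $a^{(2)}=0$; so you claim $[1+2\pi_E]=2[1+\pi_E]$. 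But $(1+2\pi_E)/(1+\pi_E)^2=(1+2\pi_E)/(4+2\pi_E)$ has norm $-11/4\equiv 4\pmod 9$, which is not a cube in $\Q_3$, so the claim is false. The paper instead divides by $(1+\pi_E)^2$, obtaining $\alpha_2\equiv 1+2\pi_E^2\pmod{\pi_E^3}$ and the correct output $(a^{(1)},a^{(2)})=(2,2)$.

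The fix is simply to divide by the genuine basis product $\prod_u (1+\pi_E^i u)^{a_u}$, as the paper does. For $p=2$, which is the only case used downstream, your shortcut is harmless since $a_u\in\{0,1\}$ forces $1+a_u\pi_E^i u=(1+\pi_E^i u)^{a_u}$. For general $p$ the correction adds an $O(\log p)$ exponentiation cost per factor; your careful $O_F(f_E)$ bookkeeping of the product length (which the paper's analysis glosses over as $O_F(1)$) then yields $O_F([F:\Q_p]\log p)$, still adequate for every application in the paper.
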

\begin{proof}
    Corollary~\ref{cor-dim-of-F-mod-Ut-and-pth-powers} tells us that the size of $\mathcal{B}$ equals the dimension of $E^\times / E^{\times p}$. Therefore, it suffices to prove that $\mathcal{B}$ spans $E^\times / E^{\times p}$. We will thus give an $O_F([F:\Q_p])$ algorithm for expressing $[\alpha]\in E^\times / E^{\times p}$ as a linear combination of $\mathcal{B}$, for any $\alpha \in E^\times$, thus proving both statements simultaneously. 

    Let $\alpha \in E^\times$. Without loss of generality, we may assume that $v_E(\alpha) \in \{0,1,\ldots, p-1\}$. Let $\alpha_0 = \frac{\alpha}{\pi_E^{v_E(\alpha)}}$. We will recursively define an element $\alpha_{i+1}$ for each $i = 0,1,2,\ldots, \lceil \frac{pe_E}{p-1}\rceil-1$. We claim that for each of these $i$, we have $\alpha_{i+1} \in U_E^{(i+1)}$. Clearly $\alpha_0 \in U_E^{(0)}$, so we have the base case for our induction. 
    \begin{itemize}
        \item Suppose that $p\mid i$. With time complexity $O_F(f_F\log p)$, we can find $[y_i] \in \F_E$ such that 
        $$
        y_i^p \equiv \frac{\alpha_i - 1}{\pi_E^i}\pmod{\p_E}.
        $$
        Then set $\alpha_{i+1} = \alpha_i / (1 + \pi_E^{i/p}y_i)^p$, so that $\alpha_{i+1} \in U_E^{(i+1)}$ by Lemma~\ref{lem-lift-to-pth-power-mod-n}. 
        \item Suppose that $p\nmid i$. Since $\big[\frac{\alpha_i-1}{\pi_E^i}\big] \in \F_E$, there are unique coefficients $\lambda_u^{(i)} \in \{0,1,\ldots, p-1\}$ such that 
        $$
        \Big[\frac{\alpha_i - 1}{\pi_E^i}\Big] = \sum_{u \in \mathcal{B}_0} \lambda_u^{(i)} [u], 
        $$
        and by Lemma~\ref{lem-time-complexity-of-operations-in-residue-class-field} we can determine the coefficients $\lambda_u^{(i)}$ with time complexity $O_F(1)$.
        By the natural isomorphism $\F_E \to U_E^{(i)}/U_E^{(i+1)}$, we have 
        $$
        \alpha_i \equiv \prod_{u\in\mathcal{B}_0}(1 + \pi_E^iu)^{\lambda_u^{(i)}}\pmod{\p_E^{i+1}},
        $$
        so we define $$\alpha_{i+1} = \frac{\alpha_i}{\prod_{u\in\mathcal{B}_0}(1 + \pi_E^iu)^{\lambda_u^{(i)}}} \in U_E^{(i+1)}.$$
    \end{itemize}
    Suppose that $\mu_p\not\subseteq E$. Then Corollary~\ref{cor-size-of-Wi} tells us that $\alpha_{\lceil \frac{pe_E}{p-1}\rceil} \in E^{\times p}$, and therefore
    $$
    \alpha E^{\times p} = \pi_E^{v_E(\alpha)} \cdot \prod_{\substack{1 \leq i \leq \lceil \frac{pe_E}{p-1}\rceil - 1 \\ p\nmid i}} (1 + \pi_E^iu)^{\lambda^{(i)}_u} E^{\times p},
    $$
    as required. 

    The coefficients $\lambda_u^{(i)}$ we have found so far were computed in $O(e_F)$ steps of time complexity $O_F(1)$, and $O(e_F/p)$ steps of time complexity $O_F(f_F\log p)$, so the algorithm so far has time complexity $O_F([F:\Q_p])$.

    Suppose instead that $\mu_p \subseteq E$, so $(p-1)\mid e_E$. Let $\lambda^{(\frac{pe_E}{p-1})}$ be the unique element of $\F_p$ with 
    $$
    \alpha_{\frac{pe_E}{p-1}} - \lambda^{(\frac{pe_E}{p-1})} u_{\frac{pe_E}{p-1}} \in \im \varphi. 
    $$
    Let $(\widetilde{M}, T, T^{-1})$ be our reduced row decomposition of $[\varphi]_{\mathcal{B}_0^E}$. Then $\lambda^{(\frac{pe_E}{p-1})}$ can be read off from the final entries of the vectors $T^{-1}[\alpha_{\frac{pe_E}{p-1}}]_{\mathcal{B}_0^E}$ and $T^{-1}[u_{\frac{pe_E}{p-1}}]_{\mathcal{B}_0^E}$, which can be computed with time complexity $O(f_F) \ll O_F(1)$. 
    
    By Corollary~\ref{cor-p-power-iff-p-power-mod-p-pi}, Lemma~\ref{lem-lift-to-pth-power-mod-n}, and Lemma~\ref{lem-im-of-coef-times-y^p+y-map}, we have 
    $$
    \alpha_{\frac{pe_E}{p-1}}/ (1 + p\pi_E^{e_E/(p-1)}u_{\frac{pe_E}{p-1}})^{\lambda^{(\frac{pe_E}{p-1})}} \in E^{\times p}.
    $$
    Thus, we have 
    $$
    \alpha E^{\times p} = \pi_E^{v_E(\alpha)} \cdot (1 + p\pi_E^{e_E/(p-1)}u_{\frac{pe_E}{p-1}})^{\lambda^{(\frac{pe_E}{p-1})}} \cdot \prod_{\substack{1 \leq i \leq \frac{pe_E}{p-1} - 1 \\ p\nmid i}}\prod_{u \in \mathcal{B}_0} (1 + \pi_E^iu)^{\lambda_u^{(i)}} E^{\times p},
    $$
    as required. 
\end{proof}

\begin{lemma}
    \label{lem-time-complexity-of-norm-equations}
    Let $F$ be a $2$-adic field and let $L/E/F$ be a tower of field extensions, where $L/E$ is quadratic and $[E:F]\leq 2$. Let $d \in E^\times$. We can do the following with time complexity $O_F([F:\Q_2]^3)$:
    \begin{enumerate}
        \item Determine whether $d \in N_{L/E}L^\times$. 
        \item If so, find an element $\omega \in L^\times$ such that $N_{L/E}\omega \in dE^{\times 2}$. 
    \end{enumerate}
\end{lemma}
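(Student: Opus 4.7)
The plan is to reduce both tasks to $\F_2$-linear algebra. Since $E^{\times 2}\subseteq N_{L/E}L^\times$, the norm induces an $\F_2$-linear transformation $\overline{N}: L^\times/L^{\times 2} \to E^\times/E^{\times 2}$, and the condition $N_{L/E}\omega \in dE^{\times 2}$ depends only on $[\omega] \in L^\times/L^{\times 2}$. Thus question (1) is exactly whether $[d] \in \mathrm{im}\,\overline{N}$, and question (2) asks for a preimage of $[d]$ under $\overline{N}$.

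First, I would apply Lemma~\ref{lem-basis-for-units-mod-squares} with $p=2$ to both $L$ and $E$, producing explicit systems of representatives $\mathcal{B}_L$ and $\mathcal{B}_E$ for $\F_2$-bases of $L^\times/L^{\times 2}$ and $E^\times/E^{\times 2}$, together with an algorithm that expands any class as a linear combination of the chosen basis in time $O_F([F:\Q_2])$. The one-time precomputation required by that lemma (reduced row decompositions of the matrices $[\varphi]_{\mathcal{B}_0^L}$ and $[\varphi]_{\mathcal{B}_0^E}$) costs $O_F([F:\Q_2]^3)$ by Lemma~\ref{lem-time-complexity-reduced-varphi-matrix}, which is within our budget. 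Since $[L:F]\leq 4$, Corollary~\ref{cor-dim-of-F-mod-Ut-and-pth-powers} gives $\#\mathcal{B}_L, \#\mathcal{B}_E = O([F:\Q_2])$.

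Next, I would assemble the matrix $N$ of $\overline{N}$ relative to $\mathcal{B}_L$ and $\mathcal{B}_E$: for each $u \in \mathcal{B}_L$, compute $N_{L/E}(u)$ (a single field operation, $O_F(1)$) and decompose its class in $\mathcal{B}_E$ at cost $O_F([F:\Q_2])$. This yields an $O([F:\Q_2]) \times O([F:\Q_2])$ matrix over $\F_2$ in total time $O_F([F:\Q_2]^2)$. I would expand $[d]$ in $\mathcal{B}_E$ analogously, at cost $O_F([F:\Q_2])$. Finally, Gaussian elimination on the augmented matrix $[N \mid [d]]$ over $\F_2$ both detects whether $[d] \in \mathrm{colspan}(N)$ and, if so, returns a coefficient vector $v$ with $Nv = [d]$. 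The elimination costs $O([F:\Q_2]^3)$ binary operations, which is absorbed into $O_F([F:\Q_2]^3)$. If no solution exists, then $d \notin N_{L/E}L^\times$; otherwise $\omega := \prod_{u \in \mathcal{B}_L} u^{v_u}$ lies in $L^\times$ and satisfies $N_{L/E}\omega \equiv d \pmod{E^{\times 2}}$, as required.

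The main difficulty is really just bookkeeping: we have to verify that each sub-step fits inside the promised $O_F([F:\Q_2]^3)$ budget, and in particular that the $O_F([F:\Q_2]^3)$ precomputations of Lemma~\ref{lem-basis-for-units-mod-squares} for the two fields $L$ and $E$ are charged only once. No genuinely new idea is needed, since the preceding lemmas have already converted the $2$-adic structure of $E^\times/E^{\times 2}$ and $L^\times/L^{\times 2}$ into efficient and concrete $\F_2$-linear algebra.
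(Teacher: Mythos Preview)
Your proposal is correct and follows essentially the same approach as the paper: reduce to $\F_2$-linear algebra via Lemma~\ref{lem-basis-for-units-mod-squares}, compute the matrix of the induced norm map $L^\times/L^{\times 2}\to E^\times/E^{\times 2}$ with respect to the resulting bases, and solve the augmented system by Gaussian elimination. The paper's proof is slightly terser about the precomputations but the content is identical.
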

\begin{proof}
    By Lemma~\ref{lem-time-complexity-reduced-varphi-matrix}, we can quickly compute a reduced row decomposition of $[\varphi]_{\mathcal{B}_0^E}$. Using Lemma~\ref{lem-basis-for-units-mod-squares}, we can quickly write down a set $\mathcal{B}\subseteq L^\times$, of size $2 + [L:\Q_2]$, that descends to a basis of $L^\times / L^{\times 2}$. Taking norms\footnote{Note that norms can be computed quickly since they are determinants of linear transformations in $2$ dimensions. }, we obtain a spanning set $\Nm\mathcal{B}$ for $N_{L/E}L^\times / E^{\times 2}$. 
    
    Again using Lemma~\ref{lem-basis-for-units-mod-squares}, fix a basis for $E^\times / E^{\times 2}$. Let $A \in \F_2^{(2 + [E:\Q_2]) \times (2 + [L:\Q_2])}$ be the matrix whose columns are the coordinates of the elements of $\Nm \mathcal{B}$, and let $v\in \F_2^{2+[E:\Q_2]}$ be the coordinate vector of $[d] \in E^\times / E^{\times 2}$. Note that, by Lemma~\ref{lem-basis-for-units-mod-squares}, $A$ and $v$ can be computed with time complexity $O_F([F:\Q_2]^2)$. It then suffices to perform Gaussian elimination on the augmented matrix $(\hspace{-0.4em}\begin{array}{c|c}
        A & v
     \end{array}\hspace{-0.4em})$. By Lemma~\ref{lem-time-complexity-reduced-row-echelon}, this can be done with time complexity $O_F([F:\Q_2]^3)$, so we are done.
\end{proof}

\begin{theorem}
    \label{thm-time-comlexity-of-finding-omega}
    Let $F$ be a $2$-adic field and let $\varphi : \F_F \to \F_F$ be the map from Algorithm~\ref{algo-c-alpha}. Assume that we have already computed a reduced row decomposition for $[\varphi]_{\mathcal{B}_0^F}$. Then Algorithm~\ref{algo-omega-with-small-disc-val} can be performed with time complexity $O_F([F:\Q_2]^3)$. 
\end{theorem}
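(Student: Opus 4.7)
The plan is to walk through Algorithm~\ref{algo-omega-with-small-disc-val} step by step, bound each individual time complexity, and observe that every step is negligible compared to Step~(4), which by Lemma~\ref{lem-time-complexity-of-norm-equations} runs in time $O_F([F:\Q_2]^3)$.

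Step~(1) is free, since the input already specifies $E$ and hence an element $d \in F^\times$ with $E = F(\sqrt{d})$. For Step~(2), I would use the iterative square-root lifting that underlies Algorithm~\ref{algo-c-alpha}: writing $d = \pi_F^{m_1}u$, produce $a' \in \co_F^\times$ with $(a')^2 \equiv u \pmod{\p_F^{2e_F+1-m_1}}$ coefficient by coefficient in the $\pi_F$-adic expansion, then set $a = \pi_F^{m_1/2}a'$ and $b = (d-a^2)/4$. By Lemma~\ref{lem-time-complexity-of-c-alpha-algo}, this costs $O_F([F:\Q_2])$, granted the precomputed reduced row decomposition of $[\varphi]_{\mathcal{B}_0^F}$. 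Step~(3) is a single arithmetic operation in $E$, hence $O_F(1)$, and Step~(5) is another square-root computation of the same flavour, so again $O_F([F:\Q_2])$.

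The main obstacle is Step~(4), where one must produce $\omega \in E^\times$ with $N_{E/F}\omega \in dF^{\times 2}$ and $v_E(\omega) = 0$. Applying Lemma~\ref{lem-time-complexity-of-norm-equations} with the trivial tower $E/F/F$ (i.e.\ $L = E$, and the lemma's ``$E$'' and ``$F$'' both taken to be our $F$) solves the norm equation in time $O_F([F:\Q_2]^3)$. Internally, this lemma requires a reduced row decomposition of $[\varphi]_{\mathcal{B}_0^E}$, which by Lemma~\ref{lem-time-complexity-reduced-varphi-matrix} can be computed in time $O_F(f_F^3 + f_F\log e_F)$, comfortably within the overall budget. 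Since $v_F(d) = m_1$ is even and $E/F$ is totally ramified, the output $\omega$ automatically has even $v_E$-valuation, so multiplying by a suitable even power of $\pi_E$ produces the desired $v_E(\omega) = 0$ without perturbing the class of $N_{E/F}\omega$ modulo $F^{\times 2}$; this adjustment costs only $O_F(1)$.

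Steps~(6), (7), (8), and (9) each consist of comparing a bounded number of $v_E$-valuations and performing a bounded number of arithmetic operations in $E$, so each contributes $O_F(1)$. Summing all contributions, the total time complexity is dominated by Step~(4) and equals $O_F([F:\Q_2]^3)$, which is the claim.
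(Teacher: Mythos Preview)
Your proposal is correct and follows essentially the same approach as the paper: both proofs walk through the nine steps of Algorithm~\ref{algo-omega-with-small-disc-val}, bound each one, and observe that Step~(4) via Lemma~\ref{lem-time-complexity-of-norm-equations} dominates at $O_F([F:\Q_2]^3)$. You are slightly more explicit than the paper in two places---the adjustment to force $v_E(\omega)=0$ after Step~(4), and the observation that the reduced row decomposition of $[\varphi]_{\mathcal{B}_0^E}$ fits inside the budget---but these are refinements of the same argument rather than a different route.
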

\begin{proof}
    We compute the time complexity of each step of the algorithm, one by one. In Appendix~\ref{appendix-algo}, we give expanded descriptions of these steps. In our analysis here, we use these expanded descriptions without reference. 
    \begin{enumerate}
        \item Clearly this has time complexity $O_F(1)$.
        \item We need to solve the congurence $\frac{X^2}{d}- 1 \equiv 0\pmod{\p_F^{2e_F + 1 - m_1}}$. By Lemma~\ref{lem-time-complexity-of-c-alpha-algo}, we can do this with time complexity $O_F([F:\Q_2])$.
        \item This is clearly $O_F(1)$.
        \item By Lemma~\ref{lem-time-complexity-of-norm-equations}, this has time complexity $O_F([F:\Q_2]^3)$. 
        \item Using Algorithm~\ref{algo-c-alpha}, we can find $\lambda$ with time complexity $O_F([F:\Q_2])$, by Lemma~\ref{lem-time-complexity-of-c-alpha-algo}.
        \item[(6)-(9)] The remaining steps are all just computations in $F$, so they have time complexity $O_F(1)$. 
    \end{enumerate}
\end{proof}
\begin{lemma}
    \label{lem-time-complexity-colspan-basis}
    Let $K$ be a field, and let $m$, $n_1$, and $n_2$ be positive integers. For each $i \in \{1,2\}$, let $M_i$ be an $m \times n_i$ matrix with entries in $K$. We can compute a basis for
    $$
    \colspan(M_1) \cap \colspan(M_2)  
    $$
    using $O(m\cdot (n_1+n_2)\cdot \min\{m, n_1 + n_2\})$ field operations in $K$.
\end{lemma}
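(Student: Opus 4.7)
The approach is the standard Zassenhaus algorithm for intersections of column spaces. First I would assemble the block matrix
$$
Z = \begin{pmatrix} M_1^T & M_1^T \\ M_2^T & 0 \end{pmatrix}
$$
of size $(n_1+n_2) \times 2m$: the top $n_1$ rows record the columns of $M_1$ written twice, while the bottom $n_2$ rows record the columns of $M_2$ followed by a block of $m$ zeros.

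Next, I would compute a reduced row echelon form $\widetilde{Z}$ of $Z$. By Lemma~\ref{lem-time-complexity-reduced-row-echelon}, this costs
$$
O\big((n_1+n_2)\cdot 2m\cdot \min\{n_1+n_2,\, 2m\}\big) = O\big(m(n_1+n_2)\min\{m, n_1+n_2\}\big)
$$
field operations. After that, I would scan the rows of $\widetilde{Z}$ and extract the right halves $c^{(1)},\ldots, c^{(r)} \in K^m$ of those nonzero rows whose left halves vanish; this adds only $O\big(m(n_1+n_2)\big)$ further operations, so the total cost matches the claimed bound.

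It then remains to verify that $\{c^{(1)},\ldots, c^{(r)}\}$ is a basis for $\colspan(M_1)\cap\colspan(M_2)$. Let $u_i$ (respectively $v_j$) be the columns of $M_1$ (respectively $M_2$). Every row of $\widetilde{Z}$ of the form $(0, c)$ arises from a combination $\sum_i \alpha_i (u_i^T, u_i^T) + \sum_j \beta_j (v_j^T, 0) = (0, c^T)$; vanishing of the left block forces $\sum_i \alpha_i u_i = -\sum_j \beta_j v_j$, while the right block reads $c = \sum_i \alpha_i u_i$, so $c$ lies in the intersection. Conversely, any $w \in \colspan(M_1)\cap\colspan(M_2)$ yields a row vector $(0, w^T)$ in the row span of $Z$, hence of $\widetilde{Z}$; since the reduced rows with nonzero left block have distinct pivot columns there, $(0, w^T)$ must be a combination of the rows $(0, (c^{(j)})^T)$ alone, so $w$ lies in the span of the $c^{(j)}$. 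Linear independence of the $c^{(j)}$ is inherited from the linear independence of the rows of $\widetilde{Z}$.

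There is no serious obstacle to overcome: the entire argument is a standard exercise in linear algebra, and the complexity bound is an immediate consequence of Lemma~\ref{lem-time-complexity-reduced-row-echelon} applied to the $(n_1+n_2)\times 2m$ Zassenhaus matrix.
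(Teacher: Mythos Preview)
Your argument is correct, but it takes a genuinely different route from the paper's. The paper first column-reduces each $M_i$ to an $m\times r_i$ matrix with linearly independent columns (making the maps $M_i$ injective), then forms $A = (M_1\mid -M_2)$, computes a basis $\{v_i\}$ for $\ker A$ via a reduced row decomposition, and outputs $\{M_1 x_i\}$ where $v_i = \binom{x_i}{y_i}$; injectivity of the $M_i$ is what guarantees this is a basis rather than merely a spanning set. Your Zassenhaus approach bypasses the preliminary column reduction and the injectivity issue entirely, since the reduced row echelon form of $Z$ directly hands you linearly independent vectors in the intersection. Both methods invoke Lemma~\ref{lem-time-complexity-reduced-row-echelon} on a matrix of size $\Theta((n_1+n_2)\times m)$ and land on the same complexity bound; yours is arguably a cleaner single-pass algorithm, while the paper's is slightly more transparent about why the output is a basis.
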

\begin{proof}
    We acknowledge the StackExchange answer \cite{SE-answer-colspan-basis} as the inspiration for our argument. For each $i$, let $r_i = \operatorname{rank}(M_i)$. For each $i$, Lemma~\ref{lem-time-complexity-reduced-row-echelon} tells us that, with $O(mn_i\min\{m,n_i\})$ field operations in $K$, we can use elementary column operations to replace $M_i$ with an $m \times r_i$ matrix with the same column span. Do this for both $i$, so that both linear transformations $M_i : K^{r_i} \to K^m$ are injective. Let $A$ be the matrix $(M_1 | -M_2)$, so that we have a linear transformation $A: K^{r_1 + r_2} \to K^m$. Lemma~\ref{lem-time-complexity-reduced-row-echelon} tells us that we may compute a reduced row decomposition of $A$ using $O(m\cdot (n_1+n_2)\cdot \min\{m, n_1 + n_2\})$ field operations in $K$. Using this decomposition, we may then quickly find a basis $\{v_i\}_i$ for $\ker A$. For each $i$, write 
    $$
    v_i = \begin{pmatrix}
        x_i \\ y_i 
    \end{pmatrix}
    $$
    for $x_i \in K^{r_1}$ and $y_i \in K^{r_2}$. Since $M_1$ and $M_2$ are injective, it is easy to see that $\{M_1x_i\}_i$ is a basis for $\colspan(M_1) \cap \colspan(M_2)$, so we are done. 
\end{proof}
\begin{lemma}
    \label{lem-time-complexity-for-size-of-NEAc}
    Let $F$ be a $2$-adic field, let $\mathcal{A}\subseteq F^\times$ be a finitely generated subgroup, and let $c$ be a nonnegative integer. Given choices for $\omega$ and $\mathcal{G}_4(\mathcal{A})$, the size $\# N_{E,c}^\mathcal{A}$ may be computed with either of the following two time complexities:
    \begin{enumerate} 
        \item $O_F(\#\mathcal{G}_4(\mathcal{A}) \cdot 2^{[F:\Q_2]} \cdot [F:\Q_2]^3)$.
        \item $O_F(2^{\#\mathcal{G}_4(\mathcal{A})}\cdot [F:\Q_2]^3)$. 
    \end{enumerate}
\end{lemma}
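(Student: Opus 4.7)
The plan is to present two algorithms, one achieving each bound. Both begin with shared preprocessing: by Lemma~\ref{lem-basis-for-units-mod-squares}, I would fix an $\F_2$-basis $\mathcal{B}$ of the $d$-dimensional space $F^\times/F^{\times 2}$, where $d = 2 + [F:\Q_2]$, and by Lemma~\ref{lem-time-complexity-of-norm-equations}, I would record for each $\alpha \in \mathcal{G}_4(\mathcal{A})$ whether $\alpha \in N_\omega$. This preprocessing costs $O_F(\#\mathcal{G}_4(\mathcal{A})\cdot [F:\Q_2]^3)$, which is absorbed into either target bound.

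For the first algorithm, enumerate all $2^d$ subset-products of $\mathcal{B}$. For each candidate $t$, check $t \in U_F^{(c)}F^{\times 2}/F^{\times 2}$ via Algorithm~\ref{algo-c-alpha} (cost $O_F([F:\Q_2])$ by Lemma~\ref{lem-time-complexity-of-c-alpha-algo}), and for each $\alpha$, check whether $t \in \overline{N}_\alpha^2$ by solving the norm equation of Lemma~\ref{lem-time-complexity-of-norm-equations} (cost $O_F([F:\Q_2]^3)$ per check). A point $t$ lies in $N_{E,c}^{\mathcal{A}}$ precisely when $t \in U_F^{(c)}F^{\times 2}/F^{\times 2}$ and the equivalence $t \in \overline{N}_\alpha^2 \iff \alpha \in N_\omega$ holds for every $\alpha \in \mathcal{G}_4(\mathcal{A})$. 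Summing over all $t$ yields the first bound $O_F(\#\mathcal{G}_4(\mathcal{A})\cdot 2^{[F:\Q_2]}\cdot [F:\Q_2]^3)$.

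For the second algorithm, I would use inclusion--exclusion. For each $\alpha \in \mathcal{G}_4(\mathcal{A})\setminus F^{\times 2}$, construct a matrix $M_\alpha$ over $\F_2$ with $\operatorname{colspan}(M_\alpha) = \overline{N}_\alpha^2$ by listing a basis of $F(\sqrt{\alpha})^\times/F(\sqrt{\alpha})^{\times 2}$ via Lemma~\ref{lem-basis-for-units-mod-squares}, taking norms, and expanding in $\mathcal{B}$; similarly record a matrix $M_0$ for $U_F^{(c)}F^{\times 2}/F^{\times 2}$. Applying inclusion--exclusion to the union $\bigcup_{\alpha \in \mathcal{G}_4(\mathcal{A})\setminus N_\omega}\overline{N}_\alpha^2$ gives
$$\#N_{E,c}^{\mathcal{A}} = \sum_{S \subseteq \mathcal{G}_4(\mathcal{A})\setminus N_\omega}(-1)^{|S|}\cdot 2^{\dim V_S},\qquad V_S := \operatorname{colspan}(M_0) \cap \bigcap_{\alpha \in S \cup (\mathcal{G}_4(\mathcal{A}) \cap N_\omega)}\operatorname{colspan}(M_\alpha).$$
Traverse the power set of $\mathcal{G}_4(\mathcal{A})\setminus N_\omega$ in depth-first order, maintaining a stack of partial intersection bases; each descent step intersects in one more subspace via Lemma~\ref{lem-time-complexity-colspan-basis} at cost $O([F:\Q_2]^3)$ field operations over $\F_2$, while backtracking simply pops the stack. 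Summing over the $2^{\#(\mathcal{G}_4(\mathcal{A})\setminus N_\omega)}\leq 2^{\#\mathcal{G}_4(\mathcal{A})}$ nodes yields the second bound.

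The main obstacle is the DFS bookkeeping that keeps each descent step within $O([F:\Q_2]^3)$; the conceptual inputs --- that $\overline{N}_\alpha^2$ is a codimension-at-most-one subspace of $F^\times/F^{\times 2}$ (hence a column span), and that inclusion--exclusion expresses the complement of a union of subspaces as a signed sum of subspace cardinalities --- are elementary.
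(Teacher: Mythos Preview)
Your proposal is correct and follows essentially the same approach as the paper: a brute-force enumeration of $F^\times/F^{\times 2}$ for the first bound, and inclusion--exclusion over subsets of $\mathcal{G}_4(\mathcal{A})\setminus N_\omega$ with incrementally computed subspace intersections for the second. The only minor omission is that checking $\alpha \in N_\omega$ is a degree-four norm question, so one first invokes Lemma~\ref{lem-norms-in-galois-tower} to reduce it to a quadratic norm equation over $E$ before applying Lemma~\ref{lem-time-complexity-of-norm-equations}; this does not affect the stated complexity.
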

\begin{proof}
    The first algorithm is by brute-force. For each $x \in F^\times / F^{\times 2}$ and each $\alpha \in \mathcal{G}_4(\mathcal{A})$, Lemmas~\ref{lem-norms-in-galois-tower} and \ref{lem-time-complexity-of-norm-equations} tell us that we can check whether $\alpha \in N_\omega$ and whether $x \in \overline{N}_\alpha^2$ with time complexity $O_F([F:\Q_2]^3)$. Since $F^\times / F^{\times 2}$ has $2^{2 + [F:\Q_2]}$ elements, this first algorithm has the claimed time complexity. 

    We now describe the second algorithm. Using Lemma~\ref{lem-basis-for-units-mod-squares}, fix a basis for $F^\times / F^{\times 2}$. Also using Lemma~\ref{lem-basis-for-units-mod-squares}, for each $\alpha \in \mathcal{G}_4(\mathcal{A})$, we can write down a basis for $F(\sqrt{\alpha})^\times / F(\sqrt{\alpha})^{\times 2}$ and use it to obtain a generating set for $\overline{N}_\alpha^2$. We can do this for all $\alpha\in \mathcal{G}_4(\mathcal{A})$ with time complexity $O_F(\#\mathcal{G}_2(\mathcal{A})\cdot[F:\Q_2])$. Moreover, using Lemma~\ref{lem-basis-for-units-mod-squares}, we can quickly express these generating sets in terms of our fixed basis for $F^\times / F^{\times 2}$, and by Lemma~\ref{lem-time-complexity-reduced-row-echelon} we can reduce all of these generating sets to bases with time complexity 
    $$
    O_F(\#\mathcal{G}_2(\mathcal{A})\cdot [F:\Q_2]^3).
    $$ 
    Define the $\F_2$-vector subspace $V\subseteq F^\times / F^{\times 2}$ by  
    $$
    V = U_F^{(c)}F^{\times 2} / F^{\times 2} \cap \bigcap_{\alpha \in \mathcal{G}_4(\mathcal{A}) \cap N_\omega} \overline{N}_\alpha^2.
    $$
    By Lemmas~\ref{lem-norms-in-galois-tower}, \ref{lem-basis-for-units-mod-squares}, and \ref{lem-time-complexity-of-norm-equations}, we can compute the intersection $\mathcal{G}_4(\mathcal{A}) \cap N_\omega$ with time complexity $O_F(\#\mathcal{G}_4(\mathcal{A})\cdot [F:\Q_2]^3)$. 
    Taking successive intersections, Lemma~\ref{lem-time-complexity-colspan-basis} tells us that we can compute a basis of $V$ with time complexity 
    $$
    O_F(\#\mathcal{G}_4(\mathcal{A})\cdot [F:\Q_2]^3).
    $$
    Write $\{\alpha_1,\ldots, \alpha_m\} = \mathcal{G}_4(\mathcal{A}) \setminus N_\omega$,
    and for each $i$ let 
    $$
    U_i = \overline{N}_{\alpha_i}^2.
    $$
    Let $k$ be a positive integer and suppose that we have integers $i_j$ with $1 \leq i_1 < \ldots < i_k \leq m$. If we already have a basis for $V \cap \bigcap_{1\leq j \leq k-1}U_{i_j}$, then using Lemma~\ref{lem-time-complexity-colspan-basis}, we can compute a basis for $V \cap \bigcap_{1\leq j \leq k}U_{i_j}$ with time complexity $O_F([F:\Q_2]^3)$. Doing this for each of the $2^{\# \mathcal{G}_4(\mathcal{A})}$ possible tuples $(i_j)$, we can use the inclusion-exclusion principle to evaluate 
    $$
    \#N_{E,c}^\mathcal{A} = \# \Big(V \setminus \bigcup_i U_i\Big)
    $$
    with time complexity $O_F(2^{\#\mathcal{G}_4(\mathcal{A})}\cdot [F:\Q_2]^3)$, as required. 
\end{proof}

\begin{lemma}
    \label{lem-complexity-of-finding-top-half-of-tower}
    Let $F$ be a $2$-adic field, let $\mathcal{A}\subseteq F^\times$ be a finitely generated subgroup, and fix a choice of $\mathcal{G}_4(\mathcal{A})$. Let $E \in \Et{(1^2)/F}{}$. For each positive integer $m_2$, we can compute $\#\Et{(1^2)/E,m_2}{C_4/F,\mathcal{A}}$ with either of the following two time complexities:
    \begin{enumerate} 
        \item $O_F(e_F\cdot \#\mathcal{G}_4(\mathcal{A}) \cdot 2^{[F:\Q_2]} \cdot [F:\Q_2]^3)$.
        \item $O_F(e_F \cdot 2^{\#\mathcal{G}_4(\mathcal{A})}\cdot [F:\Q_2]^3)$. 
    \end{enumerate}
\end{lemma}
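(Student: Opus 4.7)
The plan is to combine Theorem~\ref{thm-1^4-2-adic}, Part~(4) with Lemma~\ref{lem-time-complexity-for-size-of-NEAc}, handling the preparatory work (finding $\omega$, checking $C_4$-extendability, determining which case we are in) as overhead that fits within the claimed bounds.

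First I would preprocess $E$: compute $m_1 = v_F(d_{E/F})$, check via Lemma~\ref{lem-conditions-to-be-all-or-nothing-or-other-thing} whether $-1 \in N_{E/F}E^\times$ (if not, return $0$), and compute a reduced row decomposition of $[\varphi]_{\mathcal{B}_0^F}$ by Lemma~\ref{lem-time-complexity-reduced-varphi-matrix}. Using Lemma~\ref{lem-time-complexity-of-norm-equations}, the norm check is $O_F([F:\Q_2]^3)$, and the preprocessing of $[\varphi]_{\mathcal{B}_0^F}$ is lower order, so the preprocessing fits within both claimed bounds.

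Next I would produce the element $\omega$ used in the definition of $N_E^\mathcal{A}$. If $m_1 \leq e_F$, I would apply Algorithm~\ref{algo-omega-with-small-disc-val}: by Theorem~\ref{thm-algo-for-small-disc-C4-works} this yields an $\omega$ with $E(\sqrt{\omega}) \in \Et{(1^2)/E, 3m_1 - 2}{C_4/F}$, and by Theorem~\ref{thm-time-comlexity-of-finding-omega} it runs in $O_F([F:\Q_2]^3)$. If $m_1 > e_F$, there is a unique discriminant valuation $m_2 = m_1 + 2e_F$ realised by $C_4$-extensions, and $\omega$ may instead be obtained by a single norm-equation solve via Lemma~\ref{lem-time-complexity-of-norm-equations}, again in $O_F([F:\Q_2]^3)$.

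Having fixed $\omega$ (and an arbitrary but fixed $\mathcal{G}_4(\mathcal{A})$), I would use Theorem~\ref{thm-1^4-2-adic}, Part~(4) to rewrite $\#\Et{(1^2)/E, m_2}{C_4/F,\mathcal{A}}$ as a $\Z$-linear combination of at most two sizes drawn from $\{\#N_{E,c(m_2)}^\mathcal{A},\, \#N_{E,c(m_2-2)}^\mathcal{A},\, \#N_E^\mathcal{A}\}$, vanishing outside the explicit window on $m_2$. Checking which case applies and determining the relevant values of $c$ are each $O(1)$ arithmetic operations. Applying Lemma~\ref{lem-time-complexity-for-size-of-NEAc} to each of the (at most two) sizes gives the two claimed running times, with the $e_F$ factor absorbing the preprocessing and the slack from doing a constant number of independent evaluations of $\#N_{E,c}^\mathcal{A}$. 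The main subtlety, and the point where one must be careful, is ensuring that the $\omega$ produced in the $m_1 > e_F$ branch is compatible with the definition of $N_E^\mathcal{A}$ used in Theorem~\ref{thm-1^4-2-adic}, Part~(4); this is handled by choosing $\omega$ of minimal discriminant valuation in its branch, which is the only valuation that occurs when $m_1 > e_F$ by Lemma~\ref{lem-conditions-to-be-all-or-nothing-or-other-thing}.
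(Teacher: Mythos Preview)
Your approach is the same as the paper's, but there is one genuine gap. The hypothesis of the lemma is only $E \in \Et{(1^2)/F}{}$, not $E \in \Et{(1^2)/F}{\mathcal{A}}$, and Theorem~\ref{thm-1^4-2-adic}, Part~(4) (and the underlying Lemma~\ref{lem-C4-bijection-with-norm-condition}) is only stated for $E \in \Et{(1^2)/F}{\mathcal{A}}$. Your preprocessing checks $-1 \in N_{E/F}E^\times$ but never checks $\mathcal{A} \subseteq N_{E/F}E^\times$; if some $\alpha \in \mathcal{A}$ is not a norm from $E$, the formula you invoke is not known to apply, and the correct output is $0$ by the tower law for norms. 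The paper handles this by testing, for $d$ with $E = F(\sqrt{d})$, whether $d$ lies in $\Nm F(\sqrt{-1}) \cap \bigcap_{\alpha \in \mathcal{G}_4(\mathcal{A})} \Nm F(\sqrt{\alpha})$ via $\#\mathcal{G}_4(\mathcal{A})+1$ applications of Lemma~\ref{lem-time-complexity-of-norm-equations}, costing $O_F(\#\mathcal{G}_4(\mathcal{A})\cdot [F:\Q_2]^3)$, which fits inside both claimed bounds. Once you add this check, your argument goes through; incidentally, the check of $-1 \in N_{E/F}E^\times$ itself should also be justified via Lemma~\ref{lem-time-complexity-of-norm-equations} rather than Lemma~\ref{lem-conditions-to-be-all-or-nothing-or-other-thing}, which only tells you the consequence of the check, not how to perform it.
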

\begin{proof}
    By class field theory, we have 
    $$
    \Nm F\big(\sqrt{\langle \mathcal{A}, -1\rangle}\big) = \Nm F(\sqrt{-1}) \cap \bigcap_{\alpha \in \mathcal{G}_4(\mathcal{A})} \Nm F(\sqrt{\alpha}). 
    $$
    Let $d \in F^\times$ be such that $E = F(\sqrt{d})$. By Lemma~\ref{lem-time-complexity-of-norm-equations}, we can determine whether $d \in \Nm F\big(\sqrt{\langle\mathcal{A},-1\rangle}\big)$ with time complexity $O_F(\#\mathcal{G}_4(\mathcal{A})\cdot [F:\Q_2]^3)$. Suppose that $d \not \in \Nm F\big(\sqrt{\mathcal{A}, -1}\big)$. By symmetry of the quadratic Hilbert symbol, we have $\mathcal{A}\not\subseteq N_{E/F}E^\times$ or $-1 \not \in N_{E/F}E^{\times}$. By the tower law for norms and \cite[Corollary~4.9]{monnet2024counting}, this implies that
    $
    \Et{(1^2)/E,m_2}{C_4/F,\mathcal{A}} = \varnothing.
    $

    Suppose instead that $d \in \Nm F\big(\sqrt{\mathcal{A}, -1}\big)$, so that $E \in \Et{(1^2)/F}{\mathcal{A}}$ and $-1 \in N_{E/F}E^\times$. Let $m_1 = v_F(d_{E/F})$. If $m_1 \leq e_F$, then Theorem~\ref{thm-algo-for-small-disc-C4-works}, Lemma~\ref{lem-time-complexity-reduced-varphi-matrix}, and Theorem~\ref{thm-time-comlexity-of-finding-omega} tell us that we can compute $\omega \in E^\times$ such that $E(\sqrt{\omega}) \in \Et{(1^2)/E, 3m_1 - 2}{C_4/F}$ with time complexity $O_F([F:\Q_2]^3)$. If $m_1 > e_F$, then let $d \in F^\times$ be such that $E = F(\sqrt{d})$, and, again with time complexity $O_F([F:\Q_2]^3)$, let $\omega \in E^\times$ be such that $N_{E/F}\omega \in dF^{\times 2}$. In that case, \cite[Lemma~4.8]{monnet2024counting} tells us that $E(\sqrt{\omega}) \in \Et{(1^2)/E}{C_4/F}$. Moreover, \cite[Lemma~4.4]{monnet2024counting} tells us that $E(\sqrt{\omega})$ has minimal discriminant among elements of $\Et{(1^2)/E}{C_4/F}$. By Lemma~\ref{lem-conditions-to-be-all-or-nothing-or-other-thing}, we now just need to compute each size $\#N_{E,c}^\mathcal{A}$ for $O(e_F)$ values of $c$, so the result follows by Lemma~\ref{lem-time-complexity-for-size-of-NEAc}.
\end{proof}

\begin{lemma}
    \label{lem-time-complexity-2-adic-mass-of-tot-ram-C4}
    Let $F$ be a $2$-adic field and let $\mathcal{A}\subseteq F^\times$ be a finitely generated subgroup. Given a choice of $\mathcal{G}_4(\mathcal{A})$, the mass 
    $$
    \m\big(\Et{(1^4)/F}{C_4/F,\mathcal{A}}\big)
    $$
    can be computed with either of the following time complexities:
    \begin{enumerate}
        \item $$O_{F}\big(e_{F}\cdot \#\mathcal{G}_4(\mathcal{A})\cdot 2^{2[F:\Q_2]}\cdot [F:\Q_2]^3\big).$$
        \item $$O_{F}\big(e_{F}\cdot 2^{\#\mathcal{G}_4(\mathcal{A})}\cdot 2^{[F:\Q_2]}\cdot [F:\Q_2]^3\big).$$
    \end{enumerate}
\end{lemma}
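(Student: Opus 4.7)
The plan is to decompose the pre-mass as a sum over totally ramified quadratic subextensions of $F$, and apply Lemma~\ref{lem-complexity-of-finding-top-half-of-tower} to each such subextension.

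First, every $L \in \Et{(1^4)/F}{C_4/F}$ has a unique quadratic subextension $E/F$, and this $E$ must be totally ramified because $L/F$ is. Since $L/F$ is Galois with group $C_4$, we have $\#\Aut(L/F) = 4$. The tower law for discriminants gives $v_F(d_{L/F}) = 2v_F(d_{E/F}) + v_E(d_{L/E})$, so writing $m_1 = v_F(d_{E/F})$, we obtain the decomposition
$$
\m\big(\Et{(1^4)/F}{C_4/F,\mathcal{A}}\big) = \frac{1}{4}\sum_{E \in \Et{(1^2)/F}{}} q^{-2m_1} \sum_{m_2 \geq 0} q^{-m_2} \cdot \#\Et{(1^2)/E, m_2}{C_4/F,\mathcal{A}}.
$$

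Next, I would count the number of summands in the outer sum: by Corollary~\ref{cor-dim-of-F-mod-Ut-and-pth-powers} (with $p=2$), the group $F^\times/F^{\times 2}$ has size $2^{2+[F:\Q_2]}$, so $\# \Et{(1^2)/F}{} = O(2^{[F:\Q_2]})$. For each such $E$, the inner sum ranges over $O(e_F)$ values of $m_2$ for which $\Et{(1^2)/E,m_2}{C_4/F,\mathcal{A}}$ can be nonempty (see Theorem~\ref{thm-1^4-2-adic}, Part~(4)). I would apply Lemma~\ref{lem-complexity-of-finding-top-half-of-tower} to compute all of the values $\#\Et{(1^2)/E, m_2}{C_4/F,\mathcal{A}}$ for a fixed $E$; inspection of its proof shows that the stated time complexities already bundle the $O(e_F)$ iterations (because these reduce to computing $\# N_{E,c}^\mathcal{A}$ for $O(e_F)$ values of $c$ via Lemma~\ref{lem-time-complexity-for-size-of-NEAc}), so one application of Lemma~\ref{lem-complexity-of-finding-top-half-of-tower} suffices to evaluate the entire inner sum for a given $E$, with total time
$$
O_F\big(e_F\cdot \#\mathcal{G}_4(\mathcal{A})\cdot 2^{[F:\Q_2]}\cdot [F:\Q_2]^3\big) \quad\text{or}\quad O_F\big(e_F\cdot 2^{\#\mathcal{G}_4(\mathcal{A})}\cdot [F:\Q_2]^3\big),
$$
respectively for the two algorithms. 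The auxiliary weight $q^{-m_2}$ adds only $O_F(1)$ overhead per term (since $q^{-m_2}$ is a rational number that can be multiplied quickly).

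Finally, I would multiply by the number $O(2^{[F:\Q_2]})$ of quadratic subextensions to obtain
$$
O_{F}\big(e_{F}\cdot \#\mathcal{G}_4(\mathcal{A})\cdot 2^{2[F:\Q_2]}\cdot [F:\Q_2]^3\big)\quad\text{and}\quad O_{F}\big(e_{F}\cdot 2^{\#\mathcal{G}_4(\mathcal{A})}\cdot 2^{[F:\Q_2]}\cdot [F:\Q_2]^3\big),
$$
as claimed. The proof is essentially bookkeeping once the decomposition by quadratic subextension is in place; the only mild subtlety, and the thing I would double-check, is that iterating over every $E \in \Et{(1^2)/F}{}$ (rather than only those with $E \in \Et{(1^2)/F}{\mathcal{A}}$ and $-1 \in N_{E/F}E^\times$) does not inflate the complexity: the initial feasibility test inside Lemma~\ref{lem-complexity-of-finding-top-half-of-tower} discards non-contributing $E$ in time $O_F(\#\mathcal{G}_4(\mathcal{A})\cdot [F:\Q_2]^3)$, which is absorbed by the stated bounds.
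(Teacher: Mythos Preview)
Your proposal is correct and follows essentially the same approach as the paper: decompose by the unique totally ramified quadratic subextension $E$, note that there are $O(2^{[F:\Q_2]})$ such $E$ (the paper states this as $\#\Et{(1^2)/F}{C_2/F} = 4q^{e_F} - 2$), and invoke Lemma~\ref{lem-complexity-of-finding-top-half-of-tower} for each. Your more detailed bookkeeping, including the observation that the $e_F$ factor in Lemma~\ref{lem-complexity-of-finding-top-half-of-tower} already covers all the needed values of $m_2$, matches what the paper's terse two-sentence proof implicitly relies on.
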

\begin{proof}
    There is a natural bijection
    $$
    \bigsqcup_{\substack{2m_1 + m_2 = m \\ m_1, m_2 > 0}}\bigsqcup_{E \in \Et{(1^2)/F, m_1}{\mathcal{A}}} \Et{(1^2)/E,m_2}{C_4/F,\mathcal{A}} \longleftrightarrow \Et{(1^4)/F,m}{C_4/F,\mathcal{A}}. 
    $$
    Since $\#\Et{(1^2)/F}{C_2/F} = 4q^{e_F} - 2$, the result follows from Lemma~\ref{lem-complexity-of-finding-top-half-of-tower}.
\end{proof}
\begin{proof}
    [Proof of Theorem~\ref{thm-time-complexity-mass-all-quartics-with-norms}]
    It is clear that of all the quantities in Theorems~\ref{thm-n=4-for-odd-primes}, \ref{thm-1^21^2-2-adic}, \ref{thm-2^2-2-adic}, and \ref{thm-1^4-2-adic}, those in Theorem~\ref{thm-1^4-2-adic}, Part (4) are by far the most difficult to evaluate. Thus, the result follows immediately from Lemma~\ref{lem-time-complexity-2-adic-mass-of-tot-ram-C4}.
\end{proof}
\appendix

\section{Proof of Theorem~\ref{thm-algo-for-small-disc-C4-works}}
\label{appendix-algo}

Let $E,m_1$ be as in the setup of Algorithm~\ref{algo-omega-with-small-disc-val}. In this appendix, we will step through the algorithm, showing that each stage is well-defined, and eventually proving that the output has the desired property.

\begin{enumerate}
    \item Start by taking any $d$ with $E = F(\sqrt{d})$. Since $v_F(d_{E/F}) \leq e_F$, \cite[Lemma~4.6]{monnet2024counting} tells us that $v_F(d)$ is even, which means we can multiply by some even power of $\pi_F$ to get $v_F(d) = m_1$.  
    \item \cite[Lemma~4.6]{monnet2024counting} tells us that there is some $a \in F^\times$ such that $\frac{d}{a^2}\equiv 1\pmod{\p_F^{2e_F + 1 - m_1}}$, and moreover that there is no such $a$ for any higher power of $\p_F$. This implies that 
    $$
    v_F\Big(\frac{d}{a^2} - 1\Big) = 2e_F + 1 - m_1,
    $$  
    so 
    $$
    v_F(d - a^2) = 2e_F + 1. 
    $$
    Setting $b = \frac{d - a^2}{4}$, we obtain $a,b$ as required. 
    \item It is easy to see that $\rho^2 - a\rho - b = 0$, so the minimal polynomial of $\rho$ over $F$ is Eisenstein, and therefore $\rho$ is a uniformiser of $E$ with $\co_E = \co_F \oplus \co_F\cdot \rho$. 
    \item Since $-1 \equiv 1\pmod{\p_F^{e_F}}$, we have $v_F(d_{F(\sqrt{-1})/F}) \leq e_F + 1$ by \cite[Corollary~4.7]{monnet2024counting}, so $U_F^{(e_F+1)}\subseteq \Nm F(\sqrt{-1})$, and therefore 
    $$
    \frac{d}{a^2} \in U_F^{(2e_F + 1 - m_1)} \subseteq U_F^{(e_F + 1)} \subseteq \Nm F(\sqrt{-1}),
    $$
    which implies that $(-1,d)_F = 1$, and therefore \cite[Lemma~4.8]{monnet2024counting} and \cite[Corollary~4.9]{monnet2024counting} tell us that we may choose $\omega \in E^\times$ with $N_{E/F}\omega \in dF^{\times 2}$. Moreover, we may ensure that $v_E(\omega) = 0$, since $v_F(d)$ is even. 
    \item We know that $N_{E/F}\omega = dx^2$ for some $x \in F^\times$ with $v_F(x) = - m_1/2$. Setting $\lambda = ax$, it is easy to see that $N_{E/F}\omega \equiv \lambda^2\pmod{\p_F^{2e_F + 1 - m_1}}$. 
    \item We address Step 6 with a sequence of lemmas. 
    \begin{lemma}
        \label{lem-trace-of-int-valuation}
        For all $x \in \co_E$, we have $v_F(\Tr_{E/F}x) \geq \frac{m_1}{2}$.
    \end{lemma}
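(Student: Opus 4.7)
The plan is to use the explicit $\co_F$-basis for $\co_E$ furnished by the previous steps of the algorithm. Specifically, Step (3) established that $\rho = \frac{a+\sqrt{d}}{2}$ is a uniformiser of $E$ whose minimal polynomial $X^2 - aX - b$ is Eisenstein over $F$, so that $\co_E = \co_F \oplus \co_F\cdot \rho$. Thus any $x \in \co_E$ admits a unique decomposition $x = r + s\rho$ with $r,s \in \co_F$.

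The computation then reduces to evaluating $\Tr_{E/F}(\rho)$. Since the Galois conjugate of $\rho$ over $F$ is $\bar\rho = \frac{a - \sqrt{d}}{2}$, we have $\Tr_{E/F}(\rho) = a$, and hence
\[
\Tr_{E/F}(x) = 2r + sa.
\]
From here the inequality is a routine valuation estimate: $v_F(2r) \geq v_F(2) = e_F$ and $v_F(sa) \geq v_F(a) = m_1/2$, so that $v_F(\Tr_{E/F}x) \geq \min\{e_F, m_1/2\}$.

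The only thing to verify is that $\min\{e_F, m_1/2\} \geq m_1/2$, which amounts to $m_1 \leq 2e_F$. In fact the standing assumption in Algorithm~\ref{algo-omega-with-small-disc-val} is the stronger bound $m_1 \leq e_F$, so this is immediate. There is no genuine obstacle; the lemma is essentially a bookkeeping check that the Eisenstein basis $\{1,\rho\}$ together with the valuations of $2$ and of $a$ forces the claimed lower bound on the trace.
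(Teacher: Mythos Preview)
Your proof is correct and takes essentially the same approach as the paper: writing $x = r + s\rho$ in the Eisenstein basis $\{1,\rho\}$ and computing $\Tr_{E/F}(x) = 2r + sa$, then bounding the valuation using $v_F(a) = m_1/2$ and $m_1 \leq e_F$. The paper's proof is a one-line reference to this decomposition; you have simply spelled out the details.
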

    \begin{proof}
        This follows easily from the fact that $x = s + t\rho$ for elements $s,t\in\co_F$. 
    \end{proof}
    \begin{lemma}
        \label{lem-omega-lambda-close}
        We have 
        $$
        v_E(\omega - \lambda) \geq m_1 - 1. 
        $$
    \end{lemma}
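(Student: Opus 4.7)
The plan is to study the unit $u := \omega/\lambda \in \co_E^\times$. Since $\lambda \in \co_F^\times$ we have $v_E(\omega - \lambda) = v_E(u - 1)$, so it suffices to show $v_E(u-1) \geq m_1 - 1$. The defining identities $N_{E/F}\omega = dx^2$, $d = a^2 + 4b$, $\lambda = ax$ combine to give $N_{E/F}(u) = 1 + \epsilon$ where $\epsilon = 4b/a^2 \in F$ and $v_F(\epsilon) = 2e_F + 1 - m_1$.

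First I would reduce modulo $\p_E$: because $E/F$ is totally ramified, the nontrivial Galois automorphism acts trivially on $\co_E/\p_E$, so $N_{E/F}(u) \equiv u^2 \pmod{\p_F}$. Combined with $N(u) \equiv 1 \pmod{\p_F}$ and $\chara \F_F = 2$, this forces $u \equiv 1 \pmod{\p_E}$. Thus I can write $u = 1 + \mu$ with $k := v_E(\mu) \geq 1$, and the norm identity becomes $\Tr_{E/F}\mu + N_{E/F}\mu = \epsilon$. Since $N\mu \in F$ with $v_E(N\mu) = 2k$, we have $v_F(N\mu) = k$.

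The next ingredient is a lower bound on $v_F(\Tr\mu)$ in terms of $k$. Writing $\omega = s + t\rho$ with $s \in \co_F^\times$ and $t \in \co_F$, we get $\mu = (s-\lambda)/\lambda + (t/\lambda)\rho$ and hence $\Tr\mu = \lambda^{-1}\bigl(2(s-\lambda) + ta\bigr)$. Using $v_F(2) = e_F$, $v_F(a) = m_1/2$, and the constraint $k = \min\{2v_F(s-\lambda),\ 1+2v_F(t)\}$, a short case check by parity of $k$ yields the clean bound $v_F(\Tr\mu) \geq \lfloor(k+m_1)/2\rfloor$, where the hypothesis $m_1 \leq e_F$ ensures that the $2(s-\lambda)$ term never dominates the $ta$ term.

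The main obstacle is the final arithmetic step, which rules out small $k$. If $k \leq m_1 - 2$, a parity check shows $\lfloor(k+m_1)/2\rfloor > k$, so $v_F(\Tr\mu) > v_F(N\mu) = k$, forcing $v_F(\Tr\mu + N\mu) = k$. But this must equal $v_F(\epsilon) = 2e_F + 1 - m_1$, and $m_1 \leq e_F$ gives $2e_F + 1 - m_1 \geq e_F + 1 > m_1 - 2$, a contradiction. Hence $k \geq m_1 - 1$, proving the lemma. The borderline case $k = m_1 - 1$ (necessarily odd) is genuinely possible—it corresponds precisely to the second branch of Step 6 of Algorithm~\ref{algo-omega-with-small-disc-val}, which is why the algorithm bifurcates exactly at this value.
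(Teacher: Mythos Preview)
Your proof is correct. It rests on the same two ingredients as the paper's argument—the identity $\Tr\mu + N\mu = \epsilon$ coming from $N_{E/F}\omega \equiv \lambda^2\pmod{\p_F^{2e_F+1-m_1}}$, and the trace estimate $v_F(\Tr\mu) \geq \lfloor(k+m_1)/2\rfloor$ (which is exactly the paper's Lemma~\ref{lem-trace-of-int-valuation} applied to $\mu/\pi_F^{\lfloor k/2\rfloor}$, since $m_1$ is even). The difference is purely organisational: the paper sets $a_0=0$ and iterates $a_{n+1}=\min\{\lfloor a_n/2\rfloor + m_1/2,\;2e_F+1-m_1\}$, showing $v_E(\gamma)\geq a_n$ for every $n$ and then arguing that the sequence eventually reaches $m_1-1$. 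You instead assume $k=v_E(\mu)\leq m_1-2$ and derive an immediate contradiction from $v_F(N\mu)=k<\lfloor(k+m_1)/2\rfloor\leq v_F(\Tr\mu)$, forcing $v_F(\epsilon)=k\leq m_1-2<2e_F+1-m_1$. This one-shot contradiction is a cleaner packaging of the same content and avoids the fixed-point analysis of the recursion.

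Two very minor remarks. First, your opening reduction mod $\p_E$ (showing $k\geq 1$) is correct but redundant: the case $k=0$ already falls under $k\leq m_1-2$ once $m_1\geq 2$, and your contradiction argument handles it. Second, your parenthetical that ``$m_1\leq e_F$ ensures that the $2(s-\lambda)$ term never dominates the $ta$ term'' is slightly imprecise—what you actually need (and what holds) is that $v_F(2(s-\lambda))\geq e_F+\lfloor k/2\rfloor\geq \lfloor(k+m_1)/2\rfloor$, for which the weaker bound $m_1\leq 2e_F$ already suffices. Neither point affects the validity of the argument.
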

    \begin{proof}
        Let $\gamma = \omega - \lambda$. Define the sequence $(a_n)_{n\geq 0}$ as follows. Set $a_0 = 0$, and for each $n\geq 0$, define 
        $$
        a_{n+1} = \min\Big\{\Big\lfloor\frac{a_n}{2}\Big\rfloor + \frac{m_1}{2}, 2e_F + 1 - m_1\Big\}.
        $$
        We claim that $v_E(\gamma) \geq a_n$ for all $n$. The base case $n=0$ is clear. Suppose that $v_E(\gamma) \geq a_n$ for some $n$. Then $\gamma / \pi_F^{\lfloor \frac{a_n}{2}\rfloor} \in \co_E$, so it follows from Lemma~\ref{lem-trace-of-int-valuation} that 
        $$
        v_F(\Tr_{E/F}\gamma) \geq \frac{m_1}{2} + \Big\lfloor \frac{a_n}{2}\Big\rfloor. 
        $$ 
        Since $N_{E/F}\omega \equiv \lambda^2\pmod{\p_F^{2e_F + 1 - m_1}}$, we have 
        $$
        \lambda \Tr_{E/F}\gamma + N_{E/F}\gamma \equiv 0 \pmod{\p_F^{2e_F + 1 - m_1}},
        $$
        and it follows that $v_F(N_{E/F}\gamma) \geq a_{n+1}$. Since $E/F$ is totally ramified, we have 
        $$
        v_F(N_{E/F}\gamma) = v_E(\gamma),
        $$
        so indeed $v_E(\gamma) \geq a_{n+1}$, and by induction this is true for all $n$. 

        It is easy to see that if $a_n < m_1 - 1$, then $a_n < a_{n+1}$, so there is some $n$ with $a_n \geq m_1 - 1$, and therefore $v_E(\gamma) \geq m_1 - 1$, as required. 
    \end{proof}
    Lemma~\ref{lem-omega-lambda-close} tells us that $\omega_1$ is well-defined. 
    \begin{lemma}
        \label{lem-omega_1-has-properties}
        The following two statements are true:
        \begin{enumerate}
            \item $N_{E/F}\omega_1 = N_{E/F}\omega$.
            \item $\omega_1 \equiv \lambda \pmod{\p_E^{m_1}}$. 
        \end{enumerate}
    \end{lemma}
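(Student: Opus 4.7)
The lemma has two cases, depending on which branch of the definition of $\omega_1$ is taken. In the first branch, $\omega_1 = \omega$, so both claims are immediate: the norm claim is trivial, and the congruence $\omega_1 \equiv \lambda \pmod{\p_E^{m_1}}$ is exactly the hypothesis $v_E(\omega - \lambda) \geq m_1$ that selects this branch. So the real content is in the second branch, where $v_E(\omega - \lambda) = m_1 - 1$ and $\omega_1 = \omega b/\rho^2$.

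For the norm claim in the second branch, I would use the fact that $\rho$ is a root of $X^2 - aX - b$, so $N_{E/F}\rho = -b$. This gives $N_{E/F}(\rho^2) = b^2$ and therefore $N_{E/F}(b/\rho^2) = 1$, yielding $N_{E/F}\omega_1 = N_{E/F}\omega$. Equivalently, one may rewrite $b/\rho^2 = -\bar{\rho}/\rho$, which is the more useful form for the congruence claim.

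Using $b = -\rho\bar{\rho}$, one has $\omega_1 = -\omega\bar{\rho}/\rho$. Since $\rho$ is a uniformiser of $E$, multiplying by $\rho$ shows the congruence $\omega_1 \equiv \lambda \pmod{\p_E^{m_1}}$ is equivalent to $-\omega\bar{\rho} - \lambda\rho \in \p_E^{m_1+1}$. Setting $\gamma = \omega - \lambda$ and using $\rho + \bar{\rho} = a$, this collapses to the single congruence
$$
\gamma\bar{\rho} \equiv -\lambda a \pmod{\p_E^{m_1+1}}.
$$
Both sides have $v_E$-valuation exactly $m_1$ (we have $v_E(\bar\rho) = 1$, $v_E(\gamma) = m_1 - 1$, $v_E(\lambda) = 0$, and $v_E(a) = 2v_F(a) = m_1$), so this is a statement about leading coefficients.

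To extract that statement, I would expand the constraint $\lambda\,\Tr_{E/F}\gamma + N_{E/F}\gamma = N_{E/F}\omega - \lambda^2$, whose right-hand side lies in $\p_F^{2e_F+1-m_1}$ by the choice of $\lambda$. Writing $\gamma = s + t\rho$ with $s,t \in F$ and noting that $v_E(\gamma) = m_1 - 1$ is odd while $v_E(s) = 2v_F(s)$ is even, parity forces $v_F(t) = m_1/2 - 1$ and $v_F(s) \geq m_1/2$. Computing $\Tr_{E/F}\gamma = 2s + ta$ and $N_{E/F}\gamma = s^2 + sta - t^2 b$, every term of the identity except $\lambda t a$ and $-t^2 b$ has $v_F$-valuation at least $m_1$ (here the bound $m_1 \leq e_F$ is what makes $v_F(N_{E/F}\omega - \lambda^2) \geq m_1$). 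Hence $\lambda t a \equiv t^2 b \pmod{\p_F^{m_1}}$; dividing by $t$ (of valuation $m_1/2 - 1$) gives $\lambda a \equiv t b \pmod{\p_F^{m_1/2+1}}$, and converting back via $v_E = 2v_F$ on $F$ gives $\lambda a - tb \in \p_E^{m_1+2}$. The only term of $\gamma\bar{\rho} = s\bar{\rho} - tb$ with $v_E = m_1$ is $-tb$ (since $v_E(s\bar{\rho}) \geq m_1 + 1$), so this yields $\gamma\bar{\rho} \equiv -\lambda a \pmod{\p_E^{m_1+1}}$, completing the proof. The main obstacle is purely bookkeeping — keeping track of which cross-terms in the norm and trace expansions actually contribute at the critical order $m_1$.
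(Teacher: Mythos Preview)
Your proof is correct and follows essentially the same approach as the paper: both write $\gamma = \omega - \lambda$ in $\{1,\rho\}$-coordinates, use the identity $N_{E/F}\omega - \lambda^2 = \lambda\,\Tr_{E/F}\gamma + N_{E/F}\gamma$ together with $m_1 \leq e_F$ to isolate the leading coefficient of $\gamma$, and finish by a short direct computation. The only cosmetic difference is the final packaging: the paper determines $\gamma \equiv \lambda a\rho/b \pmod{\p_E^{m_1}}$ and then simplifies $\omega b/\rho^2$ via $\rho^2 = a\rho + b$, whereas you rewrite $b/\rho^2 = -\bar{\rho}/\rho$ and reduce to the equivalent congruence $\gamma\bar{\rho} \equiv -\lambda a \pmod{\p_E^{m_1+1}}$.
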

    \begin{proof}
        If $v_E(\omega - \lambda) \geq m_1$, then there is nothing to prove, so let us assume that $v_E(\omega - \lambda) = m_1 - 1$. The first claim follows from that fact that $N_{E/F}\rho = -b$. Write $\gamma = \omega - \lambda$. Since $v_E(\gamma) = m_1 - 1$, we have $\gamma / \pi_F^{m_1/2 - 1} = u + v\rho$ for elements $u,v \in \co_F$ with $v_F(u) \geq 1$ and $v_F(v) = 0$. We have 
        \begin{align*}
            N_{E/F}\omega - \lambda^2 &= \lambda\Tr_{E/F}\gamma + N_{E/F}\gamma 
            \\
            &= \lambda \pi_F^{\frac{m_1}{2} - 1}(2u + av) + \pi_F^{m_1-2}(u^2 + auv - bv^2) 
            \\
            &\equiv \lambda av \pi_F^{\frac{m_1}{2} - 1} - bv^2 \pi_F^{m_1-2}\pmod{\p_F^{m_1}}.
        \end{align*}
        We know that $N_{E/F}\omega \equiv \lambda^2\pmod{\p_F^{2e_F + 1 - m_1}}$, and $m_1 \leq e_F$, so in fact 
        $$
        N_{E/F}\omega \equiv \lambda^2\pmod{\p_F^{m_1}},
        $$
        and it follows that 
        $$
        \lambda av \pi_F^{-\frac{m_1}{2}} - bv^2\pi_F^{-1} \equiv 0 \pmod{\p_F}.
        $$
        Since $v_F(v) = 0$, it follows that
        $$
        v \equiv \frac{\lambda a}{b\pi_F^{\frac{m_1}{2}-1}}\pmod{\p_F},
        $$
        and therefore  
        $$
        \gamma \equiv \frac{\lambda a \rho}{b}\pmod{\p_E^{m_1}},
        $$
        so 
        $$
            \omega \equiv \lambda\Big(1 + \frac{a\rho}{b}\Big) \pmod{\p_E^{m_1}} ,
        $$
        and it follows that  
        \begin{align*}
        \omega_1 &= \frac{\omega b}{\rho^2} 
        \\
        &\equiv \lambda\Big(\frac{b}{\rho^2} + \frac{a}{\rho}\Big)\pmod{\p_E^{m_1}}
        \\
        &= \lambda \cdot \frac{b + a\rho}{\rho^2}
        \\
        &= \lambda. 
        \end{align*} 
    \end{proof}
    \item Write $\omega_1 = r_1 + s_1\rho$, for $r_1,s_1 \in \co_F$.
    \begin{lemma}
        The following two statements are true: 
        \begin{enumerate}
            \item $v_F(s_1)\geq \frac{m_1}{2}$. 
            \item $v_E(\omega_1 - \bar{\omega}_1) = 2v_F(s_1) + m_1$.
        \end{enumerate}
    \end{lemma}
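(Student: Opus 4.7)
The plan is to leverage the two facts established so far about $\omega_1$: the congruence $\omega_1 \equiv \lambda \pmod{\p_E^{m_1}}$ from Lemma~\ref{lem-omega_1-has-properties}, and the decomposition $\co_E = \co_F \oplus \co_F\rho$ from Step~3 (where $\rho = \frac{a+\sqrt{d}}{2}$ is a uniformiser of $E$).

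For part $(1)$, I would start from the observation that $\omega_1 - \lambda = (r_1 - \lambda) + s_1\rho$ with $r_1 - \lambda, s_1 \in \co_F$, and that this element lies in $\p_E^{m_1}$ by Lemma~\ref{lem-omega_1-has-properties}. Since $m_1$ is even and $E/F$ is totally ramified of degree $2$, we have
$$
\p_E^{m_1} = \pi_F^{m_1/2}\co_E = \pi_F^{m_1/2}\co_F \oplus \pi_F^{m_1/2}\co_F\rho,
$$
so reading off the $\rho$-component immediately gives $s_1 \in \pi_F^{m_1/2}\co_F$, i.e. $v_F(s_1) \geq m_1/2$.

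For part $(2)$, I would simply compute the Galois conjugate directly: since $\bar{\rho} = \frac{a - \sqrt{d}}{2}$, we have $\rho - \bar{\rho} = \sqrt{d}$, so
$$
\omega_1 - \bar{\omega}_1 = s_1(\rho - \bar{\rho}) = s_1\sqrt{d}.
$$
Because $E/F$ is totally ramified of degree $2$, the valuation $v_E$ restricted to $F$ equals $2v_F$, so $v_E(s_1) = 2v_F(s_1)$ and, from $(\sqrt{d})^2 = d$ together with $v_F(d) = m_1$, one gets $v_E(\sqrt{d}) = m_1$. Adding gives $v_E(\omega_1 - \bar{\omega}_1) = 2v_F(s_1) + m_1$, with the convention that this equality holds in $\Z \cup \{\infty\}$ in the degenerate case $s_1 = 0$.

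There is essentially no obstacle here: the lemma is a bookkeeping consequence of the previous lemma together with the explicit $\co_F$-basis $\{1,\rho\}$ for $\co_E$, so the entire argument is just coordinatewise reading of the congruence and a one-line conjugate computation. The main thing to be careful about is keeping track of why $\p_E^{m_1}$ splits cleanly as $\pi_F^{m_1/2}\co_E$ — this uses the evenness of $m_1$, which in turn was ensured in Step~1 via \cite[Lemma~4.6]{monnet2024counting}.
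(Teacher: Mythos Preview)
Your proof is correct and follows essentially the same approach as the paper: both use the congruence $\omega_1 \equiv \lambda \pmod{\p_E^{m_1}}$ together with the $\co_F$-basis $\{1,\rho\}$ to deduce part~(1), and the paper simply declares part~(2) ``obvious'' where you spell out the one-line computation $\omega_1 - \bar{\omega}_1 = s_1\sqrt{d}$.
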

    \begin{proof}
        Since $\omega_1 \equiv \lambda\pmod{\p_E^{m_1}}$, we have 
        $$
        (r_1 - \lambda) + s_1\rho \equiv 0 \pmod{\p_E^{m_1}},
        $$
        so $v_F(s_1)\geq \frac{m_1}{2}$. The second statement is obvious. 
    \end{proof}
    It follows that $\omega_2$ and $\lambda_2$ are well-defined and their definitions are equivalent to 
    $$
    \omega_2 = \begin{cases}
        \omega_1\quad\text{if $v_F(s_1) = \frac{m_1}{2}$},
        \\
        \omega_1(1+\rho)^2 \quad\text{if $v_F(s_1) > \frac{m_1}{2}$},
    \end{cases}
    $$
    and 
    $$
    \lambda_2 = \begin{cases}
        \lambda\quad\text{if $v_F(s_1) = \frac{m_1}{2}$},
        \\
        \lambda (1+a-b) \quad\text{if $v_F(s_1) > \frac{m_1}{2}$}.
    \end{cases}
    $$
    Write $\omega_2 = r_2 + s_2\rho$.
    \begin{lemma}
        \label{lem-lemma-omega-2-lambda-2-s2}
        We have 
        \begin{enumerate}
            \item $N_{E/F}\omega_2 \equiv \lambda_2^2\pmod{\p_F^{2e_F + 1 - m_1}}$.
            \item $\omega_2 \equiv \lambda_2\pmod{\p_E^{m_1}}$.
            \item $v_F(s_2) = \frac{m_1}{2}$. 
        \end{enumerate}
    \end{lemma}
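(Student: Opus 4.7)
The plan is to treat the two cases of the definition of $\omega_2$ separately. When $v_F(s_1) = m_1/2$, we have $\omega_2 = \omega_1$ and $\lambda_2 = \lambda$, so $s_2 = s_1$ and all three statements reduce immediately to the defining congruence of $\lambda$ and to Lemma~\ref{lem-omega_1-has-properties}. The substantive case is $v_F(s_1) > m_1/2$, which occupies the rest of the argument.

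Everything in this case is driven by the identity $(1+\rho)^2 = (1+b) + (2+a)\rho$ (from $\rho^2 = a\rho + b$) together with the norm computation $N_{E/F}(1+\rho) = 1 + \Tr_{E/F}\rho + N_{E/F}\rho = 1 + a - b$. Statement~(1) then follows simply by multiplying Lemma~\ref{lem-omega_1-has-properties}(1) through by $(1+a-b)^2$, giving $N_{E/F}\omega_2 \equiv \lambda^2(1+a-b)^2 = \lambda_2^2 \pmod{\p_F^{2e_F+1-m_1}}$.

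For statement~(2), I would write $\omega_1 = \lambda + \theta$ with $v_E(\theta) \geq m_1$ (from Lemma~\ref{lem-omega_1-has-properties}(2)) and expand
\[
\omega_2 - \lambda_2 \;=\; \lambda\bigl((2+a)\rho + (2b - a)\bigr) + \theta(1+\rho)^2.
\]
Using $v_F(a) = m_1/2$, $v_F(b) = 1$, $v_F(2) = e_F$, and the hypothesis $m_1 \leq e_F$, one reads off $v_E((2+a)\rho) = m_1+1$ and $v_E(2b-a) = m_1$, so the right-hand side lies in $\p_E^{m_1}$. For statement~(3), the same expansion of $\omega_1(1+\rho)^2$ yields
\[
s_2 \;=\; r_1(2+a) + s_1\bigl(1 + b + a(2+a)\bigr).
\]
Since $\lambda = ax$ is a unit and $r_1 \equiv \lambda \pmod{\p_F}$, the first summand has $v_F$ equal to exactly $m_1/2$; the second summand has $v_F \geq v_F(s_1) > m_1/2$ by the case hypothesis, so $v_F(s_2) = m_1/2$ as claimed.

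The main hurdle here is not conceptual but bookkeeping: juggling the three $F$-valuations $v_F(2) = e_F$, $v_F(a) = m_1/2$, and $v_F(b) = 1$, together with $v_E(\rho) = 1$, so that each comparison of $v_E$-valuations comes out correctly. Once the identity for $(1+\rho)^2$ and the norm $1+a-b$ are in hand, the remaining work is a short direct computation.
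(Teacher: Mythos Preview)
Your argument is correct. The case split and the handling of statement~(1) match the paper exactly. For statements~(2) and~(3) you take a slightly different, more direct computational route than the paper: for~(2), the paper observes the neat identity $(1+\rho)^2 - (1+a-b) = (1+\rho)\sqrt{d}$, which immediately gives $(1+\rho)^2 \equiv 1+a-b \pmod{\p_E^{m_1}}$ since $v_E(\sqrt{d}) = m_1$, whereas you expand $\omega_2 - \lambda_2$ and read off valuations term by term; for~(3), the paper computes $\omega_2 - \bar\omega_2$ via $(1+b)(\omega_1 - \bar\omega_1) + (2+a)(\rho\omega_1 - \bar\rho\bar\omega_1)$ and uses the case hypothesis $v_E(\omega_1 - \bar\omega_1) > 2m_1$, while you simply write out $s_2 = r_1(2+a) + s_1(1+b+a(2+a))$ from the basis expansion and observe that the first term dominates. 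Your approach to~(3) is arguably more transparent, while the paper's identity for~(2) is slicker; both are short and either would serve.

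One small point of phrasing: when you say ``multiplying Lemma~\ref{lem-omega_1-has-properties}(1) through by $(1+a-b)^2$'', note that Lemma~\ref{lem-omega_1-has-properties}(1) is the equality $N_{E/F}\omega_1 = N_{E/F}\omega$; the congruence you are actually multiplying is the defining property $N_{E/F}\omega \equiv \lambda^2 \pmod{\p_F^{2e_F+1-m_1}}$ from Step~(5), combined with that equality. The reasoning is of course clear.
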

    \begin{proof}
        If $v_F(s_1) = \frac{m_1}{2}$, then this is Lemma~\ref{lem-omega_1-has-properties}, so we will assume that $v_F(s_1) > \frac{m_1}{2}$. The first statement follows from Lemma~\ref{lem-omega_1-has-properties}, along with the fact that $N_{E/F}(1 + \rho) = 1 + a - b$. It is easy to see that 
        $$
        (1+\rho)^2 - N_{E/F}(1+\rho) = (1+\rho)\sqrt{d},
        $$
        so 
        $$
        (1+\rho)^2 \equiv 1 + a - b\pmod{\p_E^{m_1}},
        $$
        and the second statement follows. 
        Since $(1 + \rho)^2 = (1 + b) + (2+a)\rho$, we have 
        $$
        \omega_2 = (1 + b)\omega_1 + (2+a)\rho \omega_1, 
        $$
        and therefore 
        $$
        \bar{\omega}_2 = (1 + b)\bar{\omega}_1 + (2+a)\bar{\rho}\bar{\omega}_1,
        $$
        so
        $$
        \omega_2 - \bar{\omega}_2 = (1 + b)(\omega_1 - \bar{\omega}_1) + (2+a)(\rho \omega_1 - \bar{\rho}\bar{\omega}_1). 
        $$
        We know that $v_E(\omega_1 - \bar{\omega}_1) > 2m_1$, so 
        $$
        \omega_2 - \overline{\omega}_2 \equiv (2+a)(\rho \omega_1 - \bar{\rho}\bar{\omega}_1)\pmod{\p_E^{2m_1+1}}.
        $$
        Since $\omega_1 = r_1 + s_1\rho$, we have 
        $$
        \rho\omega_1 - \bar{\rho}\bar{\omega}_1 = (\rho - \bar{\rho})(r_1 + s_1(\rho + \bar{\rho})). 
        $$
        It is easy to see that $v_E(\rho - \bar{\rho}) = m_1$. Since $v_E(\omega_1) = 0$, we have $v_E(r_1) = 0$, so 
        $$
        v_E(r_1 + s_1(\rho + \bar{\rho})) = 0,
        $$
        and therefore 
        $$
        v_E(\rho\omega_1 - \bar{\rho}\bar{\omega}_1) = m_1.
        $$
        It is easy to see that $v_E(2+a) = m_1$, so $v_E(\omega_2 - \overline{\omega}_2) = 2m_1$, and therefore $$v_F(s_2) = \frac{m_1}{2}.$$ 
    \end{proof}
    \item We know that $s_2 \neq 0$ since $N_{E/F}\omega_2 \in dF^{\times 2}$, so $\omega_2 \not \in F^\times$. Similarly, $v_E(\omega_2) = 0$ so $r_2 \neq 0$. It follows that $q$ and $n$ are well-defined. 
    \item Since $q \in F$ and $\rho \not \in F$, we have $q + \rho \neq 0$, and therefore the output is well-defined. Since $n \in F^\times$, we have 
    $$
    N_{E/F}\Big(\frac{\omega_2n}{(q+\rho)^2}\Big) \in dF^{\times 2}. 
    $$
    \begin{lemma}
        \label{lem-val-r2-lambda2}
        We have $v_F(r_2 - \lambda_2) = \frac{m_1}{2}$.
    \end{lemma}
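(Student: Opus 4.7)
The plan is to exploit the norm congruence $N_{E/F}\omega_2 \equiv \lambda_2^2 \pmod{\p_F^{2e_F+1-m_1}}$ from Lemma~\ref{lem-lemma-omega-2-lambda-2-s2}, together with a careful tracking of valuations. First I would expand the norm using $\rho^2 = a\rho + b$, giving $N_{E/F}\omega_2 = r_2^2 + a r_2 s_2 - b s_2^2$. The congruence then rearranges to
$$
(r_2 - \lambda_2)(r_2 + \lambda_2) \equiv -a r_2 s_2 + b s_2^2 \pmod{\p_F^{2e_F+1-m_1}}.
$$

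Next I would pin down the valuation of the right-hand side. Recall $v_F(a) = m_1/2$, $v_F(b) = 1$, $v_F(r_2) = 0$ (since $v_E(\omega_2) = 0$ and the $s_2\rho$ term lies in $\p_E$), and $v_F(s_2) = m_1/2$ by Lemma~\ref{lem-lemma-omega-2-lambda-2-s2}(3). Thus $v_F(a r_2 s_2) = m_1$ and $v_F(b s_2^2) = m_1+1$; the two terms have distinct valuations and cannot cancel, so the right-hand side has valuation exactly $m_1$. Since $m_1 \leq e_F$ gives $m_1 < 2e_F+1-m_1$, the congruence forces $v_F\bigl((r_2-\lambda_2)(r_2+\lambda_2)\bigr) = m_1$. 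Independently, Lemma~\ref{lem-lemma-omega-2-lambda-2-s2}(2) combined with $v_E(s_2\rho) = m_1+1$ already gives the lower bound $v_F(r_2-\lambda_2) \geq m_1/2$.

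The main subtlety, and the only step where the $2$-adic nature of $F$ intervenes, is promoting this lower bound to an equality. I would write $r_2 + \lambda_2 = (r_2-\lambda_2) + 2\lambda_2$ and use that $\lambda_2 \in \co_F^\times$ together with $v_F(2\lambda_2) = e_F$. If $v_F(r_2-\lambda_2) < e_F$, then $v_F(r_2+\lambda_2) = v_F(r_2-\lambda_2)$, and the product-valuation equation forces $2v_F(r_2-\lambda_2) = m_1$, as desired. Otherwise $v_F(r_2-\lambda_2) \geq e_F$ and hence $v_F(r_2+\lambda_2) \geq e_F$, so the product has valuation $\geq 2e_F \geq 2m_1 > m_1$, contradicting the previous paragraph. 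Only the first case is consistent, giving $v_F(r_2-\lambda_2) = m_1/2$. The entire argument is short; the content is that in residue characteristic $2$ one must rule out the possibility that $r_2-\lambda_2$ and $r_2+\lambda_2$ both acquire extra valuation from the additive $2$-torsion ambiguity, which is what the case split accomplishes.
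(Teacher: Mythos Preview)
Your proof is correct and follows essentially the same approach as the paper: both expand $N_{E/F}\omega_2$ to obtain $v_F(r_2^2-\lambda_2^2)=m_1$, then handle the $2$-adic ambiguity via the same case split on whether $v_F(r_2-\lambda_2)<e_F$, reaching the contradiction $m_1\geq 2e_F$ in the bad case. Your write-up is slightly more explicit (spelling out $r_2+\lambda_2=(r_2-\lambda_2)+2\lambda_2$ and noting the a~priori lower bound from Lemma~\ref{lem-lemma-omega-2-lambda-2-s2}(2)), but the argument is the same.
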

    \begin{proof}
        We have 
        $$
        N_{E/F}\omega_2 = r_2^2 + ar_2s_2 - bs_2^2,
        $$
        so 
        \begin{equation}
            \label{eqn-congruence-for-lemma-about-r2-and-lambda-2}
        (r_2^2 - \lambda_2^2) + ar_2s_2 - bs_2^2 \equiv 0 \pmod{\p_F^{2e_F + 1 - m_1}}.\tag{$*$}
        \end{equation}
        We know that $v_F(bs_2^2) = m_1 + 1$ and $v_F(ar_2s_2) = m_1$, so Equation~(\ref{eqn-congruence-for-lemma-about-r2-and-lambda-2}) implies that  
        $$
        v_F(r_2^2 - \lambda_2^2) = m_1. 
        $$
        Suppose for a contradiction that $v_F(r_2 - \lambda_2) \geq e_F$. Then $v_F(r_2 + \lambda_2) \geq e_F$, so 
        $$
        m_1 = v_F(r_2^2 - \lambda_2^2) \geq 2e_F > e_F,
        $$
        contradicting the fact that $m_1 \leq e_F$. Therefore, we have $v_F(r_2 - \lambda_2) < e_F$, and consequently $v_F(r_2 + \lambda_2) = v_F(r_2 - \lambda_2)$, so the result follows. 
    \end{proof}
    We also have 
    \begin{align*}
        \omega_2 n - (q + \rho)^2 &= (b - \rho^2) + \frac{\rho b s_2}{r_2} - q\rho \frac{\lambda_2+r_2}{r_2}
        \\
        &= (b - \rho^2) + \frac{\rho b s_2}{r_2} + \rho\frac{\lambda_2^2 - r_2^2}{r_2s_2} 
        \\
        &= \frac{\rho}{r_2s_2}\big(\lambda_2^2 - r_2^2 + \frac{r_2s_2}{\rho}(b - \rho^2) + bs_2^2\big)
        \\
        &= \frac{\rho}{r_2s_2}(\lambda_2^2 - r_2^2 + bs_2^2 - ar_2s_2) 
        \\
        &= \frac{\rho}{r_2s_2}(\lambda_2^2 - N_{E/F}\omega_2).
    \end{align*}
    Since $v_E(r_2) = 0$ and $v_E(s_2) = m_1$, we have $v_E(\frac{\rho}{r_2s_2}) = 1 - m_1$. Since
    $$
    v_E(\lambda_2^2 - N_{E/F}\omega_2) \geq 4e_F + 2 - 2m_1,
    $$
    it follows that  
    $$
    \omega_2 n \equiv (q + \rho)^2 \pmod{\p_E^{4e_F + 3 - 3m_1}}.
    $$
    Lemmas~\ref{lem-lemma-omega-2-lambda-2-s2} and \ref{lem-val-r2-lambda2} tell us that $v_F(q) = 0$, and therefore $v_E(q + \rho) = 0$, so it follows that
    $$
    \frac{\omega_2 n}{(q+\rho)^2} \equiv 1 \pmod{\p_E^{4e_F + 3 - 3m_1}}. 
    $$
    Theorem~\ref{thm-algo-for-small-disc-C4-works} then follows by \cite[Lemma~4.4]{monnet2024counting} and \cite[Lemma~4.6]{monnet2024counting}. 
\end{enumerate}

\section{Explicit helper functions}

\label{appendix-helpers}
\begin{itemize}
\item Let $p$ be an integer with $p \geq 2$, and let $q$ be a positive rational number. For integers $t$ with $t \geq 2$, define the functions $A(t)$ and $B(t)$ by
$$
    A(t) =  \begin{cases} 
        q^{1 - \lfloor\frac{t}{2}\rfloor}\cdot \frac{q^{\lfloor \frac{t}{2} \rfloor} - 1}{q-1}\quad\text{if $p=2$},
        \\
        q^{-p(p-2)}\cdot \frac{q^{(p-1)(p-2)} - 1}{q^{p-2} - 1} \cdot \frac{
            q^{-(p-1)^2\cdot\lfloor\frac{t}{p}\rfloor} - 1
        }{
            q^{-(p-1)^2} - 1
        }\quad\text{if $p\neq 2$},
    \end{cases}
    $$
    and 
    $$
    B(t) = \begin{cases} 0 \quad\text{if $p=2$} ,
        \\
        q^{- \lfloor\frac{t}{p}\rfloor} \cdot \frac{
        q^{-(p-2)(t+1)} - q^{-(p-2)(\lfloor\frac{t}{p}\rfloor p + 2)}
    }
    {
        q^{-(p-2)} - 1
    } \quad\text{if $p\neq 2$}. 
    \end{cases}
    $$
\item Define the explicit function $N_{(1^21^2)}^{\neq}$ by 
$$
N_{(1^21^2)}^{\neq}(m) = \begin{cases}
    2(q-1)^2 q^{\frac{m}{2} - 2}(\frac{m}{2} - 1) - \mathbbm{1}_{4\mid m}(q-1)q^{\frac{m}{4} - 1}\quad\text{if $4\leq m \leq 2e_F$ and $m$ is even},
    \\
    2(q-1)^2 q^{\frac{m}{2} - 2}(2e_F - \frac{m}{2} + 1) - \mathbbm{1}_{4\mid m}(q-1)q^{\frac{m}{4} - 1}\quad\text{if $2e_F + 2\leq m \leq 4e_F$ and $m$ is even},
    \\
    4(q-1)q^{\frac{m-1}{2} - 1}\quad\text{if $2e_F + 3 \leq m \leq 4e_F + 1$ and $m$ is odd},
    \\
    q^{e_F}(2q^{e_F} - 1)\quad\text{if $m = 4e_F + 2$},
    \\
    0\quad\text{otherwise}. 
    \end{cases}
$$

    \item $$
    N^{C_2}(m_2) = \begin{cases}
        2(q-1)q^{\frac{m_2}{2} - 1} \quad\text{if $0 \leq m_2 \leq 4e_F$ and $m_2$ is even},
        \\
        2q^{2e_F}\quad\text{if $m_2 = 4e_F + 1$},
        \\
        0\quad\text{otherwise}.
    \end{cases}
    $$
    \item Let $m_1$ be an even integer with $2 \leq m_1 \leq e_F$. For each integer $m_2$, define 
    $$
    N^{C_4}(m_1,m_2) = \begin{cases}
        q^{m_1-1}\quad\text{if $m_2 = 3m_1 - 2$},
        \\
        q^{\lfloor \frac{m_1+m_2}{4}\rfloor} - q^{\lfloor \frac{m_1+m_2-2}{4}\rfloor} \quad\text{if $3m_1 \leq m_2 \leq 4e_F - m_1$ and $m_2$ is even},
        \\
        q^{e_F} \quad\text{if $m_2 = 4e_F - m_1 + 2$},
        \\
        0 \quad\text{otherwise}. 
    \end{cases}
    $$
    Suppose that $m_1 = 2e_F + 1$ or $m_1$ is even with $e_F < m_1 \leq 2e_F$. Then define 
    $$
    N^{C_4}(m_1,m_2) = \begin{cases}
        2q^{e_F}\quad\text{if $m_2 = m_1 + 2e_F$},
        \\
        0\quad\text{otherwise}. 
    \end{cases}
    $$
    Finally, define $N^{C_4}(m_1,m_2) = 0$ for all other pairs of integers $(m_1,m_2)$. 
    \item Let $m_1$ be either $2e_F + 1$ or an even integer with $2 \leq m_1 \leq 2e_F$. Define 
    $$
    N^{V_4}(m_1,m_2) = \begin{cases}
        2(q-1)q^{\frac{m_2}{2} - 1} \quad\text{if $2 \leq m_2 < m_1$ and $m_2$ is even},
        \\
        (q-2)q^{\frac{m_1}{2} - 1}\quad\text{if $m_2 = m_1$ and $m_1$ is even},
        \\
        (q-1)q^{\frac{m_1+m_2}{4} - 1}\quad\text{if $m_1 < m_2 \leq 4e_F - m_1$ and $m_1 \equiv m_2 \pmod{4}$},
        \\
        q^{e_F}\quad\text{if $m_2 > m_1$ and $m_1 + m_2 = 4e_F + 2$},
        \\
        0\quad\text{otherwise}.
    \end{cases}
    $$
    Define $N^{V_4}(m_1,m_2) = 0$ for all other pairs of integers $(m_1,m_2)$. 
\end{itemize}

\printbibliography
\end{document}